\theoremstyle{theorem}
\newtheorem{theorem}{Theorem}[section]
\newtheorem{lemma}[theorem]{Lemma}
\newtheorem{definition}[theorem]{Definition}
\newtheorem{corollary}[theorem]{Corollary}
\newtheorem{proposition}[theorem]{Proposition}
\theoremstyle{remark}
\newtheorem{remark}{Remark}
\newcommand*{\myproofname}{Proof}
\newenvironment{subproof}[1][\myproofname]{\begin{proof}[#1]}{\end{proof}}
\renewcommand\Re{\operatorname{Re}}
\renewcommand\Im{\operatorname{Im}}
\newcommand\MdR{\mbox{M}_d(\mathbb{R})}
\newcommand\GldR{\mbox{Gl}_d(\mathbb{R})}
\newcommand\OdR{\mbox{O}_d(\mathbb{R})}
\newcommand\Sym{\operatorname{Sym}}
\newcommand\Exp{\operatorname{Exp}}
\newcommand\tr{\operatorname{tr}}
\newcommand\diag{\operatorname{diag}}
\newcommand\supp{\operatorname{Supp}}
\newcommand\Spec{\operatorname{Spec}}
\renewcommand\det{\operatorname{det}}
\newcommand\Ker{\operatorname{Ker}}
\author{Evan Randles and Laurent Saloff-Coste}
\title{Convolution powers of complex functions on $\mathbb{Z}^d$}
\date{}
\begin{document}
\maketitle

\begin{abstract}
The study of convolution powers of a finitely supported probability distribution $\phi$ on the $d$-dimensional square lattice is central to random walk theory. For instance, the $n$th convolution power $\phi^{(n)}$ is the distribution of the $n$th step of the associated random walk and is described by the classical local limit theorem. Following previous work of P. Diaconis and the authors, we explore the more general setting in which $\phi$ takes on complex values. This problem, originally motivated by the problem of Erastus L. De Forest in data smoothing, has found applications to the theory of stability of numerical difference schemes in partial differential equations. For a complex valued function $\phi$ on $\mathbb{Z}^d$, we ask and address four basic and fundamental questions about the convolution powers $\phi^{(n)}$ which concern sup-norm estimates, generalized local limit theorems, pointwise estimates, and stability. This work extends one-dimensional results of I. J. Schoenberg, T. N. E. Greville, P. Diaconis and the second author and, in the context of stability theory, results by V. Thom\'{e}e and M. V. Fedoryuk.
\end{abstract}

\noindent{\small\bf Keywords:} Convolution powers, local limit theorems, stability of numerical difference schemes, random walks, Legendre-Fenchel transform.\\

\noindent{\small\bf Mathematics Subject Classification:} Primary 42A85; Secondary  60F99.
\section{Introduction}\label{sec:Introduction}

We denote by $\ell^1(\mathbb{Z}^d)$ the space of complex valued functions $\phi:\mathbb{Z}^d\rightarrow\mathbb{C}$ such that
\begin{equation*}
\|\phi\|_1=\sum_{x\in\mathbb{Z}^d}|\phi(x)|<\infty.
\end{equation*}
For $\psi,\phi\in\ell^1(\mathbb{Z}^d)$, the convolution product $\psi\ast\phi\in\ell^1(\mathbb{Z}^d)$ is defined by
\begin{equation*}
\psi\ast\phi(x)=\sum_{y\in\mathbb{Z}^d}\psi(x-y)\phi(y)
\end{equation*}
for $x\in\mathbb{Z}^d$.  Given $\phi\in\ell^1(\mathbb{Z}^d)$, we are interested in the convolution powers $\phi^{(n)}\in\ell^1(\mathbb{Z}^d)$ defined iteratively by $\phi^{(n)}=\phi^{(n-1)}\ast\phi^{(1)}$ for $n\in\mathbb{N}_+=:\{1,2,\dots\}$ where $\phi^{(1)}=\phi$. This study was originally motivated by problems in data smoothing, namely De Forest's problem, and it was later found essential to the theory of approximate difference schemes for partial differential equations \cite{Greville1966,Schoenberg1953, Thomee1969,Thomee1965}; the recent article \cite{Diaconis2014} gives background and pointers to the literature.\\

\noindent In random walk theory, the study of convolution powers is of central importance: Given an independent sequence of random vectors $X_1,X_2,\dots\in\mathbb{Z}^d$, all with distribution $\phi$ (here, $\phi\geq 0$), $\phi^{(n)}$ is the distribution of the random vector $S_n=X_1+X_2+\cdots+X_n$. Equivalently, a probability distribution $\phi$ on $\mathbb{Z}^d$ gives rise to a random walk whose $n$th-step transition kernel $k_n$ is given by $k_n(x,y)=\phi^{(n)}(y-x)$ for $x,y\in\mathbb{Z}^d$. For an account of this theory, we encourage the reader to see the wonderful and classic book of F. Spitzer \cite{Spitzer1964} and, for a more modern treatment, the recent book of G. Lawler and V. Limic \cite{Lawler2010} (see also Subsection \ref{subsec:ClassicalLLT}). In the more general case that $\phi$ takes on complex values (or just simply takes on both positive and negative values), its convolution powers $\phi^{(n)}$ are seen to exhibit rich and disparate behavior, much of which never appears in the probabilistic setting. Given $\phi\in\ell^1(\mathbb{Z}^d)$, we are interested in the most basic and fundamental questions that can be asked about its convolution powers. Here are four such questions:
\begin{enumerate}[(i)]
 \item\label{ques:SupremumDecay} What can be said about the decay of
\begin{equation*}
\|\phi^{(n)}\|_{\infty}=\sup_{x\in\mathbb{Z}^d}|\phi^{(n)}(x)|
\end{equation*}
as $n\rightarrow\infty$?

\item\label{ques:LLT} Is there a simple pointwise description of $\phi^{(n)}(x)$, analogous to the local (central) limit theorem, that can be made for large $n$?

\item\label{ques:PointwiseEstimates} Are global space-time pointwise estimates obtainable for $|\phi^{(n)}|$?

\item\label{ques:Stability} Under what conditions is $\phi$ \textit{stable} in the sense that
\begin{equation}\label{eq:PowerBoundedness}
\sup_{n\in\mathbb{N}_+}\|\phi^{(n)}\|_1<\infty?
\end{equation}
\end{enumerate}

\noindent The above questions have well-known answers in random walk theory. For simplicity we discuss the case in which $\phi$ is a probability distribution on $\mathbb{Z}^d$ whose associated random walk is symmetric, aperiodic, irreducible and of finite range. In this case, it is known that $n^{d/2}\phi^{(n)}(0)$ converges to a non-zero constant as $n\rightarrow\infty$ and this helps to provide an answer to Question \eqref{ques:SupremumDecay} in the form of the following two-sided estimate: For positive constants $C$ and $C'$,
\begin{equation*}
C n^{-d/2}\leq \sup_{x\in\mathbb{Z}^d}\phi^{(n)}(x)\leq C' n^{-d/2}
\end{equation*}
for all $n\in\mathbb{N}_+$. Concerning the somewhat finer Question \eqref{ques:LLT}, the classical local limit theorem states that
\begin{equation*}
\phi^{(n)}(x)=n^{-d/2}G_\phi(n^{-1/2}x)+o(n^{-d/2})
\end{equation*}
uniformly for $x\in\mathbb{Z}^d$, where $G_\phi$ is the generalized Gaussian density
\begin{align}\label{eq:Gaussian}\nonumber
G_\phi(x)&=\frac{1}{(2\pi)^d}\int_{\mathbb{R}^d}\exp\big(-\xi\cdot C_\phi\xi\big)e^{-ix\cdot\xi}\,d\xi\\
&=\frac{1}{(2\pi )^{d/2}\sqrt{\det C_\phi}}\exp\left(-\frac{x\cdot {C_\phi}^{-1}x}{2}\right);
\end{align}
here, $C_\phi$ is the positive definite covariance matrix associated to $\phi$ and $\cdot$ denotes the dot product. As an application of this local limit theorem, one can easily settle the question of recurrence/transience for random walks on $\mathbb{Z}^d$ which was originally answered by G. P\'{o}lya in the context of simple random walk \cite{Polya1921}.  For general complex valued functions $\phi\in\ell^1(\mathbb{Z}^d)$, Question \eqref{ques:LLT} is a question about the validity of (generalized) local limit theorems and can be restated as follows: Under what conditions can the convolution powers $\phi^{(n)}$ be approximated pointwise by a combination (perhaps a sum) of appropriately scaled smooth functions-- called \textit{attractors}? The answer for Question \eqref{ques:PointwiseEstimates} for a finite range, symmetric, irreducible and aperiodic random walk is provided in terms of the so-called Gaussian estimate: For positive constants $C$ and $M$,
\begin{equation*}
\phi^{(n)}(x)\leq C n^{-d/2}\exp(-M|x|^2/n)
\end{equation*}
for all $x\in\mathbb{Z}^d$ and $n\in\mathbb{N}_+$; here, $|\cdot|$ is the standard euclidean norm. Such estimates, with matching lower bounds on appropriate space-time regions, are in fact valid in a much wider context, see  \cite{Hebisch1993}. Finally, the conservation of mass provides an obvious positive answer to Question \eqref{ques:Stability} in the case that $\phi$ is a probability distribution.\\

\noindent Beyond the probabilistic setting, the study of convolution powers for complex valued functions has centered mainly around two applications, statistical data smoothing procedures and finite difference schemes for numerical solutions to partial differential equations; the vast majority of the existing theory pertains only to one dimension. In the context of data smoothing, the earliest (known) study was motivated by a problem of Erastus L. De Forest. De Forest's problem, analogous to Question \eqref{ques:LLT}, concerns the behavior of convolution powers of symmetric real valued and finitely supported functions on $\mathbb{Z}$ and was addressed by I. J. Schoenberg \cite{Schoenberg1953} and T. N. E. Greville \cite{Greville1966}. In the context of numerical solutions in partial differential equations, the stability of convolution powers (Question \eqref{ques:Stability}) saw extensive investigation following World War II spurred by advancements in numerical computing. For an approximate difference scheme to an initial value problem, the property \eqref{eq:PowerBoundedness} is necessary and sufficient for convergence to a classical solution; this is the so-called Lax equivalence theorem \cite{Richtmyer1967} (see Section \ref{sec:Stability}). Property \eqref{eq:PowerBoundedness} is also called \textit{power boundedness} and can be seen in the context of Banach algebras where $\phi$ is an element of the Banach algebra $(\ell^1(\mathbb{Z}^d),\|\cdot\|_1)$ equipped with the convolution product \cite{Schreiber1970,Kaniuth2011}. \\

\noindent In one dimension, Questions (\ref{ques:SupremumDecay}-\ref{ques:Stability}) were recently addressed in the articles \cite{Diaconis2014} and \cite{Randles2015}. For the general class of finitely supported complex valued functions on $\mathbb{Z}$, \cite{Randles2015} completely settles Questions \eqref{ques:SupremumDecay} and \eqref{ques:LLT}. For instance, consider the following theorem of \cite{Randles2015}.

\begin{theorem}[Theorem 1.1 of \cite{Randles2015}]\label{thm:1DSupDecay}
Let $\phi:\mathbb{Z}\rightarrow\mathbb{C}$ have finite support consisting of more than one point. Then there is a positive constant $A$ and a natural number $m\geq 2$ for which
\begin{equation*}
Cn^{-1/m}\leq A^n\|\phi^{(n)}\|_{\infty}\leq C'n^{-1/m}
\end{equation*}
for all $n\in\mathbb{N}_+$, where $C$ and $C'$ are positive constants.
\end{theorem}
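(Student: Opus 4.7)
The plan is to work on the Fourier side. Since $\phi$ is finitely supported, the Fourier transform $\hat\phi(\xi)=\sum_{x\in\mathbb{Z}}\phi(x)e^{-ix\xi}$ is a trigonometric polynomial on $\mathbb{T}=\mathbb{R}/2\pi\mathbb{Z}$, and Fourier inversion gives
\[\phi^{(n)}(x)=\frac{1}{2\pi}\int_{-\pi}^{\pi}\hat\phi(\xi)^n e^{ix\xi}\,d\xi.\]
I will take $A=\|\hat\phi\|_\infty=\max_{\xi\in\mathbb{T}}|\hat\phi(\xi)|$, the natural exponential growth rate. Because $\phi$ is supported at more than one point, the real-analytic function $|\hat\phi|$ is non-constant on $\mathbb{T}$, so the set $\Omega=\{\xi_1,\dots,\xi_K\}$ of its maxima is finite. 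Near each $\xi_j$ one can write
\[\hat\phi(\xi)=A\,e^{-i\theta_j}\exp\bigl(-\Gamma_j(\xi-\xi_j)\bigr)\]
with $\Gamma_j$ analytic, $\Gamma_j(0)=0$, and $\Re\Gamma_j(\eta)\ge 0$ for real $\eta$ near $0$ (forced by $|\hat\phi|$ attaining its maximum at $\xi_j$). Expanding $\Gamma_j(\eta)=\sum_{k\ge 1}\gamma_{j,k}\eta^k/k!$, this sign condition forces the smallest index $k$ with $\Re\gamma_{j,k}\neq 0$, call it $m_j$, to be even and satisfy $\Re\gamma_{j,m_j}>0$. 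Set $m=\max_j m_j\ge 2$; this will be the $m$ of the theorem.

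The upper bound now follows cleanly. By the triangle inequality,
\[\|\phi^{(n)}\|_\infty\le \frac{1}{2\pi}\int_{\mathbb{T}}|\hat\phi(\xi)|^n\,d\xi=\frac{A^n}{2\pi}\int_{\mathbb{T}}\bigl|\hat\phi(\xi)/A\bigr|^n\,d\xi.\]
I will split $\mathbb{T}$ into small neighborhoods $U_j$ of the $\xi_j$'s and their complement. Off $\bigcup U_j$ we have $|\hat\phi/A|\le 1-\delta<1$, contributing an exponentially small $e^{-cn}$. Inside each $U_j$, the inequality $\Re\Gamma_j(\eta)\ge c_j\eta^{m_j}$ (valid if $U_j$ is small enough) together with the substitution $\eta=n^{-1/m_j}\tau$ produces a contribution of order $n^{-1/m_j}\le n^{-1/m}$. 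Summing yields $A^n\|\phi^{(n)}\|_\infty\le C'n^{-1/m}$.

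For the lower bound, pick $j^*$ with $m_{j^*}=m$ and let $a_{j^*}=\Im\gamma_{j^*,1}\in\mathbb{R}$ be the drift associated with $\xi_{j^*}$ (the linear coefficient $\gamma_{j^*,1}$ is purely imaginary because $\Re\gamma_{j^*,1}=0$). A saddle-point analysis of the Fourier integral restricted to $U_{j^*}$, with $x=x_n$ near $na_{j^*}$, yields a ``wave packet'' contribution of the form
\[\phi^{(n)}_{j^*}(x_n)\sim A^n n^{-1/m}\,e^{-in\theta_{j^*}+ix_n\xi_{j^*}}\,H_{j^*}\bigl((x_n-na_{j^*})n^{-1/m}\bigr),\]
where $H_{j^*}$ is a non-zero smooth attractor expressible as an oscillatory integral involving $\gamma_{j^*,m}$ and the lower-order purely-imaginary Taylor coefficients of $\Gamma_{j^*}$. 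Taking $x_n=\lfloor na_{j^*}\rfloor$ makes $H_{j^*}$ evaluate at an $O(n^{-1/m})$ argument, producing modulus of order $A^n n^{-1/m}$. Contributions from $\xi_j\neq\xi_{j^*}$ with $m_j<m$ are of strictly smaller order $O(A^n n^{-1/m_j})=o(A^n n^{-1/m})$. Contributions from $\xi_\ell\neq\xi_{j^*}$ with $m_\ell=m$ but distinct drift $a_\ell\neq a_{j^*}$ are centered at $na_\ell$, hence at rescaled distance $n^{1-1/m}|a_\ell-a_{j^*}|\to\infty$ from $x_n$, and by the rapid decay of their attractors they are negligible.

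The main technical obstacle is the remaining case in the lower bound, when several maximal indices share both $m_j=m$ and the same drift $a_j=a_{j^*}$. These wave packets overlap in envelope but carry distinct phases $e^{-in\theta_j+ix\xi_j}$, so the total contribution is the common envelope times an almost-periodic-in-$x$ phase sum $\sum_{j}c_j e^{ix\xi_j}$. One must choose $x_n\in\mathbb{Z}$ in a window of size $\lesssim n^{1/m}$ around $na_{j^*}$ so that this phase sum is bounded away from zero; because the $\xi_j$'s are distinct in $\mathbb{T}$, Kronecker's theorem (or an $L^2$ averaging argument over the window) supplies such an $x_n$, completing the estimate $|\phi^{(n)}(x_n)|\ge CA^n n^{-1/m}$.
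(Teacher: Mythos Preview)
The paper does not itself prove this theorem --- it is quoted from \cite{Randles2015} and used only as motivation --- so there is no in-paper proof to compare against directly. I therefore evaluate your proposal on its own terms, and there is a genuine gap.

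Your identification of the exponent $m$ is wrong in general. You set $m_j$ equal to the order of vanishing of $\Re\Gamma_j$ at $0$, i.e., the smallest $k$ with $\Re\gamma_{j,k}\neq 0$. But the correct local exponent is governed by the order of the first \emph{non-linear} term of $\Gamma_j$ itself, regardless of whether that coefficient is purely imaginary. Consider a single maximum with
\[
\Gamma_j(\eta)=i\alpha\eta + ic\,\eta^{3}+\gamma\,\eta^{4}+O(\eta^5),\qquad c\in\mathbb{R}\setminus\{0\},\ \Re\gamma>0.
\]
Your recipe gives $m_j=4$. Under the scaling $\eta=n^{-1/3}\tau$ one finds $n\Gamma_j(\eta)=in^{2/3}\alpha\tau + ic\tau^3 + n^{-1/3}\gamma\tau^4+\cdots$, producing an Airy-type attractor of size $n^{-1/3}$: the correct $m$ here is $3$. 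Under \emph{your} scaling $\eta=n^{-1/4}\tau$ the intermediate term becomes $ic\,n^{1/4}\tau^3$, which blows up, so the ``attractor $H_{j^*}$ expressible as an oscillatory integral involving the lower-order purely-imaginary Taylor coefficients'' that you invoke does not exist as a limit. In such examples your upper bound $A^n\|\phi^{(n)}\|_\infty\le C'n^{-1/4}$ survives (it is merely non-sharp, coming from $\int|\hat\phi|^n$), but your lower bound $\ge Cn^{-1/4}$ fails outright, since the true size $n^{-1/3}$ is $o(n^{-1/4})$.

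This is exactly the phenomenon the paper alludes to when it says the attractors in \cite{Randles2015} include ``the Airy function and the heat kernel evaluated at purely imaginary time'': those arise when the leading non-linear coefficient of $\Gamma_j$ is purely imaginary (odd or even order, respectively). The present paper's own results (Theorems~\ref{thm:SubDecay} and~\ref{thm:MainLocalLimit}) deliberately exclude this case via the ``positive homogeneous type'' hypothesis --- which in one dimension amounts to $\Re\gamma_{j,k_j}>0$ at the \emph{first} non-linear index $k_j$ --- and the full Theorem~\ref{thm:1DSupDecay} requires the additional case analysis carried out in \cite{Randles2015}. Your argument is essentially correct only under that extra hypothesis.
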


\noindent In settling Question \eqref{ques:LLT}, the article \cite{Randles2015} gives an exhaustive account of local limit theorems in which the set of possible attractors includes the Airy function and the heat kernel evaluated at purely imaginary time. In addressing Question \eqref{ques:PointwiseEstimates}, the article \cite{Diaconis2014} contains a number of results concerning global space-time estimates for $\phi^{(n)}$ for a finitely supported function $\phi$ -- our results recapture (and extend in the case of Theorem \ref{thm:ExponentialEstimate}) these results of \cite{Diaconis2014}. The question of stability for finitely supported functions on $\mathbb{Z}$ was answered completely in 1965 by V. Thom\'{e}e \cite{Thomee1965} (see Theorem \ref{thm:Thomee} below). In fact, Thom\'{e}e's characterization is, in some sense, the light in the dark that gives the correct framework for the study of local limit theorems in one dimension and we take it as a starting point for our study in $\mathbb{Z}^d$. \\

\noindent Moving beyond one dimension, the situation becomes more interesting still, the theory harder and much remains open. As we illustrate, convolution powers exhibit a significantly wider range of behaviors in $\mathbb{Z}^d$ than is seen in $\mathbb{Z}$ (see Remark \ref{rmk:Attractors}). The focus of this article is to address Questions (\ref{ques:SupremumDecay}-\ref{ques:Stability}) under some strong hypotheses on the Fourier transform -- specifically, we work under the assumption that, near its extrema, the Fourier transform of $\phi$ is ``nice'' in a sense we will shortly make precise. To this end, we follow the article \cite{Diaconis2014} and generalize the results therein. A complete theory for finitely supported functions on $\mathbb{Z}^d$, in which the results of \cite{Randles2015} will fit, is not presently known. Not surprisingly, our results recapture the well-known results of random walk theory on $\mathbb{Z}^d$ (see Subsection \ref{subsec:ClassicalLLT}). \\

\noindent As a first motivating example, consider $\phi:\mathbb{Z}^2\rightarrow\mathbb{C}$ defined by
\begin{equation*}
\phi(x,y)=\frac{1}{22+2\sqrt{3}}\times\begin{cases}
	    8 & (x,y)=(0,0)\\
           5+\sqrt{3} & (x,y)=(\pm 1,0)\\
           -2 & (x,y)=(\pm 2,0)\\
            i(\sqrt{3}-1)& (x,y)=(\pm 1,-1)\\
            -i(\sqrt{3}-1)& (x,y)=(\pm 1,1)\\
            2\mp 2i & (x,y)=(0,\pm 1)\\
           0 & \mbox{otherwise}.
          \end{cases}
\end{equation*}
\begin{figure}[h!]
\begin{center}
\resizebox{\textwidth}{!}{
	    \begin{subfigure}[5cm]{0.5\textwidth}
		\includegraphics[width=\textwidth]{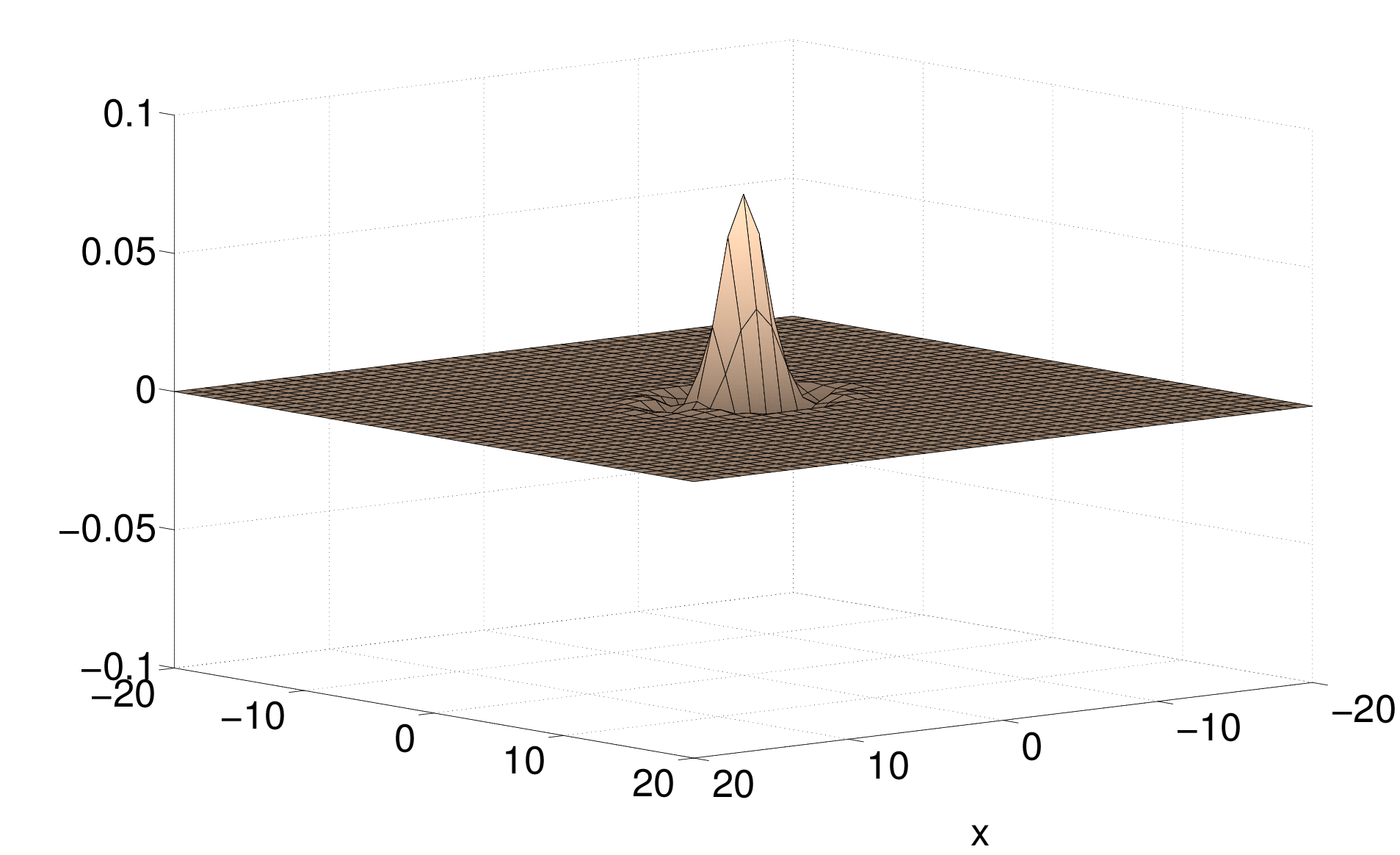}
		\caption{$\Re(\phi^{(n)})$ for $n=10$}
		\label{fig:ex_intro_10_1}
	    \end{subfigure}
\begin{subfigure}[6cm]{0.5\textwidth}
		\includegraphics[width=\textwidth]{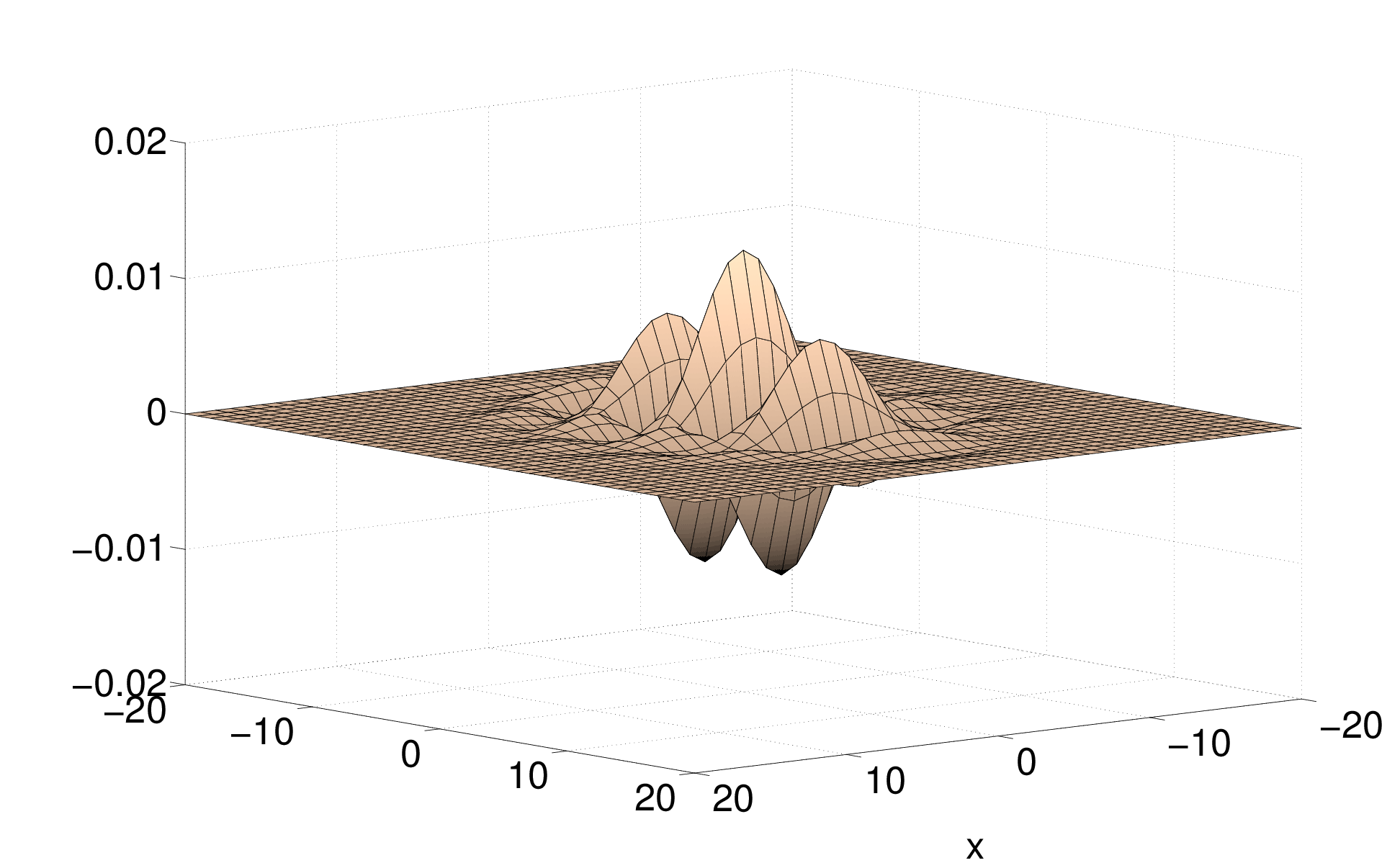}
		\caption{$\Re(\phi^{(n)})$ for $n=100$}
		\label{fig:ex_intro_100_1}
	    \end{subfigure}}
\caption{The graphs of $\Re(\phi^{(n)})$ for $n=10,100$.}
\label{fig:ex_intro}
\end{center}
\end{figure}

\noindent The graphs of $\Re(\phi^{(n)})$ for $(x,y)\in\mathbb{Z}^2$ for $-20\leq x,y\leq 20$ are displayed in Figures \ref{fig:ex_intro_10_1} and \ref{fig:ex_intro_100_1} for $n=10$ and $n=100$ respectively. By inspection, one observes that $\Re(\phi^{(n)})$ decays in absolute value as $n$ increases and, when $n=100$, there is an apparent oscillation of $\Re(\phi^{(n)})$ in the $y$-direction. Our results explain these observations.\\

\noindent For $\phi\in\ell^1(\mathbb{Z}^d)$, its Fourier transform $\hat\phi:\mathbb{R}^d\rightarrow\mathbb{C}$ is defined by
\begin{equation*}
\hat\phi(\xi)=\sum_{x\in\mathbb{Z}^d}\phi(x)e^{ix\cdot\xi}
\end{equation*}
for $\xi\in\mathbb{R}^d$; this series is absolutely convergent. The standard Fourier inversion formula holds for all $\phi\in\ell^1(\mathbb{Z}^d)$ and moreover, for each $n\in\mathbb{N}_+$,
\begin{equation}\label{eq:fourierinversion}
\phi^{(n)}(x)=\frac{1}{(2\pi)^d}\int_{\mathbb{T}^d}e^{-ix\cdot\xi}\hat\phi(\xi)^n d\xi
\end{equation}
for all $x\in\mathbb{Z}^d$ where $\mathbb{T}^d=(-\pi,\pi]^d$. Like the classical local limit theorem, our arguments are based on local approximations of $\hat\phi$ and such approximations require $\hat\phi$ to have a certain amount of smoothness. In our setting the order of smoothness needed in each case is not known a priori. For our purposes, it is sufficient (but not necessary) to consider only those $\phi\in\ell^1(\mathbb{Z}^d)$ with finite moments of all orders. That is, we consider the subspace of $\ell^1(\mathbb{Z}^d)$, denoted by $\mathcal{S}_d$, consisting of those $\phi$ for which
\begin{equation*}
\|x^{\beta}\phi(x)\|_1=\sum_{x\in\mathbb{Z}^d}|x^{\beta}\phi(x)|=\sum_{x\in\mathbb{Z}^d}|x_1^{\beta_1}x_2^{\beta_2}\cdots x_d^{\beta_d}\phi(x)|<\infty
\end{equation*}
for all multi-indices $\beta=(\beta_1,\beta_2,\dots,\beta_d)\in\mathbb{N}^d$. It is straightforward to see that $\hat\phi\in C^{\infty}(\mathbb{R}^d)$ whenever $\phi\in\mathcal{S}_d$. We note that $\mathcal{S}_d$ contains all finitely supported functions mapping $\mathbb{Z}^d$ into $\mathbb{C}$; of course, when $\phi$ is finitely supported, $\hat\phi$ extends holomorphically to $\mathbb{C}^d$. \\

\noindent Before we begin to formulate our hypotheses, let us introduce some important objects by taking motivation from probability. The quadratic form $\xi\mapsto \xi\cdot C_\phi\xi$ which appears in \eqref{eq:Gaussian} is a positive definite polynomial in $\xi$ and is homogeneous in the following sense. For all $t>0$ and $\xi\in\mathbb{R}^d$,
\begin{equation*}
(t^{1/2}\xi)\cdot C_\phi(t^{1/2}\xi)=t\,\xi\cdot C_\phi\xi.
\end{equation*}
The map $(0,\infty)\ni t\mapsto t^{1/2}I\in\GldR$ is a continuous (Lie group) homomorphism from the multiplicative group of positive real numbers into $\GldR$; here $I$ is the identity matrix in the set of $d\times d$ real matrices $\MdR$ and $\GldR\subseteq\MdR$ denotes the group of invertible matrices. For any such continuous homomorphism $t\mapsto T_t$,  $\{T_t\}_{t>0}$ is a Lie subgroup of $\GldR$, that is, a \textit{continuous one-parameter group}; the Hille-Yosida construction guarantees that all such groups are of the form
\begin{equation*}
T_t=t^{E}=\exp((\log t) E)=\sum_{k=0}^{\infty}\frac{(\log t)^k}{k!}E^k
\end{equation*}
for $t>0$ for some $E\in\MdR$. The Appendix (Section \ref{sec:Appendix}) amasses some basic properties of continuous one-parameter groups.

\begin{definition}\label{def:positivehomogeneous}
For a continuous function $P:\mathbb{R}^d\rightarrow\mathbb{C}$ and a continuous one-parameter group $\{T_t\}\subseteq\GldR$, we say that $P$ is homogeneous with respect to $T_t=t^E$ if
\begin{equation*}
tP(\xi)=P(T_t\xi)
\end{equation*}
for all $t>0$ and $\xi\in\mathbb{R}^d$. In this case $E$ is a member of the exponent set of $P$, $\Exp(P)$.

We say that $P$ is positive homogeneous if the real part of $P$, $R=\Re P$, is positive definite (that is, $R(\xi)\geq 0$ and $R(\xi)=0$ only when $\xi=0$) and if $\Exp(P)$ contains a matrix $E\in\MdR$ whose spectrum is real.
\end{definition}

\noindent Throughout this article, we concern ourselves with positive homogeneous multivariate polynomials $P:\mathbb{R}^d\rightarrow\mathbb{C}$; their appearance is seen to be natural, although not exhaustive, when considering local approximations of $\hat\phi$ for $\phi\in\mathcal{S}_d$. A given positive homogeneous polynomial $P$ need not be homogeneous with respect to a unique continuous one-parameter group. For example, for each $m\in\mathbb{N}_+$, $\xi\mapsto |\xi|^{2m}$ is a positive homogeneous polynomial and it can be shown directly that
\begin{equation*}
\Exp(|\cdot|^{2m})=(2m)^{-1}I+\mathfrak{o}(d),
\end{equation*}
where $\mathfrak{o}(d)\subseteq\MdR$ is the set of anti-symmetric matrices (these arise as the Lie algebra of the orthogonal group $\OdR\subseteq\GldR$). It will be shown however that, for a positive homogeneous polynomial $P$, $\tr E=\tr E'$ whenever $E,E'\in\Exp(P)$; this is Corollary \ref{cor:UniqueTrace}. To a given positive homogeneous polynomial $P$, the corollary allows us to uniquely define the number
\begin{equation}\label{eq:muP}
\mu_P:=\tr E
\end{equation}
for any $E\in\Exp(P)$. This number appears in many of our results; in particular, it arises in addressing the Question \eqref{ques:SupremumDecay} in which it plays the role of $1/m$ in Theorem \ref{thm:1DSupDecay}.\\

\noindent We now begin to discuss the framework and hypotheses under which our theorems are stated. Let $\phi\in\mathcal{S}_d$ be such that $\sup_{\xi\in\mathbb{R}^d}|\hat\phi(\xi)|=1$; this can always be arranged by multiplying $\phi$ by an appropriate constant. Set
\begin{equation*}
\Omega(\phi)=\{\xi\in \mathbb{T}^d:|\hat\phi(\xi)|=1\}
\end{equation*}
and, for $\xi_0\in\Omega(\phi)$, define $\Gamma_{\xi_0}:\mathcal{U}\subseteq\mathbb{R}^d\rightarrow\mathbb{C}$ by
\begin{equation*}
\Gamma_{\xi_0}(\xi)=\log\left(\frac{\hat\phi(\xi+\xi_0)}{\hat\phi(\xi_0)}\right)
\end{equation*}
where $\mathcal{U}$ is a convex open neighborhood of $0$ which is small enough to ensure that $\log$, the principal branch of logarithm, is defined and continuous on $\hat\phi(\xi+\xi_0)/\hat\phi(\xi_0)$ for $\xi\in\mathcal{U}$. Because $\hat\phi$ is smooth, $\Gamma_{\xi_0}\in C^{\infty}(\mathcal{U})$ and so we can use Taylor's theorem to approximate $\Gamma_{\xi_0}$ near $0$.  In this article, we focus on the case in which the Taylor expansion yields a positive homogeneous polynomial. The following definition, motivated by Thom\'{e}e \cite{Thomee1965}, captures this notion.
\begin{definition}\label{def:PosHomType}
Let $\phi\in\mathcal{S}_d$ be such that $\sup|\hat\phi(\xi)|=1$ and let $\xi_0\in\Omega(\phi)$. We say that $\xi_0$ is of positive homogeneous type for $\hat\phi$ if the Taylor expansion for $\Gamma_{\xi_0}$ about $0$ is of the form
\begin{equation}\label{eq:PosHomType}
\Gamma_{\xi_0}(\xi)=i\alpha_{\xi_0}\cdot\xi-P_{\xi_0}(\xi)+\Upsilon_{\xi_0}(\xi)
\end{equation}
where $\alpha_{\xi_0}\in\mathbb{R}^d$, $P_{\xi_0}$ is a positive homogeneous polynomial and $\Upsilon_{\xi_0}(\xi)=o(R_{\xi_0}(\xi))$ as $\xi\rightarrow 0$; here $R_{\xi_0}=\Re P_{\xi_0}$. We say that $\alpha_{\xi_0}$ is the drift associated to $\xi_0$.
\end{definition}

\noindent Though not obvious at first glance, $\alpha_{\xi_0}$ and $P_{\xi_0}$ of the above definition are necessarily unique. When looking at any given Taylor polynomial, it will not always be apparent when the conditions of the above definition are satisfied. In Section \ref{sec:PropertiesofHatPhi}, there is a discussion concerning this, and therein, necessary and sufficient conditions are given for $\xi_0\in\Omega(\phi)$ to be of positive homogeneous type for $\hat\phi$.\\

\noindent Our theorems are stated under the assumption that for $\phi\in\mathcal{S}_d$, $\sup|\hat\phi(\xi)|=1$ and each $\xi\in\Omega(\phi)$ is  of positive homogeneous type for $\hat\phi$. As we show in Section \ref{sec:PropertiesofHatPhi}, these hypotheses ensure that the set $\Omega(\phi)$ is finite and in this case we set
\begin{equation}\label{eq:MuPhiDefinition}
\mu_\phi=\min_{\xi\in\Omega(\phi)}\mu_{P_\xi}.
\end{equation}
This is admittedly a slight abuse of notation. We are ready to state our first main result.

\begin{theorem}\label{thm:SubDecay}
Let $\phi\in\mathcal{S}_d$ be such that $\sup|\hat\phi(\xi)|=1$ and suppose that each $\xi\in\Omega(\phi)$ is of positive homogeneous type for $\hat\phi$. Then
\begin{equation}\label{eq:SupDecay}
C'n^{-\mu_\phi}\leq\|\phi^{(n)}\|_{\infty}\leq C n^{-\mu_\phi}
\end{equation}
for all $n\in\mathbb{N}_+$, where $C$ and $C'$ are positive constants.
\end{theorem}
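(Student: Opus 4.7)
The plan is to analyze the Fourier inversion formula \eqref{eq:fourierinversion} by isolating the contributions to $\phi^{(n)}(x)$ coming from small neighborhoods of each $\xi_0\in\Omega(\phi)$ and applying the local expansion \eqref{eq:PosHomType} together with the homogeneity of $P_{\xi_0}$ under a one-parameter group $t^{E_{\xi_0}}$ with $E_{\xi_0}\in\Exp(P_{\xi_0})$. Since $\Omega(\phi)$ is finite (as stated preceding the theorem), continuity and compactness produce disjoint open neighborhoods $U_{\xi_0}\subset\mathbb{T}^d$ of its points on whose complement $|\hat\phi|\leq \rho<1$; the contribution of $\mathbb{T}^d\setminus\bigcup U_{\xi_0}$ to \eqref{eq:fourierinversion} is then $O(\rho^n)$ uniformly in $x$ and hence negligible compared with $n^{-\mu_\phi}$.

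For the upper bound, on each $U_{\xi_0}$ write $\hat\phi(\xi_0+\zeta)^n=\hat\phi(\xi_0)^n\exp(in\alpha_{\xi_0}\cdot\zeta-nP_{\xi_0}(\zeta)+n\Upsilon_{\xi_0}(\zeta))$ and substitute $\zeta=n^{-E_{\xi_0}}\eta$. This substitution has Jacobian $n^{-\tr E_{\xi_0}}=n^{-\mu_{P_{\xi_0}}}$, and the homogeneity identity yields $nP_{\xi_0}(n^{-E_{\xi_0}}\eta)=P_{\xi_0}(\eta)$. The resulting integrand has modulus $\exp(-R_{\xi_0}(\eta)+n\Re\Upsilon_{\xi_0}(n^{-E_{\xi_0}}\eta))$, and the hypothesis $\Upsilon_{\xi_0}=o(R_{\xi_0})$ combined with the fact that $n^{-E_{\xi_0}}\eta\to 0$ on compacta lets the error factor be absorbed into $e^{-\tfrac12 R_{\xi_0}(\eta)}$ on a growing core, with the tail controlled by the smallness of $U_{\xi_0}$. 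Dominated convergence then gives a bound $C n^{-\mu_{P_{\xi_0}}}$ on the $\xi_0$-contribution, uniformly in $x$, and summing over $\xi_0\in\Omega(\phi)$ yields $\|\phi^{(n)}\|_\infty\leq Cn^{-\mu_\phi}$.

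For the lower bound, fix $\xi_0^*$ with $\mu_{P_{\xi_0^*}}=\mu_\phi$. The same change of variables shows that whenever $n^{-E_{\xi_0^*}}(x-n\alpha_{\xi_0^*})\to 0$, which holds for $|x-n\alpha_{\xi_0^*}|\leq N_n$ with $N_n$ growing sufficiently slowly, the $\xi_0^*$-contribution equals $n^{-\mu_\phi}\hat\phi(\xi_0^*)^n e^{-ix\cdot\xi_0^*}\bigl(G_{P_{\xi_0^*}}(0)+o(1)\bigr)$ with $G_{P_{\xi_0^*}}(0)=(2\pi)^{-d}\int_{\mathbb{R}^d}e^{-P_{\xi_0^*}(\eta)}\,d\eta\neq 0$ since $R_{\xi_0^*}$ is positive definite. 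The other $\xi_0\in\Omega(\phi)$ contribute terms of order $O(n^{-\mu_\phi})$, modulated by characters $\hat\phi(\xi_0)^n e^{-ix\cdot\xi_0}$. To prevent cancellation, I average $|\phi^{(n)}(x)|^2$ over lattice points $x$ in the box $B_n=(n\alpha_{\xi_0^*}+[-N_n,N_n]^d)\cap\mathbb{Z}^d$: the diagonal terms contribute $\gtrsim |B_n|\,n^{-2\mu_\phi}|G_{P_{\xi_0^*}}(0)|^2$, while the cross terms are oscillatory at distinct frequencies $\xi_0-\xi_0'\not\equiv 0$ modulo $2\pi\mathbb{Z}^d$ and vanish in the average by Dirichlet cancellation. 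The inequality $\max_{x\in B_n}|\phi^{(n)}(x)|^2\geq|B_n|^{-1}\sum_{x\in B_n}|\phi^{(n)}(x)|^2$ then gives $\|\phi^{(n)}\|_\infty\geq C'n^{-\mu_\phi}$.

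The main obstacle is this non-cancellation step in the lower bound; the $L^2$-averaging over a moderately large box bypasses the delicate phase analysis that would be required to produce a single explicit extremizing $x_n$, which is particularly tricky when the minimum $\mu_\phi$ in \eqref{eq:MuPhiDefinition} is attained at several points of $\Omega(\phi)$. A secondary technical point is the uniform control of the error term $n\Re\Upsilon_{\xi_0}(n^{-E_{\xi_0}}\eta)$, handled via the smallness hypothesis on $\Upsilon_{\xi_0}$ and a core/tail split of the $\eta$-integral on which $-R_{\xi_0}(\eta)$ provides integrable decay.
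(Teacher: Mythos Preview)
Your upper-bound argument follows the paper's route: the paper first proves Theorem \ref{thm:MainLocalLimit} via Lemma \ref{lem:LocalLimit} (which is exactly your core/tail split after the substitution $\zeta = n^{-E_{\xi_0}}\eta$, with the remainder controlled through \eqref{eq:LocalLimit_2}) and then reads off the upper estimate from the triangle inequality applied to \eqref{eq:MainLocalLimit}.

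For the lower bound your strategy is correct but genuinely different from the paper's. The paper handles non-cancellation by a linear-algebraic device: Lemma \ref{lem:CharaterMatrix}, a consequence of Dedekind's independence of characters, produces finitely many lattice points $x_1,\dots,x_A$ for which the character matrix $V=(e^{-ix_j\cdot\xi_q})$ is invertible; evaluating the local limit expansion at the shifted points $x_j+\lfloor n\alpha_1\rfloor$ and using that $L_V$ is bounded below forces at least one of the values $|n^{\mu_\phi}\phi^{(n)}(x_j+\lfloor n\alpha_1\rfloor)|$ to stay away from zero. Your $L^2$-averaging over a slowly growing box is a legitimate alternative: provided $N_n\to\infty$ (for the Dirichlet cancellation of the cross terms to make them $o(|B_n|)$) while $N_n=o(n^{\lambda})$ for the smallest eigenvalue $\lambda$ among all relevant $E_q$ (so that $n^{-E_q^*}(x-n\alpha_{\xi_0^*})\to 0$ uniformly over $B_n$ for every $q$ with $\alpha_q=\alpha_{\xi_0^*}$, not only for $q=\xi_0^*$), the argument goes through. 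Your route is more analytic and sidesteps the algebraic lemma; the paper's is more constructive in that it names finitely many near-extremizers independent of $n$.

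One caveat applies to both proofs: your justification ``$G_{P_{\xi_0^*}}(0)\neq 0$ since $R_{\xi_0^*}$ is positive definite'' is not sufficient when $\Im P_{\xi_0^*}\not\equiv 0$, since positive definiteness of the real part alone does not rule out cancellation in the complex integral $(2\pi)^{-d}\int e^{-P(\eta)}\,d\eta$. The paper, however, uses the same fact $|H_{P_1}(0)|>0$ in \eqref{eq:SupDecay_6} without further comment, so you are at parity with it. Should this fail, both arguments are easily repaired by recentring at some $y_0$ with $H_{P}(y_0)\neq 0$, which exists because $H_P\in\mathcal{S}(\mathbb{R}^d)$ has $\int H_P=1$.
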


\noindent The theorem above is a partial answer to Question \eqref{ques:SupremumDecay} and nicely complements Theorem \ref{thm:1DSupDecay} and the results of \cite{Diaconis2014}.  We note however that, in view of the wider generality of Theorem \ref{thm:1DSupDecay}, Theorem \ref{thm:SubDecay} is obviously not the final result in $\mathbb{Z}^d$ on this matter (see the discussion of tensor products in Subsection \ref{subsec:ex4}).\\

\noindent Returning to our motivating example and with the aim of applying Theorem \ref{thm:SubDecay}, we analyze the Fourier transform of $\phi$. We have
\begin{align*}
\hat\phi(\eta,\zeta)&=\frac{1}{11+\sqrt{3}}\big(4-2\cos(2\eta)+\big(5+\sqrt{3}\big)\cos(\eta)+2\big(\cos(\zeta)\\
&\quad+\sin(\zeta)\big)+\big(2\sqrt{3}+2\big)\cos(\eta)\sin(\zeta)\big)
\end{align*}
for $(\eta,\zeta)\in\mathbb{R}^2$. One easily sees that $\sup|\hat\phi|=1$ and that $|\hat\phi|$ is maximized in $\mathbb{T}^2$ at only one point $(0,\pi/3)$ and here, $\hat\phi(0,\pi/3)=1$. As is readily computed,
\begin{align*}
\Gamma(\eta,\zeta)&=\log\left(\frac{\hat\phi(\eta,\zeta+\pi/3)}{\hat\phi(0,\pi/3)}\right)=\log(\hat\phi(\eta,\zeta+\pi/3))\\
&=-\frac{1}{11+\sqrt{3}}\eta^4+\frac{7-6\sqrt{3}}{118}\eta^2\zeta-\frac{2}{11+\sqrt{3}}\zeta^2\\
&\quad+O(|\eta|^5)+O(|\eta^4\zeta|)+O(|\eta\zeta|^2)+O(|\zeta|^3)
\end{align*}
as $(\eta,\zeta)\rightarrow 0$. Let us study the polynomial
\begin{equation*}
P(\eta,\zeta)=\frac{1}{22+2\sqrt{3}}\left(2\eta^4+\left(\sqrt{3}-1\right)\eta^2\zeta+4\zeta^2\right),
\end{equation*}
which leads this expansion. It is easily verified that $P=\Re P$ is positive definite and
\begin{equation*}
P(t^E(\eta,\zeta))=P(t^{1/4}\eta,t^{1/2}\zeta)=tP(\eta,\zeta) \hspace{.5cm}\mbox{ with}\hspace{.5cm}
E=\begin{pmatrix}
   1/4 & 0\\
   0  & 1/2
  \end{pmatrix}
\end{equation*}
for all $t>0$ and $(\eta,\zeta)\in\mathbb{R}^2$ and therefore $P$ is a positive homogeneous polynomial with $E\in\Exp(P)$. Upon rewriting the error in the Taylor expansion, we have
\begin{equation*}
\Gamma(\eta,\zeta)=-P(\eta,\zeta)+\Upsilon(\eta,\zeta)
\end{equation*}
where $\Upsilon(\eta,\zeta)=o(P(\eta,\zeta))$ as $(\eta,\zeta)\rightarrow (0,0)$ and so it follows that $(0,\pi/3)$ is of positive homogeneous type for $\hat\phi$ with corresponding $\alpha=(0,0)\in\mathbb{R}^2$ and positive homogeneous polynomial $P$. Consequently, $\phi$ satisfies the hypotheses of Theorem \ref{thm:SubDecay} with $\mu_\phi=\mu_P=\tr E=3/4$ and so
\begin{equation*}
C'n^{-3/4}\leq \|\phi^{(n)}\|_{\infty}\leq Cn^{-3/4}
\end{equation*}
for all $n\in\mathbb{N}_+$, where $C$ and $C'$ are positive constants. With the help of a local limit theorem, we will shortly describe the pointwise behavior of $\phi$.\\

\noindent Coming back to the general setting, we now introduce the attractors which appear in our main local limit theorem. For a positive homogeneous polynomial $P$, define $H_P^{(\cdot)}:(0,\infty)\times\mathbb{R}^d\to\mathbb{C}$ by
\begin{equation}\label{eq:HPdef}
H_P^t(x)=\frac{1}{(2\pi)^d}\int_{\mathbb{R}^d}e^{-tP(\xi)}e^{-ix\cdot\xi}\,d\xi
\end{equation}
for $t>0$ and $x\in\mathbb{R}^d$; we write $H_P(x)=H_P^1(x)$. As we show in Section \ref{sec:HomogeneousPolynomialsandAttractors}, for each $t>0$, $H_P^t(\cdot)$ belongs to the Schwartz space, $\mathcal{S}(\mathbb{R}^d)$, and moreover, for any $E\in\Exp(P)$,
\begin{equation}\label{eq:HPScale}
H_P^t(x)=\frac{1}{t^{\tr E}}H_P(t^{-E^*}x)=\frac{1}{t^{\mu_P}}H_P(t^{-E^*}x)
\end{equation}
for all $t>0$ and $x\in\mathbb{R}^d$; here $E^*$ is the adjoint of $E$. These function arise naturally in the study of  partial differential equations. For instance, consider the partial differential operator $\partial_t+\Lambda_P$ where $\Lambda_P:=P(D)$, called a \textit{positive homogeneous operator}, is defined by replacing the $d$-tuple $\xi=(\xi_1,\xi_2,\dots,\xi_d)$ in $P(\xi)$ by the $d$-tuple of partial derivatives $D=\left(i\partial_{x_1},i\partial_{x_2},\dots,i\partial_{x_d}\right)$.
The associated Cauchy problem for this operator can be stated thus: Given initial data $f$ (from a suitable class of functions), find $u(x,t)$ satisfying
\begin{equation}\label{eq:HeatTypeEquation}
\left\{ \begin{array}{rlll}
         (\partial_t+\Lambda_P)u(x,t)&=&0&x\in\mathbb{R}^d,\,t>0\\
			      u(0,x)&=&f(x)&x\in\mathbb{R}^d.
        \end{array}\right.
\end{equation}
In this context, $H_P^{(\cdot)}$ is a fundamental solution to \eqref{eq:HeatTypeEquation} in the sense that the representation
\begin{equation}\label{eq:SemigroupRepresentation}
u(x,t)=(e^{-t\Lambda_P}f)(x)=\int_{\mathbb{R}^d}H_P^{(t)}(x-y)f(y)dy
\end{equation}
satisfies $(\partial_t+\Lambda_P)u=0$ and has $u(t,\cdot)\rightarrow f$ as $t\rightarrow 0$ in an appropriate topology. Equivalently, $H_P^{(\cdot)}$ is the integral kernel of the semigroup $e^{-t\Lambda_P}$ with infinitesimal generator $\Lambda_P$. The Cauchy problem for the setting in which $\Lambda_P$ is replaced by an operator $H$ which depends on $x$ and is uniformly comparable to $(-\Delta)^{m}=\Lambda_{|\cdot|^{2m}}$ is the subject of (higher order) parabolic partial differential equations and its treatment can be found in the classic texts \cite{Eidelman1969} and \cite{Friedman1964} (see also \cite{Davies1995} and \cite{Davies1997}). A forthcoming article will treat extensively the case in which $H$ is uniformly comparable to a positive homogeneous operator. In the present article, we shall only need a few basic facts concerning $H_P^{(\cdot)}$.\\

\begin{remark}\label{rmk:Attractors}
When $d=1$, every positive homogeneous polynomial is of the form $P(\xi)=\beta \xi^{m}$ where $\Re \beta>0$ and $m$ is an even natural number. In this case, $H_P$ is equal to the function $H_m^{\beta}$ of \cite{Randles2015}. We note that the simplicity of the dilation structure in one dimension is in complete contrast with the natural complexity of the multi-dimensional analogue seen in this article.\\
\end{remark}

\noindent For our next main theorem which addresses Question \eqref{ques:LLT}, we restrict our attention to the set of points $\{\xi_1,\xi_2,\dots,\xi_A\}\subseteq \Omega(\phi)$ for which $\mu_{P_{\xi_q}}=\mu_\phi$ for $q=1,2,\dots,A$; the points $\xi\in\Omega(\phi)$ for which $\mu_{P_\xi}>\mu_\phi$ (if there are any) are not seen in local limits. Finally for each $\xi_q$ for $q=1,2,\dots,A$, we set $\alpha_q=\alpha_{\xi_q}$ and $P_q=P_{\xi_q}$. The following local limit theorem addresses Question \eqref{ques:LLT}.

\begin{theorem}\label{thm:MainLocalLimit}
Let $\phi\in\mathcal{S}_d$ be such that $\sup|\hat\phi(\xi)|=1$ and suppose that every point $\xi\in\Omega(\phi)$ is of positive homogeneous type for $\hat\phi$. Let $\mu_\phi$ be defined by \eqref{eq:MuPhiDefinition} and let $\xi_1,\xi_2,\dots,\xi_A$, $\alpha_1,\alpha_2,\dots,\alpha_A$, and $P_1,P_2,\dots,P_A$ be as in the previous paragraph. Then
\begin{equation}\label{eq:MainLocalLimit}
\phi^{(n)}(x)=\sum_{q=1}^Ae^{-ix\cdot \xi_q}\hat\phi(\xi_q)^n H^n_{P_q}(x-n\alpha_q)+o(n^{-\mu_\phi})
\end{equation}
uniformly for $x\in\mathbb{Z}^d$.
\end{theorem}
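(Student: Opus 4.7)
The proof plan rests on Fourier inversion, localization around $\Omega(\phi)$, and a careful rescaling that exploits the dilation structure of each $P_q$. Using the fact that $\Omega(\phi)$ is finite (established earlier in the paper), choose pairwise-disjoint open neighborhoods $U_\xi\subset\mathbb{T}^d$ of each $\xi\in\Omega(\phi)$, taken so small that $\Gamma_\xi$ is defined on $V_\xi:=U_\xi-\xi$, the expansion \eqref{eq:PosHomType} holds, and $|\Upsilon_\xi(\eta)|\leq R_\xi(\eta)/2$ for all $\eta\in V_\xi$ (possible since $\Upsilon_\xi=o(R_\xi)$ at $0$). On the compact complement $K=\mathbb{T}^d\setminus\bigcup_\xi U_\xi$ the continuous function $|\hat\phi|$ is bounded by some $1-\delta<1$, so by \eqref{eq:fourierinversion} the $K$-contribution to $\phi^{(n)}(x)$ is $O((1-\delta)^n)=o(n^{-\mu_\phi})$ uniformly in $x$.

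For each dominant point $\xi_q$ (with $\mu_{P_q}=\mu_\phi$), translating by $\xi_q$ and applying \eqref{eq:PosHomType} yields
\begin{equation*}
\frac{1}{(2\pi)^d}\int_{U_{\xi_q}}e^{-ix\cdot\xi}\hat\phi(\xi)^n\,d\xi = \frac{e^{-ix\cdot\xi_q}\hat\phi(\xi_q)^n}{(2\pi)^d}\int_{V_q}e^{-i(x-n\alpha_q)\cdot\eta}e^{-nP_q(\eta)}e^{n\Upsilon_q(\eta)}\,d\eta.
\end{equation*}
Selecting $E_q\in\Exp(P_q)$ with real spectrum and substituting $\eta=n^{-E_q}\eta'$ (Jacobian $n^{-\mu_\phi}$, with $nP_q(\eta)=P_q(\eta')$), the error
\begin{equation*}
H^n_{P_q}(x-n\alpha_q)-\frac{1}{(2\pi)^d}\int_{V_q}e^{-i(x-n\alpha_q)\cdot\eta}e^{-nP_q(\eta)}e^{n\Upsilon_q(\eta)}\,d\eta
\end{equation*}
decomposes as $n^{-\mu_\phi}$ times the sum of (i) a tail integral $\int_{\mathbb{R}^d\setminus n^{E_q}V_q}e^{-P_q(\eta')}\cdot(\text{unit-modulus factor})\,d\eta'$ and (ii) a perturbation integral $\int_{n^{E_q}V_q}e^{-P_q(\eta')}\bigl(1-e^{n\Upsilon_q(n^{-E_q}\eta')}\bigr)\cdot(\text{unit-modulus factor})\,d\eta'$, where the unit-modulus factor $e^{-i(x-n\alpha_q)\cdot n^{-E_q}\eta'}$ carries the entire $x$-dependence and is dropped in modulus estimates.

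Both pieces are handled by dominated convergence, uniformly in $x$. For (i), the positive definiteness of $R_q$ combined with $R_q(n^{-E_q}\eta')=R_q(\eta')/n$ forces $n^{-E_q}\eta'\to 0$ for every fixed $\eta'$; hence $n^{E_q}V_q$ exhausts $\mathbb{R}^d$ and, since $e^{-P_q}$ is integrable, the tail vanishes. For (ii), the preparatory choice $|\Upsilon_q(\eta)|\leq R_q(\eta)/2$ on $V_q$ gives $|n\Upsilon_q(n^{-E_q}\eta')|\leq nR_q(n^{-E_q}\eta')/2=R_q(\eta')/2$ for \emph{every} $\eta'\in n^{E_q}V_q$, so the integrand is dominated by the fixed integrable function $e^{-R_q(\eta')/2}+e^{-R_q(\eta')}$ on the whole integration region; pointwise $n\Upsilon_q(n^{-E_q}\eta')\to 0$ (again by $\Upsilon_q=o(R_q)$), hence the integral tends to $0$.

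Finally, for a subdominant $\xi\in\Omega(\phi)$ with $\mu_{P_\xi}>\mu_\phi$, the same rescaling argument combined with the scaling identity \eqref{eq:HPScale} shows the $U_\xi$-contribution is $O(n^{-\mu_{P_\xi}})=o(n^{-\mu_\phi})$, so such points contribute only to the error. Summing the dominant contributions with the $K$- and subdominant remainders yields \eqref{eq:MainLocalLimit}. The main obstacle I anticipate is precisely the dominated-convergence step in (ii): one must arrange at the outset that $|\Upsilon_q|\leq R_q/2$ holds on \emph{all} of $V_q$ (not merely on a shrinking neighborhood of $0$), so that the rescaled integrand admits a globally integrable dominant on $n^{E_q}V_q$ rather than only on compacta of $\eta'$, and this in turn relies on a uniform small-neighborhood estimate coming from $\Upsilon_q=o(R_q)$ at the origin.
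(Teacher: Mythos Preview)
Your proposal is correct and follows essentially the same approach as the paper: Fourier inversion, localization around $\Omega(\phi)$, rescaling by $n^{-E_q}$, and dominated convergence driven by the preparatory bound $|\Upsilon_q|\le R_q/2$ on $V_q$ (the paper's estimate \eqref{eq:LocalLimit_2}). The only organizational difference is that the paper introduces an auxiliary compact set $K$ in the rescaled variable and uses a three-term split $I_1,I_2,I_3$ with the Bounded Convergence Theorem on $K$, whereas your two-term split applies dominated convergence directly on the expanding region $n^{E_q}V_q$ with the global dominant $e^{-R_q/2}+e^{-R_q}$; this is a slightly more streamlined packaging of the same estimates, and the ``main obstacle'' you flag is exactly the point that makes the argument work.
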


\noindent Let us make a few remarks about this theorem. First, the attractors $H^n_{P_q}$ appearing in \eqref{eq:MainLocalLimit} are rescaled versions of $H_{P_q}=H_{P_q}^1$ in view of \eqref{eq:HPScale}, and all decay in absolute value on the order $n^{-\mu_\phi}$ -- this is consistent with Theorem \ref{thm:SubDecay}. Second, the attractors $H_{P_q}(x)$ often exhibit slowly varying oscillations as $|x|$ increases (see Subsection \eqref{subsec:ex1}), however, the main oscillatory behavior, which is present in Figure \ref{fig:ex_intro_100_1}, is a result of the prefactor $e^{-ix\cdot\xi_q}\hat\phi(\xi_q)$. This is, of course, a consequence of $\hat\phi$ being maximized away from the origin. In Subsection \ref{subsec:ClassicalLLT}, we will see that when $\phi$ is a probability distribution, all of the attractors in \eqref{eq:MainLocalLimit} are identical and the prefactors collapse into a single function, $\Theta$, which nicely describes the support of $\phi^{(n)}$ and hence periodicity of the associated random walk (see Theorems \ref{thm:classicLLT} and \ref{thm:PeriodicityofRandomWalk}). \\


\noindent Taking another look at our motivating example, we note that the hypotheses of Theorem \ref{thm:SubDecay} are precisely the hypotheses of Theorem \ref{thm:MainLocalLimit} and so an application of the local limit theorem is justified, where, because $\Omega(\phi)$ is a singleton, the sum in \eqref{eq:MainLocalLimit} consists only of one term. We have
\begin{align*}
 \phi^{(n)}(x,y)&=e^{-i(x,y)\cdot(0,\pi/3)}\hat\phi((0,\pi/3))^nH_P^n(x,y)+o(n^{-\mu_\phi})\\
 &=e^{-i\pi y/3}H_P^n(x,y)+o(n^{-3/4})
\end{align*}
uniformly for $(x,y)\in\mathbb{Z}^2$. To illustrate this result, the graphs of $\Re(e^{-i\pi y/3}H_P^n)$ for $(x,y)\in\mathbb{Z}^2$ for $-20\leq x,y\leq 20$ are displayed in Figures \ref{fig:ex_intro_10_attractor_1} and \ref{fig:ex_intro_100_attractor_1} for $n=10$ and $n=100$ respectively for comparison against Figures \ref{fig:ex_intro_10_1} and \ref{fig:ex_intro_100_1}. The oscillation in the $y$-direction is now explained by the appearance of the multiplier $e^{-i\pi y/3}$ and is independent of $n$.\\
\begin{figure}[h!]
\begin{center}
\resizebox{\textwidth}{!}{
	    \begin{subfigure}[5cm]{0.5\textwidth}
		\includegraphics[width=\textwidth]{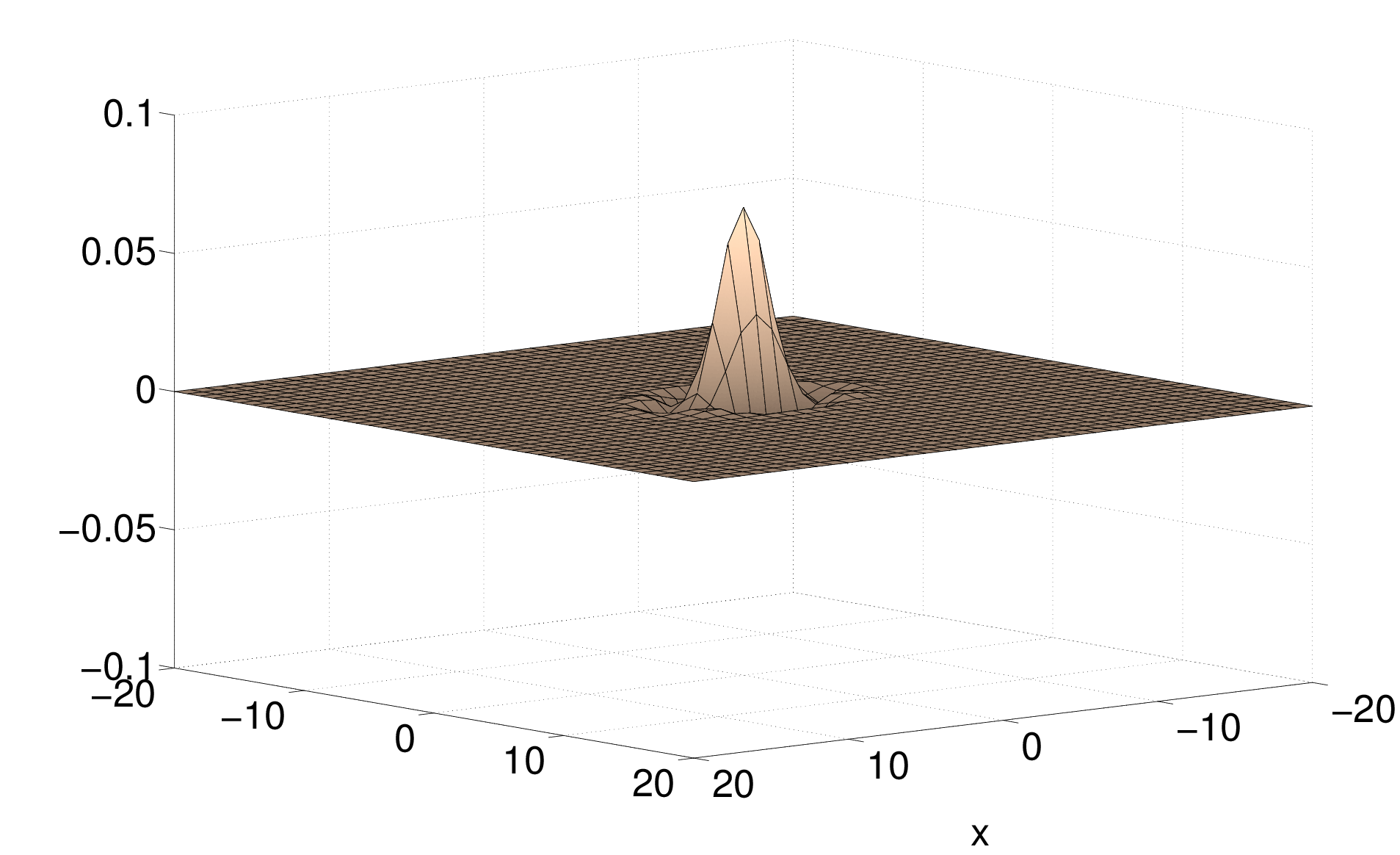}
		\caption{$\Re(e^{-i\pi y/3}H_P^n)$ for $n=10$}
		\label{fig:ex_intro_10_attractor_1}
	    \end{subfigure}
	    \begin{subfigure}[5cm]{0.5\textwidth}
		\includegraphics[width=\textwidth]{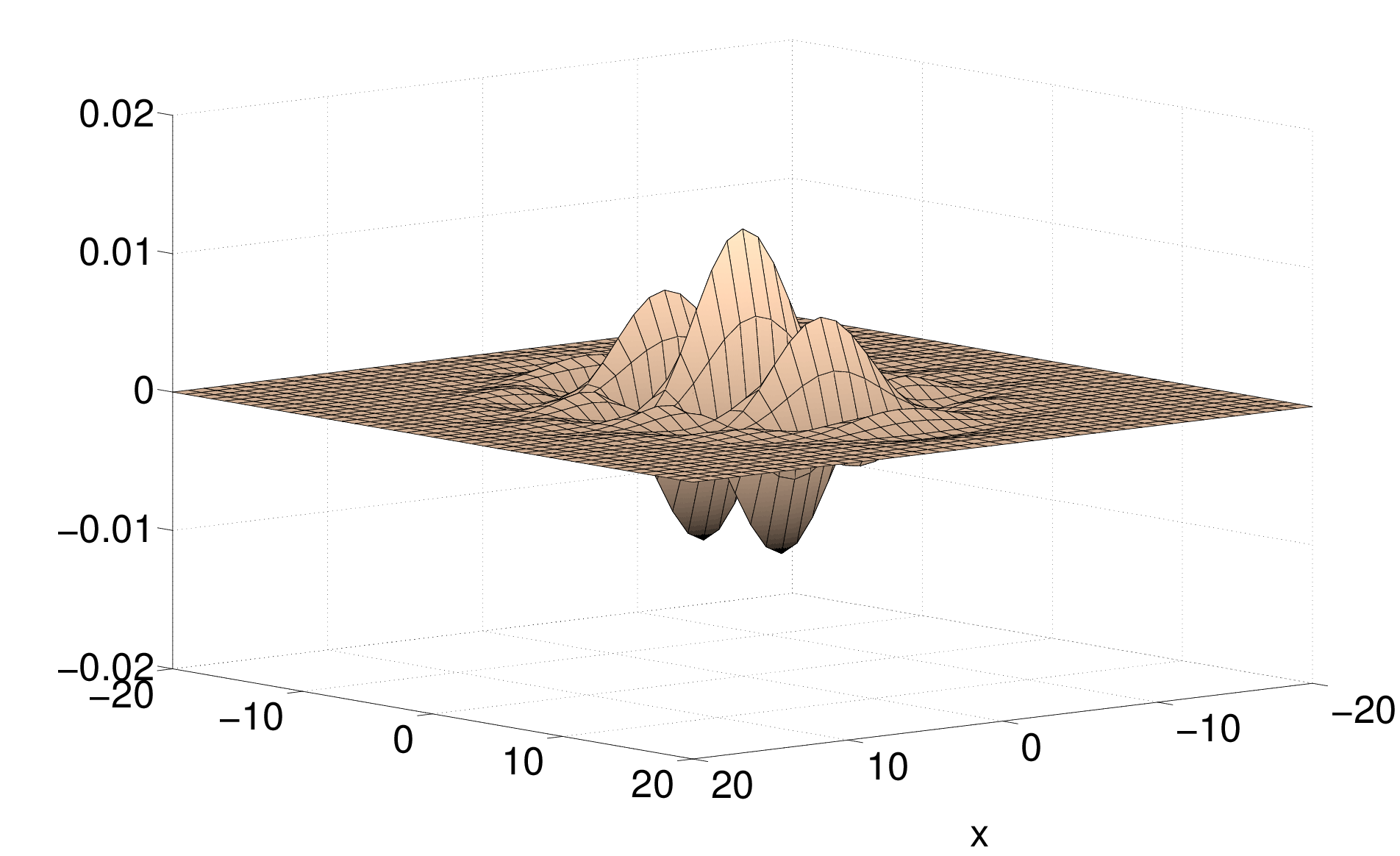}
		\caption{$\Re(e^{-i\pi y/3}H_P^n)$ for $n=100$}
		\label{fig:ex_intro_100_attractor_1}
	    \end{subfigure}}
\caption{The graphs of $\Re(e^{-i\pi y/3}H_P^n)$ for $n=10,100$.}
\label{fig:ex_intro}
\end{center}
\end{figure}

\noindent To address Question \eqref{ques:PointwiseEstimates} and obtain pointwise estimates for the $\phi^{(n)}$, we restrict our attention to those $\phi:\mathbb{Z}^d\rightarrow\mathbb{C}$ with finite support. In this article, we present two theorems concerning pointwise estimates for $|\phi^{(n)}(x)|$. The most general result, in addition to requiring finite support for $\phi$, assumes the hypotheses of Theorem \ref{thm:MainLocalLimit}; this is Theorem \ref{thm:SubExponentialEstimate}. The other result, Theorem \ref{thm:ExponentialEstimate}, additionally assumes that all $\xi\in\Omega(\phi)$ have the same corresponding drift $\alpha_\xi=\alpha\in\mathbb{R}^d$ and positive homogeneous polynomial $P=P_{\xi}$ -- a condition which is seen to be quite natural by taking a look at  Subsections \ref{subsec:ex3} and \ref{subsec:ClassicalLLT}, although not necessary, see Remark \ref{rmk:ExpHypoth}.  Theorem \ref{thm:ExponentialEstimate} extends the corresponding $1$-dimensional result, Theorem 3.1 of \cite{Diaconis2014}, to $d$-dimensions and, even in $1$-dimension, is seen to be an improvement. In addition to global pointwise estimates for $\phi^{(n)}$, in Section \ref{sec:PointwiseBounds} we present a variety of results which give global pointwise estimates for discrete space and time derivatives of $\phi^{(n)}$. In what follows, we describe the statement of Theorem \ref{thm:ExponentialEstimate} as it is the simplest.\\

\noindent For simplicity, assume that $\phi:\mathbb{Z}^d\rightarrow\mathbb{C}$ is finitely supported, satisfies $\sup_\xi|\hat\phi|=1$ and $\Omega(\phi)$ consists of only one point $\xi_0$ which is of positive homogeneous type for $\hat\phi$. In this case, we use Theorem \ref{thm:MainLocalLimit} to motivate the correct form for pointwise estimated for $\phi^{(n)}$. The theorem gives the approximation
\begin{equation}\label{eq:SingleAttractorLocalLimit}
\phi^{(n)}(x)=e^{-ix\cdot\xi_0}\hat{\phi}(\xi_0)^{n}H_P^n(x-n\alpha)+o(n^{-\mu_P})
\end{equation}
uniformly for $x\in\mathbb{Z}^d$, where $P=P_{\xi_0}$ is positive homogeneous and $\alpha=\alpha_{\xi_0}\in\mathbb{R}^d$. Pointwise estimates for the attractor $H_P$ can be deduced with the help of the Legendre-Fenchel transform, a central object in convex analysis \cite{Rockafellar1970,Touchette2007}. The Legendre-Fenchel transform of $R=\Re P$ is the function $R^{\#}:\mathbb{R}^d\rightarrow\mathbb{R}$ defined by
\begin{equation*}
R^{\#}(x)=\sup_{\xi\in\mathbb{R}^d}\{x\cdot\xi-R(\xi)\}.
\end{equation*}
It is evident that $R^{\#}(x)\geq 0$ and, for $E\in\Exp(P)$,
\begin{equation*}
tR^{\#}(x)=\sup_{\xi\in\mathbb{R}^d}\left\{tx\cdot \xi-R(t^{E}\xi)\right\}=R^{\#}\left(t^{(I-E)^*}x\right)
\end{equation*}
for all $t>0$ and $x\in\mathbb{R}^d$, i.e., $(I-E)^*\in\Exp(R^{\#})$. It turns out that $R^{\#}$ is necessarily continuous and positive definite (Proposition \ref{prop:LegendreContinuousPositiveDefinite}).  In Section \ref{sec:HomogeneousPolynomialsandAttractors}, we establish the following pointwise estimates for $H_P$. There exists positive constants $C,M$ such that
\begin{equation}\label{eq:LegendreTransformEstimateIntro}
|H_P^t(x)|\leq\frac{C}{t^{\tr E}}\exp(-MR^{\#}(t^{-E^*}x))=\frac{C}{t^{\mu_P}}\exp(-tMR^{\#}(x/t))
\end{equation}
for all $x\in\mathbb{R}^d$ and $t>0$. \\

\begin{remark}In the special case that $P(\xi)=|\xi|^{2m}$, $E=(2m)^{-1}I\in\Exp(P)$ and one can directly compute $R^{\#}(x)=C_m|x|^{2m/(2m-1)}$ where $C_m=(2m)^{-1/(2m-1)}-(2m)^{-2m/(2m-1)}>0$. Here, the estimate \eqref{eq:LegendreTransformEstimateIntro} takes the form
\begin{equation*}
H^t_{|\cdot|^{2m}}(x)\leq \frac{C}{t^{d/2m}}\exp\left(-M|x|^{2m/(2m-1)}/t^{1/(2m-1)}\right)
\end{equation*}
for $t>0$ and $x\in\mathbb{R}^d$ and so we recapture the well-known off-diagonal estimate for the semigroup $e^{-t(-\Delta)^m}$ \cite{Friedman1964,Eidelman1969,Davies1995,Davies1997}. In the context of local limit theorems, $H_{|\cdot|^{2m}}$ is seen to be the attractor of the convolution powers of $\kappa_m=\delta_0-(\delta_0-\kappa)^{(m)}$ where $\kappa$ is the probability distribution assigning $1/2$ probability to $0$ and $1/(4d)$ probability to $\pm e_j$ for $j=1,2,\dots,d$; here and in what follows, $e_1,e_2,\dots,e_d$ denote the standard euclidean basis vectors of $\mathbb{R}^d$.\\
\end{remark}

\noindent In view of \eqref{eq:SingleAttractorLocalLimit} and the preceding discussion, one expects an estimate of the form \eqref{eq:LegendreTransformEstimateIntro} to hold for $\phi^{(n)}$, although, we note that no such estimate can be established on these grounds (this is due to the error term in \eqref{eq:SingleAttractorLocalLimit}). This however motivates the correct form and we are able to establish the following result which captures, as a special case, the situation described above in which $\Omega(\phi)=\{\xi_0\}$.

\begin{theorem}\label{thm:ExponentialEstimate}
Let $\phi:\mathbb{Z}^d\rightarrow\mathbb{C}$ be finitely supported and such that $\sup_{\xi\in\mathbb{T}^d}|\hat\phi(\xi)|=1$. Suppose that every point of $\xi\in\Omega(\phi)$ is of positive homogeneous type for $\hat\phi$ and every $\xi\in\Omega(\phi)$ has the same drift $\alpha=\alpha_{\xi}\in\mathbb{R}^d$ and positive homogeneous polynomial $P=P_{\xi}$. Also let $\mu_\phi=\mu_P$ be defined by \eqref{eq:muP} and let $R^{\#}$ be the Legendre-Fenchel transform of $R=\Re P$. Then there exists $C,M>0$ for which
\begin{equation}\label{eq:ExponentialEstimate_}
|\phi^{(n)}(x)|\leq\frac{C}{n^{\mu_\phi}}\exp\left(-nMR^{\#}\left(\frac{x-n\alpha }{n}\right)\right)
\end{equation}
for all $n\in\mathbb{N}_+$ and $x\in\mathbb{Z}^d$.
\end{theorem}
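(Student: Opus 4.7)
The plan is to exploit the analyticity of $\hat\phi$---guaranteed by the finite support of $\phi$---via a complex contour shift in the Fourier inversion formula \eqref{eq:fourierinversion}, and to extract the Legendre-Fenchel exponent by optimizing over the shift. Since $\hat\phi$ is entire and $2\pi$-periodic in each real coordinate, Cauchy's theorem applied coordinatewise yields, for every $\eta\in\mathbb{R}^d$,
$$
\phi^{(n)}(x)=\frac{e^{x\cdot\eta}}{(2\pi)^d}\int_{\mathbb{T}^d}e^{-ix\cdot\xi}\hat\phi(\xi+i\eta)^n\,d\xi.
$$
Writing $y=x-n\alpha$, it therefore suffices to prove a tilted bound of the form
$$
\int_{\mathbb{T}^d}|\hat\phi(\xi+i\eta)|^n\,d\xi\leq \frac{C}{n^{\mu_\phi}}\exp\bigl(-n\alpha\cdot\eta+M_0 nR(\eta)\bigr),\qquad \eta\in\mathcal{N},
$$
on some fixed open neighborhood $\mathcal{N}$ of $0$; multiplying by $e^{x\cdot\eta}$ and optimizing over $\eta\in\mathcal{N}$ then produces the factor $\exp(-nMR^{\#}(y/n))$ via the Legendre-Fenchel duality combined with the $(I-E)^*$-homogeneity of $R^{\#}$ recorded after \eqref{eq:LegendreTransformEstimateIntro}.

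The tilted bound I would prove by a partition-of-unity argument on $\mathbb{T}^d$. The set $\Omega(\phi)$ is finite under the standing hypotheses, so disjoint small neighborhoods $U_j$ of the points $\xi_j\in\Omega(\phi)$ may be chosen. Off $\bigcup_j U_j$, continuity and $|\hat\phi|<1$ give $|\hat\phi(\xi+i\eta)|\leq 1-\rho$ for $\eta\in\mathcal{N}$ small, and that contribution is exponentially small in $n$. On each $U_j$ the positive-homogeneous-type expansion extends analytically to complex $\zeta=\xi+i\eta$, so
$$
|\hat\phi(\xi_j+\xi+i\eta)|=\exp\bigl(-\alpha\cdot\eta-\Re P(\xi+i\eta)+\Re\Upsilon(\xi+i\eta)\bigr).
$$
The algebraic core is the pair of inequalities
$$
\Re P(\xi+i\eta)\geq c_1 R(\xi)-c_2 R(\eta),\qquad |\Upsilon(\xi+i\eta)|\leq \tfrac{c_1}{4}R(\xi)+c_3 R(\eta),
$$
valid on a small neighborhood of $0$: the first by a Young-type AM-GM applied to the polynomial expansion of $P(\xi+i\eta)$ and controlled via compactness on the $R$-unit set $\{R=1\}$, the second from $\Upsilon(\zeta)=o(R(\zeta))$ together with the polynomial structure. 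Inserting these and rescaling $\xi=n^{-E}u$ (under which $nR(\xi)=R(u)$ and $d\xi=n^{-\mu_\phi}\,du$) converts each localized integral into an absolutely convergent Gaussian-type integral in $u$, giving the tilted bound with prefactor $n^{-\mu_\phi}$.

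The chief obstacle is the algebraic inequality $\Re P(\xi+i\eta)\geq c_1R(\xi)-c_2R(\eta)$ for a general multivariate positive-homogeneous polynomial: the mixing of scales imposed by $E$ prevents coordinatewise reasoning, and one must exploit the full strength of positive-homogeneity---namely, that every monomial appearing in $P$ is uniformly controlled by $R$ on the compact set $\{R(\xi)=1\}$. A secondary subtlety is the final Legendre-Fenchel optimization: the constraint $\eta\in\mathcal{N}$ means the unrestricted optimizer is admissible only when $y/n$ lies in a bounded region; outside that region one instead takes $\eta$ on the boundary of $\mathcal{N}$, where the resulting linear decay $e^{y\cdot\eta}$ dominates the target $\exp(-nMR^{\#}(y/n))$ after shrinking $M$ if necessary. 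Careful but routine bookkeeping then delivers \eqref{eq:ExponentialEstimate_}.
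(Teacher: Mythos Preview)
Your overall strategy matches the paper's: shift the contour in the Fourier inversion formula, establish a tilted estimate on $|\hat\phi(\xi+i\eta)|^n$ involving $R(\eta)$, and optimize over the shift to produce the Legendre--Fenchel transform. The algebraic inequality you flag as the ``chief obstacle,'' $\Re P(\xi+i\eta)\geq c_1R(\xi)-c_2R(\eta)$, is exactly Proposition~\ref{prop:ComplexEstimate} in the paper, and your partition of $\mathbb{T}^d$ into neighborhoods of the points of $\Omega(\phi)$ mirrors the paper's decomposition into the sets $K_q$.

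The meaningful difference is the range of $\eta$. You confine $\eta$ to a small neighborhood $\mathcal{N}$ because your estimates come from the local Taylor expansion of $\Gamma_{\xi_0}$, and this forces you into a constrained Legendre--Fenchel optimization and the ``secondary subtlety'' you describe. The paper instead proves the key pointwise bound $|f_{\xi_0}(\xi-i\nu)|\leq\exp(-\epsilon R(\xi)+MR(\nu))$ for \emph{all} $\nu\in\mathbb{R}^d$ (Lemma~\ref{lem:ComplexExponentialEstimate}). This works globally because $\hat\phi$, being a trigonometric polynomial, grows at most like $e^{C|\nu|}$ in the imaginary direction, and $|\nu|$ is dominated by $R(\nu)$ for large $\nu$ (Corollary~\ref{cor:RDominatesNorm}). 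With $\nu$ unrestricted, the final infimum is exactly $-(MR)^{\#}((x-n\alpha)/n)$, and one finishes via $(MR)^{\#}\asymp R^{\#}$ (Corollary~\ref{cor:MovingConstant}).

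Your handling of the constrained optimization is the one genuinely soft spot. The assertion that for large $|y/n|$ ``the resulting linear decay $e^{y\cdot\eta}$ dominates the target $\exp(-nMR^{\#}(y/n))$'' is not correct as stated, since $R^{\#}$ can grow strictly superlinearly (Lemma~\ref{lem:LegendreNormEstimate}). What actually saves you is the finite support of $\phi$: $\phi^{(n)}$ vanishes for $|x|>Cn$, so $w=(x-n\alpha)/n$ stays in a fixed compact set. On that set you may enlarge $M_0$ (weakening the tilted bound harmlessly) so that the unconstrained optimizer of $\eta\mapsto w\cdot\eta-M_0R(\eta)$ lies in $\mathcal{N}$; then $(M_0 R)^{\#}\asymp R^{\#}$ finishes as in the paper. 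So your route closes, but the paper's global-$\nu$ estimate via Lemma~\ref{lem:ComplexExponentialEstimate} avoids this detour entirely and is the cleaner argument.
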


\noindent Revisiting, for a final time, our motivating example, we note that $\phi$ also satisfies the hypotheses of Theorem \ref{thm:ExponentialEstimate}. An appeal to the theorem gives constants $C,M>0$ for which
\begin{equation}\label{eq:ex_intro_1}
|\phi^{(n)}(x,y)|\leq\frac{C}{n^{3/4}}\exp\left(-nM R^{\#}((x,y)/n)\right)
\end{equation}
for all $n\in\mathbb{N}_+$ and for all $(x,y)\in\mathbb{Z}^2$, where $R^{\#}$ is the Legendre-Fenchel transform of $R=\Re P=P$. Instead of finding a closed-form expression for $R^{\#}$, which is not particularly illuminating, we simply remark that
\begin{equation}\label{eq:ex_intro_2}
R^{\#}(x,y)\asymp |x|^{4/3}+|y|^2,
\end{equation}
where $\asymp$ means that the ratio of the functions is bounded above and below by positive constants (\eqref{eq:ex_intro_2} is straightforward to establish and can be seen as consequence of Corollary \ref{cor:LegendreCompareDiagonal}). Upon combining \eqref{eq:ex_intro_1} and \eqref{eq:ex_intro_2}, we obtain constants $C,M>0$ for which
\begin{align*}
|\phi^{(n)}(x,y)|&\leq\frac{C}{n^{3/4}}\exp\left(-nM \left(\left|\frac{x}{n}\right|^{4/3}+\left|\frac{y}{n}\right|^2\right)\right)\\
&\leq\frac{C}{n^{3/4}}\exp\left(-M\left(\frac{|x|^{4/3}}{n^{1/3}}+\frac{|y|^2}{n}\right)\right)
\end{align*}
for all $n\in\mathbb{N}_+$ and for all $(x,y)\in\mathbb{Z}^2$. This result illustrates the anisotropic exponential decay of $n^{3/4}|\phi^{(n)}(x,y)|$ for each $n\in\mathbb{N}_+$.\\

\noindent Back within the general setting and continuing under the assumption that $\phi:\mathbb{Z}^d\rightarrow\mathbb{C}$ is finitely supported, we come to the final question posed at the beginning of this introduction, Question \eqref{ques:Stability}. The following result extends the (affirmative) results of V. Thom\'{e}e \cite{Thomee1965} and M.V. Fedoryuk \cite{Fedoryuk1967} (see also Theorem 7.5 of \cite{Schreiber1970}).

\begin{theorem}\label{thm:StabilityIntro}
Let $\phi:\mathbb{Z}^d\rightarrow\mathbb{C}$ be finitely supported and such that $\sup_{\xi}|\hat\phi(\xi)|=1$. Suppose additionally that each $\xi\in\Omega(\phi)$ is of positive homogeneous type for $\hat\phi$. Then, there exists a positive constant $C$ for which
\begin{equation*}
\|\phi^{(n)}\|_1=\sum_{x\in\mathbb{Z}^d}|\phi^{(n)}(x)|\leq C
\end{equation*}
 for all $n\in\mathbb{N}_+$.
\end{theorem}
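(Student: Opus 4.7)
The plan is to deduce $\|\phi^{(n)}\|_1 \leq C$ as a consequence of the pointwise estimate furnished by Theorem \ref{thm:SubExponentialEstimate}, the generalization of Theorem \ref{thm:ExponentialEstimate} alluded to in the introduction, which applies under exactly the hypotheses assumed here. Under those hypotheses, $\Omega(\phi) = \{\xi_1, \ldots, \xi_Q\}$ is finite (by the discussion preceding \eqref{eq:MuPhiDefinition}), and that pointwise estimate will take, for some constants $C, M > 0$, the rough shape
\[
|\phi^{(n)}(x)| \leq C \sum_{q=1}^{Q} \frac{1}{n^{\mu_{P_q}}} \exp\!\left(-n F_q\!\left(\frac{x - n \alpha_q}{n}\right)\right),
\]
where $\alpha_q = \alpha_{\xi_q}$, $P_q = P_{\xi_q}$, and $F_q$ is a continuous, positive definite function that shares with $M R_q^{\#} = M (\Re P_q)^{\#}$ the positive homogeneous scaling $(I - E_q)^* \in \Exp(F_q)$ for some $E_q \in \Exp(P_q)$. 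Theorem \ref{thm:SubExponentialEstimate} may weaken $M R_q^{\#}$ to a subexponential surrogate in order to absorb interference between distinct extrema, but the relevant scaling is preserved and this is all the summation step requires.

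Given such a bound, I would sum it term by term over $x \in \mathbb{Z}^d$. For each fixed $q$, the discrete sum is, by a standard cube-by-cube comparison exploiting the continuity and positive homogeneous scaling of $F_q$, comparable to the integral
\[
\frac{1}{n^{\mu_{P_q}}} \int_{\mathbb{R}^d} \exp\!\left(-n F_q\!\left(\frac{x - n\alpha_q}{n}\right)\right) dx.
\]
The substitution $y = (x - n\alpha_q)/n$ contributes a Jacobian $n^d$, and a second substitution $v = n^{(I - E_q)^*} y$, whose Jacobian is $n^{d - \mu_{P_q}}$, uses $n F_q(y) = F_q(n^{(I - E_q)^*} y)$ to convert the exponent into $F_q(v)$. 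The three factors $n^{-\mu_{P_q}} \cdot n^d \cdot n^{-(d - \mu_{P_q})}$ collapse to a constant, and the remaining integral $\int_{\mathbb{R}^d} \exp(-F_q(v))\, dv$ is finite because $F_q$ is continuous, positive definite, and has the correct homogeneous blow-up at infinity, as provided by Proposition \ref{prop:LegendreContinuousPositiveDefinite} together with the analysis of Section \ref{sec:HomogeneousPolynomialsandAttractors}. Summing over the finitely many $q$ yields a bound independent of $n$.

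The main obstacle is packaged entirely into Theorem \ref{thm:SubExponentialEstimate}: once that pointwise estimate is available, the $\ell^1$ bound is a rescaling exercise in which the positive homogeneity of $F_q$ makes the Jacobian factors cancel. The delicate feature to accommodate is that distinct points $\xi_q \in \Omega(\phi)$ may carry distinct drifts $\alpha_q$ and distinct polynomials $P_q$, so the estimate is a superposition of $Q$ terms centered at different points $n\alpha_q$ with different anisotropic dilations governed by $E_q$; each individual term has $\ell^1$-mass $O(1)$ by the rescaling above, and there is no obstacle in summing finitely many of them. Were one to bypass Theorem \ref{thm:SubExponentialEstimate}, a self-contained alternative is a direct Fourier-analytic argument via a smooth partition of unity on $\mathbb{T}^d$: the complementary piece, on which $|\hat\phi| \leq 1 - \delta$, is controlled by repeated integration by parts (yielding arbitrary polynomial decay in $x$ together with geometric decay $(1-\delta/2)^n$ in $n$), and each local piece near a $\xi_q$ is handled by a steepest-descent analysis of its oscillatory-integral representation followed by the same rescaling computation above — the combinatorial bookkeeping is heavier, but the underlying mechanism is identical.
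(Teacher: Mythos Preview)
Your approach is essentially the paper's: invoke Theorem~\ref{thm:SubExponentialEstimate} for a pointwise bound and sum over $x\in\mathbb{Z}^d$, using the scaling governed by $E_q\in\Exp(P_q)$ so that the $n$-dependence cancels. One correction: the bound actually delivered by Theorem~\ref{thm:SubExponentialEstimate} is the \emph{polynomial} estimate
\[
|\phi^{(n)}(x)|\leq C_N\sum_{q=1}^{Q}\frac{1}{n^{\mu_q}}\bigl(1+|n^{-E_q^*}(x-n\alpha_q)|\bigr)^{-N},
\]
not an exponential one; with $N\geq d+1$ the summation step is then immediate (compare with $\int_{\mathbb{R}^d}(1+|v|)^{-N}\,dv<\infty$ after the change of variables $v=n^{-E_q^*}(x-n\alpha_q)$, whose Jacobian $n^{\mu_q}$ cancels the prefactor), and your more elaborate two-step rescaling via $F_q$ and $(I-E_q)^*$ is not needed.
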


\noindent This article is organized as follows: Section \ref{sec:HomogeneousPolynomialsandAttractors} outlines the basic theory of positive homogeneous polynomials and their corresponding attractors. Section \ref{sec:PropertiesofHatPhi} focuses on the local behavior of $\hat\phi$ wherein necessary and sufficient condition are given to ensure that a given $\xi_0\in\Omega(\phi)$ is of positive homogeneous type for $\hat\phi$. In Section \ref{sec:LocalLimits}, we prove the main local limit theorem, Theorem \ref{thm:MainLocalLimit}, and deduce from it Theorem \ref{thm:SubDecay}. Section \ref{sec:PointwiseBounds} focuses on global space-time bounds for $\phi^{(n)}$ in the case that $\phi$ is finitely supported. In addition to the proof of Theorem \ref{thm:ExponentialEstimate}, Subsection \ref{subsec:ExponentialEstimate} contains a number of results concerning global exponential estimates for discrete space and time differences of $\phi^{(n)}$. In Subsection \ref{subsec:SubExponentialEstimate}, we prove global sub-exponential estimates for $\phi^{(n)}$ in the general case that $\phi$, in addition to being finitely supported, satisfies the hypotheses of Theorem \ref{thm:StabilityIntro}; this is Theorem \ref{thm:SubExponentialEstimate}. In Section \ref{sec:Stability}, after a short discussion on stability of numerical difference schemes in partial differential equations, we present Theorem \ref{thm:StabilityIntro} as a consequence of Theorem \ref{thm:SubExponentialEstimate}. Section \ref{sec:examples} contains a number of concrete examples, mostly in $\mathbb{Z}^2$, to which we apply our results; the reader is encouraged to skip ahead to this section as it can be read at any time. We end Section \ref{sec:examples} by showing, from our perspective, some results on the classical theory of random walks on $\mathbb{Z}^d$. The Appendix, Section \ref{sec:Appendix}, contains a number of linear-algebraic results which highlight the interplay between one-parameter contracting groups and positive homogeneous functions.\\

\noindent\textbf{Notation:} For $y\in\mathbb{Z}^d$, $\delta_y:\mathbb{Z}^d\rightarrow\{0,1\}$ is the standard delta function defined by $\delta_y(y)=1$ and $\delta_y(x)=0$ for $x\neq y$. For any subset $A$ of $\mathbb{R}$, $A_+$ denotes the subset of positive elements of $A$. Given $M\in\MdR$, its corresponding linear transformation on $\mathbb{R}^d$ is denoted by $L_M$. For any $r>0$, we denote the open unit ball with center $x\in\mathbb{R}^d$ by $B_{r}(x)$ and the closed unit ball by $\overline{B_r(x)}$. When $x=0$, we write $B_r=B_r(0)$ and denote by $S_r=\partial B_r$ the sphere of radius $r$. Further, when $r=1$, we write $B=B_1$ and $S=S_1$. We define a $d$-dimensional floor function by $\lfloor \cdot \rfloor:\mathbb{R}^d\rightarrow\mathbb{Z}^d$ by $\lfloor x\rfloor=(\lfloor x_1\rfloor,\lfloor x_2\rfloor,\dots,\lfloor x_d\rfloor)$ for $x\in\mathbb{R}^d$ where $\lfloor x_k\rfloor$ is the integer part of $x_k$ for $k=1,2,\dots d$; this is admittedly a slight abuse of notation. Given $\mathbf{n}=(n_1,n_2,\dots,n_d)\in(\mathbb{N}_+)^d=\mathbb{N}_+^d$ and a multi-index $\beta\in\mathbb{N}^d$, put
\begin{equation*}
|\beta:\mathbf{n}|=\sum_{k=1}^d\frac{\beta_k}{n_k};
\end{equation*}
this is consistent with H\"{o}rmander's notation for semi-elliptic operators and polynomials \cite{Hormander1983}. For any two real functions $f,g$ on a set $X$, we write $f\asymp g$ when there are positive constants $C$ and $C'$ for which $Cg(x)\leq f(x)\leq C'g(x)$ for all $x\in X$.

\section{Positive homogeneous polynomials and attractors}\label{sec:HomogeneousPolynomialsandAttractors}

\noindent In this section, we study positive homogeneous polynomials and their corresponding attractors; let us first give some background. In H\"{o}rmander's treatise \cite{Hormander1983}, polynomials of the form
\begin{equation*}
Q(\xi)=\sum_{|\beta:\mathbf{m}|\leq 1}a_\beta\xi^\beta
\end{equation*}
for $\mathbf{m}\in\mathbb{N}_+^d$ are called \textit{semi-elliptic} provided their principal part,
\begin{equation*}
Q_p(\xi)=\sum_{|\beta:\mathbf{m}|=1}a_{\beta}\xi^{\beta},
\end{equation*}
is non-degenerate, that is, $Q_p(\xi)\neq 0$ whenever $\xi\neq 0$. For a semi-elliptic polynomial $Q$, its corresponding partial differential operator $\Lambda_Q=Q(D)$, called a semi-elliptic operator, is hypoelliptic in the sense that all $\Lambda_Q$-harmonic distributions are smooth. What appears to be the most desirable property of semi-elliptic polynomials is the way that they scale in the sense that
\begin{align*}
\lefteqn{\hspace{-1.5cm}Q_p(t^{1/m_1}\xi_1,t^{1/m_2}\xi_2,\dots,t^{1/m_d}\xi_d)}\\
&=\sum_{|\beta:\mathbf{m}|=1}a_{\beta}\prod_{j=1}^d(t^{1/m_i}\xi_j)^{\beta_j}=\sum_{|\beta:\mathbf{m}|=1}t^{|\beta:\mathbf{m}|}a_\beta \xi^{\beta}=tQ_p(\xi)
\end{align*}
for all $t>0$ and $\xi\in\mathbb{R}^d$. This property, used explicitly by H\"{o}rmander, is precisely the statement that $E=\diag(1/m_1,1/m_2,\dots,1/m_d)\in\Exp(Q_p)$, in view of Definition \ref{def:positivehomogeneous}.
Further, the associated one-parameter group $\{T_t\}=\{t^E\}$ has the useful property that it dilates and contracts space. The following definition captures this behavior in general (see Section 1.1. of \cite{Hazod2001}).

\begin{definition}
Let $\{T_t\}_{t>0}\subseteq\GldR$ be a continuous one-parameter group. We say that $\{T_t\}$ is contracting if
\begin{equation*}
\lim_{t\rightarrow 0}\|T_t\|=0.
\end{equation*}
Here and in what follows, $\|\cdot\|$ denotes the operator norm on $\GldR$.
\end{definition}

\noindent To keep in mind, the canonical example of a contracting group is $\{t^D\}$ where $D=\diag(\gamma_1,\gamma_2,\dots,\gamma_d)\in\MdR$ with $\gamma_i>0$ for $i=1,2,\dots, d$ and here, it is easily seen that $t^D=\diag(t^{\gamma_1},t^{\gamma_2},\dots,t^{\gamma_d})$ for $t>0$. Some basic results concerning contracting groups are given in the Appendix and are used throughout this article. As we will see shortly, for any positive homogeneous polynomial $P$, $t^E$ is a contracting group for any $E\in\Exp(P)$.\\

\noindent Of interest for us is the subclass of semi-elliptic polynomials of the form
\begin{equation}\label{eq:SemiEllipticPolynomial}
P(\xi)=\sum_{|\beta:2\mathbf{m}|=1}a_\beta\xi^{\beta}=\sum_{|\beta:\mathbf{m}|=2}a_\beta\xi^{\beta},
\end{equation}
where $\mathbf{m}\in\mathbb{N}_+^d$, $\{a_{\beta}\}\subseteq\mathbb{C}$ and $\Re P$ is positive definite. For these polynomials, it is easy to see that the corresponding partial differential operator $\partial_t+\Lambda_P$ is semi-elliptic in the sense of H\"{o}rmander and hence hypoelliptic. By a slight abuse of language, any reference to a semi-elliptic polynomial is a reference to a polynomial of the form \eqref{eq:SemiEllipticPolynomial}. It is straightforward to see that all such semi-elliptic polynomials are positive homogeneous and have $D=\diag((2m_1)^{-1},(2m_2)^{-1},\dots,(2m_d)^{-1})\in \Exp(P)$. However, not all positive homogeneous polynomials are semi-elliptic as the example of Subsection \ref{subsec:ex3} illustrates.  As our first result of this section shows, every positive homogeneous polynomial has a coordinate system in which it is semi-elliptic.

\begin{proposition}\label{prop:PositiveHomogeneousPolynomialsareSemiElliptic}
Let $P$ be a positive homogeneous polynomial and let $E\in\Exp(P)$ have real spectrum. There exist $A\in\GldR$ and $\{m_1,m_2,\dots,m_d\}\subseteq\mathbb{N}_+$ for which
\begin{equation}\label{eq:EisDiagonalizable}
 A^{-1}EA=\diag((2m_1)^{-1},(2m_2)^{-1},\dots,(2m_d)^{-1})
\end{equation}
and
\begin{equation}\label{eq:PositiveHomogeneousPolynomialsareSemiElliptic}
(P\circ L_A)(\xi)=\sum_{|\beta:\mathbf{m}|=2} a_{\beta}\xi^{\beta}
\end{equation}
for $\xi\in\mathbb{R}^d$.
\end{proposition}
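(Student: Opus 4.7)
The plan is to reduce the problem to a Jordan-form version of $E$ and then extract arithmetic information about the eigenvalues from the identity $P(t^E\xi) = tP(\xi)$ combined with positive-definiteness of $R = \Re P$. Since $E$ has real spectrum, I choose $A \in \GldR$ so that $J := A^{-1}EA$ is in real Jordan form, with blocks of the shape $\gamma_l I_{k_l} + N_{k_l}$ for real $\gamma_l$ and nilpotent $N_{k_l}$. Setting $Q := P\circ L_A$ transforms the scaling relation into $Q(t^J\eta) = tQ(\eta)$ for all $t > 0$ and $\eta\in\mathbb{R}^d$, and preserves positive-definiteness of $\Re Q$. Appealing to the Appendix results on contracting one-parameter groups, positive-definiteness of $R$ forces $\{t^E\}$ (hence $\{t^J\}$) to be contracting; combined with the assumption of real spectrum, this gives $\gamma_l > 0$ for every block.

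The main step is to rule out every Jordan block of size $\geq 2$. Differentiating $Q(t^J\eta) = tQ(\eta)$ in $t$ at $t=1$ yields the Euler-type identity $LQ = Q$ for the first-order operator $L$ defined by $(Lf)(\eta) := (J\eta)\cdot\nabla f(\eta)$. Restricting to a single Jordan block $V$ with eigenvalue $\gamma$, write $Q_1 := Q|_V$. On the subspace of degree-$n$ polynomials on $V$, $L|_V$ equals $\gamma n\, I$ plus a nilpotent perturbation, so $LQ_1 = Q_1$ forces every homogeneous component of $Q_1$ of degree $\neq 1/\gamma$ to vanish. Therefore $1/\gamma$ must be a positive integer $n$ (otherwise $Q_1 \equiv 0$, contradicting $R|_V$ being positive-definite), $Q_1$ is homogeneous of degree $n$, and $(N\eta)\cdot\nabla Q_1 = 0$; equivalently, $Q_1$ is invariant under the flow $\{e^{sN}\}$. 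Taking a Jordan chain $v_1,\ldots,v_k$ in $V$ with $Nv_1 = 0$ and $Nv_j = v_{j-1}$ for $j\geq 2$, one computes $e^{sN}v_k = v_k + sv_{k-1} + \cdots + \frac{s^{k-1}}{(k-1)!}v_1$, and expanding $Q_1(e^{sN}v_k)$ in $s$ the leading term is $\frac{Q_1(v_1)}{((k-1)!)^n}\,s^{n(k-1)}$. Invariance requires $Q_1(e^{sN}v_k) = Q_1(v_k)$, constant in $s$; since $n(k-1)\geq 1$ when $k\geq 2$, this forces $Q_1(v_1) = 0$. But then $R(v_1) = \Re Q_1(v_1) = 0$ with $v_1 \neq 0$, contradicting positive-definiteness of $R$. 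Thus $J$ has only blocks of size $1$, so $J = D = \diag(\gamma_1,\ldots,\gamma_d)$ with each $\gamma_i > 0$.

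With $J = D$, the scaling $Q(t^D\eta) = tQ(\eta)$ applied monomial by monomial forces $\sum_i \gamma_i\beta_i = 1$ whenever the coefficient $a_\beta$ of $\eta^\beta$ in $Q$ is non-zero. Restricting $Q$ to the $i$-th coordinate axis $\eta \mapsto \eta_i e_i$ produces a single monomial $a_{(1/\gamma_i)e_i}\,\eta_i^{1/\gamma_i}$. Since $Q$ is a polynomial, $1/\gamma_i$ is a positive integer, and since $\Re Q$ is strictly positive along this axis, this integer must be even, say $1/\gamma_i = 2m_i$ with $\Re a_{(1/\gamma_i)e_i} > 0$. This establishes \eqref{eq:EisDiagonalizable}, and the constraint $\sum_i \beta_i/(2m_i) = 1$, equivalently $|\beta:\mathbf{m}|=2$, is exactly the statement \eqref{eq:PositiveHomogeneousPolynomialsareSemiElliptic} for $Q = P\circ L_A$. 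The chief obstacle is the Jordan-block exclusion step, where the polynomial nature of $Q$ has to be reconciled with the logarithm-producing behavior of $t^J$ on non-diagonal blocks; the Euler-identity approach above, reducing the question to the invariant-theoretic fact that any non-trivial polynomial invariant of the flow of a nilpotent Jordan block must vanish on every eigenvector of the block, seems to be the cleanest route.
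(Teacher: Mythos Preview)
Your argument is correct and takes a genuinely different route from the paper's. The paper works with the Jordan--Chevalley decomposition $F=A^{-1}EA=D+N$ (with $DN=ND$), writes $t^F=t^Nt^D$, expands $t^N$ as a polynomial in $\log t$, and invokes linear independence of the functions $t^{1-\beta\cdot\gamma}$ and $(\log t)^j$ to conclude that all $\log t$-contributions vanish for every fixed $\xi$; this yields $D\in\Exp(Q)$ and $Q(\xi)=\sum_{\beta\cdot\gamma=1}a_\beta\xi^\beta$ in one stroke. The eigenvalues $\gamma_i=(2m_i)^{-1}$ are then read off on coordinate axes, and $N$ is eliminated by a growth argument: since $\{t^D\}$ is visibly contracting, $R_Q\to\infty$ at infinity, while $R_Q(t^N\xi)=R_Q(\xi)$; choosing $\nu=N\xi\neq 0$ with $N\nu=0$ gives $R_Q(\xi+(\log t)\nu)$ bounded in $t$, a contradiction. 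Your approach through the infinitesimal Euler identity $LQ=Q$ and the leading-term expansion of $Q_1(e^{sN}v_k)$ is more algebraic and handles the nilpotent part block by block; the paper's route stays global but trades the block restriction for the analytic growth step. Both arrive at the same conclusion with comparable effort.

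One correction: your appeal to ``Appendix results'' for the implication ``$R$ positive definite $\Rightarrow$ $\{t^E\}$ contracting'' is circular---this is Lemma~\ref{lem:tEisContracting}, whose proof uses the present proposition. Fortunately the step is superfluous in your argument: the Euler-identity analysis already forces $1/\gamma\in\mathbb{N}_+$ (hence $\gamma>0$) directly from $LQ_1^{(n)}=Q_1^{(n)}$ and the fact that $L$ has sole eigenvalue $\gamma n$ on degree-$n$ polynomials. Simply drop the contracting sentence and the proof stands.
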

\begin{proof}

In light of the fact that the spectrum of $E$ is real, the characteristic polynomial for $E$ factors completely over $\mathbb{R}$ and so we may apply the Jordan-Chevally decomposition. This gives $A\in\GldR$ for which $F:=A^{-1}EA=D+N$ where $D$ is a diagonal matrix, $N$ is a nilpotent matrix and $ND=DN$. It is evident that $Q:=(P\circ L_A)$ is a polynomial and so we can write
\begin{equation}\label{eq:CoordinatePoly}
Q(\xi)=\sum_{\beta}a_\beta\xi^{\beta}
\end{equation}
for all $\xi\in\mathbb{R}^d$. In fact, our hypothesis guarantees that $Q$ is positive homogeneous and $F\in\Exp(Q)$. Our proof proceeds in three steps, first we show that $D\in\Exp(Q)$. Second, we determine the spectrum of $D$. In the final step we show that $N=0$.\\

\noindent \textit{Step 1.} We have

\begin{equation}\label{eq:PolynomialHomogeneous_1}
tQ(\xi)=Q(t^{F}\xi)=Q(t^{D+N}\xi)=Q(t^{N}t^{D}\xi)
\end{equation}
for all $t>0$ and $\xi\in\mathbb{R}^d$ where $D=\mbox{diag}(\gamma_1,\gamma_2,\dots,\gamma_d)$ for $\gamma_1,\gamma_2,\dots\gamma_d\in\mathbb{R}$. Because $N$ is nilpotent,
\begin{equation*}
t^{N}=I+\frac{\log t}{1} N+\dots+\frac{(\log t)^k}{k!}N^k
\end{equation*}
where $k+1$ is the index of $N$. Thus by \eqref{eq:PolynomialHomogeneous_1}, for all $t>0$ and $\xi\in\mathbb{R}^d$,
\begin{equation}\label{eq:PolynomialHomogeneous_2}
\begin{split}
tQ(t^{-D}\xi)&=Q\left(\xi+(\log t) N\xi+\cdots+\frac{(\log t)^k}{k!}N^k\xi\right)\\
&=Q(\xi)+S_N(\xi,\log t)
\end{split}
\end{equation}
where $S_N$ is a polynomial on $\mathbb{R}^d\times\mathbb{R}$ with no constant term. Consequently, for each $\xi\in\mathbb{R}^d$ we may write
\begin{equation}\label{eq:RemainderPolynomial}
S_N(\xi,x)=\sum_{j=1}^{l}b_j(\xi)x^j
\end{equation}
where $b_j(\xi)\in\mathbb{C}$ for each $j$.

Let us now fix a non-zero $\xi\in\mathbb{R}^d$. Combining \eqref{eq:CoordinatePoly}, \eqref{eq:PolynomialHomogeneous_2} and \eqref{eq:RemainderPolynomial} yields
\begin{equation*}
\sum_{\beta}a_{\beta}t^{(1-\beta\cdot\gamma)}\xi^{\beta}=Q(\xi)+\sum_{j=1}^{l}b_j(\xi)(\log t)^j
\end{equation*}
for all $t>0$ where $\beta\cdot\gamma=\beta_1\gamma_1+\beta_2\gamma_2+\cdots\beta_d\gamma_d$ and necessarily $Q(\xi)\neq 0$. Since distinct real powers of $t$ and $\log t$ are linearly independent as $C^{\infty}$ functions for $t>0$, it follows that $b_j(\xi)=0$ for each $j$ and more importantly,
\begin{equation}\label{eq:QSemiElliptic}
Q(\xi)=\sum_{\beta\cdot\gamma=1}a_\beta\xi^{\beta}.
\end{equation}
Since $\xi$ was arbitrary, \eqref{eq:QSemiElliptic} must hold for all $\xi\in\mathbb{R}^d$ and from this we see that
\begin{equation}\label{eq:PolynomialHomogeneous_3}
Q(t^{D}\xi)=\sum_{\beta\cdot\gamma=1}a_{\beta}(t^{D}\xi)^\beta=\sum_{\beta\cdot\gamma=1}a_{\beta}t^{\beta\cdot\gamma}(\xi)^\beta=tQ(\xi)
\end{equation}
for all $t>0$ and $\xi\in\mathbb{R}^d$; hence $D\in\Exp(Q)$. \\

\noindent \textit{Step 2.} Writing $R_Q=\Re Q$, it follows from \eqref{eq:QSemiElliptic} that
\begin{equation}\label{eq:QSemiEllipticReal}
R_Q(\xi)=\sum_{\beta\cdot\gamma=1}c_\beta\xi^{\beta}
\end{equation}
for all $\xi\in\mathbb{R}^d$ where $c_\beta=\Re a_\beta$ for each multi-index $\beta$. Now for each $i=1,2,\dots, d$, $xe_i$ is an eigenvector of $D$ with eigenvalue $\gamma_i$ for all non-zero $x\in\mathbb{R}$; here $e_i$ is that of the standard euclidean basis. Using the positive definiteness of $R_Q$,  for all $t>0$ and $x\neq 0$, we have
\begin{equation*}
tR_{Q}(x e_i)=R_Q(t^D(xe_i))=R_Q(t^{\gamma_i}xe_i)=t^{(|\beta|\gamma_i)}c_{\beta}x^{|\beta|}>0
\end{equation*}
where $\beta$ is the only surviving multi-index from the sum in \eqref{eq:QSemiEllipticReal} and necessarily $\beta$ is an integer multiple of $e_i$. From this we see that $|\beta|$ must be even for otherwise positivity would be violated and also that $1/\gamma_i=|\beta|=:2m_i$ as claimed.\\

\noindent\textit{Step 3.} In view of the previous step,
\begin{equation}\label{eq:tDisDiagonal}
t^D=\diag\Big(t^{(2m_1)^{-1}},t^{(2m_2)^{-1}},\dots,t^{(2m_d)^{-1}}\Big)
\end{equation}
for all $t>0$ and so $\{t^D\}_{t>0}$ is a one-parameter contracting group. Using the positive definiteness of $R_Q$, it follows from Proposition \ref{prop:ScalefromSphere} that
\begin{equation}\label{eq:LimitofRQ}
\lim_{|\xi|\rightarrow\infty}R_Q(\xi)\geq\lim_{t\rightarrow\infty}\inf_{\eta\in S}R_Q(t^{D}\eta)\geq\lim_{t\rightarrow\infty}t\inf_{\eta\in S}R_Q(\eta)=\infty.
\end{equation}
Now because $D$ commutes with $F$ and $D\in\Exp(R_Q)$,
\begin{equation*}
R_Q(\xi)=tt^{-1}R_Q(\xi)=R_Q(t^{F}t^{-D}\xi)=R_Q(t^{N}\xi)
\end{equation*}
for $t>0$ and $\xi\in\mathbb{R}^d$. Our goal is to show that $N=0$. For suppose that $N\neq 0$, then for some $\xi\in\mathbb{R}^d$, $\nu=N\xi\neq 0$ but $N\nu=0$. Then,
\begin{align*}
R_Q(\xi)&=R_Q(t^{N}\xi)=R_Q\left(\xi+(\log t)N\xi+\frac{(\log t)^2}{2!}(N)^2\xi+\cdots\right)\\
&=R_Q(\xi+(\log t) \nu)
\end{align*}
for all $t>0$. This however cannot hold for its validity would contradict \eqref{eq:LimitofRQ} and so $N=0$ as desired.
\end{proof}


\begin{proposition}\label{prop:SymPisCompact}
If $P$ is a positive homogeneous polynomial then $\Sym(P):=\{O\in\MdR:P(O\xi)=P(\xi)\mbox{ for all }\xi\in\mathbb{R}^d\}$ is a compact subgroup of $\GldR$ and hence a subgroup of the orthogonal group, $\OdR$.
\end{proposition}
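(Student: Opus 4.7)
The plan is to verify three ingredients in turn: (a) $\Sym(P)$ is closed under composition and inversion and consists of invertible matrices, making it a subgroup of $\GldR$; (b) $\Sym(P)$ is closed and bounded in $\MdR$, hence compact; and (c) compactness permits the Haar averaging construction that realizes $\Sym(P)$ as an orthogonal group.

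For (a), closure under composition is immediate from the defining identity $P(O\xi)=P(\xi)$. The real issue is invertibility: if $O\in\Sym(P)$ and $O\xi=0$ for some $\xi\neq 0$, then $R(\xi)=R(O\xi)=R(0)=0$, contradicting the positive definiteness of $R=\Re P$. Invertibility in hand, closure under inverses follows by substituting $\xi\mapsto O^{-1}\eta$ in the defining identity.

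For (b), closedness is immediate since $\Sym(P)$ is the intersection over $\xi\in\mathbb{R}^d$ of the closed conditions $\{O\in\MdR:P(O\xi)=P(\xi)\}$. The nontrivial step is boundedness, for which the key fact is the coercivity of $R$, i.e., $R(\xi)\to\infty$ as $|\xi|\to\infty$. This is essentially contained in Step 3 of the proof of Proposition \ref{prop:PositiveHomogeneousPolynomialsareSemiElliptic}: after the change of coordinates given there, $P$ becomes semi-elliptic with a diagonal $D$ whose positive entries lie in $\Exp(P)$, so $R(t^D\eta)=tR(\eta)$ and $R>0$ on the unit sphere $S$ forces $R\to\infty$ at infinity. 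Setting $M=\sup_{\eta\in S}R(\eta)<\infty$, for every $O\in\Sym(P)$ and every $\xi\in S$ one has $R(O\xi)=R(\xi)\leq M$, placing $O\xi$ in the bounded sublevel set $\{R\leq M\}$; taking a supremum over $\xi\in S$ yields a bound on $\|O\|$ uniform in $\Sym(P)$.

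Having established that $\Sym(P)$ is a compact subgroup of $\GldR$, the final step uses the standard averaging trick: with normalized Haar measure $\mu$ on $\Sym(P)$, the bilinear form
\begin{equation*}
\langle x,y\rangle_P=\int_{\Sym(P)}(Ox)\cdot(Oy)\,d\mu(O)
\end{equation*}
is a positive definite inner product invariant under $\Sym(P)$. In an orthonormal basis for $\langle\cdot,\cdot\rangle_P$, $\Sym(P)$ is then contained in the standard $\OdR$, which I take to be the content of the stated conclusion. The main potential obstacle is step (b); fortunately, the required coercivity of $R$ is already available from Proposition \ref{prop:PositiveHomogeneousPolynomialsareSemiElliptic} and can be imported directly.
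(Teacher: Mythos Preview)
Your proof is correct and follows essentially the same approach as the paper. The group-structure argument (a) matches the paper's exactly; for boundedness in (b) you argue directly via the bounded sublevel set $\{R\leq M\}$ whereas the paper phrases the same idea as a contradiction, but both rest on the coercivity of $R$ imported from Proposition~\ref{prop:PositiveHomogeneousPolynomialsareSemiElliptic}. The one addition is your step (c): the paper simply asserts ``hence a subgroup of $\OdR$'' as a known consequence of compactness, while you supply the Haar-averaging argument and correctly note that it yields containment in $\OdR$ only after a change of inner product---a subtlety the paper glosses over but which does not affect the only downstream use (namely $|\det O|=1$ in Corollary~\ref{cor:UniqueTrace}).
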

\begin{proof}
It is clear that $I\in\Sym(P)$ and that for any $O_1,O_2\in\Sym(P)$, $O_1 O_2\in\Sym(P)$. If $O\in\Sym(P)$, $R(O\xi)=R(\xi)$ for all $\xi\in\mathbb{R}^d$ where $R=\Re P$. The positive definiteness of $R$ implies that $\Ker O$ is trivial and hence $O\in\GldR$. Consequently, $P(O^{-1}\xi)=P(OO^{-1}\xi)=P(\xi)$ for all $\xi\in\mathbb{R}^d$ and hence $O^{-1}\in\Sym(P)$.

It remains to show that $\Sym(P)$ is compact and so, in view of the Heine-Borel theorem, we show that $\Sym(P)$ is closed and bounded. To see that $\Sym(P)$ is closed, let $\{O_n\}\subseteq\Sym(P)$ be such that $O_n\rightarrow O\in\MdR$. Then the continuity of $P$ implies that for all $\xi\in\mathbb{R}^d$,
\begin{equation*}
P(O\xi)=\lim_nP(O_n\xi)=P(\xi)
\end{equation*}
and so $O\in\Sym(P)$.

To show that $\Sym(P)$ is bounded, we first make an observation from the proof of Proposition \ref{prop:PositiveHomogeneousPolynomialsareSemiElliptic}. Assuming the notation therein, we conclude from \eqref{eq:LimitofRQ} that
\begin{equation}\label{eq:SymPisCompact_1}
\lim_{|\xi|\rightarrow\infty}R(\xi)=\infty
\end{equation}
because $R(\xi)=R_Q(A^{-1}\xi)$ for all $\xi\in\mathbb{R}^d$. Finally, to reach a contradiction, we assume that $\Sym(P)$ is not bounded. Then there exist sequences $\{O_n\}\subseteq\Sym(P)$ and $\{\xi_n\}\subseteq S$ for which $\lim_n|O_n\xi_n|=\infty$. Observe however that
\begin{equation*}
R(O_n\xi_n)=R(\xi_n)\leq \sup_{\xi\in S}R(\xi)<\infty
\end{equation*}
for all $n$; in view of \eqref{eq:SymPisCompact_1} we have obtained our desired contradiction.
\end{proof}

\begin{corollary}\label{cor:UniqueTrace}
Let $P$ be a positive homogeneous polynomial. Then for any $E,E'\in\Exp(P)$,
\begin{equation*}
\tr(E)=\tr(E').
\end{equation*}
\end{corollary}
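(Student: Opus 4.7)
The plan is to exploit the compactness of $\Sym(P)$ established in Proposition \ref{prop:SymPisCompact}, which forces $\Sym(P) \subseteq \OdR$, combined with the determinant identity $\det(t^M) = t^{\tr M}$ that converts traces of exponents into powers of $t$.

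First, I would observe that if $E, E' \in \Exp(P)$, then for every $t > 0$ and every $\xi \in \mathbb{R}^d$,
\begin{equation*}
P(t^{E'} t^{-E} \eta) = P(\eta),
\end{equation*}
obtained by setting $\eta = t^E \xi$ in the identity $P(t^E \xi) = tP(\xi) = P(t^{E'} \xi)$. This exhibits the one-parameter family $\{t^{E'} t^{-E}\}_{t > 0}$ as a subset of $\Sym(P)$.

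Next, by Proposition \ref{prop:SymPisCompact}, $\Sym(P) \subseteq \OdR$, so each $t^{E'} t^{-E}$ is an orthogonal matrix and hence has determinant $\pm 1$. Using multiplicativity of the determinant together with the standard identity $\det(e^A) = e^{\tr A}$, I compute
\begin{equation*}
\det\bigl(t^{E'} t^{-E}\bigr) = \det\bigl(t^{E'}\bigr)\, \det\bigl(t^{-E}\bigr) = e^{(\log t)\tr E'}\, e^{-(\log t)\tr E} = t^{\tr(E') - \tr(E)}
\end{equation*}
for all $t > 0$. The map $t \mapsto \det(t^{E'} t^{-E})$ is continuous, takes values in $\{-1, +1\}$, and equals $1$ at $t = 1$, so by connectedness of $(0, \infty)$ it is identically $1$. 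Therefore $t^{\tr(E') - \tr(E)} = 1$ for every $t > 0$, which forces $\tr(E) = \tr(E')$.

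There is essentially no obstacle here: the heavy lifting was done in Proposition \ref{prop:SymPisCompact}, whose proof showed that positive definiteness of $R = \Re P$ combined with the coercivity in \eqref{eq:SymPisCompact_1} prevents $\Sym(P)$ from being unbounded. Once that compactness (and hence orthogonality) is in hand, the present corollary is a one-line consequence of reading off the determinant in two different ways.
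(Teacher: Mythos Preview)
Your proof is correct and follows essentially the same approach as the paper: show $t^{E'}t^{-E}\in\Sym(P)\subseteq\OdR$ via Proposition~\ref{prop:SymPisCompact}, then read off the trace identity from the determinant. The only cosmetic difference is that the paper takes the absolute value $|\det(t^{E}t^{-E'})|=1$ directly (using that traces of real matrices are real), which bypasses your continuity/connectedness argument for the sign.
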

\begin{proof}
For $E,E'\in\Exp(P)$, it follows immediately that $t^{E}t^{-E'}\in\Sym(P)$ for all $t>0$. In view of Proposition \ref{prop:SymPisCompact},
\begin{equation*}
t^{\tr E-\tr E'}=|t^{\tr E}t^{-\tr E'}|=|\det(t^{E})\det(t^{-E'})|=|\det(t^{E}t^{-E'})|=1
\end{equation*}
for all $t>0$; here we have used the fact that the trace of a real matrix is real and that the determinant maps $\OdR$ into the unit circle. The corollary follows immediately.
\end{proof}

\begin{lemma}\label{lem:tEisContracting}
Let $P$ be a positive homogeneous polynomial. For any $E\in\Exp(P)$, the continuous one-parameter group $\{t^{E}\}_{t>0}$ is contracting.
\end{lemma}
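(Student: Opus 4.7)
The plan is to reduce to the case of an $E_0 \in \Exp(P)$ with real spectrum, which exists by the very definition of positive homogeneity, and then to bootstrap from that distinguished $E_0$ to an arbitrary $E \in \Exp(P)$ via the compactness of $\Sym(P)$ established in Proposition \ref{prop:SymPisCompact}.

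First I will handle the case of real spectrum. Given $E_0 \in \Exp(P)$ with real spectrum, Proposition \ref{prop:PositiveHomogeneousPolynomialsareSemiElliptic} produces $A \in \GldR$ and positive integers $m_1, m_2, \dots, m_d$ for which $A^{-1} E_0 A = D := \diag((2m_1)^{-1}, (2m_2)^{-1}, \dots, (2m_d)^{-1})$. Since each $(2m_i)^{-1} > 0$, one has
\begin{equation*}
t^{D} = \diag\bigl(t^{1/(2m_1)}, t^{1/(2m_2)}, \dots, t^{1/(2m_d)}\bigr) \longrightarrow 0
\end{equation*}
in operator norm as $t \to 0^+$, and writing $t^{E_0} = A\, t^{D}\, A^{-1}$ gives $\|t^{E_0}\| \le \|A\|\,\|A^{-1}\|\,\|t^{D}\| \to 0$. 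Hence $\{t^{E_0}\}$ is contracting.

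Next, for an arbitrary $E \in \Exp(P)$ and any $t > 0$, both $t^E$ and $t^{E_0}$ satisfy $P(t^{E}\xi) = tP(\xi) = P(t^{E_0}\xi)$. Substituting $\eta = t^{E_0}\xi$ (so $\xi = t^{-E_0}\eta$) gives $P\bigl((t^{E} t^{-E_0})\eta\bigr) = P(\eta)$ for every $\eta \in \mathbb{R}^d$, that is, $t^{E} t^{-E_0} \in \Sym(P)$. By Proposition \ref{prop:SymPisCompact}, $\Sym(P)$ is a compact subgroup of $\GldR$, so there exists a constant $C > 0$ with $\|t^{E} t^{-E_0}\| \le C$ for all $t > 0$. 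Consequently,
\begin{equation*}
\|t^{E}\| = \|t^{E} t^{-E_0} \cdot t^{E_0}\| \le \|t^{E} t^{-E_0}\|\cdot\|t^{E_0}\| \le C\,\|t^{E_0}\| \longrightarrow 0 \quad \text{as } t \to 0^{+},
\end{equation*}
which shows that $\{t^{E}\}$ is contracting.

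There is no significant obstacle here: the only subtlety is that the definition of positive homogeneity guarantees \emph{only one} exponent with real spectrum, so contracting behavior for a general $E \in \Exp(P)$ must be transferred through the symmetry group rather than read off directly. The compactness of $\Sym(P)$ and the diagonalization in Proposition \ref{prop:PositiveHomogeneousPolynomialsareSemiElliptic} already do all the heavy lifting.
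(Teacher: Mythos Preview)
Your proof is correct and follows essentially the same approach as the paper: first handle an $E_0\in\Exp(P)$ with real spectrum via the diagonalization of Proposition~\ref{prop:PositiveHomogeneousPolynomialsareSemiElliptic}, then transfer to an arbitrary $E\in\Exp(P)$ using $t^{E}t^{-E_0}\in\Sym(P)$ and the compactness from Proposition~\ref{prop:SymPisCompact}. The only cosmetic difference is that the paper notes $\Sym(P)\subseteq\OdR$ (so $\|t^{E}t^{-E_0}\|=1$), whereas you use the uniform bound $C$ coming from compactness; both yield the same conclusion.
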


\begin{proof}
First let $E_0\in \Exp(P)$ have real spectrum. In view of Proposition \ref{prop:PositiveHomogeneousPolynomialsareSemiElliptic},
\begin{equation*}
A^{-1}t^{E_0}A=\mbox{diag}(t^{\gamma_1},t^{\gamma_2},\dots,t^{\gamma_d})
\end{equation*}
for all $t>0$ where $0<\gamma_i<1/2$ for $i=1,2,\dots,d$. By inspection, we can immediately conclude that $\{t^{E_0}\}_{t>0}$ is contracting. Now for any $E\in\Exp(P)$, $t^{E}t^{-E_0}\in\Sym(P)\subseteq\OdR$ for all $t>0$ by virtue of Proposition \ref{prop:SymPisCompact}; from this it follows immediately that $\{t^{E}\}$ is contracting.
\end{proof}

\noindent We now turn to the study of the attractors appearing in Theorem \ref{thm:MainLocalLimit}; these are of the form $H_P^{(\cdot)}$, defined by \eqref{eq:HPdef}, where $P$ is a positive homogeneous polynomial.

\begin{proposition}\label{prop:PropertiesofHP}
Let $P$ be a positive homogeneous polynomial with $R=\Re P$. The following is true:
\begin{enumerate}[i)]
 \item\label{st:PropertiesofHP_1} For any $t>0$, $H_P^{(t)}(\cdot)\in\mathcal{S}(\mathbb{R}^d)$.
 \item\label{st:PropertiesofHP_2} If $E\in\Exp(P)$ then, for all $t>0$ and $x\in\mathbb{R}^d$,
 \begin{equation*}
 H_{P}^{(t)}(x)=\frac{1}{t^{\tr E}}H_P^1(t^{-E^*}x)=\frac{1}{t^{\mu_P}}H_P(t^{-E^*}x);
 \end{equation*}
where $E^*$ is the adjoint of $E$.
\item\label{st:PropertiesofHP_3} There exist constants $C, M>0$ such that
\begin{equation*}
\left|H_P^{(t)}(x)\right|\leq \frac{C}{t^{\mu_P}}\exp(-tMR^{\#}(x/t))
\end{equation*}
for all $t>0$ and $x\in\mathbb{R}^d$.
\end{enumerate}
\end{proposition}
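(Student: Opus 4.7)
For part (i), I would show that $e^{-tP} \in \mathcal{S}(\mathbb{R}^d)$, from which $H_P^t \in \mathcal{S}(\mathbb{R}^d)$ follows because the Fourier transform preserves the Schwartz class. Smoothness of $e^{-tP}$ is immediate, and each of its derivatives is a polynomial multiple of itself, so I only need super-polynomial decay of $e^{-tR(\xi)}$. This follows from a polynomial lower bound of the form $R(\xi) \geq c|\xi|^{\gamma}$ for large $|\xi|$ (some $c,\gamma>0$), which I would obtain by combining the positive definiteness of $R$ on the compact unit sphere with the contracting one-parameter group $\{s^E\}_{s>0}$ from Lemma \ref{lem:tEisContracting} via the sphere-scaling result of the Appendix (Proposition \ref{prop:ScalefromSphere}).

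For part (ii), I would perform the change of variables $\xi = t^{-E}\eta$ in the defining integral \eqref{eq:HPdef}. The homogeneity $P(t^{E}\zeta) = tP(\zeta)$ rearranges to $P(\eta) = tP(t^{-E}\eta)$; the Jacobian is $t^{-\tr E}$, and $x\cdot(t^{-E}\eta) = (t^{-E^*}x)\cdot\eta$. Substituting gives $H_P^{t}(x) = t^{-\tr E}H_P(t^{-E^*}x)$, and Corollary \ref{cor:UniqueTrace} identifies $\tr E = \mu_P$.

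For part (iii), the plan is first to reduce to $t=1$ by combining (ii) with the identity $R^{\#}(t^{-E^*}x) = tR^{\#}(x/t)$, which follows from $(I-E)^* \in \Exp(R^{\#})$ together with the commutation $t^{(I-E)^*} t^{-I} = t^{-E^*}$ (since $(I-E)^* - I = -E^*$ and $I$ commutes with $E^*$). It then suffices to prove $|H_P(x)| \leq C e^{-M R^{\#}(x)}$. For this, I would shift the contour in \eqref{eq:HPdef} by $-i\eta$ (valid by Cauchy's theorem applied to the entire function $e^{-P}$, together with the decay from part (i)), yielding
\[
H_P(x) = \frac{e^{-x \cdot \eta}}{(2\pi)^d} \int_{\mathbb{R}^d} e^{-P(\xi - i\eta)}\, e^{-ix \cdot \xi}\, d\xi
\]
for arbitrary $\eta \in \mathbb{R}^d$. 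Bounding in absolute value and optimizing over $\eta$ then produces the Legendre-Fenchel transform of $c'R$, namely $(c'R)^{\#}(x) = R^{\#}((c')^{-E^*}x)$, which is comparable to $R^{\#}(x)$ since $(c')^{-E^*}$ commutes with $t^{(I-E)^*}$ for all $t>0$ and $R^{\#}$ is continuous and positive definite (Proposition \ref{prop:LegendreContinuousPositiveDefinite}); compactness of the unit sphere then yields the desired bound.

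The main obstacle in the above plan is the integral estimate
\[
\int_{\mathbb{R}^d} e^{-\Re P(\xi - i\eta)}\, d\xi \leq C e^{c' R(\eta)}
\]
uniformly in $\eta$, which in turn reduces to the pointwise inequality $\Re P(\xi - i\eta) \geq \epsilon R(\xi) - c' R(\eta)$ valid for all $\xi,\eta \in \mathbb{R}^d$ and some constants $\epsilon, c' > 0$. My strategy for this is to first invoke Proposition \ref{prop:PositiveHomogeneousPolynomialsareSemiElliptic} to place $P$ in semi-elliptic form $P(\xi) = \sum_{|\beta:\mathbf{m}|=2} a_\beta \xi^\beta$ (the target estimate being invariant under linear changes of coordinates up to constants), then expand $P(\xi - i\eta)$ monomially and dominate each cross-term $\xi^\alpha \eta^\beta$ with $|\beta|\geq 1$ by a Young-type geometric-mean bound tailored to the weights $m_i$, using $R(\xi) \asymp \sum_i \xi_i^{2m_i}$ (and similarly for $\eta$) on the relevant regions.
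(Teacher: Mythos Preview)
Your proposal is correct and follows essentially the same route as the paper: the complex pointwise bound $\Re P(\xi-i\eta)\geq \epsilon R(\xi)-MR(\eta)$ you single out as the main obstacle is exactly the paper's Proposition \ref{prop:ComplexEstimate} (proved there via a homogeneity-rescaling trick on the cross-terms rather than Young's inequality, though your approach works too), and the comparability $(MR)^{\#}\asymp R^{\#}$ is recorded as Corollary \ref{cor:MovingConstant}. One small correction: the contour shift itself is justified by this same complex estimate (which gives decay of $e^{-P(\xi-i\eta)}$ as $|\xi|\to\infty$ uniformly for $\eta$ in compacta), not by the real-variable Schwartz decay from part (i).
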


\begin{proof}
To prove items \ref{st:PropertiesofHP_1}) and \ref{st:PropertiesofHP_2}), it suffices only to show that $H_P=H_P^1\in\mathcal{S}(\mathbb{R}^d)$. Indeed, if $H_P\in\mathcal{S}(\mathbb{R}^d)$ then, in particular, $e^{-P}\in L^1(\mathbb{R}^d)$ and so the change-of-variables formula guarantees that, for any $t>0$ and $x\in\mathbb{R}^d$,
\begin{align*}
H_P^{t}(x)&=\frac{1}{(2\pi)^d}\int_{\mathbb{R}^d}e^{-tP(\xi)}e^{-ix\cdot\xi}\,d\xi\\
&=\frac{1}{(2\pi)^d}\int_{\mathbb{R}^d}e^{-P(t^{E}\xi)}e^{-ix\cdot\xi}\,d\xi\\
&=\frac{1}{(2\pi)^d}\int_{\mathbb{R}^d}e^{-P(\xi)}e^{-ix\cdot (t^{-E}\xi)}\det(t^{-E})\,d\xi\\
&=\frac{t^{-\tr E}}{(2\pi)^d}\int_{\mathbb{R}^d}e^{-P(\xi)}e^{-i(t^{-E^*}x)\cdot\xi}\,d\xi\\
&=t^{-\mu_P}H_P(t^{-E^*}x)
\end{align*}
whenever $E\in\Exp(P)$. From this the validity of item \ref{st:PropertiesofHP_2}) is clear but moreover, the formula ensures that that $H_P^t\in\mathcal{S}(\mathbb{R}^d)$ for all $t>0$.

In view of \eqref{eq:HPdef}, $H_P\in\mathcal{S}(\mathbb{R}^d)$ if and only if $e^{-P}\in\mathcal{S}(\mathbb{R}^d)$ because the Fourier transform is an isomorphism of $\mathcal{S}(\mathbb{R}^d)$. Also, for any $A\in\GldR$, it is clear that $e^{-P}\in\mathcal{S}(\mathbb{R}^d)$ if and only if $e^{-P\circ L_A}$. Hence, to show that $H_P\in\mathcal{S}(\mathbb{R}^d)$ it suffices to show that $e^{-P\circ L_A}\in\mathcal{S}(\mathbb{R}^d)$ for some $A\in\GldR$. This is precisely what we do now: Let $E\in \Exp(P)$ have real spectrum and correspondingly, take $A\in\GldR$ as guaranteed by Proposition \ref{prop:PositiveHomogeneousPolynomialsareSemiElliptic}. As in the proof of the proposition, we write $Q=P\circ L_A$, $R_Q=\Re Q$ and $D=\diag((2m_1)^{-1},(2m_2)^{-1},\dots,(2m_d)^{-1})$. It is clear that $e^{-Q}\in C^{\infty}(\mathbb{R}^d)$. Let $\mu$ and $\beta$ be multi-indices and observe that
\begin{equation*}
\|e^{-Q}\|_{\mu,\beta}:=\sup_{\xi\in\mathbb{R}^d}\big|\xi^\mu D^\beta e^{-Q}\big|=\sup_{\xi\in\mathbb{R}^d} \big|Q_{\mu,\beta}(\xi)\exp(-Q(\xi))\big |
\end{equation*}
where $Q_{\mu,\beta}$ is a polynomial. Using Proposition \ref{prop:ScalefromSphere} and the continuity of $Q_{\mu,\beta}e^{-Q}$, it follows that
\begin{align*}
\|e^{-Q}\|_{\mu,\beta}&=\sup_{\nu\in S, t>0}\Big|Q_{\mu,\beta}(t^{D}\nu)\exp(-Q(t^{D}\nu))\Big|\\
&=\sup_{\nu\in S, t>0}\Big|Q_{\mu,\beta}(t^D\nu)\exp(-tQ(\nu))\Big|.
\end{align*}
Now because $Q$ is positive homogeneous, $Q_{\mu,\beta}$ is a polynomial and $t^D$ has the form \eqref{eq:tDisDiagonal},
\begin{equation*}
|Q_{\mu,\beta}(t^D\nu)e^{-tQ(\nu)}|\leq M_1(1+t^m)e^{-tM_2}
\end{equation*}
for all $t>0$ and $\nu\in\mathcal{S}$ where $m,M_1$ and $M_2$ are positive constants. We immediately see that
\begin{equation*}
\|e^{-Q}\|_{\mu,\beta}\leq \sup_{t>0}M_1(1+t^m)e^{-tM_2}<\infty
\end{equation*}
and therefore $e^{-Q}\in\mathcal{S}(\mathbb{R}^d)$.

The key to the proof of \ref{st:PropertiesofHP_3}) is a complex change-of-variables. For each $x\in\mathbb{R}^d$, function $z\mapsto e^{-P(z)}e^{-ix\cdot z}$ is holomorphic on $\mathbb{C}^d$ and, in view of Proposition \ref{prop:ComplexEstimate}, satisfies
\begin{equation}\label{eq:propertiesofHP}
|e^{-P(\xi-i\nu)}e^{-ix\cdot(\xi-i\nu)}|=e^{-x\cdot\nu}|e^{-P(\xi-i\nu)}|\leq e^{-x\cdot\nu+MR(\nu)}e^{-\epsilon R(\xi)}
\end{equation}
for all $z=\xi-i\nu\in\mathbb{C}^d$, where $M,\epsilon$ are positive constants. By virtue of \eqref{eq:SymPisCompact_1}, \eqref{eq:propertiesofHP} ensures that the integration in the definition of $H_P$ can be shifted to any any complex plane in $\mathbb{C}^d$ parallel to $\mathbb{R}^d$. In other words, for any $x,\nu\in\mathbb{R}^d$,
\begin{equation*}
\int_{\mathbb{R}^d}e^{-P(\xi)}e^{-ix\cdot\xi}\,d\xi=\int_{\xi\in\mathbb{R}^d}e^{-P(\xi-i\nu)}e^{-ix\cdot(\xi-i\nu)}\,d\xi
\end{equation*}
and therefore
\begin{equation*}
|H_P(x)|\leq e^{-x\cdot\nu+MR(\nu)}\frac{1}{(2\pi)^d}\int_{\mathbb{R}^d}e^{-\epsilon R(\xi)}=C\exp(-(x\cdot\nu-MR(\nu))),
\end{equation*}
where $C>0$. The natural appearance of the Legendre-Fenchel transform is now seen by infimizing over $\nu\in\mathbb{R}^d$. We have
\begin{align*}
|H_P(x)|&\leq C\inf_{\nu\in\mathbb{R}^d}\exp(-(x\cdot\nu-MR(\nu)))\\
&=C\exp\left(-\sup_{\nu\in\mathbb{R}^d}\{x\cdot\nu-MR(\nu)\}\right)\\
&=C\exp\left(-(MR)^{\#}(x)\right)\\
&\leq C\exp\left(-MR^{\#}(x)\right)
\end{align*}
for all $x\in\mathbb{R}^d$, where we have made use of Corollary \ref{cor:MovingConstant} to adjust the constant $M$. Finally, an appeal to \ref{st:PropertiesofHP_2}) and Proposition \ref{prop:LegendreContinuousPositiveDefinite}, gives
\begin{align*}
|H_P^{(t)}(x)|&\leq \frac{C}{t^{\mu_P}}\exp\left(-MR^{\#}(t^{-E^*}x)\right)\\
&=\frac{C}{t^{\mu_P}}\exp\left(-MR^{\#}(t^{(I-E)^*}(x/t))\right)\\
&=\frac{C}{t^{\mu_P}}\exp\left(-tMR^{\#}(x/t)\right)
\end{align*}
for all $t>0$ and $x\in\mathbb{R}^d$.
\end{proof}

\section{Properties of $\hat\phi$}\label{sec:PropertiesofHatPhi}

\begin{lemma}\label{lem:PisUnique}
Let $\phi\in\mathcal{S}_d$ be such that $\sup|\hat\phi|=1$ and suppose that $\xi_0\in\Omega(\phi)$ is of positive homogeneous type for $\hat\phi$. Then the expansion \eqref{eq:PosHomType}, with $\alpha_{\xi_0}\in\mathbb{R}^d$ and positive homogeneous polynomial $P_{\xi_0}$, is unique.
\end{lemma}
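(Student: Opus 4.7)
I would suppose, for the sake of a uniqueness argument, that $\Gamma_{\xi_0}$ admits two decompositions as in Definition 1.5, namely
\begin{equation*}
\Gamma_{\xi_0}(\xi) = i\alpha\cdot\xi - P(\xi) + \Upsilon(\xi) = i\alpha'\cdot\xi - P'(\xi) + \Upsilon'(\xi),
\end{equation*}
with $R := \Re P$, $R' := \Re P'$, and errors $\Upsilon(\xi)=o(R(\xi))$, $\Upsilon'(\xi)=o(R'(\xi))$ as $\xi \to 0$. The plan is first to identify $\alpha$ intrinsically as a derivative of $\Gamma_{\xi_0}$ at $0$, and then to show that the polynomial $L := P' - P$ vanishes by scaling in semi-elliptic coordinates and invoking positive homogeneity of both $P$ and $P'$.

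For the uniqueness of $\alpha$, I would invoke Proposition 2.4: for any $E \in \Exp(P)$ with real spectrum, there is $A \in \GldR$ in whose coordinates $P \circ L_A$ is semi-elliptic with weights $\mathbf{m} \in \mathbb{N}_+^d$, and every monomial of such a polynomial has classical degree at least $2\min_i m_i \geq 2$. Since minimal classical degree is preserved by invertible linear change of coordinates, $P$ itself has no constant or linear part; consequently $R(\xi) = O(|\xi|^2)$ and $\Upsilon(\xi) = o(|\xi|^2) = o(|\xi|)$ as $\xi \to 0$. Uniqueness of the linear Taylor coefficient of the smooth function $\Gamma_{\xi_0}$ at $0$ then forces both $i\alpha$ and $i\alpha'$ to agree with that coefficient, so $\alpha = \alpha'$.

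Having reduced to a common drift, I would set $L := P' - P = \Upsilon - \Upsilon'$ and note the estimate $|L(\xi)| \leq \epsilon(R(\xi) + R'(\xi))$, valid for all $\epsilon > 0$ on a small enough neighborhood of $0$. Taking real parts with $\epsilon = 1/2$ yields $|R - R'| \leq (R+R')/2$, hence $R \asymp R'$ near $0$ and $L(\xi) = o(R(\xi))$. I then fix $E \in \Exp(P)$ of real spectrum, together with $A \in \GldR$ and $D := \diag(1/(2m_1),\ldots,1/(2m_d))$ from Proposition 2.4, so that the polynomial $\tilde L := L \circ L_A$ decomposes into $D$-homogeneous components $\tilde L = \sum_w \tilde L_w$ with $\tilde L_w(t^D\xi) = t^w\tilde L_w(\xi)$. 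Scaling $\xi \mapsto t^D\xi$ and sending $t \to 0^+$ with $\xi \neq 0$ fixed,
\begin{equation*}
\frac{\tilde L(t^D\xi)}{\tilde R(t^D\xi)} = \frac{1}{\tilde R(\xi)}\sum_w t^{w-1}\tilde L_w(\xi) \longrightarrow 0,
\end{equation*}
which forces $\tilde L_w \equiv 0$ for every $w \leq 1$.

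The main obstacle is killing the remaining weight-$>1$ components of $\tilde L$. For this I plan to exploit positive homogeneity of $P' = P + L$: choosing $E' \in \Exp(P')$ of real spectrum, a parallel scaling argument (applying $t^{E'}$ to both decompositions, dividing by $t$, and using $R \asymp R'$ to show that the two error contributions $\Upsilon(t^{E'}\xi)/t$ and $\Upsilon'(t^{E'}\xi)/t$ both vanish) should yield $\lim_{t \to 0^+} P(t^{E'}\xi)/t = P'(\xi)$ for every $\xi$. Re-expanding $P$ as a polynomial in the semi-elliptic coordinates of $P'$ and repeating the weight-component analysis identifies the weight-$1$ part of $P$ in those coordinates as $P'$ itself; combined with the analysis in the previous paragraph, this rules out every high-weight monomial of $L$. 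The hardest bookkeeping is reconciling the two distinct semi-elliptic coordinate systems associated with $E$ and $E'$; this is where the structural tools of Section 2 --- the contracting-group property of $\{t^E\}$ from Lemma 2.7 and the compactness of $\Sym(P)$ from Proposition 2.5 --- do the essential work. The outcome is $L \equiv 0$, hence $P = P'$, completing the uniqueness proof.
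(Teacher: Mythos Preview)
Your treatment of the uniqueness of $\alpha_{\xi_0}$ is correct and is essentially the paper's argument: both recognize $i\alpha$ as the linear term of the Taylor expansion of $\Gamma_{\xi_0}$ (the paper phrases this as the linear term being purely imaginary). Your derivation of $R \asymp R'$ near $0$ and of $L := P' - P = o(R)$ is also sound, as is the scaling argument that eliminates the $D$-weight $\le 1$ components of $\tilde L$.

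The gap is the final paragraph. You claim that combining ``the weight-$1$ part of $P'$ in the $D$-coordinates equals $P$'' with ``the weight-$1$ part of $P$ in the $D'$-coordinates equals $P'$'' rules out every high-weight monomial of $L$, but you never actually carry this out. These two gradings live in \emph{different} linear coordinate systems (those diagonalizing $E$ and $E'$ respectively), and there is no a priori reason why a nonzero polynomial cannot have all its components of weight $>1$ simultaneously in two unrelated anisotropic gradings. The structural results you invoke do not bridge this: Lemma~\ref{lem:tEisContracting} only says each $\{t^{E_i}\}$ is contracting, and Proposition~\ref{prop:SymPisCompact} compares elements of $\Exp(P)$ to \emph{each other}, not $\Exp(P)$ to $\Exp(P')$. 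The step you yourself flag as ``the hardest bookkeeping'' is a genuine gap, not bookkeeping.

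The paper avoids all of this machinery: it never decomposes $L$ by weight and never introduces semi-elliptic coordinates. For fixed nonzero $\zeta$ and $\epsilon>0$ it sets $\delta_i = \epsilon/(2R_i(\zeta))$, picks $E_i \in \Exp(P_i)$, uses the contracting property to place both $t^{-E_1}\zeta$ and $t^{-E_2}\zeta$ inside a neighborhood where $|\Upsilon_i| < \delta_i R_i$, and then estimates
\[
|P_1(\zeta) - P_2(\zeta)| = t\bigl|P_1(t^{-E_1}\zeta) - P_2(t^{-E_2}\zeta)\bigr| \le t\delta_1 R_1(t^{-E_1}\zeta) + t\delta_2 R_2(t^{-E_2}\zeta) = \epsilon.
\]
The point is that each $P_i$ rescales back to the value at $\zeta$ under its \emph{own} one-parameter group, so no reconciliation of two coordinate systems is required. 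This delivers uniqueness in a few lines, whereas your route, even if it can be completed, needs substantially more work.
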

\begin{proof}
The fact that $|\hat\phi(\xi)|\leq 1$ ensures that the linear term in the Taylor expansion for $\Gamma_{\xi_0}$ is purely imaginary. This determines $\alpha_{\xi_0}$ uniquely. We assume that
\begin{equation*}
\Gamma_{\xi_0}(\xi)=i\alpha_{\xi_0}\cdot\xi-P_1(\xi)+\Upsilon_1(\xi)=i\alpha_{\xi_0}\cdot\xi-P_2(\xi)+\Upsilon_2(\xi)
\end{equation*}
for $\xi\in\mathcal{U}$ where $P_1$ and $P_2$ are positive homogeneous polynomials with $\Re P_1=R_1$, $\Re P_2=R_2$ and $\Upsilon_i=o(R_i)$ as $\xi\rightarrow 0$ for $i=1,2$. We shall prove that $P_1=P_2$.

Let $\epsilon>0$ and, for a fixed non-zero $\zeta\in\mathbb{R}^d$, set $\delta_i=\epsilon/2R_i(\zeta)$ for $i=1,2$. Also, take $E_i\in\Exp(P_i)$ for $i=1,2$. Because $\Upsilon_i=o(R_i)$ as $\xi\rightarrow 0$ for $i=1,2$ there is a neighborhood $\mathcal{O}$ of $0$ for which $|\Upsilon_i(\xi)|<\delta_iR_i(\xi)$ whenever $\xi\in\mathcal{O}$ for $i=1,2$. By virtue of Lemma \ref{lem:tEisContracting}, $t^{-E_1}\zeta,t^{-E_2}\zeta\in\mathcal{O}$ for some $t>0$ and therefore
\begin{align*}
|P_1(\zeta)-P_2(\zeta)|&=t|P_1(t^{-E_1}\zeta)-P_2(t^{-E_2}\zeta)|\\
&\leq t|\Upsilon_1(t^{-E_1}\zeta)|+t|\Upsilon_2(t^{-E_2}\zeta)|\\
&<t\delta_1R_1(t^{-E_1}\zeta)+t\delta_2R_2(t^{-E_2}\zeta)\\
&\leq\delta_1R_1(\zeta)+\delta_2R_2(\zeta)\\
&\leq\epsilon
\end{align*}
as required.
\end{proof}

\begin{lemma}\label{lem:UpsilonToZero}
Let $\phi\in\mathcal{S}_d$ be such that $\sup|\hat\phi|=1$ and suppose that $\xi_0\in\Omega(\phi)$ is of positive homogeneous type for $\hat\phi$ with associated positive homogeneous polynomial $P=P_{\xi_0}$ and remainder $\Upsilon=\Upsilon_{\xi_0}$. Then for any $E\in\Exp(P)$,
\begin{equation*}
\lim_{t\rightarrow\infty}t\Upsilon(t^{-E}\xi)=0.
\end{equation*}
for each $\xi\in\mathbb{R}^d$.
\end{lemma}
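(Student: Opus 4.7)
The plan is to combine three ingredients: the defining bound $\Upsilon(\eta) = o(R(\eta))$ as $\eta \to 0$ (where $R = \Re P$), the homogeneity relation $R(t^{-E}\eta) = t^{-1}R(\eta)$ inherited from $E \in \Exp(P)$, and the fact that $\{t^E\}$ is contracting (Lemma \ref{lem:tEisContracting}), which implies $t^{-E}\xi \to 0$ as $t \to \infty$.

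Fix $\xi \in \mathbb{R}^d$. If $\xi = 0$ the statement is trivial since $\Upsilon(0) = 0$, so assume $\xi \neq 0$; in particular $R(\xi) > 0$ by positive definiteness of $R$. Given $\varepsilon > 0$, use the hypothesis $\Upsilon = o(R)$ near the origin to choose a neighborhood $\mathcal{O}$ of $0$ on which $|\Upsilon(\eta)| \leq (\varepsilon/R(\xi))\, R(\eta)$. By Lemma \ref{lem:tEisContracting}, $\|t^{-E}\| = \|(1/t)^E\| \to 0$ as $t \to \infty$, so $t^{-E}\xi \in \mathcal{O}$ for all $t$ sufficiently large. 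For such $t$,
\begin{equation*}
t\,|\Upsilon(t^{-E}\xi)| \;\leq\; \frac{\varepsilon}{R(\xi)}\, t\, R(t^{-E}\xi).
\end{equation*}

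Now I invoke homogeneity: since $E \in \Exp(P)$, and in particular $E \in \Exp(R)$ because $R = \Re P$, we have $t\,R(t^{-E}\xi) = R(t^E t^{-E}\xi) = R(\xi)$. Substituting gives $t\,|\Upsilon(t^{-E}\xi)| \leq \varepsilon$ for all sufficiently large $t$. Since $\varepsilon > 0$ is arbitrary, $\lim_{t\to\infty} t\Upsilon(t^{-E}\xi) = 0$, as claimed.

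There is no real obstacle here: the only thing worth double-checking is that $E \in \Exp(P)$ genuinely passes to $E \in \Exp(R)$, which is immediate by taking real parts of $tP(\eta) = P(t^E\eta)$, and that $\{t^{-E}\}_{t \geq 1}$ contracts to $0$, which follows from Lemma \ref{lem:tEisContracting} applied at the parameter $1/t$. Everything else is the standard "little-oh plus exact scaling" cancellation.
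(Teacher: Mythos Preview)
Your proof is correct and follows essentially the same approach as the paper's: both handle $\xi=0$ trivially, then for $\xi\neq 0$ combine the contracting property of $\{t^E\}$ (so $t^{-E}\xi\to 0$), the little-oh hypothesis $\Upsilon=o(R)$, and the homogeneity identity $tR(t^{-E}\xi)=R(\xi)$ to conclude. The only cosmetic difference is that you write the argument as an explicit $\varepsilon$-bound while the paper phrases it as the vanishing of the ratio $\Upsilon(t^{-E}\xi)/R(t^{-E}\xi)$.
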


\begin{proof}
The assertion is clear when $\xi=0$. When $\xi\in\mathbb{R}^d$ is non-zero, we note that $t^{-E}\xi\rightarrow 0$ as $t\rightarrow 0$ by virtue of Lemma \ref{lem:tEisContracting}; in particular, $t^{-E}\xi\in\mathcal{U}$ for sufficiently large $t$. Consequently,
\begin{equation*}
\lim_{t\rightarrow\infty}\frac{\Upsilon(t^{-E}\xi)}{R(t^{-E}\xi)}=0
\end{equation*}
because $\Upsilon(\eta)=o(R(\eta))$ as $\eta\rightarrow 0$ and so it follows that
\begin{equation*}
\lim_{t\rightarrow\infty}t\Upsilon(t^{-E}\xi)=\lim_{t\rightarrow\infty}R(\xi)\frac{\Upsilon(t^{-E}\xi)}{t^{-1}R(\xi)}=R(\xi)\lim_{t\rightarrow\infty}\frac{\Upsilon(t^{-E}\xi)}{R(t^{-E}\xi)}=0
\end{equation*}
as desired.
\end{proof}

\noindent Given $\xi_0\in\Omega(\phi)$ and considering the Taylor expansion for $\Gamma_{\xi_0}$, to recognize whether or not $\xi_0$ is of positive homogeneous type for $\hat\phi$ is not always straightforward, e.g., Subsection \ref{subsec:ex3}). Nonetheless, it is useful to have a method based on the Taylor expansion for $\Gamma_{\xi_0}$ through which we can determine if $\xi_0$ is of positive homogeneous type for $\hat\phi$ and, when it is, pick out the associated positive homogeneous polynomial $P_{\xi_0}$. The remainder of this section is dedicated to doing just this.\\

\noindent Given any integer $m\geq 2$, the $m$th order Taylor expansion for $\Gamma_{\xi_0}$ is necessarily of the form
\begin{equation}\label{eq:DefinitionQm}
\Gamma_{\xi_0}(\xi)=i\alpha_{\xi_0}\cdot\xi-Q^m_{\xi_0}(\xi)+O(|\xi|^{m+1})
\end{equation}
for $\xi\in\mathcal{U}$ where $\alpha_{\xi_0}\in\mathbb{R}^d$ and $Q^m_{\xi_0}(\xi)$ is a polynomial given by
\begin{equation*}
Q^m_{\xi_0}(\xi)=\sum_{1<|\alpha|\leq m}c_\alpha\xi^{\alpha}
\end{equation*}
for $\xi\in\mathbb{R}^d$, where $\{c_{\alpha}\}\subseteq\mathbb{C}$. No constant term appears in the expansion for $\Gamma_{\xi_0}$ because $\Gamma_{\xi_0}(0)=0$.  Moreover the fact that
\begin{equation*}
\hat\phi(\xi+\xi_0)=\hat\phi(\xi_0)e^{\Gamma_{\xi_0}(\xi)}
\end{equation*}
for all $\xi\in\mathcal{U}$ and the condition that $\sup|\hat\phi(\xi)|=1$ ensure that
\begin{equation*}
\Re(i\alpha_{\xi_0}\cdot\xi-Q^m_{\xi_0}(\xi))=-\Re Q^m_{\xi_0}(\xi)\leq 0
\end{equation*}
for $\xi$ sufficiently close to $0$ (in fact, this is precisely why $\alpha_{\xi_0}\in\mathbb{R}^d$). Our final result of this section, Proposition \ref{prop:PosHomTypeChar}, provides necessary and sufficient conditions for $\xi_0$ to be of positive homogeneous type for $\hat\phi$ in terms of $Q^m_{\xi_0}$. We remark that the proposition, although quite useful for examples, is not used anywhere else in this work. As the proof is lengthy and in many ways parallels the proof of Proposition \ref{prop:PositiveHomogeneousPolynomialsareSemiElliptic}, we have placed it in the Appendix, Subsection \ref{subsec:ProofofPosHomTypeChar}.

\begin{proposition}\label{prop:PosHomTypeChar}
Let $\phi\in\mathcal{S}_d$, suppose that $\sup|\hat\phi(\xi)|=1$ and let $\xi_0\in\Omega(\phi)$. Then the following are equivalent:
\begin{enumerate}[a.]
\item The point $\xi_0$ is of positive homogeneous type for $\hat\phi$ with corresponding positive homogeneous polynomial $P_{\xi_0}$.
\item There exist $m\geq 2$ and a positive homogeneous polynomial $P$ such that, for some $C,r>0$,
\begin{equation*}
C^{-1}R(\xi)\leq \Re Q_{\xi_0}^m(\xi)\leq C R(\xi)
\end{equation*}
and
\begin{equation*}
|\Im Q_{\xi_0}^m(\xi)|\leq C R(\xi)
\end{equation*}
for all $\xi\in\overline{B_r}$, where $R=\Re P$.
\item There exist $m\geq 2$ and $E\in\MdR$ with real spectrum such that, for some $r>0$ and sequence of positive real numbers $\{t_n\}$ such that $t_n\rightarrow\infty$ as $n\rightarrow\infty$, the
sequence $\{\rho_n\}$ of polynomials defined by
\begin{equation}\label{eq:PosHomTypeCharHypothesis}
\rho_n(\xi)=t_nQ_{\xi_0}^m(t_n^{-E}\xi)
\end{equation}
converges for all $\xi\in\overline{B_r}$ as $n\rightarrow\infty$ and its limit has positive real part for all $\xi\in S_r$.
\end{enumerate}
When the above equivalent conditions are satisfied, for any $m'\geq m$,
\begin{equation*}
P_{\xi_0}(\xi)=\lim_{t\rightarrow \infty}tQ^{m'}_{\xi_0}(t^{-E}\xi)
\end{equation*}
for all $\xi\in\mathbb{R}^d$ and this convergence is uniform on all compact subsets of $\mathbb{R}^d$.
\end{proposition}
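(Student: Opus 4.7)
My plan is to establish the three characterizations cyclically, $(a)\Rightarrow(b)\Rightarrow(c)\Rightarrow(a)$, reading off the concluding formula $P_{\xi_0}(\xi)=\lim_{t\to\infty}tQ^{m'}_{\xi_0}(t^{-E}\xi)$ from the proof of the last implication. For $(a)\Rightarrow(b)$, I take $P=P_{\xi_0}$, $R=\Re P$, and pick $m$ at least as large as $\deg P$ and large enough that $|\xi|^{m+1}=o(R(\xi))$ as $\xi\to 0$. The latter is available because Proposition~\ref{prop:PositiveHomogeneousPolynomialsareSemiElliptic} places $R$ in semi-elliptic coordinates and then the contracting property of $\{t^E\}$ (Lemma~\ref{lem:tEisContracting}) combined with the positive definiteness of $R$ on $S$ yields a lower bound $R(\xi)\geq c|\xi|^{s}$ for small $\xi$. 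Matching $\Gamma_{\xi_0}(\xi)=i\alpha_{\xi_0}\cdot\xi-Q^m_{\xi_0}(\xi)+O(|\xi|^{m+1})$ against $\Gamma_{\xi_0}(\xi)=i\alpha_{\xi_0}\cdot\xi-P(\xi)+\Upsilon(\xi)$ with $\Upsilon=o(R)$ gives $Q^m_{\xi_0}=P+o(R)$; splitting real and imaginary parts yields $\Re Q^m_{\xi_0}=R+o(R)$ (whence the required two-sided estimate), while the inequality $|\Im P|\leq CR$, which follows from the scaling $P(t^E\xi)=tP(\xi)$ and compactness of $S$, gives $|\Im Q^m_{\xi_0}|\leq CR$ on a small ball.

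For $(b)\Rightarrow(c)$, I take $E\in\Exp(P)$ with real spectrum and, after the linear change of coordinates supplied by Proposition~\ref{prop:PositiveHomogeneousPolynomialsareSemiElliptic}, assume $E=D=\diag(\gamma_1,\ldots,\gamma_d)$ with $\gamma_j>0$. Writing $Q^m_{\xi_0}(\xi)=\sum_\alpha c_\alpha\xi^\alpha$ in these coordinates,
\begin{equation*}
tQ^m_{\xi_0}(t^{-D}\xi)=\sum_\alpha c_\alpha t^{1-\alpha\cdot\gamma}\xi^\alpha.
\end{equation*}
The bound $|Q^m_{\xi_0}|\leq 2CR$ from (b) together with the scaling $R(t^{-D}\xi)=t^{-1}R(\xi)$ gives $|tQ^m_{\xi_0}(t^{-D}\xi)|\leq 2CR(\xi)$ uniformly for $t\geq 1$ and $\xi$ in a fixed compact set; linear independence of monomials on open sets then forces $c_\alpha=0$ for every multi-index with $\alpha\cdot\gamma<1$. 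Consequently $tQ^m_{\xi_0}(t^{-D}\xi)\to \tilde P(\xi):=\sum_{\alpha\cdot\gamma=1}c_\alpha\xi^\alpha$ as $t\to\infty$ uniformly on compacta, and passing to the limit in $\Re(tQ^m_{\xi_0}(t^{-D}\xi))\geq C^{-1}R(\xi)$ yields $\Re\tilde P>0$ on $S_r$, establishing (c).

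For $(c)\Rightarrow(a)$ and the formula, the same monomial-identification argument applied in diagonalizing coordinates shows that the limit $\tilde P$ in (c) is necessarily of the form $\sum_{\alpha\cdot\gamma=1}c_\alpha\xi^\alpha$, hence a polynomial satisfying $\tilde P(s^E\xi)=s\tilde P(\xi)$, i.e., $E\in\Exp(\tilde P)$; combined with $\Re\tilde P>0$ on $S_r$, scaling promotes this to global positive definiteness, so $\tilde P$ is positive homogeneous. To identify $P_{\xi_0}=\tilde P$, I write
\begin{equation*}
\Upsilon(\xi):=\Gamma_{\xi_0}(\xi)-i\alpha_{\xi_0}\cdot\xi+\tilde P(\xi)=\bigl(\tilde P(\xi)-Q^{m'}_{\xi_0}(\xi)\bigr)+O(|\xi|^{m'+1})
\end{equation*}
for $m'\geq m$. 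Since $Q^{m'}_{\xi_0}$ still satisfies the estimates of (b) (perhaps with a larger constant, valid on a possibly smaller ball), the monomial argument shows that $\tilde P-Q^{m'}_{\xi_0}$ consists entirely of monomials of $D$-weight strictly greater than $1$, each of which is $o(\tilde R)$ by the scaling $\tilde R(t^D\eta)=t$; taking $m'$ large enough additionally forces $|\xi|^{m'+1}=o(\tilde R(\xi))$. Hence $\Upsilon=o(\tilde R)$, verifying (a) with $P_{\xi_0}=\tilde P$ (uniquely, by Lemma~\ref{lem:PisUnique}) and simultaneously proving the closing formula with uniform convergence on compacta.

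The main obstacle is the possible nondiagonalizability of $E$. When $E=D+N$ with $N\neq 0$ nilpotent commuting with $D$, one has $t^{-E}=t^{-D}t^{-N}$ and $t^{-N}=I-(\log t)N+\cdots$, so $\rho_n(\xi)$ acquires mixed factors $t^a(\log t)^k$ that obstruct the clean monomial-weight analysis. Exactly as in Step~3 of the proof of Proposition~\ref{prop:PositiveHomogeneousPolynomialsareSemiElliptic}, the boundedness of $tQ^m_{\xi_0}(t^{-E}\xi)$ combined with the linear independence of $\{t^s(\log t)^k\}$ as functions of $t$ forces the logarithmic coefficients to vanish, effectively reducing every step above to the diagonal case $E=D$.
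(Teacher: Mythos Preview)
Your outline follows the paper's proof closely: the paper also proves the cycle $(a)\Rightarrow(b)\Rightarrow(c)\Rightarrow(a)$, with $(a)\Rightarrow(b)$ via Proposition~\ref{prop:PositiveHomogeneousPolynomialsareSemiElliptic} and Proposition~\ref{prop:NormGammaLittleOhR}, $(b)\Rightarrow(c)$ via compactness of degree-$m$ polynomials (your direct monomial-weight argument is a mild improvement, yielding full rather than subsequential convergence), and the bulk of the work in $(c)\Rightarrow(a)$.

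Two points in $(c)\Rightarrow(a)$ need tightening. First, invoking ``the estimates of (b)'' for $Q^{m'}_{\xi_0}$ is circular, since (b) is not yet available. The correct argument is internal to (c): once you have shown (in diagonal coordinates) that every nonzero coefficient $c_\alpha$ of $Q^m_{\xi_0}$ satisfies $\alpha\cdot\gamma\geq 1$, the additional monomials in $Q^{m'}_{\xi_0}$ have degree exceeding $m\geq\max_j(1/\gamma_j)$ and hence automatically have weight $\alpha\cdot\gamma>1$; this is how the paper argues. Second, your treatment of the nilpotent part conflates two distinct steps. The asymptotic argument (the paper's Step~1 of Lemma~\ref{lem:PosHomTypeChar_1}) shows only that the logarithmic \emph{terms} $b_j(\xi)\,t^{\omega_j}(\log t)^j$ tend to zero along $t_n\to\infty$---the coefficients $b_j(\xi)$ need not vanish---and this alone yields only $D\in\Exp(\tilde P)$, not $E\in\Exp(\tilde P)$. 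To obtain $E\in\Exp(\tilde P)$ (and hence $N=0$, which is needed for the concluding formula with $t^{-E}$), the paper first shows $F=D+N\in\Exp(P_A)$ directly from the scaling-limit definition of $P_A$, and only then applies the growth argument of Step~3 of Proposition~\ref{prop:PositiveHomogeneousPolynomialsareSemiElliptic}. Your reference to that step is apt, but it cannot be invoked until both $D$ and $F$ are known to lie in $\Exp(\tilde P)$.
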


\section{Local limit theorems and $\ell^{\infty}$ estimates}\label{sec:LocalLimits}

In this section we prove Theorems \ref{thm:SubDecay} and \ref{thm:MainLocalLimit}. Our first result ensures that, under the hypotheses of Theorem \ref{thm:MainLocalLimit}, we can approximate the convolution powers of $\phi$ by a finite sum of attractors.

\begin{proposition}\label{prop:OmegaisFinite}
Let $\phi\in\mathcal{S}_d$ be such that $\sup|\hat\phi(\xi)|=1$. If each $\xi\in\Omega(\phi)$ is of positive homogeneous type for $\hat\phi$ then $\Omega(\phi)$ is discrete (and hence finite).
\end{proposition}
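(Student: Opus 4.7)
The plan is to show that every point of $\Omega(\phi)$ is isolated; the finiteness claim then follows from the standard fact that a discrete closed subset of the compact torus $\mathbb{T}^d$ must be finite. Since $\hat\phi$ is continuous and $2\pi$-periodic in each coordinate, $\Omega(\phi)$ is closed in $\mathbb{T}^d$, so the entire difficulty is localized to establishing the isolation property at an arbitrary $\xi_0\in\Omega(\phi)$.

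First I would fix $\xi_0\in\Omega(\phi)$ and invoke the hypothesis that $\xi_0$ is of positive homogeneous type. Writing $P=P_{\xi_0}$, $R=\Re P$ and $\Upsilon=\Upsilon_{\xi_0}$, the expansion \eqref{eq:PosHomType} gives
\begin{equation*}
\Gamma_{\xi_0}(\xi)=i\alpha_{\xi_0}\cdot\xi-P(\xi)+\Upsilon(\xi)
\end{equation*}
on a convex neighborhood $\mathcal{U}$ of $0$. Since $|\hat\phi(\xi_0+\xi)|=|\hat\phi(\xi_0)|\,|e^{\Gamma_{\xi_0}(\xi)}|=e^{\Re\Gamma_{\xi_0}(\xi)}$, taking real parts yields
\begin{equation*}
|\hat\phi(\xi_0+\xi)|=\exp\bigl(-R(\xi)+\Re\Upsilon(\xi)\bigr)\qquad (\xi\in\mathcal{U}).
\end{equation*}
This is the key identity: it converts the problem into a quantitative comparison between the positive definite quantity $R(\xi)$ and the error $\Upsilon(\xi)$.

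Next I would exploit the defining property $\Upsilon(\xi)=o(R(\xi))$ as $\xi\to 0$. This yields a neighborhood $V\subseteq\mathcal{U}$ of $0$ on which $|\Re\Upsilon(\xi)|\leq \tfrac{1}{2}R(\xi)$, so that on $V\setminus\{0\}$,
\begin{equation*}
|\hat\phi(\xi_0+\xi)|\leq\exp\bigl(-\tfrac{1}{2}R(\xi)\bigr)<1,
\end{equation*}
the strict inequality coming from the positive definiteness of $R$ (which is part of the definition of a positive homogeneous polynomial). Consequently $\xi_0+\xi\notin\Omega(\phi)$ for any nonzero $\xi\in V$, i.e.\ $\xi_0$ is isolated in $\Omega(\phi)$. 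Since $\xi_0$ was arbitrary, $\Omega(\phi)$ is discrete; being closed in the compact torus $\mathbb{T}^d$, it is finite.

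I do not anticipate a genuine obstacle here: the argument is entirely local and the only subtlety is ensuring that the remainder $\Upsilon$ is dominated by $R$ on a full neighborhood of $0$, which is exactly what the little-$o$ condition in Definition \ref{def:PosHomType} provides. No appeal to the structure of $\Exp(P)$ or to contracting groups is needed for this particular statement; those tools will be essential only later, when quantitative estimates on the size of the ``good'' neighborhood $V$ (uniform in $\xi_0$) are required for the proof of the local limit theorem.
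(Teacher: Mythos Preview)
Your proof is correct and follows essentially the same approach as the paper: both show each $\xi_0\in\Omega(\phi)$ is isolated by using the positive definiteness of $R_{\xi_0}$ together with the condition $\Upsilon_{\xi_0}=o(R_{\xi_0})$ to force $|\hat\phi(\xi_0+\xi)|<1$ on a punctured neighborhood. Your version is in fact slightly more explicit than the paper's, since you compute $|\hat\phi(\xi_0+\xi)|=\exp(-R(\xi)+\Re\Upsilon(\xi))$ directly and also spell out why discreteness plus closedness in $\mathbb{T}^d$ yields finiteness.
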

\begin{proof}
Let $\xi_0\in\Omega(\phi)$ be of positive homogeneous type for $\hat\phi$; it suffices to show that $\xi_0$ is an isolated point of $\Omega(\phi)$. In view of  Definitions \ref{def:positivehomogeneous} and \ref{def:PosHomType}, let $\Gamma_{\xi_0}$, $R_{\xi_0}=\Re P_{\xi_0}$ and $\Upsilon_{\xi_0}$ be associated to $\xi_0$. Because $R_{\xi_0}$ is positive definite and $\Upsilon_{\xi_0}(\eta)=o(R_{\xi_0}(\eta))$ as $\eta\rightarrow 0$, there is a neighborhood of $0$ on which $\Gamma_{\xi_0}(\xi)=0$ only when $\xi=0$. Since $\hat\phi(\xi+\xi_0)=\hat\phi(\xi_0)\exp(\Gamma_{\xi_0}(\xi))$ for all $\xi\in\mathcal{U}$, there is a neighborhood of $\xi_0$ on which $|\hat\phi(\xi)|<1$ for all $\xi\neq \xi_0$. Hence $\xi_0$ is an isolated point of $\Omega(\phi)$.
\end{proof}

\begin{remark}\label{rmk:TorusShift}
For any $\phi$ which satisfied the hypotheses of Proposition \ref{prop:OmegaisFinite}, we fix $\mathbb{T}_\phi^d=(-\pi,\pi]^d+\xi_\phi$ where $\xi_\phi\in\mathbb{R}^d$ makes $\Omega(\phi)$ live in the interior of $\mathbb{T}^d_\phi$ (as a subspace of $\mathbb{R}^d$); this can always be done in view of the proposition. We do this only to avoid non-essential technical issues arising from the difference between the topology of $\mathbb{R}^d$ and the topology of $\mathbb{T}^d$ inherited as a subspace.
\end{remark}

\begin{lemma}\label{lem:LocalLimit}
Let $\phi\in\mathcal{S}_d$ be such that $\sup|\hat\phi(\xi)|=1$ and suppose that $\xi_0\in\Omega(\phi)$ is of positive homogeneous type for $\hat\phi$. Let $\alpha=\alpha_{\xi_0}$ and $P=P_{\xi_0}$ be associated to $\hat\phi$ in view of Definition \ref{def:PosHomType} and let $\mu_P$ and $H_P^{(\cdot)}$ be defined by \eqref{eq:muP} and \eqref{eq:HPdef} respectively. Then there exists an open neighborhood $\mathcal{U}_{\xi_0}$ of $\xi_0$ such that, for any open sub-neighborhood $\mathcal{O}_{\xi_0}\subseteq\mathcal{U}_{\xi_0}$ containing $\xi_0$, the following limit holds. For all $\epsilon>0$ there exists $N\in\mathbb{N}_+$ such that
\begin{equation*}
\left|\frac{n^{\mu_P}}{(2\pi)^d}\int_{\mathcal{O}_{\xi_0}}\hat\phi(\xi)^ne^{-ix\cdot\xi}\,d\xi-n^{\mu_P}e^{-ix\cdot\xi_0}\hat\phi(\xi_0)^nH_P^n(x-n\alpha)\right|<\epsilon
\end{equation*}
for all natural numbers $n\geq N$ and for all $x\in\mathbb{R}^d$.
\end{lemma}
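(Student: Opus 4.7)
The plan is a standard local-limit computation built around the Taylor expansion of $\Gamma_{\xi_0}$ combined with the rescaling by the one-parameter group $\{t^E\}_{t>0}$ for some $E \in \Exp(P)$. First I would pick $\mathcal{U}_{\xi_0}$ to be a small open neighborhood of $\xi_0$ contained in $\xi_0 + \mathcal{U}$ and small enough that $|\Re\Upsilon(\eta)| \leq R(\eta)/2$ for every $\eta \in \mathcal{U}_{\xi_0} - \xi_0$; this is possible because $\Upsilon(\eta) = o(R(\eta))$ and $R = \Re P$ is positive definite. For any open sub-neighborhood $\mathcal{O}_{\xi_0} \subseteq \mathcal{U}_{\xi_0}$ containing $\xi_0$, set $V = \mathcal{O}_{\xi_0} - \xi_0$ and $y = x - n\alpha$. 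Using $\hat\phi(\xi_0 + \eta)^n = \hat\phi(\xi_0)^n e^{n\Gamma_{\xi_0}(\eta)}$ together with the expansion $\Gamma_{\xi_0}(\eta) = i\alpha\cdot\eta - P(\eta) + \Upsilon(\eta)$, the change of variables $\xi = \xi_0 + \eta$ yields
\begin{equation*}
\frac{n^{\mu_P}}{(2\pi)^d}\int_{\mathcal{O}_{\xi_0}} \hat\phi(\xi)^n e^{-ix\cdot\xi}\,d\xi = \frac{n^{\mu_P} e^{-ix\cdot\xi_0}\hat\phi(\xi_0)^n}{(2\pi)^d}\int_V e^{-nP(\eta) + n\Upsilon(\eta)} e^{-iy\cdot\eta}\,d\eta.
\end{equation*}
Comparing with the defining integral of $H_P^n(y)$ and recalling that $|e^{-ix\cdot\xi_0}\hat\phi(\xi_0)^n| = 1$, the claim reduces to showing that
\begin{equation*}
A_n(y) := \frac{n^{\mu_P}}{(2\pi)^d}\int_V e^{-nP(\eta)}\bigl(e^{n\Upsilon(\eta)} - 1\bigr) e^{-iy\cdot\eta}\,d\eta, \qquad B_n(y) := \frac{n^{\mu_P}}{(2\pi)^d}\int_{\mathbb{R}^d \setminus V} e^{-nP(\eta)} e^{-iy\cdot\eta}\,d\eta
\end{equation*}
both tend to $0$ uniformly in $y \in \mathbb{R}^d$.

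The key step is the substitution $\eta = n^{-E}\zeta$. Using $\tr E = \mu_P$ and the homogeneity relation $nP(n^{-E}\zeta) = P(\zeta)$, this substitution absorbs the prefactor $n^{\mu_P}$ into the Jacobian and transforms $A_n, B_n$ into integrals over $n^E V$ and $\mathbb{R}^d \setminus n^E V$, respectively. Because the oscillatory factor $e^{-i(n^{-E^*}y)\cdot\zeta}$ has modulus one, passing to absolute values inside the integrals yields dominators independent of $y$, so uniform convergence in $y$ will follow from dominated convergence applied to these dominators. For $B_n$, the dominator is $(2\pi)^{-d}\int_{\mathbb{R}^d \setminus n^E V} e^{-R(\zeta)}\,d\zeta$: because $\{t^E\}$ is contracting (Lemma \ref{lem:tEisContracting}) the sets $n^E V$ exhaust $\mathbb{R}^d$, and integrability of $e^{-R}$ (implicit in Proposition \ref{prop:PropertiesofHP}) gives the decay. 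For $A_n$, on $n^E V$ the preimage $n^{-E}\zeta$ lies in $V \subseteq \mathcal{U}_{\xi_0} - \xi_0$, so the preparatory choice of $\mathcal{U}_{\xi_0}$ combined with the homogeneity of $R$ gives $n \Re\Upsilon(n^{-E}\zeta) \leq R(\zeta)/2$; hence $|e^{-P(\zeta)}(e^{n\Upsilon(n^{-E}\zeta)} - 1)|$ is dominated by the integrable function $e^{-R(\zeta)/2} + e^{-R(\zeta)}$. Lemma \ref{lem:UpsilonToZero} provides the pointwise convergence $n\Upsilon(n^{-E}\zeta) \to 0$, so the integrand tends to zero pointwise, and dominated convergence concludes.

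The main obstacle is less computational than organizational: achieving uniformity in $x$ without recourse to oscillatory cancellation. The saving grace is that $|e^{-iy\cdot\eta}| = 1$, so each integral can be replaced by its modulus before invoking dominated convergence, producing $x$-independent bounds. A secondary point is that $\mathcal{U}_{\xi_0}$ must be fixed independently of $\mathcal{O}_{\xi_0}$; choosing it small enough for $|\Re\Upsilon(\eta)| \leq R(\eta)/2$ to hold throughout guarantees the dominator estimate on every admissible sub-neighborhood, which is precisely why the lemma is stated with a fixed outer neighborhood and arbitrary inner one.
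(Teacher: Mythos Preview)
Your argument is correct and is essentially the same as the paper's: the same choice of $\mathcal{U}_{\xi_0}$ (so that $|\Re\Upsilon|\le R/2$), the same substitution $\eta=n^{-E}\zeta$ exploiting $\det(n^{-E})=n^{-\mu_P}$ and $nP(n^{-E}\zeta)=P(\zeta)$, the same domination by $e^{-R/2}$, and the same appeal to Lemma~\ref{lem:UpsilonToZero} for the pointwise limit. The only organizational difference is that the paper inserts an auxiliary compact set $K$ and splits the difference into three integrals $I_1,I_2,I_3$ (handling $I_1$ on $K$ by bounded convergence and $I_2,I_3$ via the tail bound $\int_{\mathbb{R}^d\setminus K}e^{-R/2}<\epsilon/3$), whereas you split into two pieces $A_n,B_n$ and invoke dominated convergence directly on $\mathbb{R}^d$, using that $n^{E}V$ exhausts $\mathbb{R}^d$. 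Your packaging is slightly more economical but not substantively different.
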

\begin{proof}
Given that $\xi_0\in\Omega(\phi)$ is of positive homogeneous type for $\hat\phi$,
\begin{equation}\label{eq:LocalLimit_1}
\hat\phi(\xi+\xi_0)=\hat\phi(\xi_0)e^{\Gamma(\xi)}
\end{equation}
for $\xi\in\mathcal{U}$ where
\begin{equation*}
\Gamma(\xi)=i\alpha\cdot\xi-P(\xi)+\Upsilon(\xi)
\end{equation*}
and where $\Upsilon(\xi)=o(R(\xi))$ and $R=\Re P$. If necessary, we restrict $\mathcal{U}$ further so that
\begin{equation}\label{eq:LocalLimit_2}
\big|e^{\Gamma(\xi)}\big|=e^{\Re(i\alpha\cdot\xi-P(\xi)+\Upsilon(\xi))}\leq e^{-R(\xi)/2}
\end{equation}
for all $\xi\in\mathcal{U}$ and put $\mathcal{U}_{\xi_0}=\xi_0+\mathcal{U}$. Now, let $\mathcal{O}_{\xi_0}\subseteq\mathcal{U}_{\xi_0}$ be an open set containing $\xi_0$. It is clear that $\mathcal{O}:=\mathcal{O}_{\xi_0}-\xi_0$ is open and is such that $0\in\mathcal{O}\subseteq\mathcal{U}$. Of course, \eqref{eq:LocalLimit_1} and \eqref{eq:LocalLimit_2} hold for all $\xi\in\mathcal{O}$.

Observe that, for all $x\in\mathbb{R}^d$ and $n\in\mathbb{N}_+$,
\begin{equation}\label{eq:LocalLimit_3}
\begin{split}
\lefteqn{\frac{n^{\mu_P}}{(2\pi)^d}\int_{\mathcal{O}_{\xi_0}}\hat\phi(\xi)^ne^{-ix\cdot\xi}\,d\xi-e^{-ix\cdot\xi_0}\hat\phi(\xi_0)^n n^{\mu_P}H_P^n(x-n\alpha)}\\ 
&=\frac{n^{\mu_P}}{(2\pi)^d}\int_{\mathcal{O}}\hat\phi(\xi+\xi_0)^ne^{-ix\cdot(\xi+\xi_0)}\,d\xi\\ 
&\quad  -e^{-ix\cdot\xi_0}\hat\phi(\xi_0)^n \frac{n^{\mu_P}}{(2\pi)^d}\int_{\mathbb{R}^d}e^{-nP(\xi)}e^{-i(x-n\alpha)\cdot\xi}\,d\xi \\ 
&=\frac{e^{-ix\cdot\xi_0}\hat\phi(\xi_0)^n}{(2\pi)^d}\Big(n^{\mu_P}\int_{\mathcal{O}}e^{n\Gamma(\xi)}e^{-ix\cdot\xi}\,d\xi-n^{\mu_P}\int_{\mathbb{R}^d}e^{-nP(\xi)}e^{-i(x-n\alpha)\cdot\xi}\,d\xi \Big).
\end{split}
\end{equation}
Now for $E\in\Exp(P)$,
\begin{align*}
n^{\mu_P}\int_{\mathbb{R}^d}e^{-nP(\xi)}e^{-i(x-n\alpha)\cdot\xi}\,d\xi &=n^{\mu_P}\int_{\mathbb{R}^d}e^{-P(n^E\xi)}e^{-i(x-n\alpha)\cdot\xi}\,d\xi\\
&=n^{\mu_P}\int_{n^{E}(\mathbb{R}^d)}e^{-P(\xi)}e^{-i(x-n\alpha)\cdot n^{-E}\xi}\det(n^{-E})\,d\xi\\
&=\int_{\mathbb{R}^d}e^{-P(\xi)}e^{-i(x-n\alpha)\cdot n^{-E}\xi}\,d\xi
\end{align*}
for all $x\in\mathbb{R}^d$ and $n\in\mathbb{N}_+$ where, in view of Corollary \ref{cor:UniqueTrace}, we have used the fact that $\det(n^{-E})=n^{-\tr E}=n^{-\mu_P}$. Noting the adjoint relation $(n^{-E})^*=n^{-E^*}$, and upon putting $y(n,x)=n^{-E^*}(x-n\alpha)$, we have
\begin{equation}\label{eq:LocalLimit_4}
n^{\mu_P}\int_{\mathbb{R}^d}e^{-nP(\xi)}e^{-i(x-n\alpha)\cdot\xi}\,d\xi=\int_{\mathbb{R}^d}e^{-P(\xi)}e^{-iy(n,x)\cdot\xi}\,d\xi
\end{equation}
for all $x\in\mathbb{R}^d$ and $n\in\mathbb{N}_+$.

Let $\epsilon>0$ and observe that, in view of Proposition \ref{prop:PropertiesofHP},
$e^{-P/2}\in L^1(\mathbb{R}^d)$ because $P(\xi)/2$ is a positive homogeneous polynomial. We can therefore choose a compact set $K$ for which
\begin{equation}\label{eq:LocalLimit_5}
 \int_{\mathbb{R}^d\setminus K}\big|e^{-P}\big|\,d\xi\leq\int_{\mathbb{R}^d\setminus K}e^{-R(\xi)}\,d\xi\leq \int_{\mathbb{R}^d\setminus K}e^{-R(\xi)/2}<\epsilon/3.
\end{equation}

By virtue of Proposition \ref{prop:ContractingCapturesCompact} and Lemma \ref{lem:tEisContracting}, there is $N_1\in\mathbb{N}_+$, such that $n^{-E}(K)\subseteq \mathcal{O}$ for all $n\geq N_1$. Thus
\begin{equation}\label{eq:LocalLimit_6}
\begin{split}
\lefteqn{\int_{\mathcal{O}}e^{n\Gamma(\xi)}e^{-ix\cdot\xi}\,d\xi}\\
 &=\int_{n^{-E}(K)}e^{n\Gamma(\xi)}e^{-ix\cdot\xi}\,d\xi+\int_{\mathcal{O}\setminus n^{-E}(K)}e^{n\Gamma(\xi)}e^{-ix\cdot\xi}\,d\xi\\
&=\int_{n^{-E}(K)}e^{-P(n^{E}\xi)+n\Upsilon(\xi)}e^{-i(x-n\alpha)\cdot\xi}\,d\xi+\int_{\mathcal{O}\setminus n^{-E}(K)}e^{n\Gamma(\xi)}e^{-ix\cdot\xi}\,d\xi\\
&=\frac{1}{n^{\mu_P}}\int_{K}e^{-P(\xi)+n\Upsilon(n^{-E}\xi)}e^{-iy(n,x)\cdot\xi}\,d\xi+\int_{\mathcal{O}\setminus n^{-E}(K)}e^{n\Gamma(\xi)}e^{-ix\cdot\xi}\,d\xi
\end{split}
\end{equation}
for all $n\geq N_1$ and $x\in\mathbb{R}^d$; here we have again used the fact that $\det(n^{-E})=n^{-\mu_P}$. Combining \eqref{eq:LocalLimit_3},\eqref{eq:LocalLimit_4} and \eqref{eq:LocalLimit_6} yields
\begin{equation}\label{eq:LocalLimit_7}
\begin{split}
\lefteqn{\left|\frac{n^{\mu_P}}{(2\pi)^d}\int_{\mathcal{O}_{\xi_0}}\hat\phi(\xi)^ne^{-ix\cdot\xi}\,d\xi-e^{-ix\cdot\xi_0}\hat\phi(\xi_0)^n n^{\mu_P}H_P^n(x-n\alpha)\right|}\\
&\leq \left|\int_{K}\left(e^{-P(\xi)+n\Upsilon(n^{-E}\xi)}-e^{-P(\xi)}\right)e^{-iy(n,x)}\,d\xi\right|\\\
& \quad+\int_{\mathbb{R}^d\setminus K}\left|e^{-P(\xi)}e^{-iy(n,x)\cdot \xi}\right|\,d\xi+n^{\mu_P}\left|\int_{\mathcal{O}\setminus n^{-E}(K)}e^{n\Gamma(\xi)}e^{-ix\cdot\xi}\,d\xi\right|\\
&\leq \int_{K}\left|e^{-P(\xi)+n\Upsilon(n^{-E}\xi)}-e^{-P(\xi)}\right|\,d\xi\\
&\quad +\int_{\mathbb{R}^d\setminus K}e^{-R(\xi)}\,d\xi+n^{\mu_P}\int_{\mathcal{O}\setminus n^{-E}(K)}\big|e^{\Gamma(\xi)}\big|^n\,d\xi\\
&=:I_1(n)+I_2(n)+I_3(n)
\end{split}
\end{equation}
for all $n\geq N_1$ and $x\in\mathbb{R}^d$.

It is clear that $I_2(n)<\epsilon/3$ for all $n\geq N_1$ by virtue of \eqref{eq:LocalLimit_5}. Now, in view of \eqref{eq:LocalLimit_2} and \eqref{eq:LocalLimit_5},
\begin{equation}\nonumber
I_3(n)\leq n^{\mu_P}\int_{\mathcal{O}\setminus n^{-E}(K)}e^{-nR(\xi)/2}\,d\xi\leq\int_{\mathbb{R}^d\setminus K}e^{-R(\xi)/2}\,d\xi<\epsilon/3
\end{equation}
for all $n\geq N_1$; here we have used that facts that $E\in\Exp(P)\subseteq \Exp(R)$, $\det(n^{-E})=n^{-\mu_P}$, and
\begin{equation*}
n^{E}(\mathcal{O}\setminus n^{-E}(K))= n^{E}(\mathcal{O})\setminus K\subseteq\mathbb{R}^d\setminus K.
\end{equation*}
 To estimate $I_1$, we recall that $n^{-E}(K)\subseteq \mathcal{O}$ for all $n\geq N_1$ and so the estimate \eqref{eq:LocalLimit_2} ensures that the integrand of $I_1(n)$ is bounded by $2$ for all $n\geq N_1$. In view of Lemma \ref{lem:UpsilonToZero}, an appeal to the Bounded Convergence Theorem gives a natural number $N\geq N_1$ for which $I_1(n)<\epsilon/3$ for all $n\geq N$. The desired result follows by combining our estimates for $I_1,I_2$ and $I_3$ with \eqref{eq:LocalLimit_7}.
\end{proof}

\noindent The next lemma follows directly from Lemma \ref{lem:LocalLimit} by upon recalling that $n^{\mu_P}H_P^{n}=H_P\circ L_{n^{-E^*}}\in\mathcal{S}(\mathbb{R}^d)$ for all $n\in\mathbb{N}_+$.

\begin{lemma}\label{lem:OnDiagonalBound}
Let $\phi$, $\xi_0$ and $P$ be as in the statement of Lemma \ref{lem:LocalLimit}. Under the same hypotheses of the lemma, there exists an open neighborhood $\mathcal{U}_{\xi_0}$ of $\xi_0$ such that, for any open sub-neighborhood $\mathcal{O}_{\xi_0}\subseteq\mathcal{U}_{\xi_0}$ containing $\xi_0$, there exists $C>0$ and a natural number $N$ such that
\begin{equation*}
\left|\frac{1}{(2\pi)^d}\int_{\mathcal{O}_{\xi_0}}\hat\phi(\xi)^ne^{-ix\cdot\xi}\,d\xi\right|\leq \frac{C}{n^{\mu_P}}
\end{equation*}
for all $n\geq N$ and $x\in\mathbb{R}^d$.
\end{lemma}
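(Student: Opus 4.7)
The proof should be a short consequence of Lemma \ref{lem:LocalLimit} together with the scaling identity for $H_P^{(\cdot)}$ from Proposition \ref{prop:PropertiesofHP}. The plan is the following: take the neighborhood $\mathcal{U}_{\xi_0}$ to be exactly the one produced by Lemma \ref{lem:LocalLimit}, and for an arbitrary open sub-neighborhood $\mathcal{O}_{\xi_0}\subseteq\mathcal{U}_{\xi_0}$ containing $\xi_0$, apply the lemma with $\epsilon=1$ (any fixed positive $\epsilon$ would do). This yields a natural number $N$ such that, for every $n\geq N$ and every $x\in\mathbb{R}^d$,
\begin{equation*}
\left|\frac{n^{\mu_P}}{(2\pi)^d}\int_{\mathcal{O}_{\xi_0}}\hat\phi(\xi)^ne^{-ix\cdot\xi}\,d\xi-n^{\mu_P}e^{-ix\cdot\xi_0}\hat\phi(\xi_0)^nH_P^n(x-n\alpha)\right|<1.
\end{equation*}

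Next, I would bound the attractor term uniformly in both $n$ and $x$. By item \ref{st:PropertiesofHP_2}) of Proposition \ref{prop:PropertiesofHP}, for any $E\in\Exp(P)$,
\begin{equation*}
n^{\mu_P}H_P^n(x-n\alpha)=H_P\bigl(n^{-E^*}(x-n\alpha)\bigr),
\end{equation*}
and since $H_P\in\mathcal{S}(\mathbb{R}^d)$ by item \ref{st:PropertiesofHP_1}) of the same proposition, $H_P$ is bounded on $\mathbb{R}^d$. Coupled with $|\hat\phi(\xi_0)|=1$ and $|e^{-ix\cdot\xi_0}|=1$, this gives
\begin{equation*}
\bigl|n^{\mu_P}e^{-ix\cdot\xi_0}\hat\phi(\xi_0)^nH_P^n(x-n\alpha)\bigr|\leq \|H_P\|_{\infty}
\end{equation*}
for every $x\in\mathbb{R}^d$ and every $n\in\mathbb{N}_+$.

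Combining the two estimates via the triangle inequality yields, for all $n\geq N$ and all $x\in\mathbb{R}^d$,
\begin{equation*}
\left|\frac{n^{\mu_P}}{(2\pi)^d}\int_{\mathcal{O}_{\xi_0}}\hat\phi(\xi)^ne^{-ix\cdot\xi}\,d\xi\right|\leq 1+\|H_P\|_{\infty}=:C,
\end{equation*}
and dividing by $n^{\mu_P}$ produces the claimed inequality. There is really no obstacle here; the only subtlety is that the $x$-dependence in the attractor term enters only through the $L_{n^{-E^*}}$-dilate of $H_P$, so the bound is genuinely uniform in $x$, which is what allows the constant $C$ to depend neither on $x$ nor on $n$.
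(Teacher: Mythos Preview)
Your argument is correct and is exactly the approach the paper takes: it states that the lemma follows directly from Lemma~\ref{lem:LocalLimit} upon recalling that $n^{\mu_P}H_P^{n}=H_P\circ L_{n^{-E^*}}\in\mathcal{S}(\mathbb{R}^d)$, which is precisely the scaling identity and boundedness you invoke.
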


\begin{proof}[Proof of Theorem \ref{thm:MainLocalLimit}]
Under the hypotheses of the theorem, Proposition \ref{prop:OmegaisFinite} ensures that $\Omega(\phi)$ is finite. In line with the paragraph preceding the statement of the theorem, we label
\begin{equation*}
\Omega(\phi)=\{\xi_1,\xi_2,\dots,\xi_A, \xi_{A+1},\dots,\xi_B\}\subseteq \mathbb{T}^d
\end{equation*}
where $\mu_{P_{\xi_q}}=\mu_{\phi}$ for $q=1,2,\dots A$ and $\mu_{P_{\xi_q}}>\mu_\phi$ for $q=A+1,A+2,\dots B$. Also, we assume all additional notation from the paragraph preceding the statement of the theorem and take $\mathbb{T}_\phi^d$ as in Remark \ref{rmk:TorusShift}.

Let $\{\mathcal{O}_{\xi_q}\}_{q=1,2,\dots,B}$ be a collection of disjoint open subsets of $\mathbb{T}_\phi^d$ for which the conclusions of Lemmas \ref{lem:LocalLimit} and \ref{lem:OnDiagonalBound} hold for $q=1,2,\dots A$ and $q=A+1,A+2,\dots B$ respectively. Set
\begin{equation*}
K=\mathbb{T}_\phi^d\setminus\Big(\bigcup_{q=1}^B\mathcal{O}_{\xi_q}\Big)
\end{equation*}
and observe that
\begin{equation*}
s:=\sup_{\xi\in K}|\hat\phi(\xi)|<1
\end{equation*}
Now, in view of the Fourier inversion formula,
\begin{equation}\label{eq:MainLocalLimit_1}
\begin{split}
\phi^{(n)}(x)&= \frac{1}{(2\pi)^d}\int_{\mathbb{T}_\phi^d}\hat\phi(\xi)^ne^{-ix\cdot\xi}\,d\xi\\
&= \sum_{q=1}^{B}\frac{1}{(2\pi)^d}\int_{\mathcal{O}_{\xi_q}}\hat\phi(\xi)^ne^{-ix\cdot\xi}\,d\xi+\frac{1}{(2\pi)^d}\int_{K}\hat\phi(\xi)^ne^{-ix\cdot\xi}\,d\xi
\end{split}
\end{equation}
for all $x\in\mathbb{Z}^d$ and $n\in\mathbb{N}_+$. Appealing to Lemma \ref{lem:LocalLimit} ensures that for $q=1,2,\dots,A$,
\begin{equation}\label{eq:MainLocalLimit_2}
\frac{1}{(2\pi)^d}\int_{\mathcal{O}_{\xi_q}}\hat\phi(\xi)^ne^{-ix\cdot \xi}\,d\xi=e^{-ix\cdot\xi_q}\hat\phi(\xi_q)^nH^n_{P_q}(x-n\alpha_q)+o(n^{-\mu_\phi})
\end{equation}
uniformly for $x\in\mathbb{R}^d$. Now, for each $q=A+1,A+2,\dots,B$, Lemma \ref{lem:OnDiagonalBound} guarantees that
\begin{equation}\label{eq:MainLocalLimit_3}
\frac{1}{(2\pi)^d}\int_{\mathcal{O}_{\xi_q}}\hat\phi(\xi)^ne^{-ix\cdot \xi}\,d\xi=O(n^{-\mu_{P_{\xi_q}}})=o(n^{-\mu_{\phi}})
\end{equation}
uniformly for $x\in\mathbb{R}^d$ because $\mu_{P_{\xi_q}}>\mu_{\phi}$. Finally, we note that
\begin{equation}\label{eq:MainLocalLimit_4}
\frac{1}{(2\pi)^d}\int_{K}\hat\phi(\xi)^ne^{-ix\cdot\xi}\,d\xi=o(n^{-\mu_\phi})
\end{equation}
uniformly for $x\in\mathbb{R}^d$ because $s^n=o(n^{-\mu_\phi})$. The desired result is obtained by combining \eqref{eq:MainLocalLimit_1}, \eqref{eq:MainLocalLimit_2},\eqref{eq:MainLocalLimit_3} and \eqref{eq:MainLocalLimit_4}.
\end{proof}

\noindent As an application to Theorem \ref{thm:MainLocalLimit}, we are now in a position to prove $\ell^{\infty}(\mathbb{Z}^d)$ estimates for $\phi^{(n)}$ and thus give a partial answer to Question \eqref{ques:SupremumDecay}. We first treat a basic lemma whose proof makes use of the famous theorem of R. Dedekind (generalized by E. Artin) concerning the linear independence of characters. Interestingly enough, the statement of the lemma below mirrors a result of Dedekind appearing in the Vorlesungen \cite{Dirichlet1894} where the characters $e^{-ix\cdot \xi}$ are replaced by field isomorphisms (see p. 6 of \cite{Dean2009}).

\begin{lemma}\label{lem:CharaterMatrix}
For any distinct $\xi_1,\xi_2,\dots,\xi_A\in\mathbb{T}^d$, there exists $x_1,x_2,\dots x_A\in\mathbb{Z}^d$ such that
\begin{equation*}
V=\begin{pmatrix}
   e^{-ix_1\cdot\xi_1} & e^{-ix_1\cdot\xi_2} & \cdots &e^{-ix_1\cdot\xi_A}\\
  e^{-ix_2\cdot\xi_1} & e^{-ix_2\cdot\xi_2} & \cdots &e^{-ix_2\cdot\xi_A}\\
  \vdots & \vdots & \ddots &\vdots\\
 e^{-ix_A\cdot\xi_1} & e^{-ix_A\cdot\xi_2} & \cdots &e^{-ix_A\cdot\xi_A}\\
  \end{pmatrix}
\end{equation*}
is invertible.
\end{lemma}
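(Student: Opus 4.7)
The plan is to induct on $A$, invoking at the inductive step the classical theorem of Dedekind asserting that distinct characters of a group are linearly independent. Recall that the characters $\chi_k\colon \mathbb{Z}^d \to \mathbb{C}^{*}$ defined by $\chi_k(x) = e^{-ix\cdot\xi_k}$ are group homomorphisms, and they are pairwise distinct precisely because $\xi_1,\dots,\xi_A$ are distinct in $\mathbb{T}^d = (-\pi,\pi]^d$: indeed $\chi_j = \chi_k$ on $\mathbb{Z}^d$ iff $\xi_j - \xi_k \in 2\pi\mathbb{Z}^d$, which is impossible for distinct points of $\mathbb{T}^d$. The base case $A=1$ is trivial: I would take $x_1 = 0$ so that $V = (1)$.

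For the inductive step, suppose the statement holds at level $A-1$. Given distinct $\xi_1,\dots,\xi_A \in \mathbb{T}^d$, the inductive hypothesis furnishes $x_1,\dots,x_{A-1} \in \mathbb{Z}^d$ for which the $(A-1)\times(A-1)$ matrix $V'$ with entries $e^{-ix_j\cdot\xi_k}$ (for $1 \le j,k \le A-1$) is invertible. I would then treat $x_A$ as a free variable and cofactor-expand $\det V$ along the last row to obtain
\begin{equation*}
\det V \;=\; \sum_{k=1}^{A} (-1)^{A+k}\, M_k\, e^{-ix_A\cdot\xi_k},
\end{equation*}
where $M_k$ denotes the minor obtained by deleting the last row and the $k$th column of $V$. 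The decisive observation is that $M_A = \det V' \neq 0$ by construction, so $\det V$, viewed as a function of $x_A \in \mathbb{Z}^d$, is a nontrivial linear combination of the characters $\chi_1,\dots,\chi_A$ in which the coefficient of $\chi_A$ is nonzero.

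By Dedekind's theorem on the linear independence of distinct characters, this linear combination cannot vanish identically on $\mathbb{Z}^d$, and so there exists $x_A \in \mathbb{Z}^d$ making $\det V \neq 0$. This closes the induction and proves the lemma. The only substantive point is the linear independence of the $\chi_k$ on $\mathbb{Z}^d$, which I would either cite directly (Dedekind--Artin) or verify on the spot by a one-line Fourier argument: a nontrivial relation $\sum_{k=1}^A c_k e^{-ix\cdot\xi_k} \equiv 0$ on $\mathbb{Z}^d$ would mean that the finite measure $\sum_k c_k \delta_{-\xi_k}$ on $\mathbb{T}^d$ has all Fourier coefficients zero, and hence is itself zero, forcing every $c_k$ to vanish. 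I do not foresee any other obstacles; the cofactor expansion isolates a pure character $\chi_A$ with a nonzero coefficient, and independence of characters does the rest.
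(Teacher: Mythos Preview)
Your proof is correct and follows essentially the same approach as the paper: induction on $A$, cofactor expansion of the determinant along the row corresponding to the free variable, and an appeal to Dedekind's theorem on the linear independence of distinct characters to conclude the expansion cannot vanish identically. The paper phrases the inductive step by contradiction while you argue it directly, and you additionally offer a Fourier-coefficient proof of the character independence, but the substance is the same.
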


\begin{proof}
The statement is obviously true when $A=1$ and so we use induction on $A$. Let $\xi_1,\xi_2,\dots,\xi_{A+1}\in\mathbb{T}^d$ be distinct and take $x_1,x_2,\dots,x_A\in\mathbb{Z}^d$ as guaranteed by the inductive hypotheses. For any $\zeta_1,\zeta_2,\dots,\zeta_A\in\mathbb{T}^d$, we define
\begin{equation*}
F(\zeta_1,\zeta_2,\dots,\zeta_A)=\det\begin{pmatrix}
   e^{-ix_1\cdot\zeta_1} & e^{-ix_1\cdot\zeta_2} & \cdots &e^{-ix_1\cdot\zeta_A}\\
  e^{-ix_2\cdot\zeta_1} & e^{-ix_2\cdot\zeta_2} & \cdots &e^{-ix_2\cdot\zeta_A}\\
  \vdots & \vdots & \ddots &\vdots\\
 e^{-ix_A\cdot\zeta_1} & e^{-ix_A\cdot\zeta_2} & \cdots &e^{-ix_A\cdot\zeta_A}\\
  \end{pmatrix}.
\end{equation*}
In this notation, our inductive hypothesis is the condition $F(\xi_1,\xi_1,\dots,\xi_A)\neq 0$.  Let $G:\mathbb{Z}^d\rightarrow\mathbb{C}$ be defined by
\begin{equation*}
G(x)=\det\begin{pmatrix}
   e^{-ix_1\cdot\xi_1} & e^{-ix_1\cdot\xi_2} & \cdots &e^{-ix_1\cdot\xi_A} & e^{-ix_1\cdot \xi_{A+1}}\\
  e^{-ix_2\cdot\xi_1} & e^{-ix_2\cdot\xi_2} & \cdots &e^{-ix_2\cdot\xi_A} & e^{-ix_1\cdot \xi_{A+1}}\\
  \vdots & \vdots & \ddots &\vdots &\vdots\\
 e^{-ix_A\cdot\xi_1} & e^{-ix_A\cdot\xi_2} & \cdots &e^{-ix_A\cdot\xi_A}& e^{-ix_A\cdot \xi_{A+1}}\\
e^{-ix\cdot\xi_1} & e^{-ix\cdot\xi_2} & \cdots &e^{-ix\cdot\xi_A}& e^{-ix\cdot \xi_{A+1}}
  \end{pmatrix}
\end{equation*}
for $x\in\mathbb{Z}^d$. Our job is to conclude that $G(x_{A+1})\neq 0$ for some $x_{A+1}\in\mathbb{Z}^d$. We assume to reach a contradiction that this is not the case, that is, for all $x\in\mathbb{Z}^d$, $G(x)=0$. Upon expanding by cofactors, we have
\begin{equation*}
G(x)=\sum_{k=1}^{A+1}(-1)^{A+1+k}F(\xi_1,\xi_2,\dots,\widehat{\xi_k},\dots,\xi_{A+1})e^{-ix\cdot \xi_k}=0
\end{equation*}
for all $x\in\mathbb{Z}^d$; here $\widehat{\xi_k}$ means that we have omitted $\xi_k$ from the list $\xi_1,\xi_2,\dots,\xi_{A+1}$. Given that $\xi_1,\xi_2,\dots,\xi_{A+1}$ are all distinct, the characters $x\mapsto e^{-ix\cdot\xi_k}$ for $k=1,2,\dots, A+1$ are distinct and so by Dedekind's independence theorem it follows that $F(\xi_1,\xi_2,\dots,\widehat{\xi_k},\dots,\xi_{A+1})=0$ for all $k=1,2,\dots,A+1$. This however contradicts our inductive hypotheses for $F(\xi_1,\xi_2,\dots,\xi_A,\widehat{\xi_{A+1}})=F(\xi_1,\xi_2,\dots,\xi_A)\neq 0$.
\end{proof}

\begin{proof}[Proof of Theorem \ref{thm:SubDecay}]
By virtue of Theorem \ref{thm:MainLocalLimit} and \eqref{eq:HPScale}, we have
\begin{equation}\label{eq:SupDecay_1}
n^{\mu_\phi}\phi^{(n)}(x)=\sum_{k=1}^A e^{-ix\cdot\xi_k}\hat\phi(\xi_k)^nH_{P_k}\left(n^{-E_k^*}\left(x-n\alpha_k\right)\right)+o(1)
\end{equation}
uniformly for $x\in\mathbb{Z}^d$ where $E_k\in\Exp(P_k)$ for $k=1,2,\dots A$. Upon recalling that the attractors $H_{P_k}\in \mathcal{S}(\mathbb{R}^d)$, the upper estimate of \eqref{eq:SupDecay} follows directly from \eqref{eq:SupDecay_1} and the triangle inequality. Showing the lower estimate of \eqref{eq:SupDecay} is trickier, for we must ensure that the sum in \eqref{eq:SupDecay_1} does not collapse at all $x\in\mathbb{Z}^d$ -- this is precisely where Lemma \ref{lem:CharaterMatrix} comes in.

For the distinct collection $\xi_1,\xi_2,\dots,\xi_A\in\mathbb{T}^d$, let $x_1,x_2,\dots,x_d\in\mathbb{Z}^d$ be as guaranteed by Lemma \ref{lem:CharaterMatrix} and, by focusing on $x$'s near $n\alpha_1$, we consider the $A\times A$ systems
\begin{equation}\label{eq:SupDecay_2}
f(n,x_j)=\sum_{k=1}^A\exp\left(-i( x_j+\lfloor n\alpha_1\rfloor)\cdot \xi_k\right)\hat\phi(\xi_k)^nH_{P_k}\left(n^{-E_k^*}(x_j+\lfloor n\alpha_1\rfloor-n\alpha_k)\right)
\end{equation}
and
\begin{equation}\label{eq:SupDecay_3}
g_j(n)=\sum_{k=1}\exp(-ix_j\cdot\xi_k) h_k(n)
\end{equation}
for $j=1,2,\dots, A$, where
\begin{equation*}
h_k(n)=\begin{cases}
       e^{-i\lfloor n\alpha_1\rfloor\cdot \xi_k}\hat\phi(\xi_k)^nH_{P_k}(0) &\mbox{ if }\alpha_1=\alpha_k\\
             0 & \mbox{ otherwise}
       \end{cases}
\end{equation*}
for $k=1,2,\dots,A$.
By virtue of Lemma \ref{lem:SpectralEstimateforContractingGroup} and Propositions \ref{prop:PositiveHomogeneousPolynomialsareSemiElliptic} and \ref{prop:SymPisCompact}, it follows that
\begin{equation*}
\lim_{n\rightarrow\infty}|n^{-E_k^*}(x_j+\lfloor n\alpha_1\rfloor -n\alpha_k)|=\begin{cases}
                                                                              0 & \mbox{ if }\alpha_k=\alpha_1\\
                                                                              \infty & \mbox{ otherwise}.
                                                                             \end{cases}
\end{equation*}
for all $j,k=1,2,\dots, A$. Again using the fact that each $H_{P_k}\in\mathcal{S}(\mathbb{R}^d)$, the above limit ensures that, for all $\epsilon>0$, there exists $N_{\epsilon}\in\mathbb{N}_+$ for which
\begin{equation}\label{eq:SupDecay_4}
|f(n,x_j)-g_j(n)|<\epsilon
\end{equation}
for all $j=1,2,\dots A$ and $n\geq N_{\epsilon}$. The system \eqref{eq:SupDecay_3} can be rewritten in the form
\begin{equation*}
\begin{pmatrix}
g_1(n)\\
g_2(n)\\
\vdots\\
g_A(n)
\end{pmatrix}
=\begin{pmatrix}
   e^{-ix_1\cdot\xi_1} & e^{-ix_1\cdot\xi_2} & \cdots &e^{-ix_1\cdot\xi_A}\\
  e^{-ix_2\cdot\xi_1} & e^{-ix_2\cdot\xi_2} & \cdots &e^{-ix_2\cdot\xi_A}\\
  \vdots & \vdots & \ddots &\vdots\\
 e^{-ix_A\cdot\xi_1} & e^{-ix_A\cdot\xi_2} & \cdots &e^{-ix_A\cdot\xi_A}\\
  \end{pmatrix}
\begin{pmatrix}
h_1(n)\\
h_2(n)\\
\vdots\\
h_A(n)
\end{pmatrix}
\end{equation*}
or equivalently
\begin{equation}\label{eq:SupDecay_5}
g(n)=V h(n)
\end{equation}
for $n\in\mathbb{N}_+$ where $V$ is that of Lemma \ref{lem:CharaterMatrix}. Taking $\mathbb{C}^A$ to be equipped with the maximum norm, the matrix $V$ determines a linear operator $L_V:\mathbb{C}^A\rightarrow\mathbb{C}^A$ which is bounded below by virtue of the lemma. So, in view of \eqref{eq:SupDecay_6}, there is a constant $\delta>0$ for which
\begin{equation}\label{eq:SupDecay_6}
\max_{j=1,2,\dots, A}|g_j(n)|\geq \delta \max_{j=1,2,\dots, A}|h_j(n)|\geq \delta |H_{P_1}(0)|=:3C>0
\end{equation}
for all $n\in\mathbb{N}_+$. Upon combining \eqref{eq:SupDecay_1}, \eqref{eq:SupDecay_4} and \eqref{eq:SupDecay_6}, we obtain $N\in\mathbb{N}_+$ for which
\begin{equation*}
n^{\mu_\phi}\|\phi^{(n)}\|_{\infty}\geq \max_{j=1,2,\dots A}|n^{\mu_\phi}\phi^{(n)}(x_j+\lfloor n\alpha_1\rfloor)|\geq C
\end{equation*}
for all $n\geq N$. The theorem now follows by, if necessary, adjusting the constant $C$ for $n<N$.
\end{proof}

\section{Pointwise bounds for $\phi^{(n)}$}\label{sec:PointwiseBounds}

Throughout this section, we assume that $\phi:\mathbb{Z}^d\rightarrow \mathbb{C}$ is finitely supported. In this case, $\hat\phi(z)$ is a trigonometric polynomial on $\mathbb{C}^d$. As usual, we assume that $\sup_{\xi\in\mathbb{T}^d}|\hat\phi(\xi)|=\sup_{\xi\in\mathbb{R}^d}|\hat\phi(\xi+0i)|=1$.

\subsection{Generalized exponential bounds}\label{subsec:ExponentialEstimate}

\noindent In this subsection, we prove Theorem \ref{thm:ExponentialEstimate} and present a variety of results concerning discrete space and time differences of convolution powers. The estimate of the following lemma, Lemma \ref{lem:ComplexExponentialEstimate}, is crucial to our arguments to follow; its analogue when $d=1$ can be found the proof of Theorem 3.1 of \cite{Diaconis2014}. We note that in \cite{Diaconis2014}, the analogue of Lemma \ref{lem:ComplexExponentialEstimate} is used to deduce Gevrey-type estimates from which the desired estimates follow in one dimension. Such arguments are troublesome when the decay is anisotropic for $d>1$. By contrast, our off-diagonal estimates are found by applying Lemma \ref{lem:ComplexExponentialEstimate} following a complex change-of-variables.

\begin{lemma}\label{lem:ComplexExponentialEstimate}
Let $\phi:\mathbb{Z}^d\rightarrow\mathbb{C}$ be finitely supported and such that $\sup_{\xi\in\mathbb{T}^d}|\hat\phi(\xi)|=1$. Suppose that $\xi_0\in\Omega(\phi)$ is of positive homogeneous type for $\hat\phi$ with associated $\alpha\in\mathbb{R}^d$ and positive homogeneous polynomial $P$. Define $f_{\xi_0}:\mathbb{C}^d\rightarrow\mathbb{C}$ by
\begin{equation}\label{eq:ComplexExponentialEstimate}
f_{\xi_0}(z)=\hat\phi(\xi_0)^{-1}e^{-\alpha\cdot (z+\xi_0)}\hat\phi(z+\xi_0)
\end{equation}
for $z\in\mathbb{C}^d$. For any compact set $K\subseteq\mathbb{R}^d$ containing an open neighborhood of $0$ for which $|\phi(\xi+\xi_0)|<1$ for all non-zero $\xi\in K$, there exist $\epsilon,M>0$ for which
\begin{equation*}
|f_{\xi_0}(z)|\leq \exp(-\epsilon R(\xi)+M R(\nu))
\end{equation*}
for all $z=\xi-i\nu$ such that $\xi\in K$ and $\nu\in\mathbb{R}^d$.
\end{lemma}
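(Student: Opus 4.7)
The plan is to combine the real-variable asymptotic from the positive homogeneous type assumption with Proposition \ref{prop:ComplexEstimate} applied to $P$, and to patch these estimates against the crude bound from the finite support of $\phi$ on the rest of the domain.

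First, I would establish the real-variable bound $|f_{\xi_0}(\xi)| \leq e^{-\epsilon_0 R(\xi)}$ for all $\xi \in K$ and some $\epsilon_0 > 0$. In a small ball $B_r$ contained in the domain $\mathcal{U}$ of $\Gamma_{\xi_0}$, Definition \ref{def:PosHomType} (together with the observation that the prefactor cancels the linear drift $i\alpha \cdot \xi$) gives $f_{\xi_0}(\xi) = \exp(-P(\xi) + \Upsilon(\xi))$ with $\Upsilon(\xi) = o(R(\xi))$, so shrinking $r$ yields $|f_{\xi_0}(\xi)| \leq e^{-R(\xi)/2}$ on $B_r$. On $K \setminus B_r$, the hypothesis on $K$ combined with continuity and compactness gives $|f_{\xi_0}(\xi)| \leq s$ for some $s < 1$ uniformly, and since $R$ is continuous, positive definite, and bounded above and below on the compact set $K \setminus B_r$, the exponential estimate $|f_{\xi_0}(\xi)| \leq e^{-\epsilon_0 R(\xi)}$ follows by adjusting $\epsilon_0$.

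Next I would extend to complex $z = \xi - i\nu$ in a complex neighborhood of $0$. Because $\phi$ has finite support, $\hat\phi$ is entire, so in view of $\hat\phi(\xi_0) \neq 0$, $\Gamma_{\xi_0}$ admits a holomorphic extension to some complex neighborhood $\mathcal{W}$ of $0$; on $\mathcal{W}$ one has $f_{\xi_0}(z) = \exp(-P(z) + \tilde\Upsilon(z))$ with $\tilde\Upsilon$ holomorphic. A Taylor/Cauchy argument, using that the Taylor polynomial of $\tilde\Upsilon$ up to the total degree of $P$ is constrained by $\Upsilon(\xi) = o(R(\xi))$ together with the semi-elliptic structure provided by Proposition \ref{prop:PositiveHomogeneousPolynomialsareSemiElliptic}, yields $|\Re\tilde\Upsilon(z)| \leq \delta(R(\xi) + R(\nu))$ for any small $\delta > 0$ after further shrinking $\mathcal{W}$. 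The estimate $|\exp(-P(z))| \leq \exp(-\epsilon_1 R(\xi) + M_1 R(\nu))$ from Proposition \ref{prop:ComplexEstimate} then yields the target bound on $\mathcal{W}$ with slightly adjusted constants.

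For $\xi \in K$ and $\nu \in \mathbb{R}^d$ with $z = \xi - i\nu \notin \mathcal{W}$, I would use the crude triangle-inequality bound $|f_{\xi_0}(\xi - i\nu)| \leq C_1 e^{C_2 |\nu|}$ coming directly from finite support. By Proposition \ref{prop:PositiveHomogeneousPolynomialsareSemiElliptic}, any $E \in \Exp(P)$ has spectrum in $(0, 1/2]$, so $R(\nu) \gtrsim |\nu|$ for large $|\nu|$; thus $e^{MR(\nu)}$ dominates $C_1 e^{C_2|\nu|}$ for $M$ large and $|\nu|$ large. On the remaining compact region—where $\xi$ is bounded away from $0$ in $K$ and $|\nu|$ is bounded—$|f_{\xi_0}|$ is uniformly bounded by continuity, and this constant can be absorbed into $e^{-\epsilon R(\xi) + MR(\nu)}$ using that $R(\xi) \geq r_0 > 0$ on $K \setminus B_r$ and $R(\nu) \geq 0$. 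Choosing a single pair $(\epsilon, M)$ that works uniformly on all three regions completes the proof. The chief technical obstacle is the complex-extension step: translating the real-variable $o$-condition on $\Upsilon$ into a quantitative complex bound compatible with the target $e^{-\epsilon R(\xi) + MR(\nu)}$ requires combining Taylor expansion with the homogeneity structure of $P$, and is the only place in the argument where the order of vanishing of $\tilde\Upsilon$ matters.
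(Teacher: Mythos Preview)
Your strategy matches the paper's: a local complex estimate near $0$ obtained from the expansion together with Proposition~\ref{prop:ComplexEstimate}, a continuity/compactness patch on the tube where $\xi\in K$ and $|\nu|$ is bounded, and the crude trigonometric-polynomial growth bound combined with $R(\nu)\gtrsim |\nu|$ for large $|\nu|$ (the paper invokes Corollary~\ref{cor:RDominatesNorm} for that last step). You are in fact more explicit than the paper about the complex remainder $\tilde\Upsilon$; the paper simply asserts that restricting $\mathcal{U}$ and citing Proposition~\ref{prop:ComplexEstimate} yields the bound, whereas your Taylor/semi-elliptic argument is the right way to justify it.

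One small imprecision in your region decomposition: the set $\{z=\xi-i\nu\notin\mathcal{W}:\xi\in K,\ |\nu|\text{ bounded}\}$ is not only where $\xi$ is bounded away from $0$; it also contains points with $\xi$ arbitrarily close to $0$ but $|\nu|$ of moderate size (between the inner radius of $\mathcal{W}$ and your large-$|\nu|$ threshold). On that piece your absorption via $R(\xi)\geq r_0$ fails, but $R(\nu)$ is bounded below by a positive constant there, so absorption into $e^{MR(\nu)}$ works instead. With that adjustment your argument is complete.
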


\begin{proof}
Write $f=f_{\xi_0}$ and denote by $\pi_{r}$ the canonical projection from $\mathbb{C}^d$ onto $\mathbb{R}^d$. We first estimate $f(z)$ on a neighborhood of $0$ in $\mathbb{C}^d$.

Our assumption that $\xi_0\in\Omega(\phi)$ ensures that the expansion \eqref{eq:PosHomType} is valid on an open set $\mathcal{U}\in\mathbb{C}^d$ such that $0\in \pi_r(\mathcal{U})\subseteq K$. By virtue of Proposition \ref{prop:ComplexEstimate}, we can further restrict $\mathcal{U}$ to ensure that, for some $\epsilon'>0$ and $M>0$,
\begin{equation}\label{eq:ComplexExponentialEstimate_1}
|f(z)|\leq e^{-\epsilon' R(\xi)+MR(\nu)}
\end{equation}
for $z=\xi-i\nu\in\mathcal{U}$.

We now estimate $f(z)$ on a cylinder of $K$ in $\mathbb{C}^d$. Since $|\hat\phi(\xi)|<1$ for all non-zero $\xi\in K$, the compactness $K\setminus\pi_r(\mathcal{U})$ ensures that, for some $0<\epsilon\leq\epsilon'$, the continuous function $h:\mathbb{C}^d\rightarrow\mathbb{C}$, defined by
\begin{equation*}
h(z)=e^{\epsilon R(\xi)}f(z)=\exp(-\epsilon (R\circ \pi_r)(z))f(z)
\end{equation*}
for $z=\xi-i\nu\in\mathbb{C}^d$, is such that $|h(\xi)|<1$ for all $\xi\in K\setminus \pi_r(\mathcal{U})$. Because $h$ is continuous, there exists $\delta>0$ for which $|h(z)|\leq 1$ for all $z=\xi-i\nu$ such that $\xi\in K\setminus\pi_r(\mathcal{U})$ and $|\nu|\leq\delta$. Consequently,
\begin{equation}\label{eq:ComplexExponentialEstimate_2}
|h(z)|\leq e^{-\epsilon R(\xi)}\leq e^{-\epsilon R(\xi)+M R(\nu)}
\end{equation}
for all $z=\xi-i\nu$ such that $\xi\in K\setminus\pi_r(\mathcal{U})$ and $|\nu|\leq \delta$. Upon possibly further restricting $\delta>0$, a combination of the estimates \eqref{eq:ComplexExponentialEstimate_1} and \eqref{eq:ComplexExponentialEstimate_2} ensures that
\begin{equation}\label{eq:ComplexExponentialEstimate_3}
|f(z)|\leq e^{-\epsilon R(\xi)+MR(\nu)}
\end{equation}
for all $z=\xi-i\nu\in\mathbb{C}$ such that $\xi\in K$ and $|\nu|\leq \delta$.

Finally, we estimate $f(z)=f(\xi-i\nu)$ for unbounded $\nu$. Because $\hat\phi$ is a trigonometric polynomial, $f(z)$ has exponential growth on the order of $|\nu|$ for $z=\xi-i\nu\in\mathbb{C}^d$ when $\xi$ is restricted to $K$. Therefore,
\begin{equation}\label{eq:ComplexExponentialEstimate_4}
|f(z)|\leq e^{-\epsilon R(\xi)+|\nu|+C}
\end{equation}
for all $z=\xi-i\nu$ such that $\xi\in K$ and $\nu\in\mathbb{R}^d$. Because $|\nu|+C$ is dominated by $R(\nu)$ by virtue of Corollary \ref{cor:RDominatesNorm}, the lemma follows immediately from the estimates \eqref{eq:ComplexExponentialEstimate_3} and \eqref{eq:ComplexExponentialEstimate_4}.
\end{proof}

\begin{lemma}\label{lem:TimeExponentialEstimate}
Let $\phi:\mathbb{Z}^d\rightarrow\mathbb{C}$ be finitely supported and such that $\sup_{\xi\in\mathbb{T}^d}|\hat\phi(\xi)|=1$. Assume additionally that $\Omega(\phi)=\{\xi_0\}$ and $\xi_0$ is of positive homogeneous type for $\hat\phi$ with corresponding $\alpha\in\mathbb{R}^d$ and positive homogeneous polynomial $P$ and let $\mathbb{T}_\phi^d$ be as in Remark \ref{rmk:TorusShift}. Define $g_{(\cdot)}:\mathbb{N}_+\times\mathbb{C}^d\rightarrow \mathbb{C}$ by $g_l(z)=1-f_{\xi_0}(z)^l$ for $l\in\mathbb{N}_+$ and $z\in\mathbb{C}^d$ where $f_{\xi_0}$ is given by \eqref{eq:ComplexExponentialEstimate}. There exist positive constants $C$ and $M$ for which
\begin{equation*}
|g_l(z)|\leq lC(R(\nu)+R(\xi))e^{lMR(\nu)}
\end{equation*}
for all $l\in\mathbb{N}_+$ and $z=\xi-i\nu$ such that $\xi\in\mathbb{T}_{\phi}^d$ and $\nu\in\mathbb{R}^d$.
\end{lemma}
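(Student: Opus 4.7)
My plan is to factor $g_l(z) = (1 - f_{\xi_0}(z))\sum_{j=0}^{l-1} f_{\xi_0}(z)^j$, so that $|g_l(z)| \le |1 - f_{\xi_0}(z)| \cdot \sum_{j=0}^{l-1} |f_{\xi_0}(z)|^j$, and then estimate the two factors separately. Writing $f = f_{\xi_0}$, the geometric sum is the easy factor: under the hypothesis $\Omega(\phi) = \{\xi_0\}$, the compact shifted torus $\mathbb{T}_\phi^d$ (translated so that $0$ lies in its interior, as in Remark \ref{rmk:TorusShift}) furnishes an admissible compact set $K$ in Lemma \ref{lem:ComplexExponentialEstimate}, since no other point of $\mathbb{T}_\phi^d$ maximizes $|\hat\phi|$. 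That lemma produces $\epsilon, M > 0$ with $|f(z)| \le \exp(-\epsilon R(\xi) + MR(\nu)) \le \exp(MR(\nu))$ (the last step using $R(\xi) \ge 0$), and hence $\sum_{j=0}^{l-1} |f(z)|^j \le l\exp((l-1)MR(\nu)) \le l\exp(lMR(\nu))$.

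What remains is to show that $|1 - f(z)| \le C(R(\xi) + R(\nu)) \exp(MR(\nu))$ uniformly for $\xi \in \mathbb{T}_\phi^d$ and $\nu \in \mathbb{R}^d$, since combining this with the bound on the geometric sum immediately yields the claim. I would split into two regions. If $R(\xi) + R(\nu) \ge \delta$ for a fixed $\delta > 0$ chosen below, the estimate is automatic: $|1 - f(z)| \le 1 + \exp(MR(\nu)) \le (2/\delta)(R(\xi) + R(\nu))\exp(MR(\nu))$. The delicate region is the complementary small complex neighborhood of $0$. Since $\hat\phi$ is a trigonometric polynomial, $f$ extends holomorphically to $\mathbb{C}^d$ and, in a complex neighborhood of $0$, admits the expansion $f(z) = \exp(-P(z) + \Upsilon(z))$ where $P$ and $\Upsilon$ are the holomorphic extensions of the positive homogeneous polynomial and its remainder. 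Applying the elementary inequality $|1 - e^w| \le |w|\exp(\max(0, \Re w))$ reduces matters to an upper bound on $|{-}P(z) + \Upsilon(z)|$ and a control of $\max(0, \Re({-}P(z) + \Upsilon(z)))$, the latter being harmless in view of the estimate already extracted from Lemma \ref{lem:ComplexExponentialEstimate}.

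The main obstacle is the polynomial bound $|P(\xi - i\nu)| + |\Upsilon(\xi - i\nu)| \le C(R(\xi) + R(\nu))$ on the small neighborhood. The bound on $P$ is essentially the content underlying Proposition \ref{prop:ComplexEstimate} (already invoked in the proof of Lemma \ref{lem:ComplexExponentialEstimate}): expanding $P(\xi - i\nu)$ into a polynomial in $(\xi, \nu)$ whose monomials carry the same weighted degree as those of $R$, positive definiteness and positive homogeneity of $R$ yield the desired comparison on bounded subsets. The hypothesis $\Upsilon(\eta) = o(R(\eta))$ as $\eta \to 0$ promotes by Taylor's theorem to $|\Upsilon(\xi - i\nu)| \le \tfrac{1}{2}(R(\xi) + R(\nu))$ on a sufficiently small neighborhood of $0$, which fixes the threshold $\delta$ and makes the $\Upsilon$ contribution absorbable into the $P$-estimate. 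Putting these together bounds $|1 - f(z)|$ by $C(R(\xi) + R(\nu))\exp(MR(\nu))$ on the small neighborhood (where the exponential factor is in fact close to $1$), which combines with the geometric sum estimate to give the desired $|g_l(z)| \le lC(R(\xi) + R(\nu))\exp(lMR(\nu))$.
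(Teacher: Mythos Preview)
Your proposal is correct and follows essentially the same approach as the paper: factor $g_l=(1-f_{\xi_0})\sum_{k=0}^{l-1}f_{\xi_0}^k$, bound the geometric sum via Lemma~\ref{lem:ComplexExponentialEstimate}, and establish $|1-f_{\xi_0}(z)|\leq C(R(\xi)+R(\nu))e^{MR(\nu)}$ by arguments parallel to those of that lemma. The paper's proof is terser—it simply invokes ``similar arguments to those in the preceding lemma's proof'' for the $|1-f_{\xi_0}|$ bound—whereas you spell out the near/far split and the role of Proposition~\ref{prop:ComplexEstimate} more explicitly, but the underlying strategy is the same.
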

\begin{proof}
By making similar arguments to those in the preceding lemma's proof, we obtain positive constants $C$ and $M$ for which $|1-f_{\xi_0}(z)|\leq C(R(\xi)+R(\nu))e^{MR(\nu)}$ for all $z=\xi-i\nu$ such that $\xi\in\mathbb{T}_{\phi}^d$ and $\nu\in\mathbb{R}^d$. The desired estimate now follows from Lemma \ref{lem:ComplexExponentialEstimate} (where $K=\overline{T_\phi^d}$) by writing $g_l=(1-f_{\xi_0})\sum_{k=0}^{l-1}f_{\xi_0}^k$ and making use of the triangle inequality.
\end{proof}

\noindent We are now in a position to prove Theorem \ref{thm:ExponentialEstimate}.
\begin{proof}[Proof of Theorem \ref{thm:ExponentialEstimate}] In view of the hypotheses, there exist $\alpha\in\mathbb{R}^d$ and a positive homogeneous polynomial $P$ such that each $\xi\in\Omega(\phi)$ is of positive homogeneous type for $\hat\phi$ with corresponding $\alpha_\xi=\alpha$ and $P_\xi=P$. We write $\Omega(\phi)=\{\xi_1,\xi_2,\dots,\xi_Q\}$ in view of Proposition \ref{prop:OmegaisFinite} and take $\mathbb{T}_\phi^d$ as in Remark \ref{rmk:TorusShift}. Because $\Omega(\phi)$ is finite and lives on the interior of $\mathbb{T}_\phi^d$, there exits a collection of mutually disjoint and relatively compact sets $\{K_q\}_{q=1}^Q$ such that $\mathbb{T}_\phi^d=\cup_{q=1}^Q K_q$ and, for each $q=1,2,\dots,Q$, $K_q$ contains an open neighborhood of $\xi_q$. We now establish two important uniform estimates. First, upon noting that $|\hat\phi(\xi+\xi_q)|<1$ for all $\xi\in \overline{K_q-\xi_q}$ for each $q=1,2,\dots,Q$, by virtue of Lemma \ref{lem:ComplexExponentialEstimate} there are positive constants $M$ and $\epsilon$ such that, for each $q=1,2,\dots,Q$,
\begin{equation}\label{eq:ExponentialEstimate_1}
|f_{\xi_q}(\xi-i\nu)|\leq \exp(-\epsilon R(\xi)-MR(\nu))
\end{equation}
for all $\xi\in K_q-\xi_q$ and $\nu\in\mathbb{R}^d$. Also, by a similar argument to those given in the proof of Lemma \ref{lem:LocalLimit}, we observe that
\begin{equation}\label{eq:ExponentialEstimate_2}
\begin{split}
n^{\mu_P}\int_{K_q-\xi_q}e^{-\epsilon nR(\xi)}\,d\xi&= n^{\mu_P}\int_{K_q-\xi_q}e^{-\epsilon R(n^{-E}\xi)}\,d\xi\\
&=\int_{n^{E}(K_q-\xi_q)}e^{-\epsilon R(\xi)}\,d\xi\\
&\leq \int_{\mathbb{R}^d}e^{-\epsilon R(\xi)}\,d\xi=:C<\infty
\end{split}
\end{equation}
for all $n\in\mathbb{N}_+$ and $q=1,2,\dots,Q$.

Now, let $\nu\in\mathbb{R}^d$ be arbitrary but fixed. Because $\hat\phi$ is a trigonometric polynomial (and so periodic on $\mathbb{C}^d$), it follows that
\begin{equation}\label{eq:ExponentialEstimate_3}
\begin{split}
\phi^{(n)}(x)&=\frac{1}{(2\pi)^d}\int_{\mathbb{T}_\phi^d}e^{-ix\cdot(\xi-i\nu)}\hat\phi(\xi-i\nu)^n\,d\xi\\
&=\frac{1}{(2\pi)^d}\sum_{q=1}^Q\int_{K_q}e^{-ix\cdot(\xi-i\nu)}\hat\phi(\xi-i\nu)^n\,d\xi
\end{split}
\end{equation}
for all $x\in\mathbb{Z}^d$ and $n\in\mathbb{N}_+$. Our aim is to uniformly estimate the integrals over $K_q$. To this end, for each $q=1,2,\cdots,Q$, we observe that
\begin{align*}
\lefteqn{\int_{K_q}e^{-ix\cdot(\xi-i\nu)}\hat\phi(\xi-i\nu)^n\,d\xi}\\
&=\int_{K_q-\xi_q}e^{-ix\cdot(\xi_q+\xi-i\nu)}\hat\phi(\xi_q)^ne^{-in\alpha\cdot(\xi_0+\xi-i\nu)}f_{\xi_q}(\xi-i\nu)^n\,d\xi\\
&=e^{-ny_n(x)\cdot\nu}\int_{K_q-\xi_q}\left(e^{-iy_n(x)\cdot(\xi_q+\xi)}\hat\phi(\xi_q)\right)^nf_{\xi_q}(\xi-i\nu)^n\,d\xi
\end{align*}
for all $x\in\mathbb{Z}^d$ and $n\in\mathbb{N}_+$, where $y_n(x):=(x-n\alpha)/n$. In view of the estimates \eqref{eq:ExponentialEstimate_1} and \eqref{eq:ExponentialEstimate_2}, we have
\begin{equation}\label{eq:ExponentialEstimate_4}
\begin{split}
\left|\int_{K_q}e^{-ix\cdot(\xi-i\nu)}\hat\phi(\xi-i\nu)\,d\xi\right|&\leq e^{-ny_n(x)\cdot\nu}\int_{K_q-\xi_q}|f_{\xi_q}(\xi-i\nu)|^n\,d\xi\\
&\leq \frac{C}{n^{\mu_\phi}}\exp(-n(y_n(x)\cdot\nu-MR(\nu)))
\end{split}
\end{equation}
for all $x\in\mathbb{Z}^d$, $n\in\mathbb{N}_+$ and $q=1,2,\dots,Q$ where the constants $M$ and $C$ are independent of $\nu$. Upon setting $C'=(2\pi)^d/Q$ and combining \eqref{eq:ExponentialEstimate_3} and \eqref{eq:ExponentialEstimate_4}, we obtain the estimate
\begin{equation*}
|\phi^{(n)}(x)|\leq \frac{C'}{n^{\mu_\phi}}\exp(-n(y_n(x)\cdot\nu-MR(\nu)))
\end{equation*}
which holds uniformly for $x\in\mathbb{Z}^d$ and $n\in\mathbb{N}_+$ and $\nu\in\mathbb{R}^d$. Consequently,
\begin{align*}
|\phi^{(n)}(x)|&\leq \inf_{\nu\in\mathbb{R}^d}\frac{C'}{n^{\mu_\phi}}\exp(-n(y_n(x)\cdot\nu-MR(\nu)))\\
&\leq\frac{C'}{n^{\mu_\phi}}\exp\left(-n\sup_{\nu}(y_n(x)\cdot\nu-MR(\nu))\right)\\
&=\frac{C'}{n^{\mu_\phi}}\exp\left(-n(MR)^{\#}(y_n(x))\right)
\end{align*}
for all $x\in\mathbb{Z}^d$ and $n\in\mathbb{N}_+$. The desired result follows upon noting that $(MR)^{\#}\asymp R^{\#}$ in view of Corollary \ref{cor:MovingConstant}.
\end{proof}

\begin{remark}\label{rmk:ExpHypoth}
The essential hypothesis of Theorem \ref{thm:ExponentialEstimate} (essential for a global exponential bound) is that each $\xi\in\Omega(\phi)$ has the same drift $\alpha$; this can be seen by looking at the example of Subsection \ref{subsec:ex2} wherein the convolution powers $\phi^{(n)}$ exhibit two ``drift packets'' which drift away from one another. The hypothesis that all of the corresponding positive homogeneous polynomials are the same can be weakened to include, at least, the condition that $R_\xi=\Re P_\xi\asymp R$ for all $\xi\in\Omega(\phi)$, where $R$ is some fixed real valued positive homogeneous polynomial. In any case, the theorem's hypotheses are seen to be natural when $\phi$ has some form of ``periodicity'' as can be seen in the example of Subsection \ref{subsec:ex3}. Also, the hypotheses are satisfied for all finitely supported and genuinely $d$-dimensional probability distributions on $\mathbb{Z}^d$, see Subsection \ref{subsec:ClassicalLLT}.\\
\end{remark}

\noindent For the remainder of this subsection, we restrict our attention further to finitely supported functions $\phi:\mathbb{Z}^d\rightarrow\mathbb{C}$ which satisfy $\sup_{\xi}|\hat\phi|=1$ and where this supremum is attained at only one point in $\mathbb{T}^d$, i.e., $\Omega(\phi)=\{\xi_0\}$. In this setting, we obtain global estimates for discrete space and time derivatives of convolution powers. Our first result concerns only discrete spatial derivatives of $\phi^{(n)}$ and is a useful complement to Theorem \ref{thm:ExponentialEstimate}. For related results, see Theorem 3.1 of \cite{Diaconis2014} and Theorem 8.2 of \cite{Thomee1969}, the latter being due to O. B. Widlund \cite{Widlund1965,Widlund1966}. For $w\in\mathbb{Z}^d$ and $\psi:\mathbb{Z}^d\rightarrow\mathbb{C}$, define $D_w \psi:\mathbb{Z}^d\rightarrow\mathbb{C}$ by
\begin{equation*}
D_w\psi(x)=\psi(x+w)-\psi(x)
\end{equation*}
for $x\in\mathbb{Z}^d$.
\begin{theorem}\label{thm:DerivativeEstimate}
Let $\phi:\mathbb{Z}^d\rightarrow\mathbb{C}$ be finitely supported and such that $\sup_{\xi\in\mathbb{T}^d}|\hat\phi(\xi)|=1$. Additionally assume that $\Omega(\phi)=\{\xi_0\}$ and that $\xi_0$ is of positive homogeneous type for $\hat\phi$ with corresponding $\alpha=\alpha_{\xi_0}\in\mathbb{R}^d$ and positive homogeneous polynomial $P=P_{\xi_0}$. Also let $\mu_\phi$ be defined by \eqref{eq:MuPhiDefinition} (or equivalently \eqref{eq:muP}), let $R^{\#}$ be the Legendre-Fenchel transform of $R=\Re P$ and take $E\in\Exp(P)$. There exists $M>0$ such that, for any $B>0$ and $m\in\mathbb{N}_+$, there exists $C_m>0$ such that, for any $w_1,w_2,\dots, w_m\in\mathbb{Z}^d$,
\begin{equation}\label{eq:DerivativeEstimate}
\left|D_{w_1}D_{w_2}\cdots D_{w_m}\left(\hat\phi(\xi_0)^{-n}e^{ix\cdot\xi_0}\phi^{(n)}(x)\right)\right|
\leq \frac{C_m}{n^{\mu_{\phi}}}\left(\prod_{j=1}^m |n^{-E^*}w_j|\right)\exp\left(-nMR^{\#}\left(\frac{x-n\alpha}{n}\right)\right)
\end{equation}
for all $x\in\mathbb{Z}^d$ and $n\in\mathbb{N}_+$ such that $|n^{-E^*}w_j|\leq B$ for $j=1,2,\dots, m$.
\end{theorem}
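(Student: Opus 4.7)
The plan is to adapt the contour-shift argument used in the proof of Theorem \ref{thm:ExponentialEstimate}, tracking carefully the extra multiplicative factors produced on the Fourier side by the differences $D_{w_1}\cdots D_{w_m}$. Set $F(x) := \hat\phi(\xi_0)^{-n}e^{ix\cdot\xi_0}\phi^{(n)}(x)$. The substitution $\xi = \xi_0+\eta$ in the Fourier inversion formula gives
\begin{equation*}
F(x) = \frac{1}{(2\pi)^d}\int_{\mathbb{T}^d_\phi - \xi_0} e^{-i(x-n\alpha)\cdot\eta}\, h(\eta)^n\,d\eta,
\end{equation*}
where $h(\eta) := e^{-i\alpha\cdot\eta}\hat\phi(\xi_0+\eta)/\hat\phi(\xi_0)$ plays the role of the $f_{\xi_0}$ from Lemma \ref{lem:ComplexExponentialEstimate}. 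Each $D_{w_j}$ acting on $x$ becomes multiplication by $(e^{-iw_j\cdot\eta}-1)$ under the integral sign. Since $\hat\phi$ extends as a trigonometric polynomial to $\mathbb{C}^d$, we shift the contour $\eta\mapsto\eta-i\nu$ for $\nu\in\mathbb{R}^d$ to be chosen later; this introduces the factor $e^{-(x-n\alpha)\cdot\nu}$ and turns each difference multiplier into $(e^{-iw_j\cdot\eta - w_j\cdot\nu}-1)$, while Lemma \ref{lem:ComplexExponentialEstimate} dominates $|h(\eta-i\nu)|^n$ by $e^{-n\epsilon R(\eta)+nMR(\nu)}$.

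Now substitute $\eta = n^{-E}\zeta$ and parametrize the shift as $\nu = n^{-E}\mu$. The identities $R(n^{-E}\zeta) = n^{-1}R(\zeta)$, $w_j\cdot n^{-E}\zeta = \hat w_j\cdot\zeta$ with $\hat w_j := n^{-E^*}w_j$, and $(x-n\alpha)\cdot n^{-E}\mu = n^{-E^*}(x-n\alpha)\cdot\mu$ rescale the bound to
\begin{equation*}
|D_{w_1}\cdots D_{w_m}F(x)|\leq \frac{n^{-\mu_\phi}\,e^{-n^{-E^*}(x-n\alpha)\cdot\mu + MR(\mu)}}{(2\pi)^d}\int \prod_{j=1}^m\bigl|e^{-i\hat w_j\cdot\zeta - \hat w_j\cdot\mu}-1\bigr|\,e^{-\epsilon R(\zeta)}\,d\zeta.
\end{equation*}
Using $|e^a-1|\leq |a|e^{|a|}$ for complex $a$ and the standing hypothesis $|\hat w_j|\leq B$, each factor in the product is dominated by $|\hat w_j|(|\zeta|+|\mu|)e^{B|\mu|}$, so the product factors as $\prod_{j}|\hat w_j|\cdot(|\zeta|+|\mu|)^m e^{mB|\mu|}$. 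The $\zeta$-integral $\int(|\zeta|+|\mu|)^m e^{-\epsilon R(\zeta)}d\zeta$ is bounded by $C_m(1+|\mu|^m)$, since $e^{-R}\in\mathcal{S}(\mathbb{R}^d)$ by Proposition \ref{prop:PropertiesofHP}.

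Collecting these estimates, the $\prod_{j=1}^m|\hat w_j|$ factor comes out and the remaining $\mu$-dependent polynomial and the factor $e^{mB|\mu|}$ are absorbed into $e^{M'R(\mu)}$ for a slightly enlarged $M'$; this is possible because $R$ grows superlinearly at infinity by Corollary \ref{cor:RDominatesNorm}. Minimizing over $\mu\in\mathbb{R}^d$ yields
\begin{equation*}
\inf_\mu\bigl[M'R(\mu) - n^{-E^*}(x-n\alpha)\cdot\mu\bigr] = -(M'R)^{\#}\bigl(n^{-E^*}(x-n\alpha)\bigr) = -n(M'R)^{\#}\bigl((x-n\alpha)/n\bigr),
\end{equation*}
where the last equality uses the homogeneity $R^{\#}(n^{(I-E)^*}y) = nR^{\#}(y)$ combined with the identity $n^{-E^*}(x-n\alpha) = n^{(I-E)^*}\bigl((x-n\alpha)/n\bigr)$ (from $n^{(I-E)^*} = n\cdot n^{-E^*}$ since $I$ commutes with $E^*$). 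Finally, $(M'R)^{\#}\asymp R^{\#}$ by Corollary \ref{cor:MovingConstant}, giving the claimed exponential $e^{-nMR^{\#}((x-n\alpha)/n)}$.

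The main obstacle is the absorption step, where the stray factor $e^{mB|\mu|}$ coming from bounding the differences must be controlled uniformly in the parameters $w_1,\ldots,w_m$. This is where the hypothesis $|n^{-E^*}w_j|\leq B$ is essential: it guarantees $|\hat w_j\cdot\mu|\leq B|\mu|$ grows only linearly in $|\mu|$, rather than inheriting the potentially unbounded size of the $w_j$ themselves, so that the perturbation only enlarges the constant $M$ and does not destroy the Legendre--Fenchel gain from $MR(\mu)$.
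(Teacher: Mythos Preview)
Your approach is essentially the same as the paper's: shift the contour, bound $|h(\eta-i\nu)|^n$ via Lemma~\ref{lem:ComplexExponentialEstimate}, control the difference multipliers $\prod_j(e^{-iw_j\cdot(\eta-i\nu)}-1)$ using $|e^a-1|\le |a|e^{|\Re a|}$ together with the hypothesis $|n^{-E^*}w_j|\le B$, and then infimize over the shift to produce $R^{\#}$. The paper packages the multiplier estimate as a separate lemma (Lemma~\ref{lem:DerivativeEstimate}) and does not rescale $\nu$, but these are cosmetic differences.

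There is, however, one point where you lose something the paper keeps: the quantifier order. The theorem asserts that $M$ is chosen \emph{before} $B$ and $m$, while your absorption step ``$(1+|\mu|^m)e^{mB|\mu|}\le e^{M'R(\mu)}$ for a slightly enlarged $M'$'' makes $M'$ depend on $mB$. The fix is to use Proposition~\ref{prop:RDominatesNorm} with the coefficient of $R(\mu)$ held fixed: for any $\delta>0$ there is $C_\delta$ with $|\mu|\le C_\delta+\delta R(\mu)$, so choosing $\delta=1/(mB)$ gives $mB|\mu|\le mB\,C_{1/(mB)}+R(\mu)$, and similarly $(1+|\mu|^m)\le C_{m}' e^{R(\mu)}$. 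Thus the coefficient of $R(\mu)$ becomes $M+2$, independent of $m$ and $B$, and the $(m,B)$-dependent constants are pushed into $C_m$. This is exactly what the paper's Lemma~\ref{lem:DerivativeEstimate} accomplishes (note that in its conclusion the exponent is $n(\epsilon R(\xi)+R(\nu))$ with the fixed coefficient~$1$ on $R(\nu)$, while the prefactor $C$ depends on $m$ and $B$). With this adjustment your argument is complete.
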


\noindent We remark that all constants in the statement of the theorem are independent of $E\in\Exp(P)$ in view of Proposition \ref{prop:SymPisCompact}. The appearance of the prefactor $\hat\phi(\xi_0)^{-n}e^{ix\cdot\xi_0}$ in the left hand side of the estimate is used to remove the highly oscillatory behavior which appears, for instance, in the example outlined in the introduction. That which remains of $\phi^{(n)}$ is well-behaved when this oscillatory prefactor is removed and this is loosely what the theorem asserts. Let us further note that, in contrast to Theorem \ref{thm:ExponentialEstimate}, Theorem \ref{thm:DerivativeEstimate} does not apply to the example illustrated in Subsection \ref{subsec:ex3} (where $\Omega(\phi)$ consists of two points) and, in fact, the latter theorem's conclusion does not hold for this $\phi$. See Subsection \ref{subsec:ex3} for further discussion.

\begin{lemma}\label{lem:DerivativeEstimate}
Given $A>0$, $\epsilon>0$ and $m\in\mathbb{N}_+$, there exists $C>0$ such that the function
\begin{equation*}
Q_{w_1,w_2,\dots,w_m}(z)=\prod_{i=1}^m(e^{iw_i\cdot z}-1)
\end{equation*}
satisfies
\begin{equation}\label{eq:DerivativeEstimate_1}
|Q_{w_1,w_2,\dots,w_m}(\xi-i\nu)|\leq C\left(\prod_{i=1}^{m}|n^{-E^*}w_i|\right)e^{n(\epsilon R(\xi)+R(v))}
\end{equation}
for all $z=\xi-i\nu\in\mathbb{C}^d$, $n\in\mathbb{N}_+$ and $w_1,w_2,\dots, w_m\in\mathbb{Z}^d$ for which $|n^{-E^*}w_i|\leq A$ for all $i=1,2,\dots,m$.
\end{lemma}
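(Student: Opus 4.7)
The plan is to reduce the lemma to an $n$-independent pointwise estimate by the substitution $\widetilde{w}_i := n^{-E^*} w_i$. The adjoint identity $w_i\cdot z = \widetilde{w}_i\cdot(n^E z)$, together with the new variables $\eta := n^E\xi$ and $\mu := n^E\nu$, recasts each factor of $Q_{w_1,\dots,w_m}$ as $e^{i\widetilde{w}_i\cdot(\eta-i\mu)}-1$. Since $E\in\Exp(P)\subseteq\Exp(R)$, one has $R(\eta)=nR(\xi)$ and $R(\mu)=nR(\nu)$, so it suffices to prove the $n$-free inequality
\[
\prod_{i=1}^m \bigl|e^{i\widetilde{w}_i\cdot(\eta-i\mu)}-1\bigr| \leq C\Bigl(\prod_{i=1}^m|\widetilde{w}_i|\Bigr)\, e^{\epsilon R(\eta)+R(\mu)}
\]
for all $\eta,\mu\in\mathbb{R}^d$ and all $\widetilde{w}_i$ with $|\widetilde{w}_i|\leq A$.

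For a single factor I would use the elementary bound $|e^{w}-1|\leq |w|\, e^{|\Re w|}$ (obtained by writing $e^{w}-1=\int_0^1 w\, e^{sw}\,ds$), which applied with $w = i\widetilde{w}_i\cdot(\eta-i\mu)$ and $\Re w = -\widetilde{w}_i\cdot\mu$ yields
\[
\bigl|e^{i\widetilde{w}_i\cdot(\eta-i\mu)}-1\bigr|\leq |\widetilde{w}_i|\,(|\eta|+|\mu|)\, e^{A|\mu|},
\]
using $|\widetilde{w}_i\cdot\mu|\leq A|\mu|$. Multiplying over $i$ produces the product bound $\prod_i|\widetilde{w}_i|\cdot(|\eta|+|\mu|)^m\, e^{mA|\mu|}$, which separates the desired prefactor from a purely geometric overhead.

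The remaining task is to dominate $(|\eta|+|\mu|)^m e^{mA|\mu|}$ by $e^{\epsilon R(\eta)+R(\mu)}$. Because $R$ is positive definite and positive homogeneous, Propositions \ref{prop:PositiveHomogeneousPolynomialsareSemiElliptic} and \ref{prop:ScalefromSphere} force $R(\eta)\to\infty$ at polynomial rate as $|\eta|\to\infty$. Consequently, for any $\delta>0$ and $k\in\mathbb{N}$ there exist constants $C_{k,\delta}, C'_{\delta}$ with $|\eta|^k\leq C_{k,\delta}\, e^{\delta R(\eta)}$ and $e^{mA|\eta|}\leq C'_{\delta}\, e^{\delta R(\eta)}$ for all $\eta$. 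Applying the first bound with $\delta=\epsilon$ to the $|\eta|^m$ piece, and reserving both bounds with a fixed small $\delta$ (say $1/4$) for the $|\mu|^m$ and $e^{mA|\mu|}$ pieces, then expanding $(|\eta|+|\mu|)^m\leq 2^{m-1}(|\eta|^m+|\mu|^m)$ and combining the resulting exponentials (using $R\geq 0$), one obtains the majorant $C\, e^{\epsilon R(\eta)+R(\mu)}$, which by the scaling identity equals $C\, e^{n(\epsilon R(\xi)+R(\nu))}$.

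The main obstacle is purely bookkeeping: the coefficient of $R(\nu)$ on the right-hand side must be exactly $1$, with no extra multiplicative factor. This forces the slack arising from $e^{mA|\mu|}$ and $|\mu|^m$ to be routed into a proper fraction of $R(\mu)$ (e.g.\ $3/4$), leaving room to swallow it into a single copy of $R(\mu)$. In contrast, any slack in the $\eta$-direction can be absorbed into the tunable $\epsilon R(\eta)$, which is harmless since $\epsilon>0$ is given at the outset and the constant $C$ is permitted to depend on it.
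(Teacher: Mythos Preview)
Your proof is correct and follows essentially the same route as the paper's: both use the adjoint identity $w_i\cdot z=(n^{-E^*}w_i)\cdot(n^{E}z)$, the elementary bound $|e^{w}-1|\le |w|\,e^{|w|}$ (you use the marginally sharper $|w|\,e^{|\Re w|}$), and the domination of linear growth by $R$. The paper streamlines your final ``bookkeeping'' step by bundling everything into a single factor $e^{M|n^{E}z|}$ and invoking Proposition~\ref{prop:RDominatesNorm} once, which delivers $M|n^{E}z|\le M'+\epsilon R(n^{E}\xi)+R(n^{E}\nu)$ with the prescribed coefficients $\epsilon$ and $1$ in one shot, so no slack-routing is needed.
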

\begin{proof}
We observe that, for $M=m(B+1)$,
\begin{equation}\label{eq:DerivativeEstimate_2}
\begin{split}
|Q_{w_1,w_2,\dots,w_m}(z)|&\leq \prod_{j=1}^m |w_j\cdot z|e^{|w_j\cdot z|}\\
&\leq\prod_{j=1}^m |n^{-E^*}w_j||n^{E}z|e^{B|n^{E}z|}\\
&\leq\left(\prod_{j=1}^m|n^{-E^*}w_j|\right)e^{M|n^{E}z|}
\end{split}
\end{equation}
for all $z\in\mathbb{C}^d$, $n\in\mathbb{N}_+$ and $w_1,w_2,\dots, w_m\in\mathbb{Z}^d$ for which $|n^{-E^*}w_j|\leq B$ for all $j=1,2,\dots,m$. Given $\epsilon>0$, an appeal to Proposition \ref{prop:RDominatesNorm} ensures that, for some $M'>0$,
\begin{equation}\label{eq:DerivativeEstimate_3}
M|n^Ez|\leq M'+\epsilon R(n^E\xi)+R(n^E\nu)=M'+n(\epsilon R(\xi)+R(\nu))
\end{equation}
for all $z=\xi-i\nu\in\mathbb{C}^d$ and $n\in\mathbb{N}_+$. The desired estimate is obtained by combining \eqref{eq:DerivativeEstimate_2} and \eqref{eq:DerivativeEstimate_3}.
\end{proof}

\begin{proof}[Proof of Theorem \ref{thm:DerivativeEstimate}]
By replacing $\phi(x)$ by $\hat\phi(\xi_0)^{-1}e^{ix\cdot\xi_0}\phi(x)$, we assume without loss of generality that $\xi_0=0$ and $\hat\phi(\xi_0)=1$. For any $x,w_1,w_2,\dots,w_m\in\mathbb{Z}^d$ and $\nu\in\mathbb{R}^d$, we invoke the periodicity of $\hat\phi$ to see that
\begin{equation}\label{eq:DerivativeEstimate_4}
\begin{split}
\lefteqn{D_{w_1}D_{w_2}\cdots D_{w_m}\phi^{(n)}(x)}\\
&= D_{w_1}D_{w_2}\cdots D_{w_m}\frac{1}{(2\pi)^d}\int_{\mathbb{T}^d}e^{-ix\cdot (\xi-i\nu)}(\hat\phi(\xi-i\nu))^n\,d\xi\\
&= \frac{e^{-ny_n(x)\cdot \nu}}{(2\pi)^d}\int_{\mathbb{T}^d}e^{-iny_n(x)\cdot \xi}Q_{w_1,w_2,\dots,w_m}(\xi-i\nu)f(\xi-i\nu)^n\,d\xi,
\end{split}
\end{equation}
where $y_n(x)=(x-n\alpha)/n$ and $f(z)=f_{\xi_0}(z)=e^{-i\alpha\cdot z}\hat\phi(z)$ is that of Lemma \ref{lem:ComplexExponentialEstimate}. An appeal to the lemma shows that, for some $\epsilon>0$ and $M\geq 1$,
\begin{equation}\label{eq:DerivativeEstimate_5}
|f(\xi-i\nu)|\leq e^{-2\epsilon R(\xi)+(M-1)R(\nu)}
\end{equation}
for all $\xi\in\mathbb{T}^d$ and $\nu\in\mathbb{R}^d$; note that these constants are independent of $m$. By combining the estimates \eqref{eq:ExponentialEstimate_2}, \eqref{eq:DerivativeEstimate_1},\eqref{eq:DerivativeEstimate_4} and \eqref{eq:DerivativeEstimate_5} we obtain, for $\nu\in\mathbb{R}^d$ and $w_1,w_2,\dots,w_m\in\mathbb{Z}^d$,
\begin{align*}
\lefteqn{|D_{w_1}D_{w_2}\cdots D_{w_m}\phi^{(n)}(x)|}\\
&\leq e^{-ny_n(x)\cdot \nu}\int_{\mathbb{T}^d}|Q_{w_1,w_2,\dots,w_m}(\xi-i\nu)||f(\xi-i\nu)|^n\,d\xi\\
&\leq C_m'\left(\prod_{j=1}^m|n^{-E^*}w_j|\right)\exp(-ny_n(x)\cdot\nu+nMR(\nu))\int_{\mathbb{T}^d}e^{-n\epsilon R(\xi)}\,d\xi \\
&\leq  \frac{C C_m'}{n^{\mu_\phi}}\left(\prod_{j=1}^m|n^{-E^*}w_j|\right)\exp(-n(y_n(x)\cdot\nu-MR(\nu)))
\end{align*}
 for all $x\in\mathbb{Z}^d$ and $n\in\mathbb{N}_+$ for which $|n^{-E^*}w_j|\leq B$ for all $j=1,2,\dots,m$. As all constants are independent of $\nu$, the desired estimate is obtained by repeating the same line of reasoning of the proof of Theorem \ref{thm:ExponentialEstimate}.
\end{proof}

\noindent For a collection $v=\{v_1,\dots,v_d\}\in\mathbb{Z}^d$ and a multi-index $\beta$, consider the discrete spatial operator
\begin{equation}\label{eq:SpatialOperator}
D_v^{\beta}=(D_{v_1})^{\beta_1}(D_{v_2})^{\beta_2}\cdots (D_{v_d})^{\beta_d}.                                                                                                                                                                                                                                                                                                                                                                                                                                                                                                                                                                                                                                                                                                                                                                                                                                                                                               \end{equation}
Our next result, a corollary to Theorem \ref{thm:DerivativeEstimate}, gives estimates for $D_v^{\beta}\phi^{(n)}$ in the case that $n^{-E^*}$ acts diagonally on $v_j$ for $j=1,2,\dots,d$ and, in this case, the term involving $w$'s in \eqref{eq:DerivativeEstimate} simplifies considerably. We first give a definition.

\begin{definition}
Let $P:\mathbb{R}^d\rightarrow\mathbb{C}$ be a positive homogeneous polynomial and let $A\in\GldR$ and $\mathbf{m}=(m_1,m_2,\dots,m_d)\in\mathbb{N}_+^d$ be as given by Proposition \ref{prop:PositiveHomogeneousPolynomialsareSemiElliptic}. An ordered collection $v=\{v_1,v_2,\dots,v_d\}\subseteq \mathbb{Z}^d$ is said to be $P$-fitted if $A^*v_j\in\mbox{span}(e_j)$ for $j=1,2,\dots,d$. In this case we say that $\mathbf{m}$ is the weight of $v$.
\end{definition}

\noindent Let us make a few remarks about the above definition. First, for a $P$-fitted collection $v=\{v_1,v_2,\dots,v_d\}$ of weight $\mathbf{m}$, by virtue of Proposition \ref{prop:PositiveHomogeneousPolynomialsareSemiElliptic}, $t^{-E^*}v_j=t^{-1/(2m_j)}v_j$ for all $t>0$ and $j=1,2,\dots d$, where $E=ADA^{-1}\in\Exp(P)$. Our definition does not require the $v_j's$ to be non-zero and, in fact, it is possible that the only $P$-fitted collection to a given positive homogeneous polynomial $P$ is the zero collection. We note however that every positive homogeneous polynomial $P$ seen in this article admits a $P$-fitted collection $v$ which is also a basis of $\mathbb{R}^d$ and, in fact, whenever $P$ is semi-elliptic, every $P$-fitted collection is of the form $v=\{x^1e_1,x^2e_2,\dots,x^de_d\}$ where $x^1,x^2,\dots,x^d\in\mathbb{Z}$.

\begin{corollary}\label{cor:DerivativeEstimate}
Let $\phi:\mathbb{Z}^d\rightarrow\mathbb{C}$ be finitely supported and such that $\sup_{\xi\in\mathbb{T}^d}|\hat\phi(\xi)|=1$. Additionally assume that $\Omega(\phi)=\{\xi_0\}$ and that $\xi_0$ is of positive homogeneous type for $\hat\phi$ with corresponding $\alpha=\alpha_{\xi_0}\in\mathbb{R}^d$ and positive homogeneous polynomial $P=P_{\xi_0}$. Define $\mu_\phi$ by \eqref{eq:MuPhiDefinition} (or equivalently \eqref{eq:muP}), let $\mathbf{m}$ (and $A$) be as in Proposition \ref{prop:PositiveHomogeneousPolynomialsareSemiElliptic} and denoted by $R^{\#}$, the Legendre-Fenchel transform of $R=\Re P$. There exists $M>0$ such that, for any $B>0$ and multi-index $\beta$, there is a positive constant $C_\beta$ such that, for any $P$-fitted collection $v=\{v_1,v_2,\dots,v_d\}$ of weight $\mathbf{m}$,
\begin{equation}\label{eq:PrincipalAxesDerivativeEstimate}
\left|D_{v}^{\beta}\left(\hat\phi(\xi_0)^{-n}e^{ix\cdot\xi_0}\phi^{(n)}(x)\right)\right|
\leq \frac{C_\beta\prod_{j=1}^d|v_j|^{\beta_j}}{n^{\mu_\phi+|\beta:2\mathbf{m}|}}\exp\left(-nM R^{\#}\left(\frac{x-n\alpha}{n}\right)\right)
\end{equation}
for all $x\in\mathbb{Z}^d$ and $n\in\mathbb{N}_+$  such that $|v_j|\leq Bn^{1/(2m_j)}$ for $j=1,2,\dots,d$.
\end{corollary}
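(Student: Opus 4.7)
The plan is to deduce this corollary from Theorem \ref{thm:DerivativeEstimate} by carefully tracking the action of $n^{-E^*}$ on a $P$-fitted collection. The essential algebraic observation is that for a $P$-fitted $v_j$ of weight $\mathbf{m}$, one has
\begin{equation*}
E^* v_j = \tfrac{1}{2m_j}\, v_j,
\end{equation*}
so that $n^{-E^*}v_j = n^{-1/(2m_j)}v_j$ for every $n\in\mathbb{N}_+$. To see this, take $E = A D A^{-1}$ with $D = \diag((2m_1)^{-1},\dots,(2m_d)^{-1})$ as in Proposition \ref{prop:PositiveHomogeneousPolynomialsareSemiElliptic}; then $E^* = (A^{-1})^* D A^*$, and since $A^* v_j = c_j e_j$ for some scalar $c_j$ by the definition of $P$-fitted, applying $D$ yields $D A^* v_j = (2m_j)^{-1} A^* v_j$, from which the claim follows.

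Next I would unfold the operator $D_v^\beta$ as an iterated composition. Setting $m = |\beta| = \beta_1+\cdots+\beta_d$, choose any ordering $(w_1,w_2,\dots,w_m)$ of the list that contains $\beta_j$ copies of $v_j$ for each $j=1,2,\dots,d$. Because the difference operators $D_{w}$ commute, we have
\begin{equation*}
D_v^\beta \bigl(\hat\phi(\xi_0)^{-n}e^{ix\cdot\xi_0}\phi^{(n)}(x)\bigr) = D_{w_1}D_{w_2}\cdots D_{w_m}\bigl(\hat\phi(\xi_0)^{-n}e^{ix\cdot\xi_0}\phi^{(n)}(x)\bigr).
\end{equation*}
For each $w_k$ in this list, if $w_k=v_j$ then $|n^{-E^*}w_k| = n^{-1/(2m_j)}|v_j|$, and so under the hypothesis $|v_j|\le B n^{1/(2m_j)}$ the quantities $|n^{-E^*}w_k|$ are uniformly bounded by $B$.

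With this setup Theorem \ref{thm:DerivativeEstimate} applies directly: there exists a universal $M>0$ (independent of $\beta$ and $v$), and a constant $C_m$ depending only on $B$ and $m = |\beta|$, such that the left-hand side is bounded by
\begin{equation*}
\frac{C_m}{n^{\mu_\phi}} \left(\prod_{k=1}^m |n^{-E^*}w_k|\right) \exp\!\left(-nMR^\#\!\left(\frac{x-n\alpha}{n}\right)\right).
\end{equation*}
The product factors as
\begin{equation*}
\prod_{k=1}^m |n^{-E^*}w_k| = \prod_{j=1}^d \bigl(n^{-1/(2m_j)}|v_j|\bigr)^{\beta_j} = n^{-|\beta:2\mathbf{m}|}\prod_{j=1}^d |v_j|^{\beta_j},
\end{equation*}
which is exactly the shape needed for \eqref{eq:PrincipalAxesDerivativeEstimate} upon setting $C_\beta = C_{|\beta|}$.

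There is essentially no obstacle here beyond book-keeping: the only substantive step is the diagonal action of $E^*$ on a $P$-fitted basis, and everything else is assembly of Theorem \ref{thm:DerivativeEstimate} under the identification of the $w_k$ with repeated copies of the $v_j$. The mild care to take is verifying that the single $M>0$ produced by Theorem \ref{thm:DerivativeEstimate} is indeed independent of $\beta$ (it is, since it depends only on $\phi$ and $P$), so that its appearance in the Legendre-Fenchel exponent in \eqref{eq:PrincipalAxesDerivativeEstimate} is uniform as claimed.
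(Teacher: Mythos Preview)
Your proof is correct and follows essentially the same approach as the paper's: both reduce to Theorem \ref{thm:DerivativeEstimate} via the key observation that $n^{-E^*}v_j = n^{-1/(2m_j)}v_j$ for a $P$-fitted collection, and then simplify the product $\prod_k |n^{-E^*}w_k|$ to $n^{-|\beta:2\mathbf{m}|}\prod_j|v_j|^{\beta_j}$. If anything, you spell out the eigenvector computation and the bookkeeping around the iterated difference operators more explicitly than the paper does.
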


\begin{proof}
As we previously remarked,
\begin{equation*}
\left|n^{-E^*}v_j\right|=|a_j|\left|n^{-E^*}(A^*)^{-1}e_j\right|=|a_j|\left|(A^*)^{-1}n^{-D}e_j\right|=n^{-1/(2m_j)}|v_k|
\end{equation*}
for $j=1,2,\dots, d$ and $n\in\mathbb{N}_+$, where $D=\diag\left((2m_1)^{-1},(2m_2)^{-1},\dots,(2m_d)^{-1}\right)$ and $E=ADA^{-1}$. Considering the operator $D_v^{\beta}$, the term involving $w$'s appearing in the right hand side of \eqref{eq:DerivativeEstimate} is, in our case,
\begin{equation}\label{eq:PrincipalAxesDerivativeEstimate_1}
\prod_{j=1}^d\left(\left|n^{-E^*}v_j\right|\right)^{\beta_j}=\prod_{j=1}^d|v_j|^{\beta_j}n^{-\beta_j/(2m_j)}=n^{-|\beta:2\mathbf{m}|}\prod_{j=1}^d|v_j|^{\beta_j}
\end{equation}
for all $n\in\mathbb{N}_+$. The desired estimate now follows by inserting \eqref{eq:PrincipalAxesDerivativeEstimate_1} into \eqref{eq:DerivativeEstimate}.
\end{proof}

\noindent Our next theorem concerns discrete time estimates for convolution powers. Given $\phi:\mathbb{Z}^d\rightarrow\mathbb{C}$ which satisfies the hypotheses of Theorem \ref{thm:DerivativeEstimate} with corresponding $\alpha\in\mathbb{R}^d$. For any $l\in\mathbb{N}_+$, the theorem provides pointwise estimates for $\phi^{(n)}-\phi^{(l+n)}$ and analogous higher-order differences. Because, in general, the peak of the convolution powers drifts according to $\alpha$, to compare $\phi^{(n)}$ and $\phi^{(l+n)}$, one needs to account for this drift by re-centering $\phi^{(l+n)}$ but, in doing this, a possible complication arises: If $l\alpha\not\in \mathbb{Z}^d$, one cannot re-center $\phi^{(l+n)}$ in a way that keeps it on the lattice. For this reason, the theorem requires $l\alpha\in\mathbb{Z}^d$ and in this case $\left(\delta_{-l\alpha}\ast\phi^{(l)}\right)\ast\phi^{(n)}(x)=\phi^{(l+n)}(x+l\alpha)$ which can then be compared to $\phi^{(n)}(x)$. Assuming that $\phi$ satisfies the hypotheses of Theorem \ref{thm:DerivativeEstimate} (with $\xi_0\in\mathbb{T}^d$ and $\alpha\in\mathbb{Z}^d$), for any $l\in\mathbb{N}_+$ such that $l\alpha\in\mathbb{Z}^d$, we define the discrete time difference operator $\partial_l=\partial_{l}(\phi,\xi_0,\alpha)$ by
\begin{equation}\label{eq:TimeOperator}
\partial_{l}\psi=\left(\delta-\hat\phi(\xi_0)^{-l}\left(\delta_{-l\alpha}\ast\phi^{(l)}\right)\right)\ast\psi=\psi-\hat\phi(\xi_0)^{-l}\left(\delta_{-l\alpha}\ast\phi^{(l)}\right)\ast\psi
\end{equation}
for $\psi\in\ell^{1}(\mathbb{Z}^d)$.
\begin{theorem}\label{thm:TimeExponentialEstimate}
Let $\phi:\mathbb{Z}^d\rightarrow\mathbb{C}$ be finitely supported and such that $\sup_{\xi\in\mathbb{T}^d}|\hat\phi(\xi)|=1$. Additionally assume that $\Omega(\phi)=\{\xi_0\}$ and that $\xi_0$ is of positive homogeneous type for $\hat\phi$ with corresponding $\alpha=\alpha_{\xi_0}\in\mathbb{R}^d$ and positive homogeneous polynomial $P=P_{\xi_0}$. Define $\mu_\phi$ by \eqref{eq:MuPhiDefinition} (or equivalently \eqref{eq:muP}) and denote by $R^{\#}$, the Legendre-Fenchel transform of $R=\Re P$. There are positive constants $C$ and $M$ such that, for any $l_1,l_2,\dots,l_k\in\mathbb{N}_+$ such that $l_q\alpha\in\mathbb{Z}^d$ for $q=1,2,\dots,k$ (assume $k\geq 1$),
\begin{equation}\label{eq:TimeExponentialEstimate}
|\partial_{l_1}\partial_{l_2}\cdots\partial_{l_k} \phi^{(n)}(x)|\leq \frac{C^k k!\prod_{q=1}^kl_q}{n^{\mu_\phi+k}}\exp\left(-(n+l_1+l_2+\cdots+ l_k) M R^{\#}\left(\frac{x-n\alpha}{n+l_1+l_2+\cdots+ l_k}\right)\right)
\end{equation}
for all $x\in\mathbb{Z}^d$ and $n\in\mathbb{N}_+$.
\end{theorem}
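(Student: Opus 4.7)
The plan is to mimic the Fourier-multiplier-plus-contour-shift strategy used in the proofs of Theorems \ref{thm:ExponentialEstimate} and \ref{thm:DerivativeEstimate}, now feeding in the time-difference estimate from Lemma \ref{lem:TimeExponentialEstimate}. The key observation is that the Fourier multiplier of $\partial_{l}$, once the variable is shifted by $\xi_0$, is exactly the function $g_l$ of Lemma \ref{lem:TimeExponentialEstimate}: a short computation using $\widehat{\delta_{-l\alpha}\ast\phi^{(l)}}(\xi)=e^{-il\alpha\cdot\xi}\hat\phi(\xi)^l$ together with the identity $f_{\xi_0}(\eta)^l=\hat\phi(\xi_0)^{-l}e^{-il\alpha\cdot(\eta+\xi_0)}\hat\phi(\eta+\xi_0)^l$ shows that the symbol is $g_{l}(\xi-\xi_0)$. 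Writing $L=l_1+\cdots+l_k$ and $N=n+L$, Fourier inversion on $\mathbb{T}_\phi^d$, the change of variables $\eta=\xi-\xi_0$, and the factorization $\hat\phi(\eta+\xi_0)^n=\hat\phi(\xi_0)^n e^{in\alpha\cdot(\eta+\xi_0)}f_{\xi_0}(\eta)^n$ give
\[
\partial_{l_1}\cdots\partial_{l_k}\phi^{(n)}(x)=\frac{\hat\phi(\xi_0)^n e^{-i(x-n\alpha)\cdot\xi_0}}{(2\pi)^d}\int_{\mathbb{T}^d}e^{-i(x-n\alpha)\cdot\eta}\prod_{q=1}^k g_{l_q}(\eta)\,f_{\xi_0}(\eta)^n\,d\eta.
\]
Because $\hat\phi$ is a trigonometric polynomial, the integrand extends holomorphically to $\mathbb{C}^d$ and is periodic in real directions, so I may freely shift the contour to $\eta\mapsto\eta-i\nu$ for any $\nu\in\mathbb{R}^d$; this introduces the prefactor $e^{-(x-n\alpha)\cdot\nu}$ and replaces each $g_{l_q}(\eta)$ and $f_{\xi_0}(\eta)$ by $g_{l_q}(\eta-i\nu)$ and $f_{\xi_0}(\eta-i\nu)$.

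For the pointwise bound on the integrand I combine Lemma \ref{lem:ComplexExponentialEstimate}, which yields $|f_{\xi_0}(\eta-i\nu)|^n\le e^{-n\epsilon R(\eta)+nMR(\nu)}$ on $\mathbb{T}^d\times\mathbb{R}^d$, with Lemma \ref{lem:TimeExponentialEstimate}, which produces (after possibly enlarging $M$) $|g_{l_q}(\eta-i\nu)|\le C\,l_q\,(R(\eta)+R(\nu))\,e^{l_q MR(\nu)}$. Multiplying and applying $(a+b)^k\le 2^k(a^k+b^k)$ to the product gives
\[
\prod_{q=1}^k|g_{l_q}(\eta-i\nu)|\cdot|f_{\xi_0}(\eta-i\nu)|^n\le 2^k C^k\Big(\prod_{q}l_q\Big)\bigl(R(\eta)^k+R(\nu)^k\bigr)\,e^{NMR(\nu)-n\epsilon R(\eta)}.
\]
To integrate out $\eta$ I will use the rescaling $\eta=n^{-E}\zeta$ with $E\in\Exp(P)$, exactly as in \eqref{eq:ExponentialEstimate_2}: since $\det n^{-E}=n^{-\mu_P}$ and $R$ is homogeneous of degree one with respect to $t^E$, the integrals $\int_{\mathbb{T}^d}R(\eta)^{k}e^{-n\epsilon R(\eta)}\,d\eta$ and $\int_{\mathbb{T}^d}e^{-n\epsilon R(\eta)}\,d\eta$ are dominated by $C_k n^{-\mu_\phi-k}$ and $C_0 n^{-\mu_\phi}$ respectively (finite because $e^{-\epsilon R}$ and $R^k e^{-\epsilon R}$ lie in $L^1(\mathbb{R}^d)$, cf.\ Proposition \ref{prop:PropertiesofHP}). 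This yields, for every $\nu\in\mathbb{R}^d$,
\[
|\partial_{l_1}\cdots\partial_{l_k}\phi^{(n)}(x)|\le C^k\Big(\prod_q l_q\Big)\bigl(n^{-\mu_\phi-k}+R(\nu)^k n^{-\mu_\phi}\bigr)\,e^{-(x-n\alpha)\cdot\nu+NMR(\nu)}.
\]

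The last step, and the main technical point, is to absorb the $R(\nu)^k$ factor into the exponential at a rate proportional to $N$ (not $n$), so that the resulting power of $N$ can be converted to a power of $n$. For this I will invoke the elementary inequality $x^k\le k!\,e^x$ with $x=N\delta R(\nu)$ for a small $\delta>0$, giving $R(\nu)^k\le k!(N\delta)^{-k}e^{N\delta R(\nu)}$; since $N\ge n$, we have $(N\delta)^{-k}n^{-\mu_\phi}\le \delta^{-k}n^{-\mu_\phi-k}$, which neatly transfers one power of $N^{-1}$ into one power of $n^{-1}$. Setting $M'=M+\delta$, the previous display becomes
\[
|\partial_{l_1}\cdots\partial_{l_k}\phi^{(n)}(x)|\le \widetilde{C}^k k!\Big(\prod_q l_q\Big)n^{-\mu_\phi-k}\,e^{-(x-n\alpha)\cdot\nu+NM'R(\nu)}
\]
uniformly in $\nu\in\mathbb{R}^d$. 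Taking the infimum over $\nu$ produces $e^{-(NM'R)^{\#}(x-n\alpha)}=e^{-NM'R^{\#}((x-n\alpha)/N)}$, and Corollary \ref{cor:MovingConstant} lets me replace $M'$ by a smaller constant $M>0$ inside $R^{\#}$, yielding precisely the estimate \eqref{eq:TimeExponentialEstimate}. The delicate manoeuvre is the careful choice of $\delta$ and the use of $x^k\le k!e^x$: this is what converts the naïve polynomial-in-$R(\nu)$ loss into the right $k!\,n^{-k}$ improvement while keeping the Legendre-Fenchel exponent at the physically correct scale $N$, rather than $n$.
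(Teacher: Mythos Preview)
Your argument is correct and follows essentially the same route as the paper: Fourier inversion on the shifted torus, contour shift by $-i\nu$, the bounds from Lemmas \ref{lem:ComplexExponentialEstimate} and \ref{lem:TimeExponentialEstimate}, the inequality $x^k\le k!\,e^x$, and finally the Legendre--Fenchel optimization. The only difference is tactical: the paper does not split $(R(\xi)+R(\nu))^k$ but instead applies $x^k/k!\le e^x$ directly with $x=\tfrac{n\epsilon}{2}(R(\xi)+R(\nu))$ \emph{before} integrating in $\xi$, which absorbs both pieces in one stroke and leaves $\int e^{-n\epsilon R(\xi)/2}\,d\xi\le C n^{-\mu_\phi}$; this avoids having to track the growth of $C_k:=\int R^k e^{-\epsilon R}$. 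In your version that constant matters: $C_k$ is of order $C^k k!$ (use $t^k e^{-\epsilon t/2}\le (2/\epsilon)^k k!$), so your intermediate display should carry a $k!$ on the $n^{-\mu_\phi-k}$ term as well---this is harmless for the final bound, which already has $k!$, but the line ``$C^k(\prod_q l_q)(n^{-\mu_\phi-k}+R(\nu)^k n^{-\mu_\phi})$'' as written understates the first summand. One more cosmetic point: $\inf_\nu$ yields $e^{-N(M'R)^{\#}((x-n\alpha)/N)}$, not $e^{-NM'R^{\#}((x-n\alpha)/N)}$; Corollary \ref{cor:MovingConstant} is then exactly what turns $(M'R)^{\#}$ into $M R^{\#}$, as you say in the next clause.
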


\begin{proof}
As in the proofs of Theorems \ref{thm:ExponentialEstimate} and \ref{thm:DerivativeEstimate}, we fix $\nu\in\mathbb{R}^d$ and invoke the periodicity of $\hat\phi$ to see that
\begin{align*}
\lefteqn{\partial_{l_1}\partial_{l_2}\cdots\partial_{l_k}\phi^{(n)}(x)}\\
&=\frac{1}{(2\pi)^d}\int_{\xi\in\mathbb{T}_\phi^d}\prod_{q=1}^k\left(1-\left(\hat\phi(\xi_0)^{-1}e^{-\alpha\cdot(\xi_0+z)}\hat\phi(\xi_0+z)\right)^{l_q}\right)\hat\phi(\xi_0+z)^ne^{-i x\cdot(\xi_0+z)}\,d\xi\\
&=\frac{1}{(2\pi)^d}\int_{\xi\in\mathbb{T}_\phi^d}\prod_{q=1}^k g_{l_q}(z) \hat\phi(\xi_0)^nf(z)^n e^{-i (x-n\alpha)\cdot(\xi_0+z)}\,d\xi
\end{align*}
for all $x\in\mathbb{Z}^d$ and $n\in\mathbb{N}_+$, where $z=\xi-i\nu$; here, $f=f_{\xi_0}$ is defined by \eqref{eq:ComplexExponentialEstimate} and $g_{l_1}g_{l_2},\dots,g_{l_k}$ are those of Lemma \ref{lem:TimeExponentialEstimate}. Put $s_k=l_1+l_2+\cdots+l_k$, take $\epsilon,M$ and $C$ as guaranteed by Lemmas \ref{lem:ComplexExponentialEstimate} and \ref{lem:TimeExponentialEstimate} and set $C_1=(2C/\epsilon)$. Observe that
\begin{align*}
\lefteqn{\left|\partial_{l_1}\partial_{l_2}\cdots\partial_{l_k}\phi^{(n)}(x)\right|}\\
&\leq \frac{C_1^k k!\prod_{q=1}^kl_q}{n^k} e^{-(x-n\alpha)\cdot\nu}\\
&\quad \times\int_{\xi\in\mathbb{T}_\phi^d}\frac{1}{k!}\left(\frac{n\epsilon}{2}(R(\nu)+R(\xi))\right)^ke^{s_kMR(\nu)} \exp(-n\epsilon R(\xi)+nMR(\nu))\,d\xi\\
&\leq\frac{C_1^k k!\prod_{q=1}^kl_q}{n^k}e^{-(x-n\alpha)\cdot\nu}\\
&\quad \times\int_{\xi\in\mathbb{T}_\phi^d} \exp(n\epsilon(R(\xi)+R(\nu))/2) \exp((n+s_k)MR(\nu)-n\epsilon R(\xi))\,d\xi
\end{align*}
for $x\in\mathbb{Z}^d$ and $n\in\mathbb{N}_+$. Upon setting $y_{n,s_k}(x)=(x-n\alpha)/(n+s_k)$ and replacing $M$ by $M+\epsilon/2$, we can write
\begin{align*}
\lefteqn{\left|\partial_{l_1}\partial_{l_2}\cdots\partial_{l_k}\phi^{(n)}(x)\right|}\\
&\leq\frac{C_1^k k!\prod_{q=1}^kl_q}{n^k}\exp(-(n+s_k)\left(y_{n,s_k}(x)\cdot\nu-MR(\nu))\right)\int_{\xi\in\mathbb{T}_\phi^d}\exp(-n\epsilon R(\xi)/2)\,d\xi
\end{align*}
for $x\in\mathbb{Z}^d$ and $n\in\mathbb{N}_+$. Now, as we observed in the proof of Theorem \ref{thm:ExponentialEstimate}, the integral over $\xi$ is bounded above by $C_2 n^{-\mu_\phi}\leq C_2^kn^{-\mu_\phi}$ for some constant $C_2\geq 1$ and so we obtain the estimate
\begin{equation*}
 \left|\partial_{l_1}\partial_{l_2}\cdots\partial_{l_k}\phi^{(n)}(x)\right|\leq\frac{(C_1 C_2)^k k!\prod_{q=1}^kl_q}{n^{\mu_\phi+k}}\exp(-(n+s_k)\left(y_{n,s_k}(x)\cdot\nu-MR(\nu))\right)\\
\end{equation*}
for all $x\in\mathbb{Z}^d$ and $n\in\mathbb{N}_+$. Once again, the desired result is obtained by infimizing over $\nu\in\mathbb{R}^d$.
\end{proof}

\begin{remark}
If one allows the constant $M$ to depend on $l_1,l_2,\dots,l_k$, then \eqref{eq:TimeExponentialEstimate} can be written
\begin{equation*}
|\partial_{l_1}\partial_{l_2}\cdots\partial_{l_k} \phi^{(n)}(x)|\leq \frac{C^k k!\prod_{q=1}^kl_q}{n^{\mu_\phi+k}}\exp\left(-nM_{l_1,l_2,\dots,l_k} R^{\#}\left(\frac{x-n\alpha}{n}\right)\right)
\end{equation*}
for all $x\in\mathbb{Z}^d$ and $n\in\mathbb{N}_+$. Indeed, set $s_k=l_1+l_2+\cdots +l_k$ and observe that
\begin{align*}
-(n+s_k)R^{\#}\left(\frac{x-n\alpha}{n+s_k}\right)&=-n\sup_{\nu\in\mathbb{R}^d}\left\{\left(\frac{x-n\alpha}{n}\right)\cdot \nu-\frac{n+s_k}{n}R(\nu)\right\}\\
&\leq -n\sup_{\nu}\left\{\left(\frac{x-n\alpha}{n}\right)\cdot \nu-(1+k_s)R(\nu)\right\}\\
&=-n\Big((1+s_k)R\Big)^{\#}\left(\frac{x-n\alpha}{n}\right)\\
&\leq -nM_{s_k}R^{\#}\left(\frac{x-n\alpha}{n}\right)
\end{align*}
where we have used Corollary \ref{cor:MovingConstant} to obtain $M_{s_k}=M_{l_1,l_2,\dots,l_k}$.\\
\end{remark}

\noindent In view the remark above, the following corollary is a special case of Theorem \ref{thm:TimeExponentialEstimate} when $\alpha=0$, $\hat\phi(\xi_0)=1$ and we only consider one discrete time derivative; it applies to the example in the introduction and the examples of Subsections \ref{subsec:ex1} and \ref{subsec:ex5}.

\begin{corollary}\label{cor:TimeDifferenceEstimate}
Let $\phi:\mathbb{Z}^d\rightarrow\mathbb{C}$ be finitely supported and such that $\sup|\hat\phi(\xi)|=1$. Suppose that $\Omega(\phi)=\{\xi_0\}$ is of positive homogeneous type for $\hat\phi$ with corresponding $\alpha\in\mathbb{R}^d$ and positive homogeneous polynomial $P$. Also let $\mu_\phi$ be defined by \eqref{eq:MuPhiDefinition} (or equivalently \eqref{eq:muP}) and let $R^{\#}$ be the Legendre-Fenchel transform of $R=\Re P$.  Additionally assume that $\alpha=0$ and $\hat\phi(\xi_0)=1$.  There exists a positive constant $C$ and, to each $l\in\mathbb{N}_+$, a positive constant $M_l$ such that
\begin{equation*}
\left|\phi^{(n)}(x)-\phi^{(l+n)}(x)\right|\leq \frac{Cl}{n^{\mu_\phi+1}}\exp(-nM_lR^{\#}(x/n))
\end{equation*}
for all $x\in\mathbb{Z}^d$ and $n\in\mathbb{N}_+$.
\end{corollary}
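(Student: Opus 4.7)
My plan is to derive this corollary as an immediate specialization of Theorem \ref{thm:TimeExponentialEstimate} (in the case $k=1$), combined with the remark that follows it. The setup is essentially already done; nothing new needs to be proved, but two observations must be made carefully.

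First, I would unpack the time-difference operator $\partial_l$ defined in \eqref{eq:TimeOperator} under the standing assumptions $\alpha=0$ and $\hat\phi(\xi_0)=1$. With $\alpha=0$, the requirement $l\alpha\in\mathbb{Z}^d$ is automatic and $\delta_{-l\alpha}=\delta_0$ is the identity for convolution, while $\hat\phi(\xi_0)^{-l}=1$. Consequently,
\begin{equation*}
\partial_l\psi = \psi - \phi^{(l)}\ast\psi.
\end{equation*}
Applying this to $\psi=\phi^{(n)}$ gives $\partial_l\phi^{(n)}(x) = \phi^{(n)}(x)-\phi^{(l+n)}(x)$, which is exactly the quantity we wish to bound.

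Second, I would invoke Theorem \ref{thm:TimeExponentialEstimate} with $k=1$ and $l_1=l$ (and $\alpha=0$) to obtain constants $C,M>0$, independent of $l$ and $n$, such that
\begin{equation*}
\left|\phi^{(n)}(x)-\phi^{(l+n)}(x)\right|
\leq \frac{Cl}{n^{\mu_\phi+1}}\exp\left(-(n+l)MR^{\#}\!\left(\frac{x}{n+l}\right)\right)
\end{equation*}
for all $x\in\mathbb{Z}^d$ and $n\in\mathbb{N}_+$. It then remains only to convert the exponential factor into the form stated in the corollary. This is precisely the content of the remark immediately preceding the corollary: by rewriting $R^{\#}$ via its defining supremum and using the homogeneity bookkeeping together with Corollary \ref{cor:MovingConstant}, one obtains
\begin{equation*}
-(n+l)R^{\#}\!\left(\frac{x}{n+l}\right) \leq -nM_l\, R^{\#}\!\left(\frac{x}{n}\right),
\end{equation*}
where $M_l>0$ depends on $l$ but not on $n$ or $x$. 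Inserting this estimate (and absorbing $M$ into $M_l$) yields the claimed bound.

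There is no real obstacle here; the only thing to be careful about is the $l$-dependence of the constant in the exponential, which is explicitly taken care of by the remark cited above. The case $k=1$ of Theorem \ref{thm:TimeExponentialEstimate} does all the analytic work, and the assumptions $\alpha=0$ and $\hat\phi(\xi_0)=1$ allow the operator $\partial_l$ to be read off directly as the difference $\phi^{(n)}-\phi^{(l+n)}$.
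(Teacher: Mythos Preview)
Your proposal is correct and is essentially identical to the paper's own approach: the paper explicitly presents this corollary as the special case of Theorem~\ref{thm:TimeExponentialEstimate} with $k=1$, $\alpha=0$, $\hat\phi(\xi_0)=1$, combined with the preceding remark to rewrite the exponential factor. Your unpacking of $\partial_l$ under these hypotheses and invocation of the remark matches the paper exactly.
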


\noindent Our final theorem of this subsection concerns both time and space differences for convolution powers.
\begin{theorem}\label{thm:TimeSpaceDerivatives}
Let $\phi:\mathbb{Z}^d\rightarrow\mathbb{C}$ be finitely supported and such that $\sup_{\xi\in\mathbb{T}^d}|\hat\phi(\xi)|=1$. Additionally assume that $\Omega(\phi)=\{\xi_0\}$ and that $\xi_0$ is of positive homogeneous type for $\hat\phi$ with corresponding $\alpha=\alpha_{\xi_0}\in\mathbb{R}^d$ and positive homogeneous polynomial $P=P_{\xi_0}$. Define $\mu_\phi$ by \eqref{eq:MuPhiDefinition} (or equivalently \eqref{eq:muP}), let $\mathbf{m}$ (and $A$) be as guaranteed by Proposition \ref{prop:PositiveHomogeneousPolynomialsareSemiElliptic} and denote by $R^{\#}$, the Legendre-Fenchel transform of $R=\Re P$. There are positive constants  $M$ and $C_0$ and, to each $B>0$ and multi-index $\beta$, a positive constant $C_\beta$ such that, for any $P$-fitted collection $v=\{v_1,v_2,\dots,v_d\}$ of weight $\mathbf{m}$ and $l_1,l_2,\dots,l_k\in\mathbb{N}_+$ such that $l_q\alpha\in\mathbb{Z}^d$ for $q=1,2,\dots,k$,
\begin{align*}
\lefteqn{\hspace{-1cm}\left|\partial_{l_1}\partial_{l_2}\cdots\partial_{l_k}D_{v}^{\beta}(\hat\phi(\xi_0)^{-1}e^{ix\cdot\xi_0}\phi^{(n)}(x))\right|}\\
&\leq\frac{C_{\beta}C_0^k k!\prod_{q=1}^kl_q\prod_{j=1}^d|v_j|^{\beta_j}}{n^{\mu_\phi+|\beta:2\mathbf{m}|+k}}\exp\left(-(n+l_1+l_2+\cdots+ l_k)M R^{\#}\left(\frac{x-n\alpha}{n+l_1+l_2+\cdots+ l_k}\right)\right)
\end{align*}
for all $x\in\mathbb{Z}^d$ and $n\in\mathbb{N}_+$ such that $|v_k|\leq Bn^{1/(2m_k)}$ for $k=1,2,\dots,d$.
\end{theorem}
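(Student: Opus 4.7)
The plan is to simultaneously run the Fourier-representation-with-complex-shift arguments used in the proofs of Theorems \ref{thm:DerivativeEstimate} and \ref{thm:TimeExponentialEstimate}. As a first reduction, replacing $\phi(x)$ by $\hat\phi(\xi_0)^{-1}e^{ix\cdot\xi_0}\phi(x)$ (as in the proof of Theorem \ref{thm:DerivativeEstimate}), one may assume without loss of generality that $\xi_0=0$ and $\hat\phi(\xi_0)=1$. Then, for an arbitrarily fixed $\nu\in\mathbb{R}^d$, exploiting the periodicity of $\hat\phi$ on $\mathbb{C}^d$ (valid because $\phi$ is finitely supported) allows us to shift the Fourier inversion contour and write
\begin{equation*}
\partial_{l_1}\cdots\partial_{l_k}D_v^\beta\phi^{(n)}(x)=\frac{e^{-(x-n\alpha)\cdot\nu}}{(2\pi)^d}\int_{\mathbb{T}_\phi^d}e^{-i(x-n\alpha)\cdot\xi}Q_v^\beta(z)\prod_{q=1}^k g_{l_q}(z)\,f(z)^n\,d\xi,
\end{equation*}
where $z=\xi-i\nu$, $Q_v^\beta(z)=\prod_{j=1}^d(e^{iv_j\cdot z}-1)^{\beta_j}$, the $g_{l_q}$ are from Lemma \ref{lem:TimeExponentialEstimate}, and $f=f_{\xi_0}$ is defined by \eqref{eq:ComplexExponentialEstimate}.

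The heart of the argument is a pointwise estimate of the integrand obtained by combining three lemmas. Lemma \ref{lem:ComplexExponentialEstimate} yields constants $\epsilon, M_0>0$ with $|f(z)|^n\leq\exp(-n\epsilon R(\xi)+nM_0R(\nu))$ on $\mathbb{T}_\phi^d\times\mathbb{R}^d$. Lemma \ref{lem:TimeExponentialEstimate} supplies $|g_{l_q}(z)|\leq l_qC(R(\xi)+R(\nu))e^{l_qM_0R(\nu)}$. Finally, an application of Lemma \ref{lem:DerivativeEstimate} to the collection $\{v_j\}$ (each $v_j$ appearing $\beta_j$ times), followed by the computation \eqref{eq:PrincipalAxesDerivativeEstimate_1} from the proof of Corollary \ref{cor:DerivativeEstimate}, gives $|Q_v^\beta(z)|\leq C_\beta' n^{-|\beta:2\mathbf{m}|}\bigl(\prod_j|v_j|^{\beta_j}\bigr)\exp\bigl(n\epsilon R(\xi)/4+nR(\nu)\bigr)$ whenever $|v_j|\leq Bn^{1/(2m_j)}$, where the small coefficient on $R(\xi)$ is achieved by choosing the free parameter in Lemma \ref{lem:DerivativeEstimate} appropriately. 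Writing $s_k=l_1+\cdots+l_k$ and $C_1=4C/\epsilon$, the elementary inequality $t^k\leq k!\,e^t$ (applied with $t=n\epsilon(R(\xi)+R(\nu))/4$) gives
\begin{equation*}
(R(\xi)+R(\nu))^k\leq \frac{k!\,C_1^k}{n^k}\exp(n\epsilon(R(\xi)+R(\nu))/4),
\end{equation*}
which allows absorption of the $k$-fold polynomial factor from the time differences into the exponential at the cost of a prefactor $k!C_1^k\prod_q l_q/n^k$. After enlarging $M_0$ to some $M$ to absorb the remaining $R(\nu)$ pieces, the integrand is bounded by
\begin{equation*}
\frac{C_\beta' C_1^k k!\prod_q l_q\prod_j|v_j|^{\beta_j}}{n^{k+|\beta:2\mathbf{m}|}}\exp\bigl(-n\epsilon R(\xi)/2+(n+s_k)MR(\nu)\bigr).
\end{equation*}

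Integrating in $\xi$ over $\mathbb{T}_\phi^d$ and using $\int_{\mathbb{T}_\phi^d}e^{-n\epsilon R(\xi)/2}\,d\xi\leq C_2n^{-\mu_\phi}$ (exactly as in \eqref{eq:ExponentialEstimate_2}) produces
\begin{equation*}
|\partial_{l_1}\cdots\partial_{l_k}D_v^\beta\phi^{(n)}(x)|\leq\frac{C_\beta C_0^k k!\prod_q l_q\prod_j|v_j|^{\beta_j}}{n^{\mu_\phi+|\beta:2\mathbf{m}|+k}}\exp\bigl(-(n+s_k)(y_{n,s_k}(x)\cdot\nu-MR(\nu))\bigr),
\end{equation*}
with $y_{n,s_k}(x)=(x-n\alpha)/(n+s_k)$. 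Since all constants are independent of $\nu$, taking the infimum over $\nu\in\mathbb{R}^d$ and invoking Corollary \ref{cor:MovingConstant} to absorb $M$ into $R^\#$ yields the claimed estimate. The main obstacle is the delicate bookkeeping required to allocate the $-n\epsilon R(\xi)$ decay available from $f^n$ among three competing demands: absorbing the $R(\xi)$-growth from $Q_v^\beta$, absorbing the $(R(\xi)+R(\nu))^k$ polynomial growth from the $k$ time differences so that the factorial produces exactly $k!/n^k$ rather than something worse, and leaving enough residual $\xi$-decay to produce $n^{-\mu_\phi}$ upon integration. Parallel care is needed on the $R(\nu)$ side, where the exponent must inflate from $nM_0$ to $(n+s_k)M$; this inflation is precisely what forces $(n+s_k)$ to appear as the denominator inside $R^\#$ in the final estimate, rather than $n$.
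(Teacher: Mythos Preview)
Your proposal is correct and follows exactly the approach sketched in the paper's own proof, which simply says to repeat the arguments of Theorems \ref{thm:DerivativeEstimate} and \ref{thm:TimeExponentialEstimate} while using Lemmas \ref{lem:ComplexExponentialEstimate}, \ref{lem:TimeExponentialEstimate}, \ref{lem:DerivativeEstimate} and the $P$-fitted computation from Corollary \ref{cor:DerivativeEstimate}. You have in fact supplied more detail than the paper does (the $t^k\leq k!\,e^t$ absorption trick and the careful apportioning of the $-n\epsilon R(\xi)$ decay), so this is a faithful and complete expansion of the intended argument.
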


\begin{proof}
By replacing $\phi(x)$ by $\hat\phi(\xi_0)^{-1}e^{ix\cdot\xi_0}\phi(x)$ we can assume without loss of generality that $\xi_0=0$ and $\hat\phi(\xi_0)=1$. Assuming the notation of Lemma \ref{lem:ComplexExponentialEstimate} (with $f=f_{\xi_0}$) and Lemma \ref{lem:TimeExponentialEstimate}, we fix $\nu\in\mathbb{R}^d$ and observe that
\begin{equation*}
\partial_{l_1}\partial_{l_2}\cdots\partial_{l_k}D_v^{\beta}\phi^{(n)}(x)=\frac{1}{(2\pi)^d}\int_{\mathbb{T}^d}\prod_{q=1}^k g_{l_q}(z)Q(z)f(z)^ne^{-i(n+s_k)y_{s_k,n}(x)\cdot z}\,d\xi
\end{equation*}
for all $x\in\mathbb{Z}^d$ and $n\in\mathbb{N}_+$, where $z=\xi-i\nu$, $s_k=l_1+l_2+\cdots+l_k$, $y_{s_k,n}(x)=(x-n\alpha)/(n+s_k)$ and $Q(z)=\prod_{j=1}^d (e^{iv_j\cdot z}-1)^{\beta_j}$ is the subject of Lemma \ref{lem:DerivativeEstimate}.  The desired estimate is now established by virtually repeating the arguments in the proof of Theorems \ref{thm:DerivativeEstimate} and \ref{thm:TimeExponentialEstimate} while making use of Lemmas \ref{lem:ComplexExponentialEstimate}, \ref{lem:TimeExponentialEstimate} and \ref{lem:DerivativeEstimate} and noting, as was done in the proof of Corollary \ref{cor:DerivativeEstimate}, that $|n^{-E^*}v_j|=n^{-1/(2m_j)}|v_j|$ for $j=1,2,\dots,d$.
\end{proof}

\subsection{Sub-exponential bounds}\label{subsec:SubExponentialEstimate}

In this subsection, we again consider a finitely supported function $\phi:\mathbb{Z}^d\rightarrow\mathbb{C}$ such that $\sup_{\xi\in\mathbb{T}^d}|\hat\phi(\xi)|=1$ and each $\xi\in\Omega(\phi)$ is of positive homogeneous type for $\hat\phi$. In contrast to the previous subsection, we do not require any relationship between the drifts $\alpha_\xi$ and positive homogeneous polynomials $P_\xi$ for those $\xi\in\Omega(\phi)$; a glimpse into Subsections \ref{subsec:ex2} and \ref{subsec:ex4} shows this situation to be a natural one. As was noted in \cite{Diaconis2014}, the optimization procedure which yielded the exponential-type estimates of the previous subsection is no longer of use. Here we have the following result concerning sub-exponential estimates.

\begin{theorem}\label{thm:SubExponentialEstimate}
Let $\phi:\mathbb{Z}^d\rightarrow\mathbb{C}$ be finitely supported and such that \break $\sup_{\xi\in\mathbb{T}^d}|\hat\phi(\xi)|=1$. Suppose additionally each $\xi\in\Omega(\phi)$ is of positive homogeneous type for $\hat\phi$ and hence $\Omega(\phi)=\{\xi_1,\xi_2,\dots,\xi_Q\}.$ Let $\alpha_q\in\mathbb{R}^d$ and positive homogeneous polynomial $P_{q}$ be those associated to $\xi_q$ for $q=1,2,\dots, Q$. Moreover, for each $q=1,2,\dots,Q$, set $\mu_q=\mu_{P_q}$ and let $E_q\in\Exp(P_q)$. Then, for any $N\geq 0$, there is a positive constant $C_N$ such that
\begin{equation}\label{eq:SubExponentialEstimate_1}
|\phi^{(n)}(x)|\leq C_N\sum_{q=1}^Q\frac{1}{n^{\mu_q}}(1+|n^{-E_q^*}(x-n\alpha_q)|)^{-N}
\end{equation}
for all $x\in\mathbb{Z}^d$ and $n\in\mathbb{N}_+$. The constant $C_N$ is independent of $E_q\in\Exp(P_q)$ for $q=1,2,\dots Q$.
\end{theorem}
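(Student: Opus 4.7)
The plan is to replace the complex-translation optimization of the previous subsection (which forced the drifts and polynomials to coincide) by a matrix-dilation followed by integration by parts in the Fourier variable, applied separately near each point of $\Omega(\phi)$. Fix a smooth partition of unity $\{\chi_0,\chi_1,\dots,\chi_Q\}$ on $\mathbb{T}_\phi^d$ with $\chi_q$ supported in a small neighborhood $\mathcal{O}_q$ of $\xi_q$ where the expansion \eqref{eq:PosHomType} and the pointwise bound $|e^{\Gamma_{\xi_q}(\eta)}|\le e^{-R_q(\eta)/2}$ both hold, and with $\chi_0$ supported on a set where $|\hat\phi|\le s<1$ (this is possible by Proposition \ref{prop:OmegaisFinite}). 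Fourier inversion then decomposes $\phi^{(n)}(x)=I_0(x,n)+\sum_{q=1}^{Q}I_q(x,n)$; the remainder $I_0$ is $O(s^n)$ uniformly in $x$, which is dominated by the right-hand side of \eqref{eq:SubExponentialEstimate_1} for any $N$.

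For each $q\ge 1$, substitute $\xi=\xi_q+n^{-E_q}\zeta$, use $\hat\phi(\xi_q+\eta)=\hat\phi(\xi_q)e^{\Gamma_{\xi_q}(\eta)}$, the homogeneity identity $nP_q(n^{-E_q}\zeta)=P_q(\zeta)$, and $\det(n^{-E_q})=n^{-\mu_q}$ (Corollary \ref{cor:UniqueTrace}), to obtain
\[
I_q(x,n)=n^{-\mu_q}\hat\phi(\xi_q)^n e^{-ix\cdot\xi_q}\int_{\mathbb{R}^d}A^n_q(\zeta)\,e^{-iy_{n,q}\cdot\zeta}\,d\zeta,
\]
where $y_{n,q}=n^{-E_q^*}(x-n\alpha_q)$ and $A^n_q(\zeta)=\chi_q(\xi_q+n^{-E_q}\zeta)\exp\bigl(-P_q(\zeta)+n\Upsilon_q(n^{-E_q}\zeta)\bigr)$. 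Iterating $(1-\Delta_\zeta)e^{-iy\cdot\zeta}=(1+|y|^2)e^{-iy\cdot\zeta}$ and integrating by parts $k=\lceil N/2\rceil$ times produces the factor $(1+|y_{n,q}|^2)^{-k}\asymp (1+|y_{n,q}|)^{-N}$, and the theorem follows once we show that $(1-\Delta_\zeta)^{k}A^n_q$ is absolutely integrable in $\zeta$ with a bound depending only on $N$ (and not on $n$, $x$, or the choice of $E_q$; the latter independence comes from the compactness of $\Sym(P_q)$, Proposition \ref{prop:SymPisCompact}).

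The main obstacle is this uniform-in-$n$ control of $(1-\Delta_\zeta)^{k}A^n_q$. The derivatives hitting $\chi_q(\xi_q+n^{-E_q}\zeta)$ carry powers of $\|n^{-E_q}\|\to 0$ (Lemma \ref{lem:tEisContracting}), and the polynomial $P_q$ contributes only factors of moderate growth absorbed by the Schwartz decay of $e^{-P_q(\zeta)}$ (Proposition \ref{prop:PropertiesofHP}). The delicate factor is $e^{n\Upsilon_q(n^{-E_q}\zeta)}$: one chooses the Taylor truncation order $m'$ in \eqref{eq:DefinitionQm} large enough that the remainder $R_q^{m'}(\eta)=O(|\eta|^{m'+1})$ satisfies $n\,|\partial^\beta R_q^{m'}(n^{-E_q}\zeta)|=O(1)$ on the support of $\chi_q(\xi_q+n^{-E_q}\cdot)$ for all $|\beta|\le 2k$; this is possible because $\|n^{-E_q}\|$ decays like a positive power of $1/n$ (by Proposition \ref{prop:PositiveHomogeneousPolynomialsareSemiElliptic}, after diagonalization). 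The polynomial part $nQ^{m'}_{\xi_q}(n^{-E_q}\zeta)$ and its derivatives in $\zeta$ are controlled using the scaling convergence $nQ^{m'}_{\xi_q}(n^{-E_q}\zeta)\to P_q(\zeta)$ provided by Proposition \ref{prop:PosHomTypeChar} (with Cauchy-type estimates for derivatives of the polynomial approximants). Combining these derivative bounds with the real-part estimate $-P_q(\zeta)+n\Re\Upsilon_q(n^{-E_q}\zeta)\le -R_q(\zeta)/2$ (inherited from $|e^{\Gamma_{\xi_q}(\eta)}|\le e^{-R_q(\eta)/2}$ on $\mathcal{O}_q$) makes $(1-\Delta_\zeta)^{k}A^n_q$ pointwise dominated by a polynomial in $\zeta$ times $e^{-R_q(\zeta)/2}$, which is integrable on $\mathbb{R}^d$ with a bound independent of $n$. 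Summing over $q$ and incorporating the exponentially small piece $I_0$ yields \eqref{eq:SubExponentialEstimate_1}.
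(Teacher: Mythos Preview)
Your strategy is sound and yields a proof, but it diverges from the paper's at the key technical step of bounding the $\zeta$-derivatives of the amplitude $A_q^n$. The paper covers $\mathbb{T}_\phi^d$ with $Q$ overlapping patches $\mathcal{O}_q$ (no separate ``remainder'' cutoff $\chi_0$) and, crucially, controls $D^\beta f_{q,n}(\zeta)$ via Cauchy's integral formula in $\mathbb{C}^d$: since $\hat\phi$ is entire, Lemma~\ref{lem:ComplexExponentialEstimate} bounds $|f_{q,n}|$ on a complex polydisk of fixed radius $\delta$ about each real $\zeta\in\operatorname{supp}u_{q,n}$, and Cauchy's formula then gives $|D^\beta f_{q,n}(\zeta)|\le C_q\,\beta!\,\delta^{-|\beta|}e^{-\epsilon_q R_q(\zeta)}$ uniformly in $n$ (this is the paper's Lemma~\ref{lem:ExponentialEstimate}). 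This sidesteps entirely the Taylor-remainder bookkeeping you propose. Your real-variable route---truncate $\Gamma_{\xi_q}$ at order $m'$, control $nQ^{m'}_{\xi_q}(n^{-E_q}\zeta)$ via the convergence in Proposition~\ref{prop:PosHomTypeChar}, and handle $nR_q^{m'}(n^{-E_q}\zeta)$ by choosing $m'$ large---also works, and has the merit of not relying on the holomorphy of $\hat\phi$, so in principle it extends to $\phi\in\mathcal{S}_d$ without finite support; the Cauchy-formula approach is specific to the finitely supported case but is noticeably cleaner.

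Two imprecisions in your write-up are worth noting. First, the claim that $n|\partial^\beta R_q^{m'}(n^{-E_q}\zeta)|=O(1)$ on $\operatorname{supp}\chi_q(\xi_q+n^{-E_q}\cdot)$ is not right for small $|\beta|$: the chain rule and the Taylor estimate $|\partial^\gamma R_q^{m'}(\eta)|\le C|\eta|^{m'+1-|\gamma|}$ give instead $n|\partial_\zeta^\beta[R_q^{m'}(n^{-E_q}\zeta)]|\le Cn\|n^{-E_q}\|^{m'+1}|\zeta|^{m'+1-|\beta|}$, which is $O((1+|\zeta|)^{m'+1-|\beta|})$ uniformly in $n$ once $m'+1\ge 1/\lambda_{\min}(E_q)$. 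This polynomial-in-$\zeta$ bound is what you actually need (and what your concluding sentence uses). Second, ``$I_0=O(s^n)$ is dominated by the right-hand side of \eqref{eq:SubExponentialEstimate_1}'' is false for $|x|\to\infty$ with $n$ fixed; you must either restrict to $|x|\le Rn$ using the finite support of $\phi^{(n)}$ (where the right-hand side is bounded below by a negative power of $n$, absorbing $s^n$), or integrate $I_0$ by parts in $\xi$ as well. The paper avoids this altogether by absorbing the region $\{|\hat\phi|<1\}$ into the $Q$ patches and letting Lemma~\ref{lem:ComplexExponentialEstimate} control each patch uniformly.
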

\begin{proof}
In view of Proposition \ref{prop:OmegaisFinite} and Remark \ref{rmk:TorusShift}, there exist relatively open subsets $\mathcal{B}_1,\mathcal{B}_2,\dots,\mathcal{B}_Q$ of $\mathbb{T}_\phi^d$ satisfying the following properties:
\begin{enumerate}
\item For each $q=1,2,\dots Q$, $\mathcal{B}_q$ contains $\xi_q$.
\item $\mathcal{B}_1$ contains the boundary of $\mathbb{T}_\phi^d$ (as a subset of $\mathbb{R}^d$).
\item The closed sets $\{\overline{\mathcal{B}}_1,\overline{\mathcal{B}}_2,\dots,\overline{\mathcal{B}}_Q\}$ are mutually disjoint.
\end{enumerate}
For $q=1,2,\dots Q$, define
\begin{equation*}
\mathcal{O}_q=\mathbb{T}_\phi^d\setminus\left(\bigcup_{r\neq q}\overline{\mathcal{B}_r}\right)
\end{equation*}
and observe that each $\mathcal{O}_q$ is an open neighborhood of $\xi_q$ (in the relative topology). Let $\{u_q\}_{q=1}^Q$ be a smooth partition of unity subordinate to $\{\mathcal{O}_q\}_{q=1}^Q$. By construction, $u_1\equiv 1$ on the boundary of $\mathbb{T}_\phi^d$  and, for each $q=1,2,\dots Q$, $u_q$ is compactly supported in $\mathcal{O}_q$. We note that, for each $q\neq 1$, $\supp (u_q)$ is also a compact subset of $\mathbb{R}^d$ because the boundary of $\mathbb{T}_\phi^d$ is contained in $\mathcal{B}_1$ (the relative topology of $\mathbb{T}_\phi^d$ is only seen in $\supp(u_1)$). Set
\begin{equation*}
\delta=\frac{\min_{q=1,2,\dots,Q}\mbox{dist}(\supp (u_q),\partial\mathcal{O}_q)}{2 \sqrt{d}}>0.
\end{equation*}
Observe that, for any $x\in\mathbb{Z}^d$ and $n\in\mathbb{N}_+$,
\begin{equation}\label{eq:SubExponentialEstimate_2}
\begin{split}
\phi^{(n)}(x)&=\frac{1}{(2\pi)^d}\int_{\mathbb{T}_\phi^d}e^{-ix\cdot\xi}\hat\phi(\xi)^n\,d\xi\\
&=\sum_{q=1}^Q\frac{1}{(2\pi)^d}\int_{\mathcal{O}_q}e^{-ix\cdot\xi}\hat\phi(\xi)^nu_q(\xi)\,d\xi\\
&=\sum_{q=1}^Q\frac{e^{ix\cdot\xi_q}\hat\phi(\xi_q)^n}{n^{\mu_q}(2\pi)^d}\int_{\mathcal{U}_{q,n}}e^{-iy_n(x)\cdot\xi}f_{q,n}(\xi)u_{q,n}(\xi)\,d\xi\\
&=\sum_{q=1}^Q\frac{e^{ix\cdot\xi_q}\hat\phi(\xi_q)^n}{n^{\mu_q}(2\pi)^d}\mathcal{I}_{q,n}(x),
\end{split}
\end{equation}
where we have set $y_{q,n}(x)=n^{-E_q^*}(x-n\alpha_q)$, $\mathcal{U}_{q,n}=n^{E_q}(\mathcal{O}_q)-\xi_q$, defined
\begin{equation*}
u_{q,n}(\xi)=u_q(n^{-E_q}\xi),
\end{equation*}
and
\begin{equation*}
f_{q,n}(\xi)=(\hat\phi(\xi_q)^{-1}e^{-i\alpha\cdot n^{-E_q}\xi}\hat\phi(n^{-E_q}\xi+\xi_q))^n
\end{equation*}
for $\xi\in \mathcal{U}_{q,n}$, and put
\begin{equation*}
\mathcal{I}_{q,n}(x)=\int_{\mathcal{U}_{q,n}}e^{-iy_n(x)\cdot\xi}f_{q,n}(\xi)u_{q,n}(\xi)\,d\xi.
\end{equation*}
Of course, for each $n$ and $q$,  $f_{n,q}$ extends to an entire function on $\mathbb{C}^d$; we make no distinction between this function and $f_{n,q}$. We will soon obtain the desired estimates by integrating $\mathcal{I}_{n,q}$ by parts. For this purpose, it is useful to estimate the derivatives of $f_{q,n}$ and this is done in the lemma below. The idea behind the lemma's proof is to look at $f_{q,n}$ on small neighborhoods in $\mathbb{C}^d$ of $\zeta\in\supp( u_{q,n})\subseteq\mathbb{R}^d$. On such complex neighborhoods, Lemma \ref{lem:ComplexExponentialEstimate} gives tractable estimates for $f_{q,n}$ to which Cauchy's $d$-dimensional integral formula can be applied to estimate $D^\alpha f_{q,n}(\zeta)$.

\begin{lemma}\label{lem:ExponentialEstimate}
For each $q=1,2,\dots, Q$, there exist positive constants $C_q$ and $\epsilon_q$ such that, for each multi-index $\beta$,
\begin{equation*}
|D^{\beta}f_{q,n}(\zeta)|\leq C_q\frac{\beta!}{\delta^{|\beta|}}\exp(-\epsilon_q R_q(\zeta))
\end{equation*}
for all $n\in\mathbb{N}_+$ and $\zeta\in\supp (u_{q,n})$.
\end{lemma}

\begin{subproof}[Proof of Lemma \ref{lem:ExponentialEstimate}.]
Our choice of the open cover $\{\mathcal{O}_q\}$ guarantees that $|\hat\phi(\eta+\xi_q)|<1$ for all non-zero $\eta$ in the compact set $\overline{\mathcal{O}_q}-\xi_q$. An appeal to Lemma \ref{lem:ComplexExponentialEstimate} gives $\epsilon'_q,M'_q>0$ such that
\begin{equation}\label{eq:SubExponentialEstimate_3}
\begin{split}
|f_{q,n}(z)|&\leq \exp\big(-\epsilon'_q R_q\big(n^{-E_q}\eta\big)+M'_q R_q\big(n^{-E_q}\nu\big)\big)^n\\
&\leq\exp(-\epsilon'_q R_q(\eta)+M'_q R_q(\nu))
\end{split}
\end{equation}
for all $n\in\mathbb{N}_+$ and $z=\eta-i\nu\in\mathbb{C}^d$ for which $\eta\in\overline{\mathcal{U}_{q,n}}$.

We claim that there are constants $\epsilon_q,M_q>0$ for which
\begin{equation}\label{eq:SubExponentialEstimate_4}
-\epsilon_q'R_q(\eta)+M_q'R_q(\nu)\leq -\epsilon_q R_q(\zeta)+M_q
\end{equation}
for all $z=\eta-i\nu\in\mathbb{C}^d$ and $\zeta\in\mathbb{R}^d$ such that $|z_i-\zeta_i|=\delta$ for $i=1,2,\dots d$. Indeed, it is clear that $R_q(\nu)$ is bounded for all possible values of $\nu$. An appeal to Proposition \ref{prop:RSumEstimate} ensures that, there are $M'_q,\epsilon_q>0$ for which
\begin{equation*}
-\epsilon_q'R_q(\eta)=-\epsilon_q'R_q(\zeta+(\eta-\zeta))\leq -\epsilon_q R_q(\zeta)+M'_q
\end{equation*}
for all $\eta,\zeta\in\mathbb{R}^d$ provided $|\eta_i-\zeta_i|\leq|z_i-\zeta_i|=\delta$ for all $i=1,2,\dots d$. This proves the claim.

By combining \eqref{eq:SubExponentialEstimate_3} and \eqref{eq:SubExponentialEstimate_4}, we deduce that, for all $n\in\mathbb{N}_+$, $\zeta\in\mathbb{R}^d$, and $z=\eta-i\nu\in\mathbb{C}^d$ for which $\eta\in\overline{\mathcal{U}_{q,n}}$,
\begin{equation}\label{eq:SubExponentialEstimate_5}
|f_{q,n}(z)|\leq \exp(-\epsilon_q R_q(\zeta)+M_q)
\end{equation}
whenever $|z_i-\zeta_i|=\delta$ for all $i=1,2,\dots d$. Our aim is to combine Cauchy's $d$-dimensional integral formula,
\begin{equation}\label{eq:SubExponentialEstimate_6}
D^{\beta}f_{q,n}(\zeta)=\frac{\beta!}{(2\pi i)^d}\int_{C_1}\int_{C_2}\cdots\int_{C_d}\frac{f_{q,n}(z)\,dz_1dz_2\dots dz_d}{(z-\zeta)^{(\beta_1+1,\beta_2+1,\dots,\beta_d+1)}},
\end{equation}
with \eqref{eq:SubExponentialEstimate_5} to obtain our desired bound for $\zeta\in\supp (u_{q,n})$; here, $C_i=\{z:|z_i-\zeta_i|=\delta\}$ for $i=1,2,\dots,d$. To do this, we must verify that $z=\eta-i\nu$ is such that $\eta\in\overline{\mathcal{U}_{q,n}}$ whenever $|z_i-\zeta_i|=\delta$ for $i=1,2,\dots,d$. This is easy to see, for if $\zeta\in\supp (u_{q,n})$ and $z$ is such that $|z_i-\zeta_i|=\delta$ for $i=1,2,\dots,d$,
\begin{equation*}
|z-\zeta|=\sqrt{d}\delta<\mbox{dist}(\supp (u_q),\partial\mathcal{O}_q)\leq \mbox{dist}(\supp (u_{n,q}),\partial\mathcal{U}_{n,q})
\end{equation*}
for all $n\in\mathbb{N}_+$ (the distance only increases with $n$ because $\{t^{E_q}\}$ is contracting). Consequently, a combination of \eqref{eq:SubExponentialEstimate_5} and \eqref{eq:SubExponentialEstimate_6} shows that, for any multi-index $\beta$,
\begin{equation*}
|D^{\beta}f_{q,n}(\zeta)|\leq\frac{\beta!}{\delta^{|\beta|}}\exp(-\epsilon_q R_q(\zeta)+M_q)
\end{equation*}
for all $n\in\mathbb{N}_+$ and $\zeta\in\supp (u_{q,n})$ and thus the desired result holds.
\end{subproof}

We now finish the proof of Theorem \ref{thm:SubExponentialEstimate}. We assert that, for each $q=1,2,\dots, Q$ and multi-index $\beta$, there exists $C_{\beta}>0$ such that
\begin{equation}\label{eq:SubExponentialEstimate_7}
|y_{q,n}(x)^\beta\mathcal{I}_{q,n}(x)|\leq C_{\beta}
\end{equation}
for all $x\in\mathbb{Z}^d$ and $n\in\mathbb{N}_+$. By inspecting \eqref{eq:SubExponentialEstimate_2}, we see that the desired estimate, \eqref{eq:SubExponentialEstimate_1}, follow directly from \eqref{eq:SubExponentialEstimate_7} and so we prove \eqref{eq:SubExponentialEstimate_7}.

We have, for any multi-index $\beta$,
\begin{align*}
(iy_{q,n}(x))^{\beta}\mathcal{I}_{q,n}(x)&=\int_{\mathcal{U}_{q,n}}D_\xi^{\beta}(e^{-iy_n(x)\cdot\xi})f_{q,n}(\xi)u_{q,n}(\xi)\,d\xi\\
&=(-1)^{|\beta|}\int_{\mathcal{U}_{q,n}}e^{-iy_n(x)\cdot\xi}D^{\beta}(f_{q,n}(\xi)u_{q,n}(\xi))\,d\xi
\end{align*}
for all $n\in\mathbb{N}_+$ and $x\in\mathbb{Z}^d$ where we have integrated by parts and made explicit use of our partition of unity $\{u_q\}$ to ensure that all boundary terms vanished. To see the absence of boundary contributions, note that when $q\neq 1$, $u_{q,n}$ and its derivatives are identically zero on a neighborhood of $\partial\mathcal{U}_{q,n}$. When $q=1$, $\supp (u_{1,n})\cap \partial\mathcal{U}_{1,n}=\partial( n^E\mathbb{T}^d)$ and because $u_{1,n}\equiv 1$ on a neighborhood of $\partial(n^E\mathbb{T}_\phi^d)$, the periodicity of $f_{q,n}$ and its derivatives (which are directly inherited form the periodicity of $\hat\phi(\xi)$) ensure that the integral over the $\partial\mathcal{U}_{1,n}$ is zero. Consequently,
\begin{equation*}
|y_{q,n}(x))^{\beta}\mathcal{I}_{q,n}(x)|\leq\int_{\supp (u_{q,n})}\big|D^{\beta}(f_{q,n}(\xi)u_{q,n}(\xi))\big|\,d\xi
\end{equation*}
for, $q=1,2,\dots Q$, $n\in\mathbb{N}_+$ and $x\in\mathbb{Z}^d$. Once it is observed that derivatives of $u_{q,n}$ are well-behaved as $n$ increases, the estimate \eqref{eq:SubExponentialEstimate_7} follows immediately from Lemma \ref{lem:ExponentialEstimate}. The fact that $C_N$ is independent of $E_q\in\Exp(P_q)$ for $q=1,2,\dots,Q$ follows by a direct application of Proposition \ref{prop:SymPisCompact}.
\end{proof}

\section{Stability theory}\label{sec:Stability}

\noindent We now turn to the stability of convolution operators. In this brief section, we show that Theorem \ref{thm:StabilityIntro} is a consequence of of estimates of the preceding section. Let $\phi:\mathbb{Z}^d\rightarrow\mathbb{C}$ be finitely supported and define the operator $A_\phi$ on $L^p =L^p(\mathbb{R}^d)$ for $1\leq p\leq \infty$ by
\begin{equation}\label{eq:TPhiDefinition}
(A_\phi f)(x)=\sum_{y\in\mathbb{Z}^d}\phi(y)f(x-y).
\end{equation}
Such operators arise in the theory of finite difference schemes for partial differential equations in which they produce extremely accurate numerical approximations to solutions for initial value problems, e.g., \eqref{eq:HeatTypeEquation}. We encourage the reader to see \cite{Richtmyer1967} and \cite{Trefethen1996} for readable introductions to this theory; Thom\'{e}e's survey \cite{Thomee1969} is also an excellent reference. In this framework, the operator $A_\phi$ is known as an explicit constant-coefficient difference operator. General explicit difference operators are produced by allowing $\phi$ to depend on a real parameter $h>0$ which is usually the grid size of an associated spatial discretization for the initial value problem.\\

\noindent The operator $A_\phi$ is said to be \textit{stable} in $L^p$ if the collection of successive powers of $A_\phi$ is uniformly bounded on $L^p$, i.e., there is a positive constant $C$ for which
\begin{equation*}
\|A_\phi^nf\|_{L^p}\leq C\|f\|_{L_p}
\end{equation*}
for all $f\in L^p$ and $n\in\mathbb{N}_+$; this property has profound consequences for difference schemes of partial differential equations as we discussed in the introduction. For example, the Lax equivalence theorem states that a consistent approximate difference scheme for \eqref{eq:HeatTypeEquation} is stable in $L^p$ if and only if the difference scheme converges to the true solution \eqref{eq:SemigroupRepresentation} \cite{Thomee1969,Trefethen1996}. In the $L^2$ setting, checking stability is straightforward. Using the Fourier transform, one finds that $A_\phi$ is stable in $L^2$ if and only if $\sup_{\xi}|\hat\phi(\xi)|\leq 1$; this is a special case of the von Neumann condition \cite{Thomee1965}. When $p\neq 2$, the question of stability for $A_\phi$ is more subtle. It follows directly from the definition \eqref{eq:TPhiDefinition} that $A_\phi^{n}=A_{\phi^{(n)}}$ for all $n\in\mathbb{N}_+$ and so by Minkowski's inequality we see that
\begin{equation}\label{eq:OperatorBoundsforTPhi}
\|A_{\phi}^nf\|_{L^p}=\|A_{\phi^{(n)}}\|_{L^p}\leq\|\phi^{(n)}\|_1\|f\|_{L^p}
\end{equation}
for all $f\in L^p$ and $n\in\mathbb{N}_+$. This allows us to formulate a sufficient condition for stability in $L^p$ for $1\leq p\leq \infty$ in terms of the convolution powers of $\phi$ (which is consistent with Question \eqref{ques:Stability} of Section \ref{sec:Introduction}) as follows: $A_\phi$ is stable in $L^p$ whenever there is a positive constant $C$ for which
\begin{equation}\label{eq:StabilityCondition}
\|\phi^{(n)}\|_1=\sum_{x\in\mathbb{Z}^d}|\phi^{(n)}(x)|\leq C
\end{equation}
for all $n\in\mathbb{N}_+$. The condition \eqref{eq:StabilityCondition} is also necessary when $p=\infty$ and so it is called the condition of max-norm stability. Originally investigated by John \cite{John1952} and Strang \cite{Strang1962}, this theory for difference schemes has been further developed by many authors, see for example \cite{Thomee1969,Thomee1965, Fedoryuk1967,Serdyukova1967}. In one dimension $(d=1)$, the question of stability in the max-norm was completely sorted out by Thom\'{e}e \cite{Thomee1965}. Thom\'{e}e showed that a sufficient condition of Strang was also necessary; this is summarized in the following theorem.
\begin{theorem}[Thom\'{e}e 1965]\label{thm:Thomee}
The operator $A_\phi$ is stable in $L^\infty(\mathbb{R})$ if and only if one of the following conditions is satisfied:
\begin{enumerate}[(a)]
 \item $\hat{\phi}(\xi)=ce^{i x\xi}$ for some $x\in \mathbb{Z}$ and $|c|=1$.
\item\label{con:ThomeeCondition} $|\hat\phi(\xi)|<1$ except for at most a finite number of points $\xi_1,\xi_2,\dots,\xi_Q$ in $\mathbb{T}$ where $|\hat\phi(\xi)|=1$. For $q=1,2,\dots Q$, there are constants $\alpha_q,\gamma_q,m_q$, where $\alpha_q\in\mathbb{R}$, $\Re\gamma_q>0$ and where $m_q\in\mathbb{N}_+$, such that
\begin{equation}\label{eq:ThomeeCondition}
\hat\phi(\xi+\xi_q)=\hat\phi(\xi_q)\exp(i\alpha_q\xi-\gamma_k\xi^{2m_q}+o(\xi^{2m_q}))
\end{equation}
as $\xi\rightarrow 0$.
\end{enumerate}
\end{theorem}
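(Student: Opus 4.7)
The plan is to prove the two directions separately. Sufficiency is a direct application of the machinery developed in Sections \ref{sec:LocalLimits}--\ref{sec:PointwiseBounds} specialized to $d=1$; necessity is the harder direction and requires obstruction arguments in the spirit of \cite{Randles2015}.

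\textbf{Sufficiency.} In case (a), $\phi = c\delta_x$ with $|c|=1$ and $x\in\mathbb{Z}$, so $\phi^{(n)} = c^n\delta_{nx}$ and $\|\phi^{(n)}\|_1 = 1$ for every $n$. In case (b), the stated expansion \eqref{eq:ThomeeCondition} precisely says that each $\xi_q\in\Omega(\phi)$ is of positive homogeneous type for $\hat\phi$ in the sense of Definition \ref{def:PosHomType}, with drift $\alpha_q$ and one-dimensional positive homogeneous polynomial $P_q(\xi) = \gamma_q\xi^{2m_q}$, so $\mu_{P_q} = 1/(2m_q)$ and $(2m_q)^{-1}\in\Exp(P_q)$. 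I would invoke Theorem \ref{thm:SubExponentialEstimate} with $N = 2$ to obtain
\begin{equation*}
|\phi^{(n)}(x)| \leq C \sum_{q=1}^Q \frac{n^{-1/(2m_q)}}{\bigl(1 + n^{-1/(2m_q)}|x - n\alpha_q|\bigr)^{2}}
\end{equation*}
for all $x\in\mathbb{Z}$ and $n\in\mathbb{N}_+$. Summing over $x\in\mathbb{Z}$ and comparing each $q$-sum to the convergent integral $\int_{\mathbb{R}} (1+|y|)^{-2}\,dy$ shows $\|\phi^{(n)}\|_1$ is bounded uniformly in $n$, giving the bound \eqref{eq:StabilityCondition} which is equivalent to max-norm stability of $A_\phi$.

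\textbf{Necessity, easy part.} Assume $\|\phi^{(n)}\|_1 \leq C$ for all $n$. Since $\|\hat\phi\|_\infty^n = \|\hat{\phi^{(n)}}\|_\infty \leq \|\phi^{(n)}\|_1 \leq C$, letting $n\to\infty$ yields the von Neumann bound $\sup_\xi|\hat\phi(\xi)| \leq 1$, which I normalize to equality. Since $\hat\phi$ is a trigonometric polynomial, the real-analytic function $1 - |\hat\phi|^2$ either vanishes identically or on a discrete hence finite set. In the former case $|\hat\phi|^2 \equiv 1$ forces $\phi\ast\phi^\ast = \delta_0$ with $\phi^\ast(x) = \overline{\phi(-x)}$, and the finite-support hypothesis then forces $\phi = c\delta_x$ with $|c|=1$, which is case (a). Otherwise $\Omega(\phi)$ is finite, say $\{\xi_1,\dots,\xi_Q\}$. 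For each $\xi_q$ one analyses the analytic function $\Gamma_{\xi_q}(\xi) = \log(\hat\phi(\xi+\xi_q)/\hat\phi(\xi_q))$ near $0$. A standard sign-argument from $\Re\Gamma_{\xi_q}\leq 0$ shows that the linear Taylor coefficient is purely imaginary (giving the drift $\alpha_q$) and that the first later coefficient with nonzero real part has even order $2m_q$ with negative real part, so setting $\gamma_q = -c_{2m_q}$ gives $\Re\gamma_q > 0$.

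\textbf{Necessity, hard part.} It remains to show that under stability all intermediate coefficients between the linear term and the $\xi^{2m_q}$ term must vanish so that the expansion takes the precise form \eqref{eq:ThomeeCondition}. I would argue by contradiction: suppose $\Gamma_{\xi_q}(\xi) = i\alpha_q\xi + i\beta_{k_0}\xi^{k_0} + \cdots - \gamma_q\xi^{2m_q} + O(\xi^{2m_q+1})$ with $\beta_{k_0}\neq 0$ and $2\leq k_0 < 2m_q$. Substituting into the Fourier inversion formula \eqref{eq:fourierinversion} and rescaling by the amplitude scale $\xi = n^{-1/(2m_q)}\eta$ turns the phase term into $i\beta_{k_0}n^{1-k_0/(2m_q)}\eta^{k_0}$, which diverges because $1 - k_0/(2m_q) > 0$; rescaling instead by the phase scale $\xi = n^{-1/k_0}\eta$ makes the amplitude term vanish. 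This scale mismatch produces an Airy-type attractor for $\phi^{(n)}$ supported on a region of length $\asymp n^{1/k_0}$ with amplitude $\asymp n^{-1/m'}$ for some $m' < 2m_q$, exactly as in the exhaustive 1D local limit analysis of \cite{Randles2015}. A careful stationary-phase computation, in the vein of Theorem \ref{thm:1DSupDecay} and its refinements in \cite{Randles2015}, would then show the $\ell^1$ norm of this attractor grows like a positive power of $n$, contradicting stability. The main obstacle is precisely this mixed-scale stationary-phase analysis: rigorously identifying the Airy-type asymptotic profile and extracting the sharp $\ell^1$ lower bound needed to close the contradiction.
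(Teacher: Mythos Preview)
The paper does not prove Theorem~\ref{thm:Thomee}: it is quoted as a historical result of Thom\'{e}e \cite{Thomee1965} and serves only as motivation and as the one-dimensional precursor to the paper's own Theorem~\ref{thm:StabilityIntro}. So there is no ``paper's own proof'' to compare against.

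That said, your sufficiency argument is exactly the $d=1$ specialization of what the paper does prove. Case~(b) of Thom\'{e}e's condition is precisely the statement that each $\xi_q\in\Omega(\phi)$ is of positive homogeneous type (as the paper remarks immediately after Theorem~\ref{thm:Thomee}), and your appeal to Theorem~\ref{thm:SubExponentialEstimate} followed by summation is Corollary~\ref{cor:Stability} verbatim. So this direction is fine and matches the paper's approach.

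For necessity, the paper offers nothing beyond the citation to \cite{Thomee1965} and the remark that Thom\'{e}e showed instability when the leading nonlinear term in the expansion is purely imaginary. Your ``easy part'' is correct and standard. Your ``hard part'' is honestly labeled as incomplete: the sketch of a scale-mismatch contradiction is the right heuristic, but turning it into a rigorous $\ell^1$ lower bound via stationary-phase asymptotics is genuinely delicate and is not something you can read off from the machinery in this paper. Thom\'{e}e's original argument in \cite{Thomee1965} proceeds somewhat differently, constructing explicit test data on which $A_\phi^n$ is unbounded rather than going through a full local-limit description of $\phi^{(n)}$; either route works, but neither is a two-line affair, and the paper makes no attempt at it.
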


\noindent Thom\'{e}e's characterization makes use of the fact that the level sets of non-constant holomorphic functions on $\mathbb{C}$ have no accumulation points -- a fact that breaks down in all other dimensions, e.g., $f(z)=f(z_1,z_2)=\cos(z_1-z_2)$. When $\phi:\mathbb{Z}\rightarrow\mathbb{C}$ is finitely supported and such that $\sup_\xi|\hat\phi(\xi)|=1$, the reader should note that the condition (\ref{con:ThomeeCondition}) of Theorem \ref{thm:Thomee} is equivalent to the hypotheses of Theorem \ref{thm:SubExponentialEstimate} for, in one dimension, every positive homogeneous polynomial is necessarily of the form $P(\xi)=\gamma\xi^{2m}$ where $\Re\gamma>0$ and $m\in\mathbb{N}_+$. In $\mathbb{Z}^d$, we have the following result.

\begin{corollary}\label{cor:Stability}
Let $\phi:\mathbb{Z}^d\rightarrow\mathbb{C}$ satisfy the hypotheses of Theorem \ref{thm:SubExponentialEstimate} and define $A_\phi$ by \eqref{eq:TPhiDefinition}. Then $A_\phi$ is stable in $L^\infty$ and hence stable in $L^p(\mathbb{R}^d)$ for all $1\leq p\leq\infty$.
\end{corollary}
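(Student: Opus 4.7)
The plan is to reduce the claim to Theorem \ref{thm:StabilityIntro} and then extract the $\ell^1$-bound $\sup_n\|\phi^{(n)}\|_1<\infty$ from the pointwise sub-exponential estimate of Theorem \ref{thm:SubExponentialEstimate}. The $L^p$ half is free: by \eqref{eq:OperatorBoundsforTPhi} and the identity $A_\phi^n=A_{\phi^{(n)}}$, for every $1\leq p\leq\infty$ and $f\in L^p$ we have $\|A_\phi^n f\|_{L^p}\leq\|\phi^{(n)}\|_1\|f\|_{L^p}$, so it suffices to establish the max-norm stability condition \eqref{eq:StabilityCondition}, which is precisely Theorem \ref{thm:StabilityIntro}.

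To prove Theorem \ref{thm:StabilityIntro}, I would fix any integer $N>d$ and apply Theorem \ref{thm:SubExponentialEstimate}, obtaining a constant $C_N$ such that
\begin{equation*}
|\phi^{(n)}(x)|\leq C_N\sum_{q=1}^{Q}\frac{1}{n^{\mu_q}}\bigl(1+|n^{-E_q^{*}}(x-n\alpha_q)|\bigr)^{-N}
\end{equation*}
for all $x\in\mathbb{Z}^d$ and $n\in\mathbb{N}_+$. Summing over $x$ and using the finiteness of $\Omega(\phi)$, it remains to produce a bound, uniform in $n$, for
\begin{equation*}
S_n^{(q)}:=\sum_{x\in\mathbb{Z}^d}\frac{1}{n^{\mu_q}}\bigl(1+|n^{-E_q^{*}}(x-n\alpha_q)|\bigr)^{-N}
\end{equation*}
for each $q=1,2,\dots,Q$.

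The core observation is that $S_n^{(q)}$ is a Riemann sum: writing each summand as an integral over the unit cube $x+[0,1)^d$, I would compare the values at the lattice point and in the cube. For $z\in x+[0,1)^d$, one has $|n^{-E_q^{*}}(z-x)|\leq\sqrt{d}\,\|n^{-E_q^{*}}\|$, and since $\{t^{E_q^{*}}\}$ is contracting (Lemma \ref{lem:tEisContracting}), this quantity tends to $0$ as $n\to\infty$. Hence for all $n$ larger than some $n_0$, the function $y\mapsto(1+|y|)^{-N}$ varies by a factor of at most $2^N$ between $n^{-E_q^{*}}(x-n\alpha_q)$ and $n^{-E_q^{*}}(z-n\alpha_q)$. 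Using $|\det n^{-E_q^{*}}|=n^{-\tr E_q}=n^{-\mu_q}$ (well-defined by Corollary \ref{cor:UniqueTrace}), the change of variables $u=n^{-E_q^{*}}(z-n\alpha_q)$ gives
\begin{equation*}
S_n^{(q)}\leq\frac{2^N}{n^{\mu_q}}\int_{\mathbb{R}^d}\bigl(1+|n^{-E_q^{*}}(z-n\alpha_q)|\bigr)^{-N}dz=2^N\int_{\mathbb{R}^d}(1+|u|)^{-N}\,du<\infty,
\end{equation*}
since $N>d$. For the finitely many $n\leq n_0$, $\phi^{(n)}$ is finitely supported and thus $\|\phi^{(n)}\|_1<\infty$ individually; taking the maximum yields the required uniform bound.

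No step looks genuinely hard: the only place where care is needed is the Riemann-sum comparison, where the contracting property of $\{t^{E_q^{*}}\}$ is precisely what makes the lattice mesh fine enough as $n\to\infty$. Everything else is a bookkeeping combination of Theorem \ref{thm:SubExponentialEstimate}, the inequality \eqref{eq:OperatorBoundsforTPhi}, and the definition of stability.
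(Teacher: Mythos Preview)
Your proof is correct and follows the same approach as the paper, which simply states that applying Theorem \ref{thm:SubExponentialEstimate} with $N\geq d+1$ and summing over $x\in\mathbb{Z}^d$ yields \eqref{eq:StabilityCondition}; you have supplied the Riemann-sum comparison that makes this summation uniform in $n$, a detail the paper leaves implicit. One cosmetic remark: in the paper's logical order Theorem \ref{thm:StabilityIntro} is deduced \emph{from} this corollary rather than the reverse, but since you go on to prove the $\ell^1$ bound directly from Theorem \ref{thm:SubExponentialEstimate} there is no circularity in your argument.
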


\begin{proof}
An application of Theorem \ref{thm:SubExponentialEstimate} with $N\geq d+1$ yields the uniform estimate \eqref{eq:StabilityCondition} after summing over $x\in\mathbb{Z}^d$.
\end{proof}

\begin{proof}[Proof of Theorem \ref{thm:StabilityIntro}]
This is simply Corollary \ref{cor:Stability} translated into the language of Section \ref{sec:Introduction}.
\end{proof}

\noindent In \cite{Thomee1965}, Thom\'{e}e also showed that when $\sup|\hat\phi|=1$ but the leading non-linear term in the expansion \eqref{eq:ThomeeCondition} was purely imaginary, the corresponding difference scheme was unstable. As was discussed in \cite{Diaconis2014} and \cite{Randles2015}, such expansions give rise to local limit theorems in which the corresponding attractors are bounded but not in $L^2$ and hence not in $S(\mathbb{R})$; for instance, the Airy function. In the spirit of \cite{Thomee1965}, M. V. Fedoryuk explored stability and instability in higher dimensions \cite{Fedoryuk1967}. Fedoryuk's affirmative result assumes that, for $\xi_0\in\Omega(\phi)$, the leading quadratic polynomial in the expansion for $\Gamma_{\xi_0}$ has positive definite real part. Because any quadratic polynomial $P$ with positive definite real part is positive homogeneous ($2^{-1}I\in\Exp(P)$), Corollary \ref{cor:Stability} (equivalently, Theorem \ref{thm:StabilityIntro}) extends the affirmative result of \cite{Fedoryuk1967}.

\section{Examples}\label{sec:examples}

\subsection{A well-behaved real valued function on $\mathbb{Z}^2$}\label{subsec:ex1}

\noindent This example illustrates the case in which $\hat\phi$ is maximized only at $0$ which is of positive homogeneous type for $\hat\phi$ with corresponding $P$. In this case, the local limit theorem for $\phi$ yields one attractor with no oscillatory prefactor. The positive homogeneous polynomial $P$ is a semi-elliptic polynomial of the form \eqref{eq:SemiEllipticPolynomial} and the corresponding attractor exhibits small oscillations and decays anisotropically.\\

\noindent Consider $\phi:\mathbb{Z}^2\rightarrow\mathbb{R}$ defined by $\phi=(\phi_1+\phi_2)/512$, where
\begin{equation*}
\phi_1(x,y)=
\begin{cases}
326 & (x,y)=(0,0)\\
20 & (x,y)=(\pm 2,0)\\
1 & (x,y)=(\pm 4,0)\\
64 & (x,y)=(0,\pm 1)\\
-16 & (x,y)=(0,\pm 2)\\
0 & \mbox{otherwise}
\end{cases}
\hspace{.2cm}\mbox{ and }\hspace{.2cm}
\phi_2(x,y)=
\begin{cases}
76 & (x,y)=(1,0)\\
52 & (x,y)=(-1,0)\\
\mp 4 & (x,y)=(\pm 3,0)\\
\mp 6 & (x,y)=(\pm 1,1)\\
\mp 6 & (x,y)=(\pm 1,-1)\\
\pm 2 & (x,y)=(\pm 3,1)\\
\pm 2 & (x,y)=(\pm 3,-1)\\
0 & \mbox{otherwise}.
\end{cases}
\end{equation*}
The graphs of $\phi^{(n)}$ on the domain $[-50,50]\times[-50,50]$ for $n=100$, $n=1,000$ and $n=10,000$ are shown in Figure \ref{fig:ex1triple}; in particular, the figure illustrates the decay in $\|\phi^{(n)}\|_{\infty}$. Figure \ref{fig:ex1quad} depicts $\phi^{(n)}(x,y)$ when $n=10,000$ from various angles and clearly illustrates its non-Gaussian anisotropic nature.\\

\begin{figure}[h!]
\begin{center}
\resizebox{\textwidth}{!}{
	    \begin{subfigure}[5cm]{0.5\textwidth}
		\includegraphics[width=\textwidth]{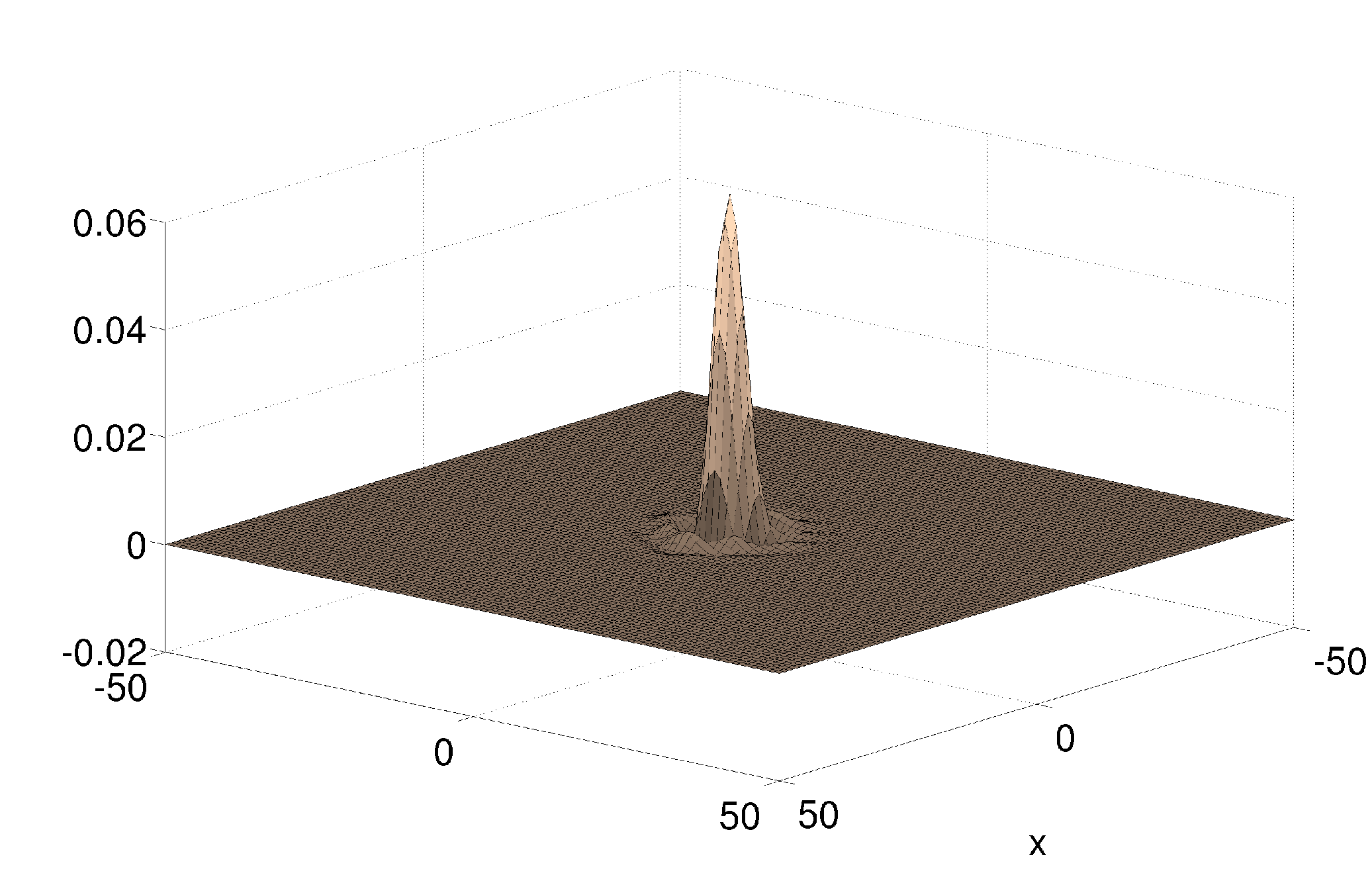}
		\caption{$n=100$}
	    \end{subfigure}
	    \begin{subfigure}[5cm]{0.5\textwidth}
		\includegraphics[width=\textwidth]{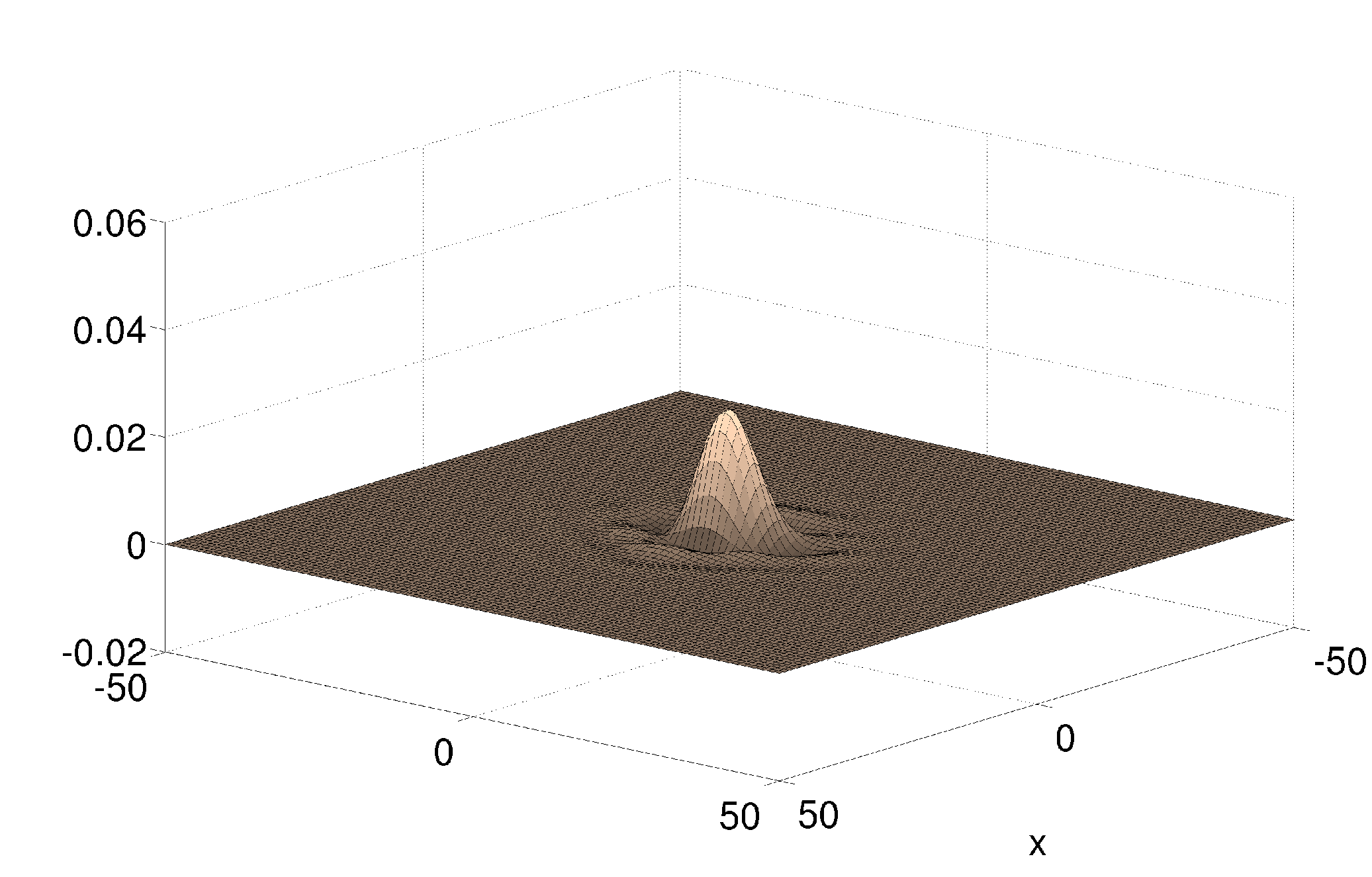}
		\caption{$n=1,000$}
	    \end{subfigure}}	
	    \begin{subfigure}[5cm]{0.5\textwidth}
		\includegraphics[width=\textwidth]{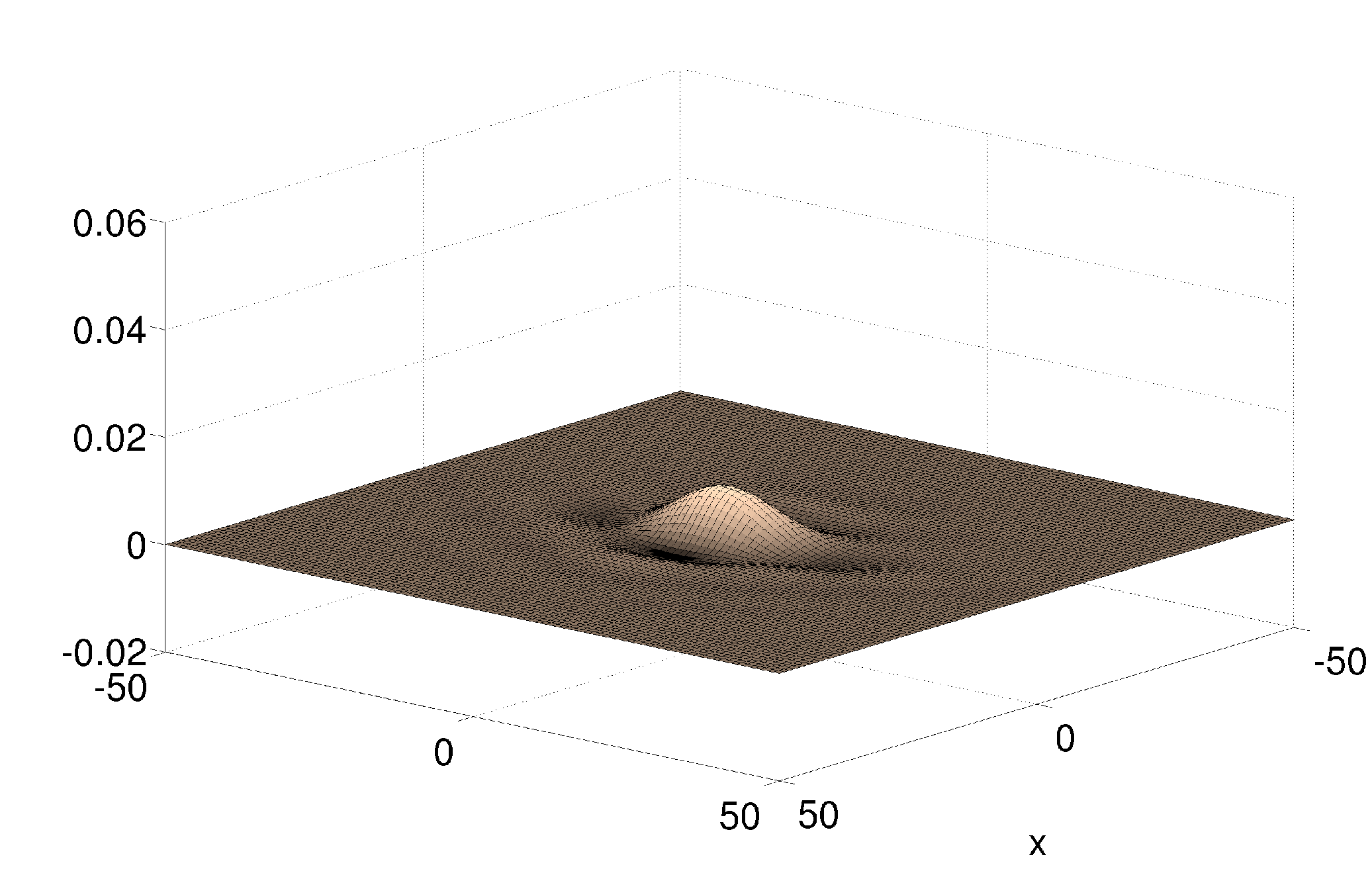}
		\caption{$n=10,000$}
	    \end{subfigure}
\caption{The graphs of $\phi^{(n)}$ for $n=100,$ $n=1,000$ and $n=10,000$.}
\label{fig:ex1triple}
\end{center}
\end{figure}

\begin{figure}[h!]
\begin{center}
\resizebox{\textwidth}{!}{
	    \begin{subfigure}[5cm]{0.5\textwidth}
		\includegraphics[width=\textwidth]{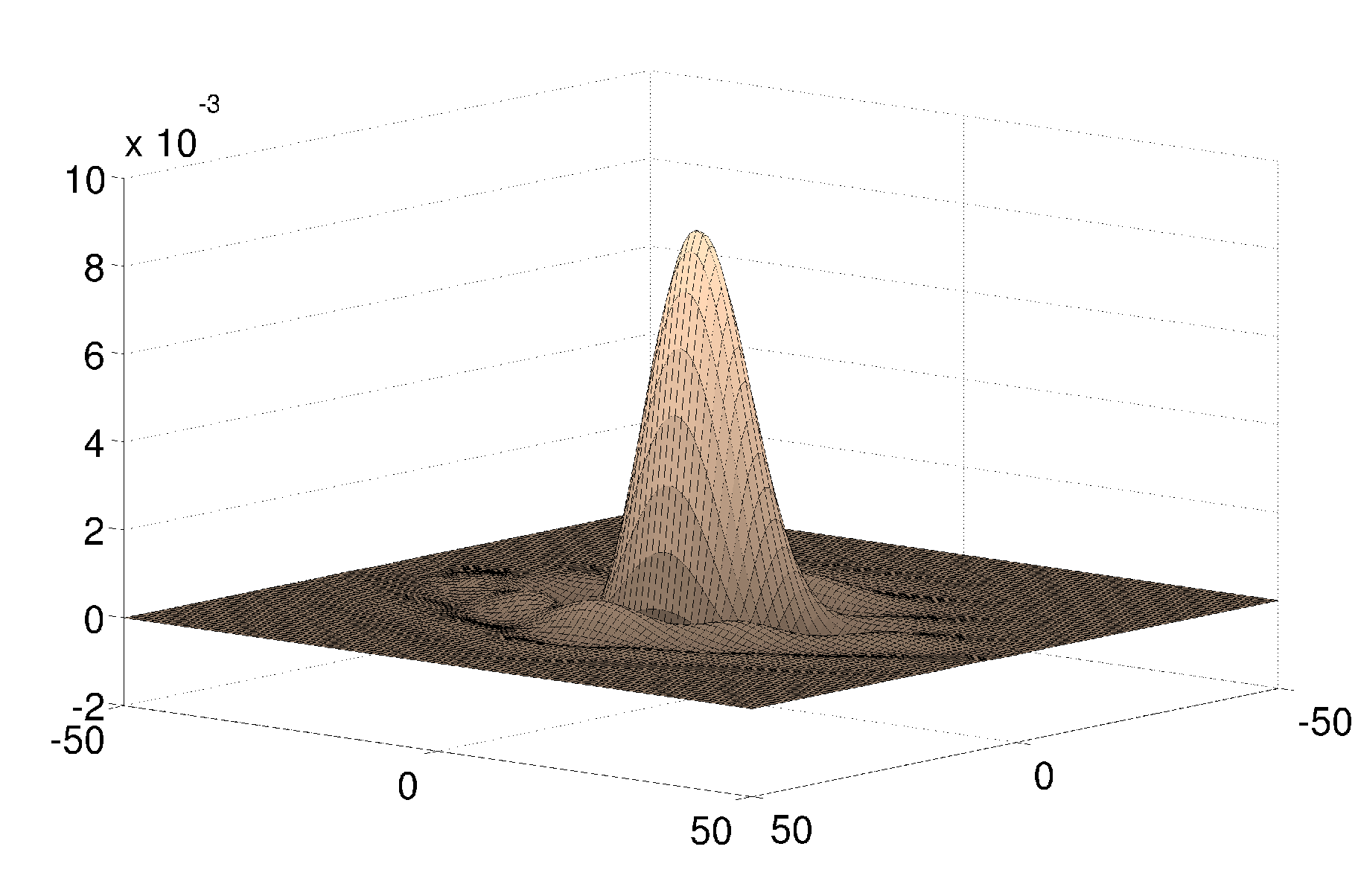}
	    \end{subfigure}
	    \begin{subfigure}[5cm]{0.5\textwidth}
		\includegraphics[width=\textwidth]{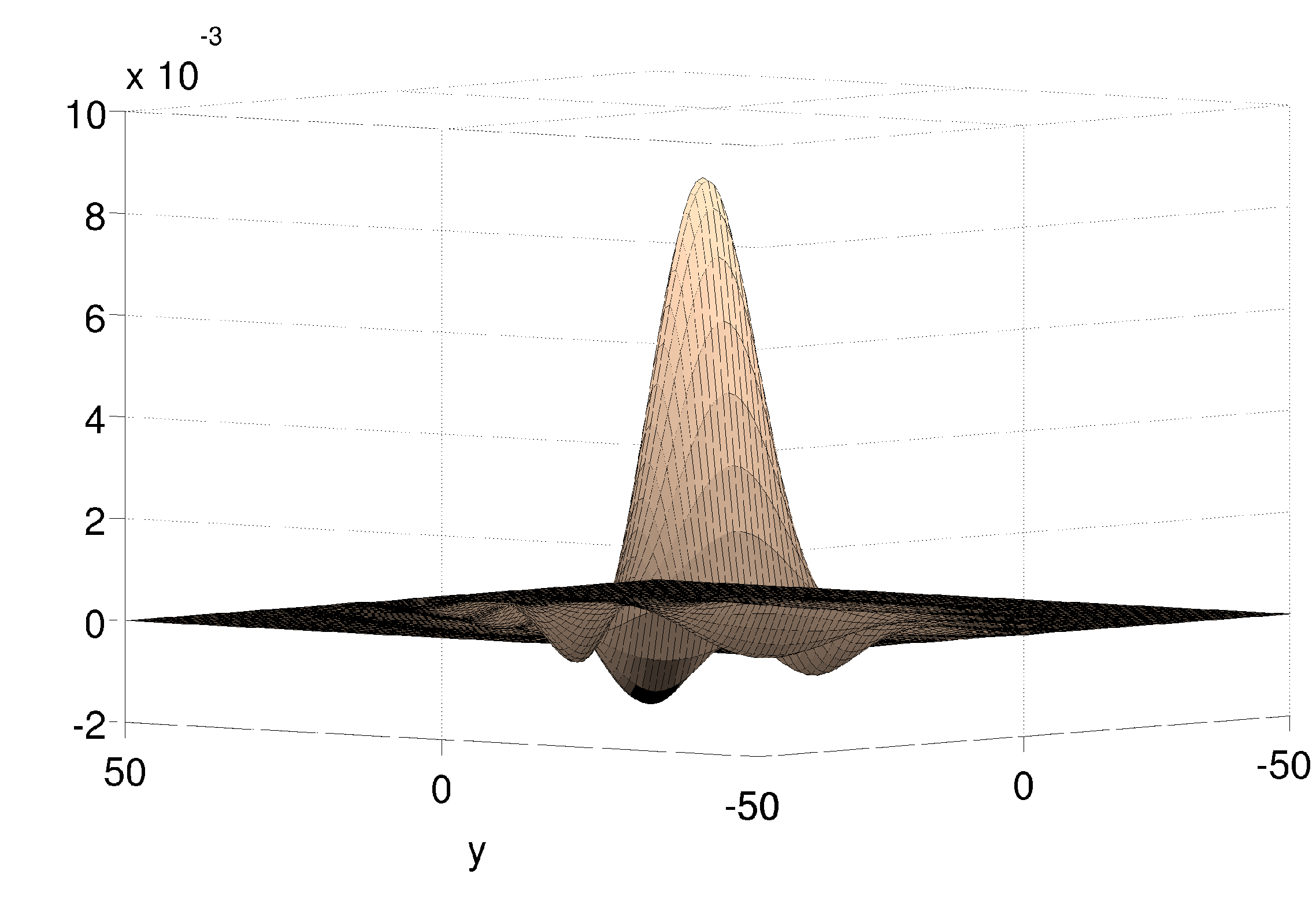}
	    \end{subfigure}}
\resizebox{\textwidth}{!}{	
	    \begin{subfigure}[5cm]{0.5\textwidth}
		\includegraphics[width=\textwidth]{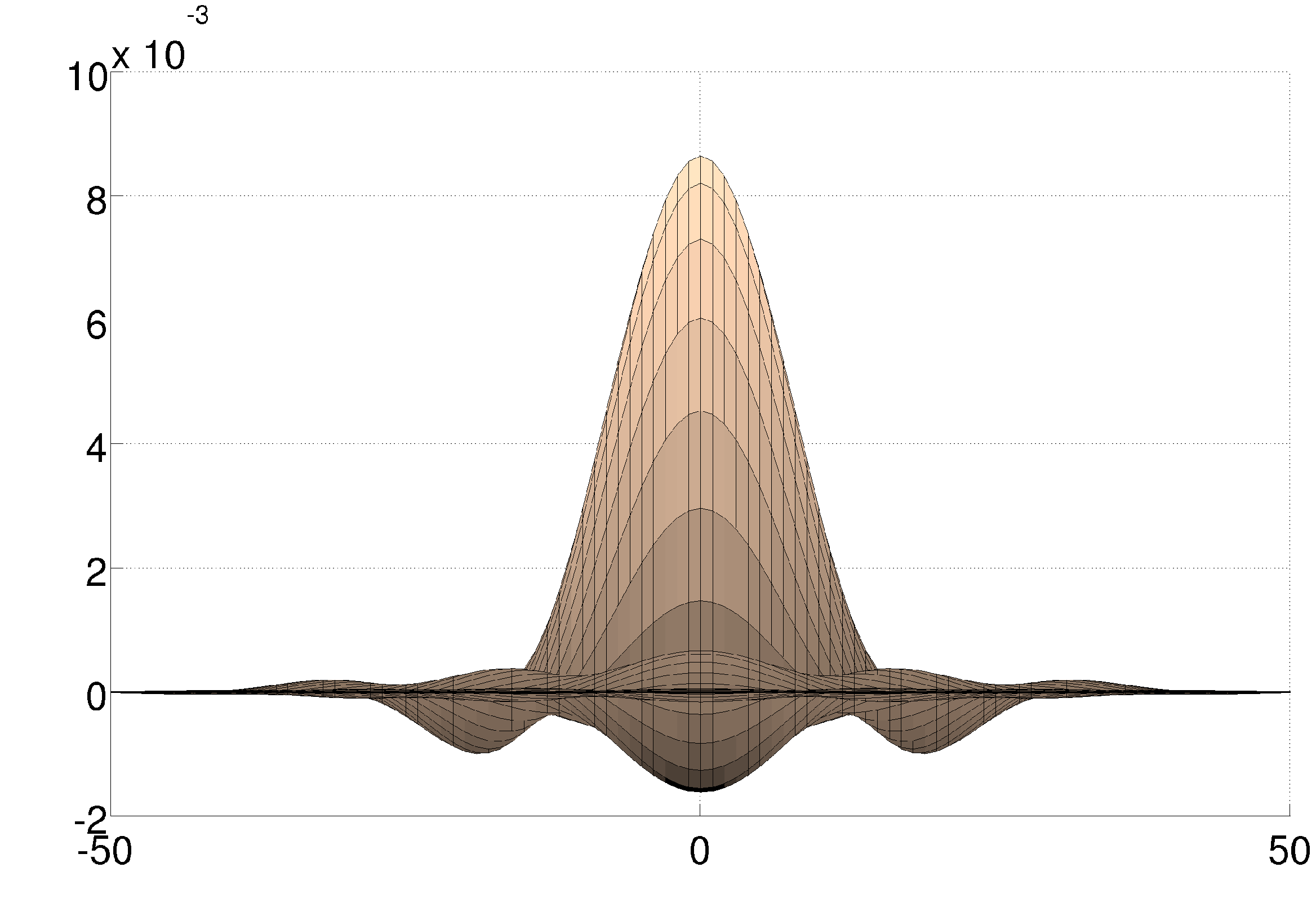}
	    \end{subfigure}
	    \begin{subfigure}[5cm]{0.5\textwidth}
		\includegraphics[width=\textwidth]{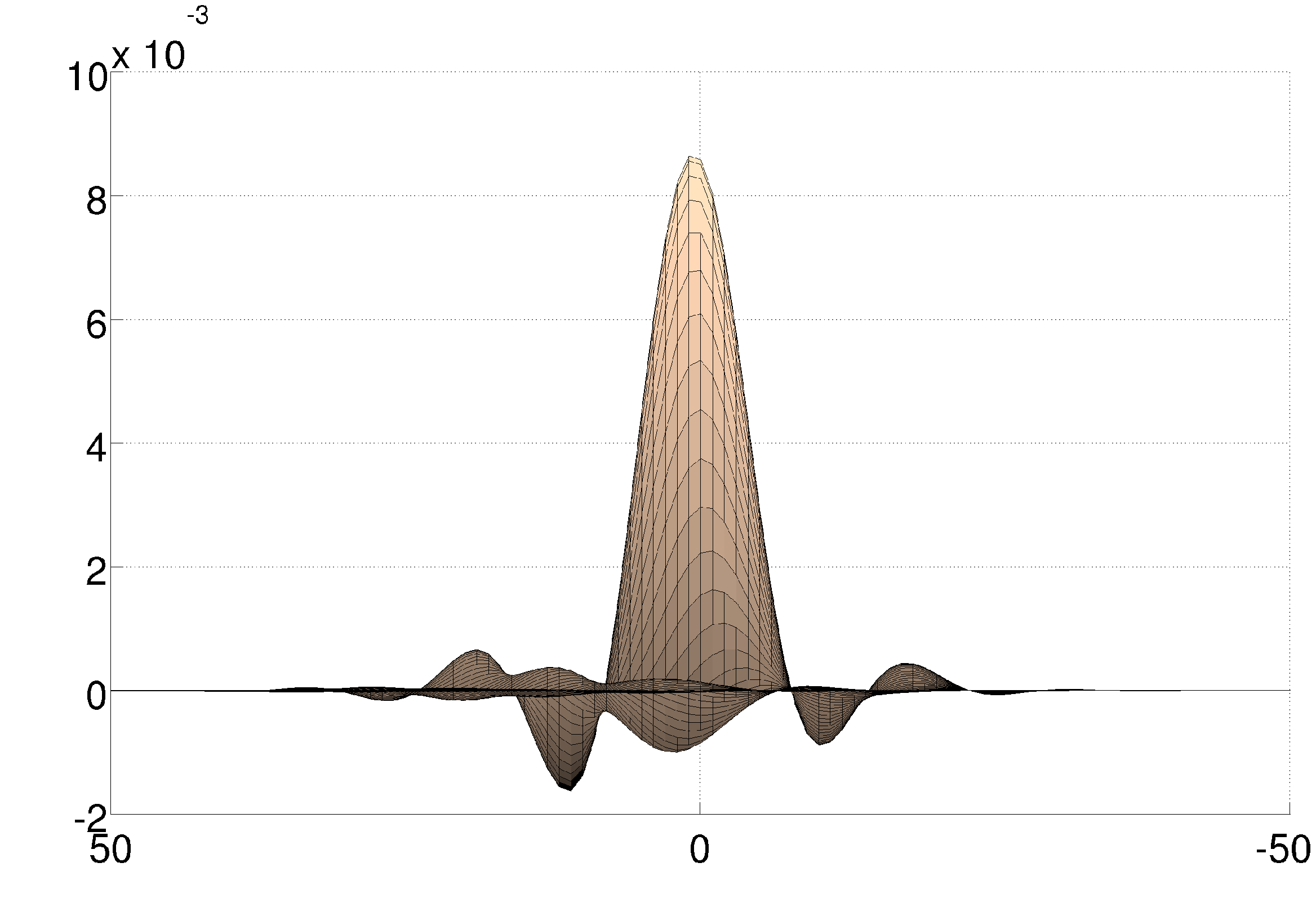}
	    \end{subfigure}}
\caption{The graph of $\phi^{(n)}$ for $n=10,000$}
\label{fig:ex1quad}
\end{center}
\end{figure}

\begin{figure}[h!]
\begin{center}
\resizebox{\textwidth}{!}{
	    \begin{subfigure}[5cm]{0.5\textwidth}
		\includegraphics[width=\textwidth]{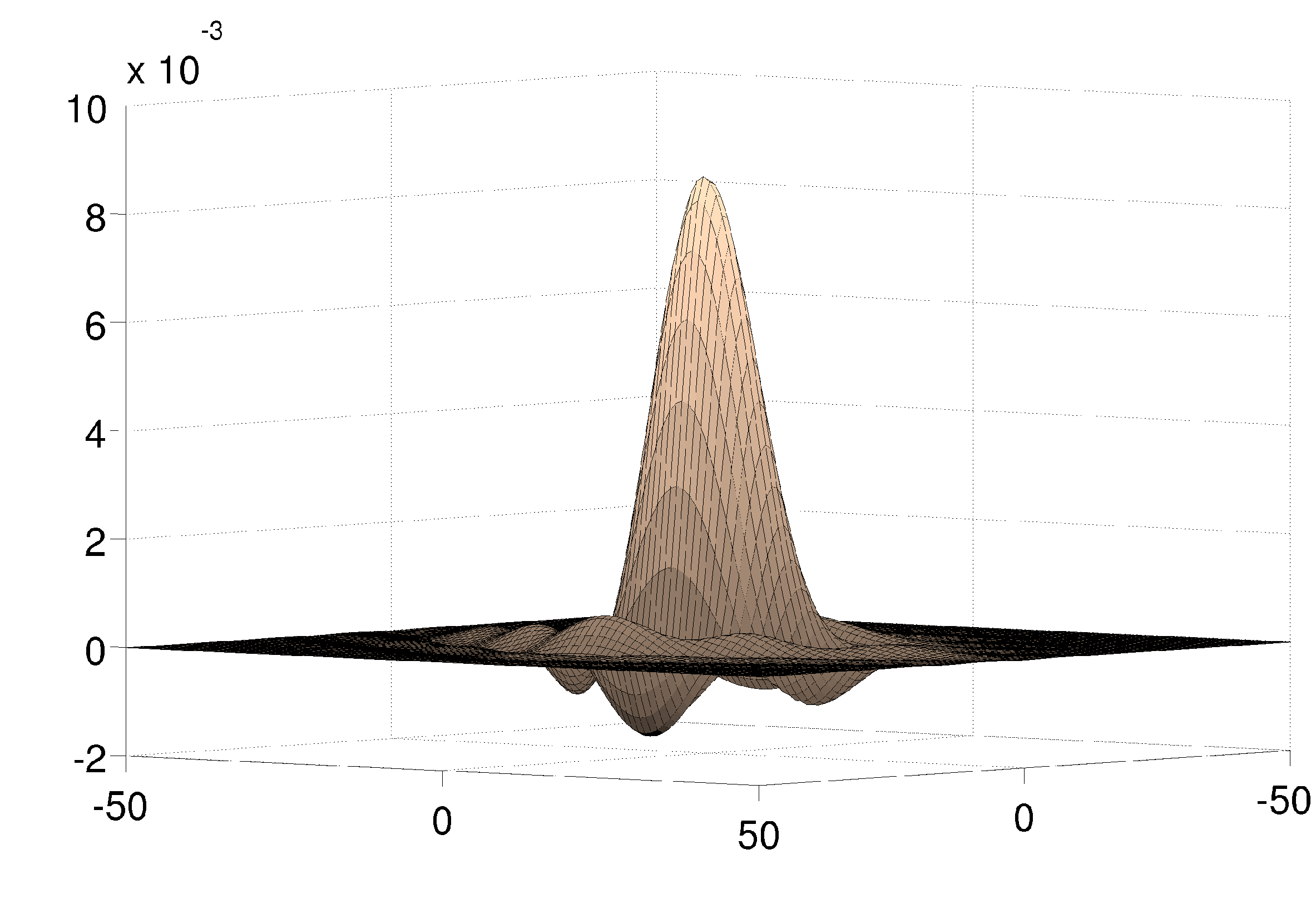}
		\caption{$\phi^{(n)}$ for $n=10,000$}
		\label{fig:ex5phi100_1}
	    \end{subfigure}
	    \begin{subfigure}[5cm]{0.5\textwidth}
		\includegraphics[width=\textwidth]{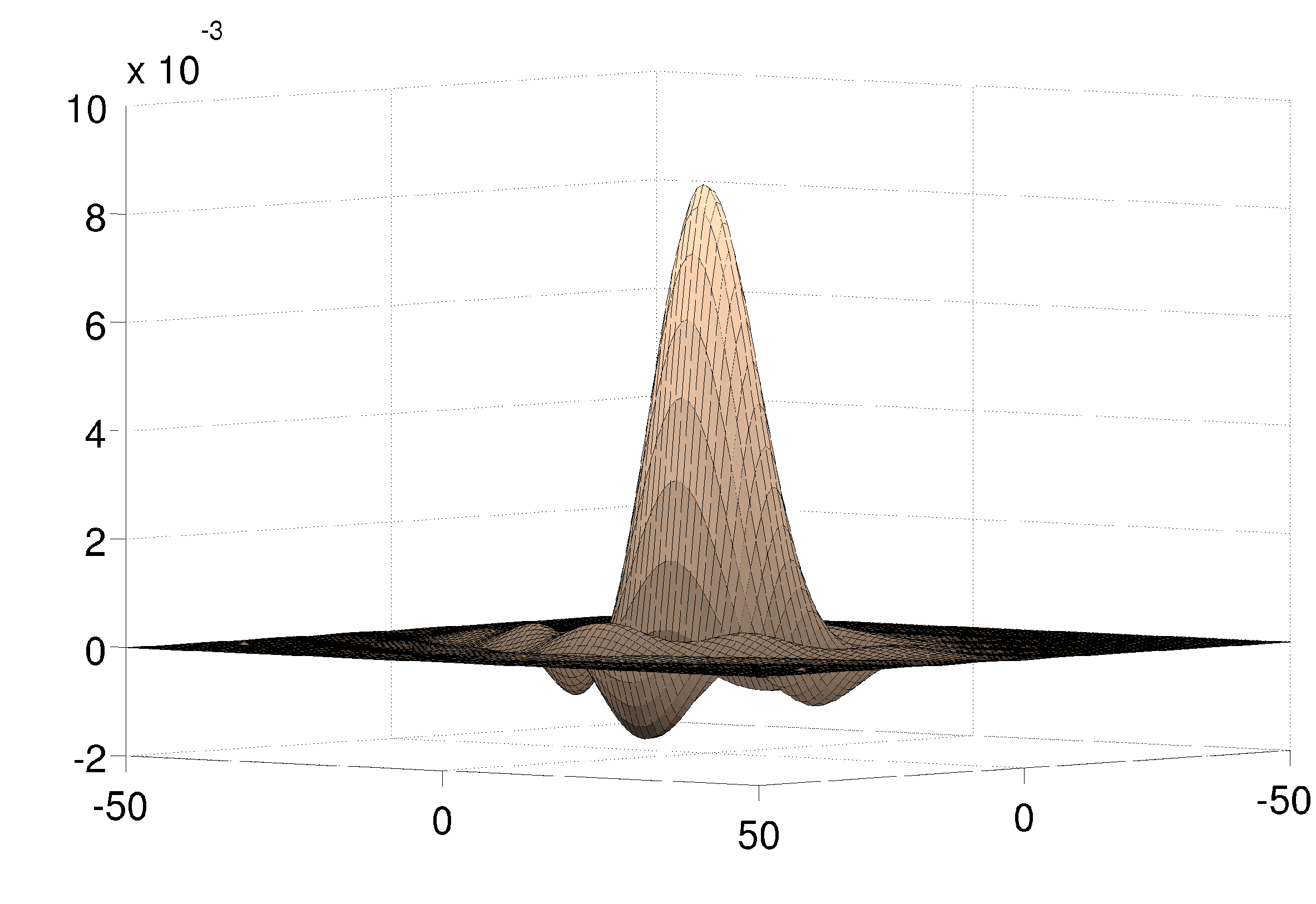}
		\caption{$H_{P_{\xi_1}}^n$ for $n=10,000$}
		\label{fig:ex5HP100_1}
	    \end{subfigure}}
\caption{The graphs of $\phi^{(n)}$ and $H_P^n$ for $n=10,000$.}
\label{fig:ex1compare}
\end{center}
\end{figure}

\noindent Given that $\phi$ is supported on $21$ points, it is clear that $\phi\in\mathcal{S}_2$. An easy computation shows that $\sup|\hat\phi(\xi)|=1$ and this supremum is only attained at $\xi=(\eta,\zeta)=(0,0)$, where $\phi(0,0)=1$, and hence $\Omega(\phi)=\{(0,0)\}$. Expanding the logarithm of $\phi(\eta,\zeta)/\phi(0,0)$ about $(0,0)$ we find that, as $(\eta,\zeta)\rightarrow (0,0)$,
\begin{equation*}
\Gamma(\eta,\zeta)=-\frac{1}{64}\left(\eta^6+2\zeta^4-2i\eta^3\zeta^2\right)+O(|\eta^7|+|\zeta^5|+|\eta^3\zeta^4|+|\eta^5\zeta^2|+|\eta^6\zeta^5|).
\end{equation*}
It is easy to see that the polynomial which leads the expansion,
\begin{equation*}
P(\eta,\zeta)=\frac{1}{64}\left(\eta^6+2\zeta^4-2i\eta^3\zeta^2\right),
\end{equation*}
has positive definite real part,
\begin{equation*}
R(\eta,\zeta)=\Re P(\eta,\zeta)=\frac{1}{64}\left(\eta^6+2\zeta^4\right).
\end{equation*}
Moreover
\begin{equation*}
P(t^E(\eta,\zeta))=P(t^{1/6}\eta,t^{1/4}\zeta)=tP(\eta,\zeta) \hspace{.5cm}\mbox{ with }\hspace{.5cm}E=
\begin{pmatrix}
   \frac{1}{6} & 0\\
  0 & \frac{1}{4}
  \end{pmatrix}\in\Exp(P)
\end{equation*}
for all $t>0$ and $(\eta,\zeta)\in\mathbb{R}^2$ and therefore $P$ is a positive homogeneous polynomial (it is also semi-elliptic). Further, we can rewrite the error to see that
\begin{equation*}
\Gamma(\eta,\zeta)=-P(\eta,\zeta)+\Upsilon(\eta,\zeta)
\end{equation*}
where $\Upsilon(\eta,\zeta)=o(R(\eta,\zeta))$ as $(\eta,\zeta)\rightarrow (0,0)$ and so we conclude that $(0,0)$ is of positive homogeneous type for $\hat\phi$ with corresponding $\alpha=(0,0)\in\mathbb{R}^2$ and positive homogeneous polynomial $P$. Clearly, $\mu_\phi=\mu_P=\tr E=5/12$ and so Theorem \ref{thm:SubDecay} gives positive constants $C$ and $C'$ for which
\begin{equation*}
C'n^{-5/12}\leq \|\phi^{(n)}\|_{\infty}\leq C n^{-5/12}
\end{equation*}
for all $n\in\mathbb{N}_+$. An appeal to Theorem \ref{thm:MainLocalLimit} shows that
\begin{equation}\label{eq:LocalLimitExample1}
\phi^{(n)}(x,y)=H_P^n(x,y)+o(n^{-5/12})
\end{equation}
uniformly for $(x,y)\in\mathbb{Z}^2$ where,
\begin{equation*}
H_P^n(x,y)=\frac{1}{(2\pi)^2}\int_{\mathbb{R}^2}e^{-i(x,y)\cdot(\xi_1,\xi_2)-nP(\xi_1,\xi_2)}\,d\xi_1\,d\xi_2=\frac{1}{n^{5/12}}H_P(n^{-1/6}x,n^{-1/4}y)
\end{equation*}
for $n\in\mathbb{N}_+$ and $(x,y)\in\mathbb{R}^2$. The local limit \eqref{eq:LocalLimitExample1} is illustrated in Figure \ref{fig:ex1compare} when $n=10,000$. We also make an appeal to Theorem \ref{thm:ExponentialEstimate} to deduce pointwise estimates for $\phi^{(n)}$ (in fact, all results of Section \ref{sec:PointwiseBounds} are valid for this $\phi$). Upon noting that
\begin{equation*}
R^{\#}(x,y)=\frac{5}{3^{6/5}}x^{6/5}+\left(1-\frac{1}{2^5}\right)y^{4/3}
\end{equation*}
for $(x,y)\in\mathbb{R}^2$, the theorem gives positive constants $C$ and $M$ for which
\begin{equation*}
|\phi^{(n)}(x,y)|\leq \frac{C}{n^{5/12}}\exp\left(-nM\left(\left(\frac{x}{n}\right)^{6/5}+\left(\frac{y}{n}\right)^{4/3}\right)\right)
\end{equation*}
for all $n\in\mathbb{N}_+$ and $(x,y)\in\mathbb{Z}^2$.

\subsection{Two drifting packets}\label{subsec:ex2}

\noindent In this example, we study a complex valued function on $\mathbb{Z}^2$ whose convolution powers $\phi^{(n)}$ exhibit two \textit{packets} which drift apart as $n$ increases. This behavior is easily described by applying Theorem \ref{thm:MainLocalLimit} in which two distinct $\alpha$'s appear.\\

\begin{figure}[h!]
\begin{center}
\resizebox{\textwidth}{!}{
	    \begin{subfigure}[5cm]{0.5\textwidth}
		\includegraphics[width=\textwidth]{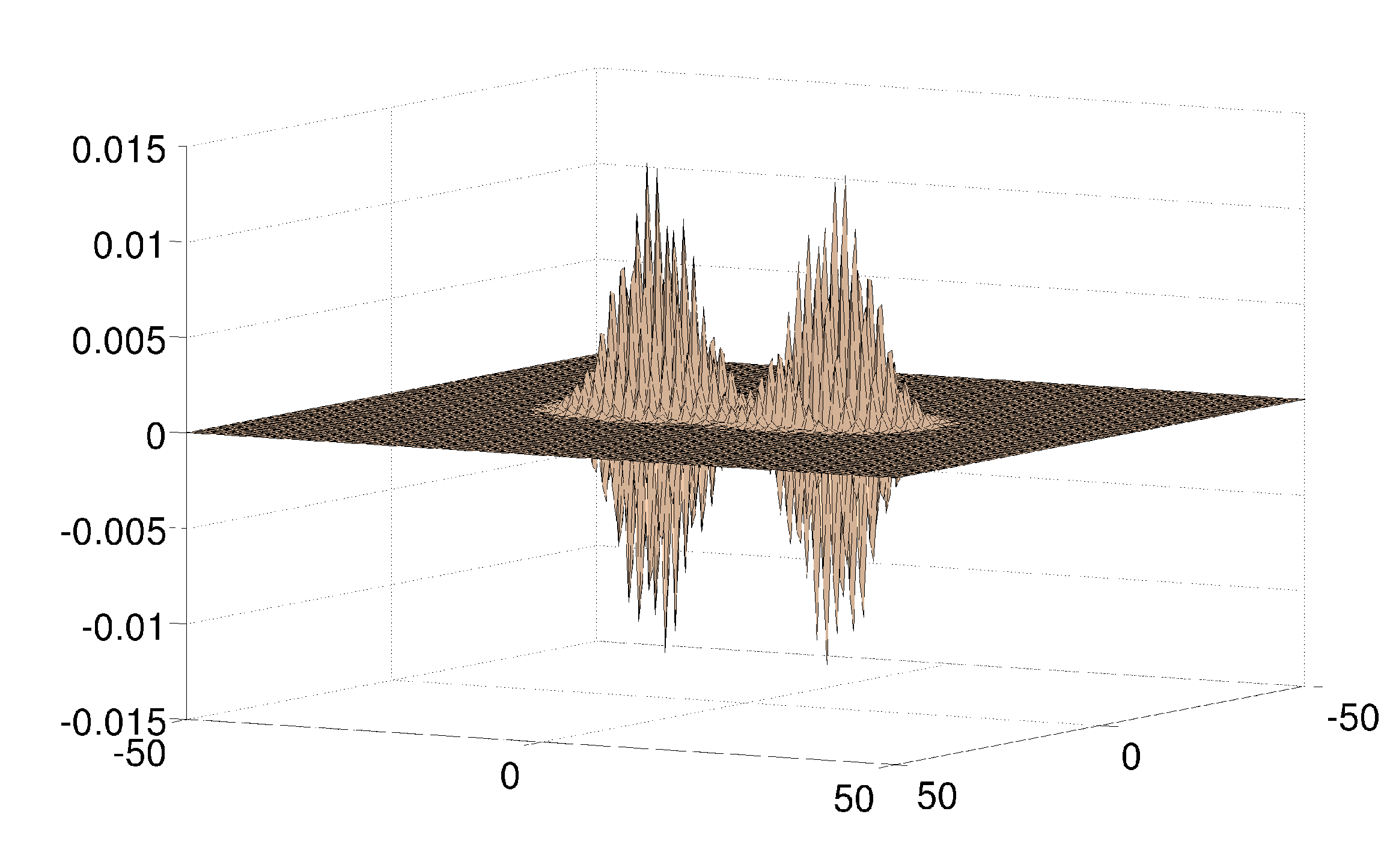}
		\caption{$\Re(\phi^{(n)})$ for $n=30$}
		\label{fig:ex2phi30}
	    \end{subfigure}
	    \begin{subfigure}[5cm]{0.5\textwidth}
		\includegraphics[width=\textwidth]{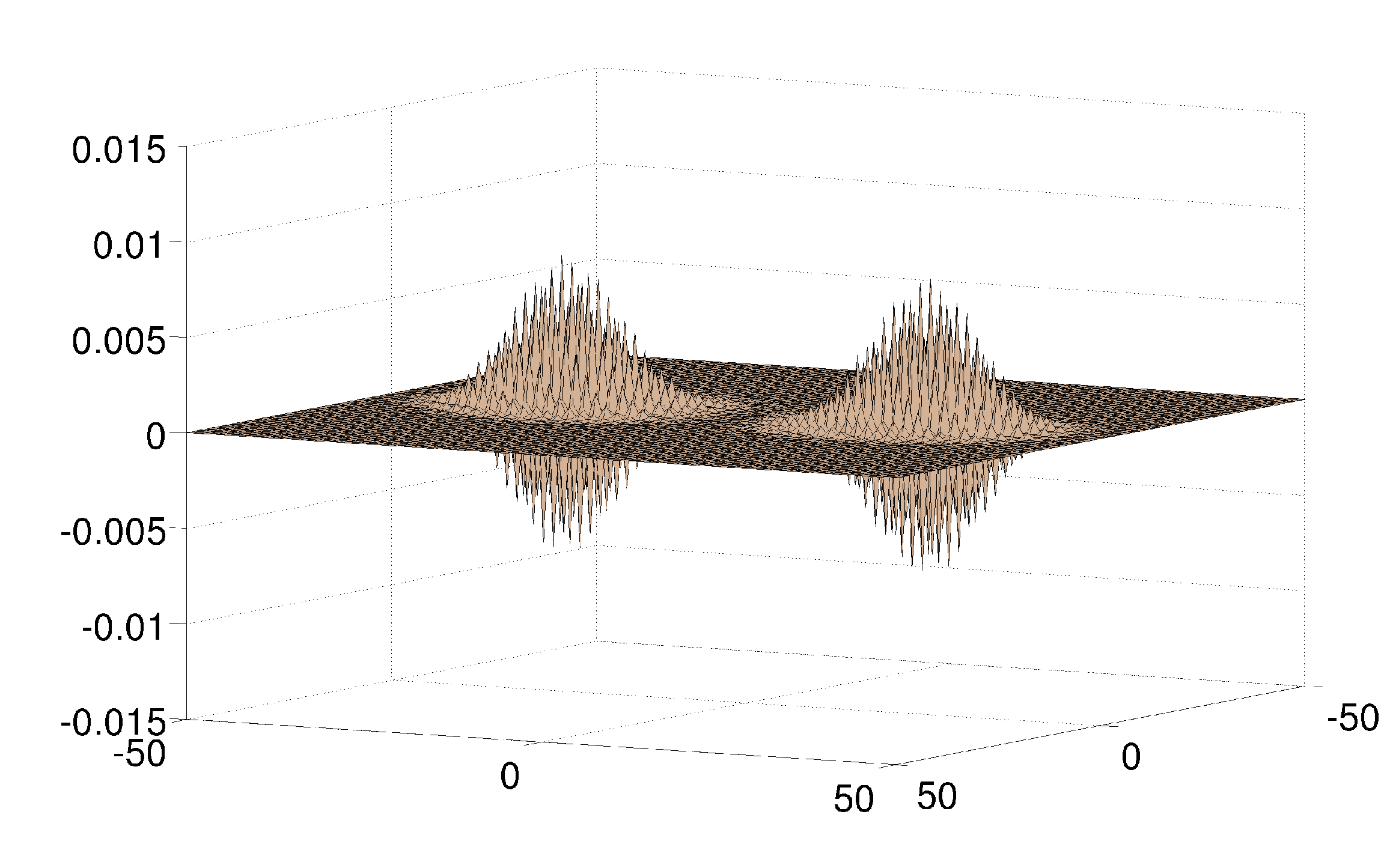}
		\caption{$\Re(\phi^{(n)})$ for $n=60$}
		\label{fig:ex2phi60}
	    \end{subfigure}}
\resizebox{\textwidth}{!}{	
	    \begin{subfigure}[5cm]{0.5\textwidth}
		\includegraphics[width=\textwidth]{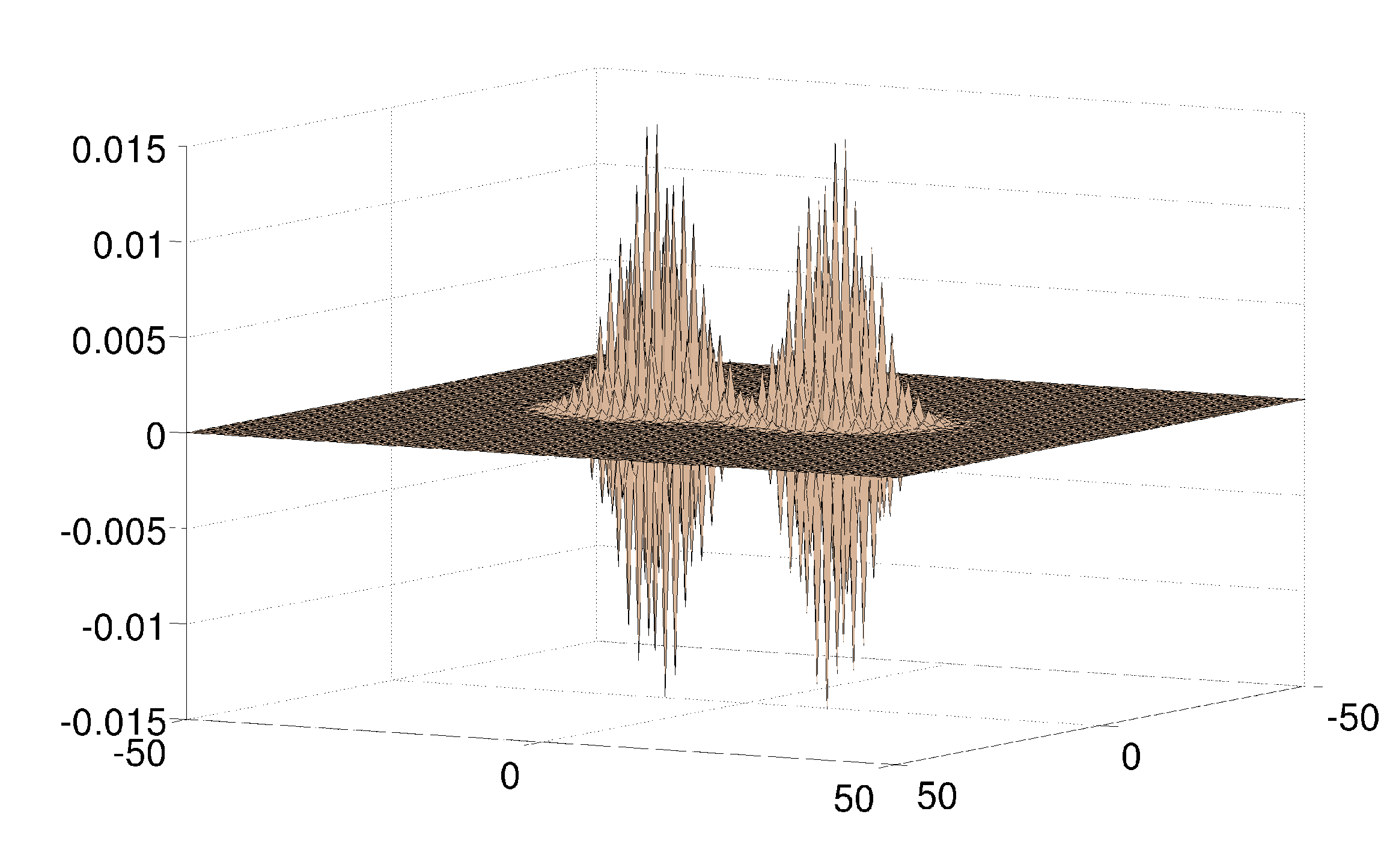}
		\caption{$Re(f_n)$ for $n=30$}
		\label{fig:ex2f30}
	    \end{subfigure}
	    \begin{subfigure}[5cm]{0.5\textwidth}
		\includegraphics[width=\textwidth]{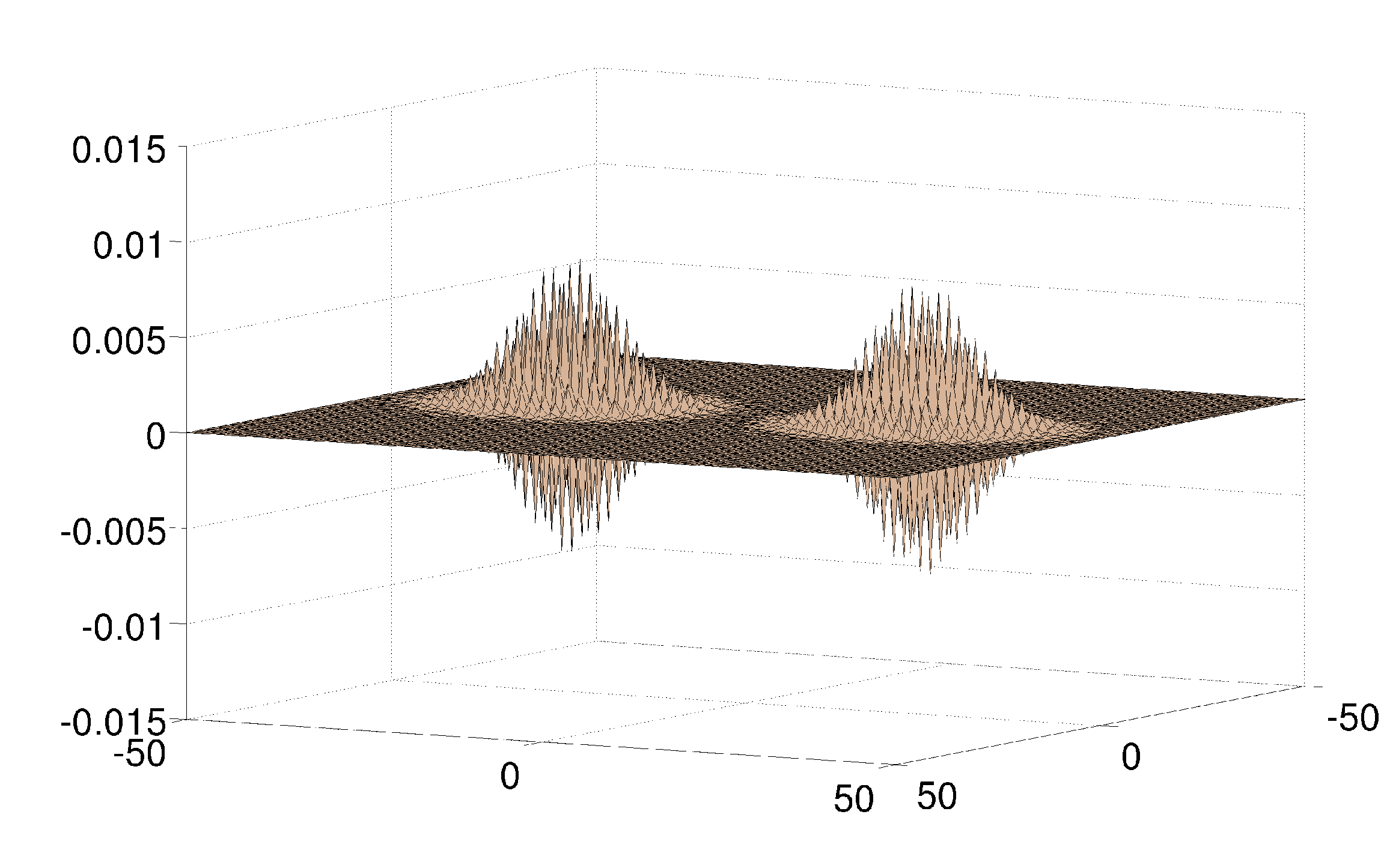}
		\caption{$\Re(f_n)$ for $n=60$}
		\label{fig:ex2f60}
	    \end{subfigure}}
\caption{The graphs of $\Re(\phi^{(n)})$ and $\Re(f_n)$ for $n=30,60$.}
\label{fig:ex2}
\end{center}
\end{figure}

\noindent Consider $\phi:\mathbb{Z}^2\rightarrow\mathbb{C}$ defined by
\begin{equation*}
\phi(x,y)=\begin{cases}
           \frac{1+i}{4a} & (x,y)=(-1,\pm 1)\\
           -\frac{1+i}{4a} & (x,y)=(1,\pm 1)\\
           \pm\frac{1}{\sqrt{2}a} & (x,y)=(0,\pm 1)\\
           0 & \mbox{otherwise}.
          \end{cases}
\end{equation*}
where $a=\sqrt{2+\sqrt{2}}$. The graphs of $\Re(\phi^{(n)})$ for $n=30,60$ are shown in Figures \ref{fig:ex2phi30} and \ref{fig:ex2phi60} respectively; observe the appearance of the drifting packets.\\

\noindent In computing the Fourier transform of $\hat\phi$, we find that $\sup|\hat\phi|=1$ and
\begin{equation*}
\Omega(\phi)=\{\xi_1,\xi_2,\xi_3,\xi_4\}=\{(\pi/2,3\pi/4),(\pi/2,-\pi/4),(-\pi/2,-3\pi/4),(-\pi/2,\pi/4)\},
\end{equation*}
where
\begin{equation*}
\hat\phi(\xi_1)=\hat\phi(\xi_3)=(i)^{5/4}\hspace{.5cm}\mbox{ and }\hspace{.5cm}\hat\phi(\xi_2)=\hat\phi(\xi_4)=-(i)^{5/4}.
\end{equation*}
Set $\gamma=\sqrt{2}-1$ and
\begin{equation*}
P(\eta,\zeta)=\frac{1+i\gamma}{4}\eta^2+\gamma \zeta^2.
\end{equation*}
As in the previous example, we expand the logarithm of $\hat\phi$ near $\xi_j$ for $j=1,2,3,4$. We find that each element of $\Omega(\phi)$ is of positive homogeneous type for $\hat\phi$ with  $\alpha_{\xi_1}=\alpha_{\xi_2}=(0,\gamma)$, $\alpha_{\xi_3}=\alpha_{\xi_4}=(0,-\gamma)$ and $P_{\xi_1}=P_{\xi_2}=P_{\xi_3}=P_{\xi_4}=P$. Note that $P$ is obviously positive homogeneous with $E=(1/2)I\in\Exp(P)$ and hence
\begin{equation}\label{eq:ex2}
\mu_{\phi}=\mu_{P_{\xi_1}}=\mu_{P_{\xi_2}}=\mu_{P_{\xi_3}}=\mu_{P_{\xi_4}}=\mu_P=1.
\end{equation}
An appeal to Theorem \ref{thm:SubDecay} gives positive constants $C$ and $C'$ for which
\begin{equation*}
C n^{-1}\leq\|\phi^{(n)}\|_{\infty}\leq C' n^{-1}
\end{equation*}
for all $n\in\mathbb{N}_+$. In view of \eqref{eq:ex2}, let us note that the contribution from all points $\xi_1,\xi_2,\xi_3,\xi_4\in\Omega(\phi)$ appear in the local limit given by Theorem \ref{thm:MainLocalLimit}. An application of the theorem gives
\begin{align*}
\phi^{(n)}(x,y)&=(i)^{5n/4}\Big(e^{-i(x,y)\cdot\xi_1}H_P^n(x,y-n\gamma)+(-1)^n e^{i(x,y)\cdot\xi_2}H_P^n(x,y-n\gamma)\\
&\quad+e^{-i(x,y)\cdot\xi_3}H_P^n(x,y+n\gamma)+(-1)^n e^{i(x,y)\cdot\xi_4}H_P^n(x,y+n\gamma)\Big)+o(n^{-1})\\
&= (i)^{5n/4}\Big((-1)^y+(-1)^n\Big)\Big(e^{-i\pi x/2}e^{i\pi y/4}H_P^n(x,y-\gamma n)\\
& \quad+e^{i\pi x/2}e^{i3\pi y/4}H_P^n(x,y+\gamma n)\Big)+o(n^{-1})
\end{align*}
which holds uniformly for $(x,y)\in\mathbb{Z}^2$. In this special case that $P$ is of second order, we can write
\begin{align*}
H_P^{n}(x,y)&=\frac{1}{(2\pi)^2}\int_{\mathbb{R}^2}e^{-i(\eta,\zeta)\cdot(x,y)-P(\eta,\zeta)}\,d\eta d\zeta\\
&=\frac{1}{2\pi n\sqrt{\gamma(1+i\gamma)}}\exp\left(-\frac{x^2}{n(1+i\gamma)}-\frac{y^2}{4n \gamma}\right)
\end{align*}
for $(x,y)\in\mathbb{R}^2$ and from this, it is easily seen that $\phi^{(n)}$ is approximated by two  generalized Gaussian packets respectively centered  at $\pm(0,\gamma n)$ for $n\in\mathbb{N}_+$. For comparison, Figures \ref{fig:ex2phi30} and \ref{fig:ex2phi60} illustrate the approximation
\begin{align*}
f_n(x,y)&:=(i)^{5n/4}\Big((-1)^y+(-1)^n\Big)\\
&\qquad\times\Big(e^{-i\pi x/2}e^{i\pi y/4}H_P^n(x,y-\gamma n)+e^{i\pi x/2}e^{i3\pi y/4}H_P^n(x,y+\gamma n)\Big)
\end{align*} to $\phi^{(n)}$ for $n=30$ and $60$.

\subsection{A supporting lattice misaligned with $\mathbb{Z}^2$}\label{subsec:ex3}

In this example, we study a real valued function $\phi$ whose support is not well-aligned with the principal coordinate axes. Here, the points at which $\hat\phi$ is maximized are of positive homogeneous type for $\hat\phi$ but the corresponding positive homogeneous polynomials are not semi-elliptic. In this way, we have a concrete example to illustrate the conclusion of Proposition \ref{prop:PositiveHomogeneousPolynomialsareSemiElliptic}. In writing out the local limit theorem for $\phi$, we also see the appearance of a multiplicative prefactor which gives us information concerning the support of $\phi^{(n)}$. Finally, the validity of global space-time exponential-type estimates is discussed.\\

\noindent Consider $\phi:\mathbb{Z}^2\rightarrow\mathbb{R}$ defined by
\begin{equation*}
\phi(x,y)=\begin{cases}
	    3/8 & (x,y)=(0,0)\\
           1/8 & (x,y)=\pm(1,1)\\
           1/4 & (x,y)=\pm(1,-1)\\
            -1/16& (x,y)=\pm( 2,-2)\\
           0 & \mbox{otherwise}.
          \end{cases}
\end{equation*}
Figures \ref{fig:ex3_100_1} and \ref{fig:ex3_100_grid} illustrate the graph and heat map of $\phi^{(n)}$ respectively when $n=100$.\\
\begin{figure}[h!]
\begin{center}
\resizebox{\textwidth}{!}{
	    \begin{subfigure}[8cm]{0.5\textwidth}
		\includegraphics[width=\textwidth]{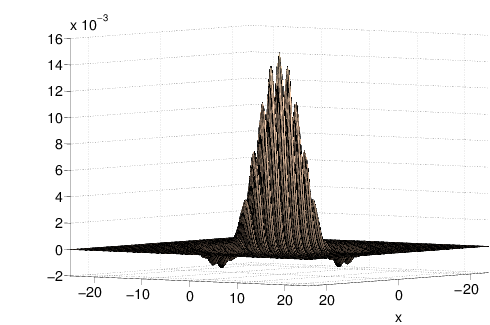}
		\caption{}
		\label{fig:ex3_100_1}
	    \end{subfigure}
	    \begin{subfigure}[8cm]{0.5\textwidth}
		\includegraphics[width=\textwidth]{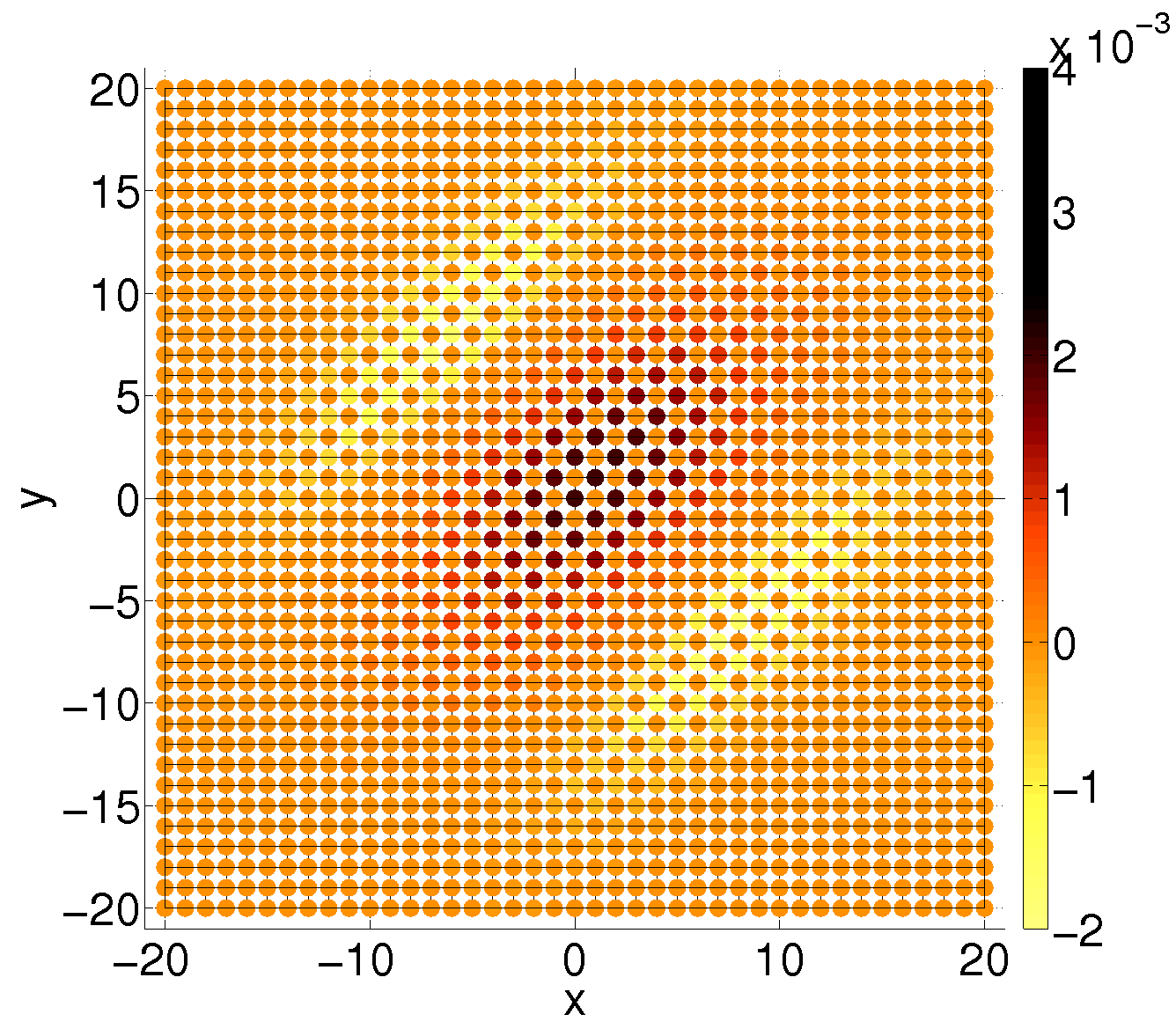}
		\caption{}
		\label{fig:ex3_100_grid}
	    \end{subfigure}}
\caption{$\phi^{(n)}$ for $n=100$}
\label{fig:ex3}
\end{center}
\end{figure}

\noindent We compute the Fourier transform of $\phi$ and find by a routine calculation that $\sup|\hat\phi|=1$ and this maximum is attained at only two points in $\mathbb{T}^2$, $(0,0)$ and $(\pi,\pi)$. We write this as
\begin{equation*}
\Omega(\phi)=\{\xi_1,\xi_2\}=\{(0,0),(\pi,\pi)\},
\end{equation*}
and note that $\phi(\xi_1)=\phi(\xi_2)=1$. For $\xi_1=(0,0)$, we have
\begin{align*}
\Gamma(\eta,\zeta)&=\log\left(\frac{\hat\phi(\xi+\xi_1)}{\phi(\xi_1)}\right)\\
&=-\frac{\eta^2}{8}-\frac{23\eta^4}{384}-\frac{\eta\zeta}{4}+\frac{25\eta^3\zeta}{96}-\frac{\zeta^2}{8}-\frac{23\eta^2\zeta^2}{64}+\frac{25 \eta\zeta^3}{96}-\frac{23\zeta^4}{384}\\
&\quad+o(|(\eta,\zeta)|^4)
\end{align*}
as $(\eta,\zeta)\rightarrow (0,0)$. In seeking a positive homogeneous polynomial to lead the expansion, we first note the appearance of the second order polynomial $\eta^2/8+\eta\zeta/4+\zeta^2/8$. We might be tempted to choose this as our candidate, however, it is not positive definite because it vanishes on the line $\eta=-\zeta$. Upon closer study, we write
\begin{align*}
\Gamma(\eta,\zeta)&=-\frac{1}{8}(\eta+\zeta)^2-\frac{23}{384}(\eta-\zeta)^4+o(|(\eta,\zeta)|^4)\\
                  &=-P(\eta,\zeta)+o(P(\eta,\zeta))
\end{align*}
as $(\eta,\zeta)\rightarrow (0,0)$, where the polynomial
\begin{equation*}
P(\eta,\zeta)=\frac{1}{8}(\eta+\zeta)^2+\frac{23}{384}(\eta-\zeta)^4.
\end{equation*}
is positive definite. Fortunately, it is also a positive homogeneous polynomial as can be seen by observing that, for
\begin{equation*}
E=\begin{pmatrix}
   3/8 & 1/8\\
   1/8 & 3/8
  \end{pmatrix},
\end{equation*}
\begin{align*}
P(t^E(\eta,\zeta))&=P\left(t^{1/2}(\eta+\zeta)/2+t^{1/4}(\eta-\zeta)/2,t^{1/2}(\eta+\zeta)/2-t^{1/4}(\eta-\zeta)/2\right)\\
&=\frac{1}{8}\left(t^{1/2}(\eta+\zeta)\right)^2+\frac{23}{384}\left(t^{1/4}(\eta-\zeta)\right)^4\\
&=tP(\eta,\zeta)
\end{align*}
for all $t>0$ and $(\eta,\zeta)\in\mathbb{R}^2$. In contrast to the previous examples, $P$ is not semi-elliptic. However, observe that
\begin{equation*}
A^{-1}EA=\begin{pmatrix}
 1/\sqrt{2} & 1/\sqrt{2}\\
  -1/\sqrt{2} & 1/\sqrt{2}
\end{pmatrix}
\begin{pmatrix}
 3/8 & 1/8\\
   1/8 & 3/8
\end{pmatrix}
\begin{pmatrix}
  1/\sqrt{2} & -1/\sqrt{2}\\
  1/\sqrt{2} & 1/\sqrt{2}
 \end{pmatrix}
=\begin{pmatrix}
  1/2 & 0\\
  0 & 1/4
 \end{pmatrix}
\end{equation*}
and
\begin{equation*}
(P\circ L_A)(\eta,\zeta)=P\left(\frac{\eta-\zeta}{\sqrt{2}},\frac{\eta+\zeta}{\sqrt{2}}\right)=\frac{1}{8}(\sqrt{2}\eta)^2+\frac{23}{384}(-\sqrt{2}\zeta)^4=\frac{1}{4}\eta^2+\frac{23}{96}\zeta^4
\end{equation*}
which is semi-elliptic; this illustrates the conclusion of Proposition \ref{prop:PositiveHomogeneousPolynomialsareSemiElliptic}.

We have shown that $\xi_1$ is of positive homogeneous type for $\hat\phi$ with corresponding $\alpha_{\xi_1}=(0,0)$ and positive homogeneous polynomial $P=P_{\xi_1}$. By expanding the logarithm of $\hat\phi$ near $\xi_2$, a similar argument shows that $\xi_2$ is also of positive homogeneous type for $\hat\phi$ with corresponding $\alpha_{\xi_2}=(0,0)$ and the same positive homogeneous polynomial $P=P_{\xi_2}$. It then follows immediately that $\phi$ meets they hypotheses of Theorems \ref{thm:SubDecay} and \ref{thm:MainLocalLimit} where
\begin{equation*}
\mu_\phi=\mu_P=\tr E=3/4.
\end{equation*}
An appeal to Theorem \ref{thm:SubDecay} gives positive constants $C$ and $C'$ for which
\begin{equation*}
C'n^{-3/4}\leq \|\phi^{(n)}\|_{\infty}\leq Cn^{-3/4}
\end{equation*}
for all $n\in\mathbb{N}_+$. By an appeal to Theorem \ref{thm:MainLocalLimit}, we also have
\begin{equation}\label{eq:ex_3}
\begin{split}
\phi^{(n)}(x,y)&=\hat\phi(\xi_1)^ne^{-i\xi_1\cdot (x,y)}H_P^n(x,y)+\hat\phi(\xi_2)e^{-i\xi_2\cdot (x,y)}H_P^n(x,y)+o(n^{-3/4})\\
&=\left(1+e^{i\pi(x+y)}\right)H_P^n(x,y)+o(n^{-3/4})\\
&=(1+\cos(\pi(x+y)))H_P^n(x,y)+o(n^{-3/4})
\end{split}
\end{equation}
uniformly for $(x,y)\in\mathbb{Z}^2$. Upon closely inspecting the prefactor $1+\cos(\pi(x+y)$, it is reasonable to assert that
\begin{equation*}
\supp\left(\phi^{(n)}\right)\subseteq\{(x,y)\in\mathbb{Z}^2:x\pm y\in 2\mathbb{Z}\}=:\mathcal{L}
\end{equation*}
for all $n\in\mathbb{N}_+$ (Figure \ref{fig:ex3_100_grid} also gives evidence for this when $n=100$). The assertion is indeed true, for it is easily verified that $\supp(\phi)\subseteq \mathcal{L}$ and, because $\mathcal{L}$ is an additive group, induction shows that
\begin{equation*}
\supp\left(\phi^{(n+1)}\right)=\supp\left(\phi^{(n)}\ast\phi\right)\subseteq \supp\left(\phi^{(n)}\right)+\supp(\phi)\subseteq \mathcal{L}+\mathcal{L}=\mathcal{L}
\end{equation*}
for all $n\in\mathbb{N}_+$. Thus, the prefactor $(1+\cos(\pi(x+y))$ gives us information about the support of the convolution powers. In Section \ref{subsec:ClassicalLLT}, we will see that this situation is commonplace when $\phi$ is a probability distribution.

Let us finally note that, because $\alpha_{\xi_1}=\alpha_{\xi_2}=(0,0)$ and $P_{\xi_1}=P_{\xi_2}=P$, $\phi$ satisfies the hypotheses of Theorem \ref{thm:ExponentialEstimate}. A straightforward computation shows that $R^{\#}(x,y)\asymp |x+y|^2+|x-y|^{4/3}$ where $R=\Re P$ and so, by an appeal to Theorem \ref{thm:ExponentialEstimate}, there are positive constants $C$ and $M$ for which
\begin{equation*}
|\phi^{(n)}(x,y)|\leq \frac{C}{n^{3/4}}\exp\left(-M\left(\frac{|x+y|^2}{n}+\frac{|x-y|^{4/3}}{n^{1/3}}\right)\right)
\end{equation*}
for all $(x,y)\in\mathbb{Z}^2$ and $n\in\mathbb{N}_+$. We note however that because $\Omega(\phi)=\{\xi_1,\xi_2\}$, $\phi$ does not satisfy the hypotheses of Theorem \ref{thm:DerivativeEstimate} and, by closely inspecting Figure \ref{fig:ex3_100_1}, this should come at no surprise. In fact, by a direct application of \eqref{eq:ex_3}, it is easily shown that $|\phi^{(n)}(0,0)|\geq \epsilon n^{-3/4}$ for some $\epsilon>0$ whereas $\phi^{(n)}(0,1)=0$ for all $n\in\mathbb{N}_+$. Consequently, $|D_{(0,1)}\phi^{(n)}(0,0)|\geq \epsilon n^{-3/4}$ for all $n\in\mathbb{N}_+$ from which it is evident that the conclusion to Theorem \ref{thm:DerivativeEstimate}, \eqref{eq:DerivativeEstimate}, doesn't hold.

\subsection{Contribution from non-minimal decay exponent}\label{subsec:ex4}

\noindent In the present example, we study a real valued function $\phi$ on $\mathbb{Z}^2$ with $\Omega(\phi)=\{\xi_1,\xi_2\}$. Although both $\xi_1$ and $\xi_2$ are of positive homogeneous type for $\hat\phi$ with corresponding positive homogeneous polynomials $P_{\xi_1}$ and $P_{\xi_2}$, we find that $\mu_{\phi}=\mu_{P_{\xi_1}}<\mu_{P_{\xi_2}}$ which is in contrast to the preceding examples. Consequently, only the contribution from $\xi_1$ appears in the local limit.\\

\noindent Consider $\phi:\mathbb{Z}^2\rightarrow\mathbb{R}$ be defined by
\begin{equation}\label{ex4phi}
\phi(x,y)=\begin{cases}
	    19/128 & (x,y)=(0,0)\\
	    19/256 & (x,y)=(0,\pm 1)\\
	    1/4 & (x,y)=(\pm 1,0)\\
            1/8 & (x,y)=(\pm 1,\pm 1)\\
            -5/64 & (x,y)=(\pm 2,0)\\
             -5/128 & (x,y)=(\pm 2,\pm 1)\\
            1/256& (x,y)=(\pm 4,0)\\
            1/512 & (x,y)=(\pm 4,\pm 1)\\
           0 & \mbox{otherwise}.
          \end{cases}
\end{equation}\begin{figure}[h!]
\begin{center}
\resizebox{\textwidth}{!}{
	    \begin{subfigure}[5cm]{0.5\textwidth}
		\includegraphics[width=\textwidth]{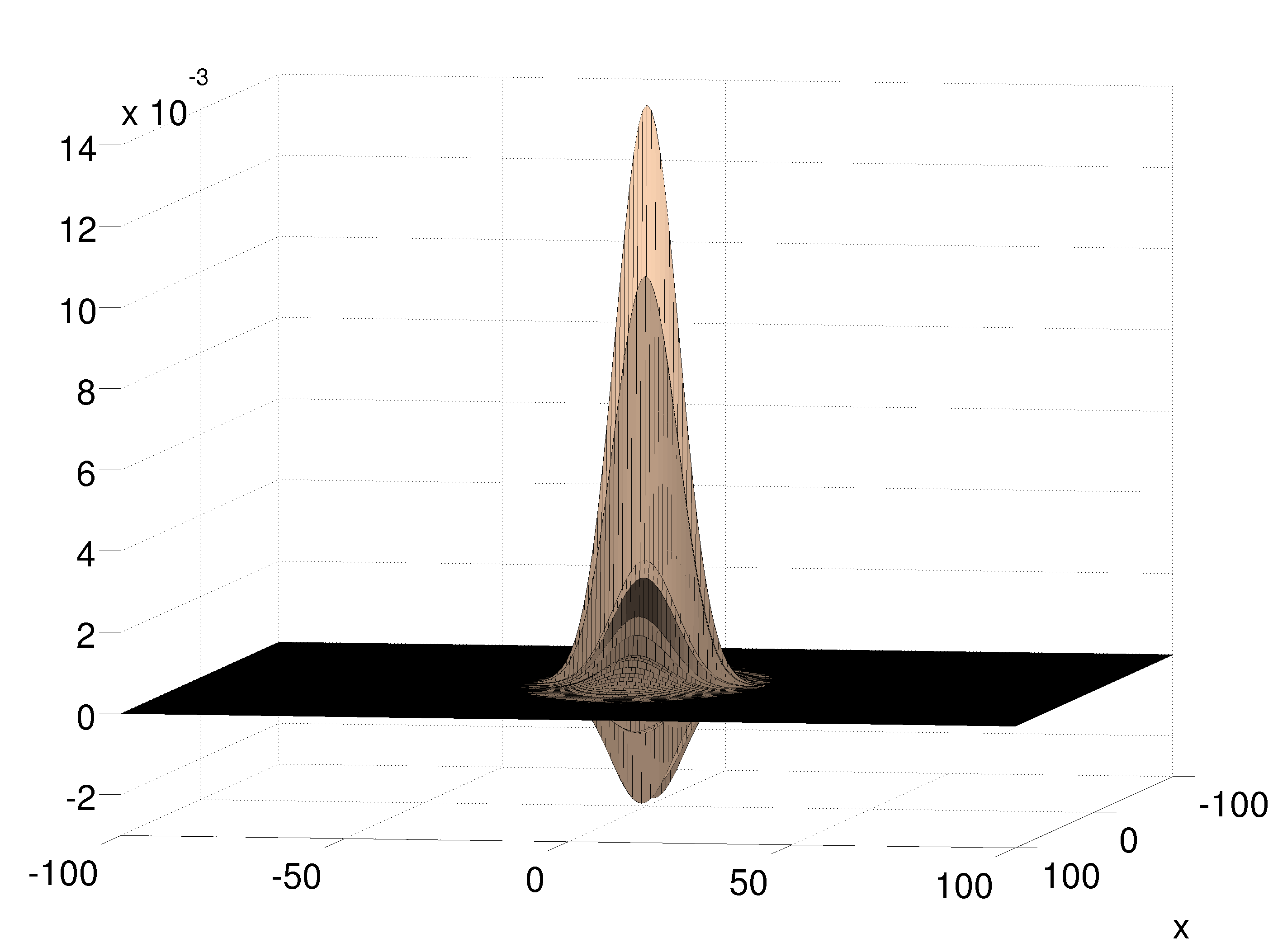}
		\caption{$\phi^{(n)}$ for $n=100$}
		\label{fig:ex4phi100_1}
	    \end{subfigure}
	    \begin{subfigure}[5cm]{0.5\textwidth}
		\includegraphics[width=\textwidth]{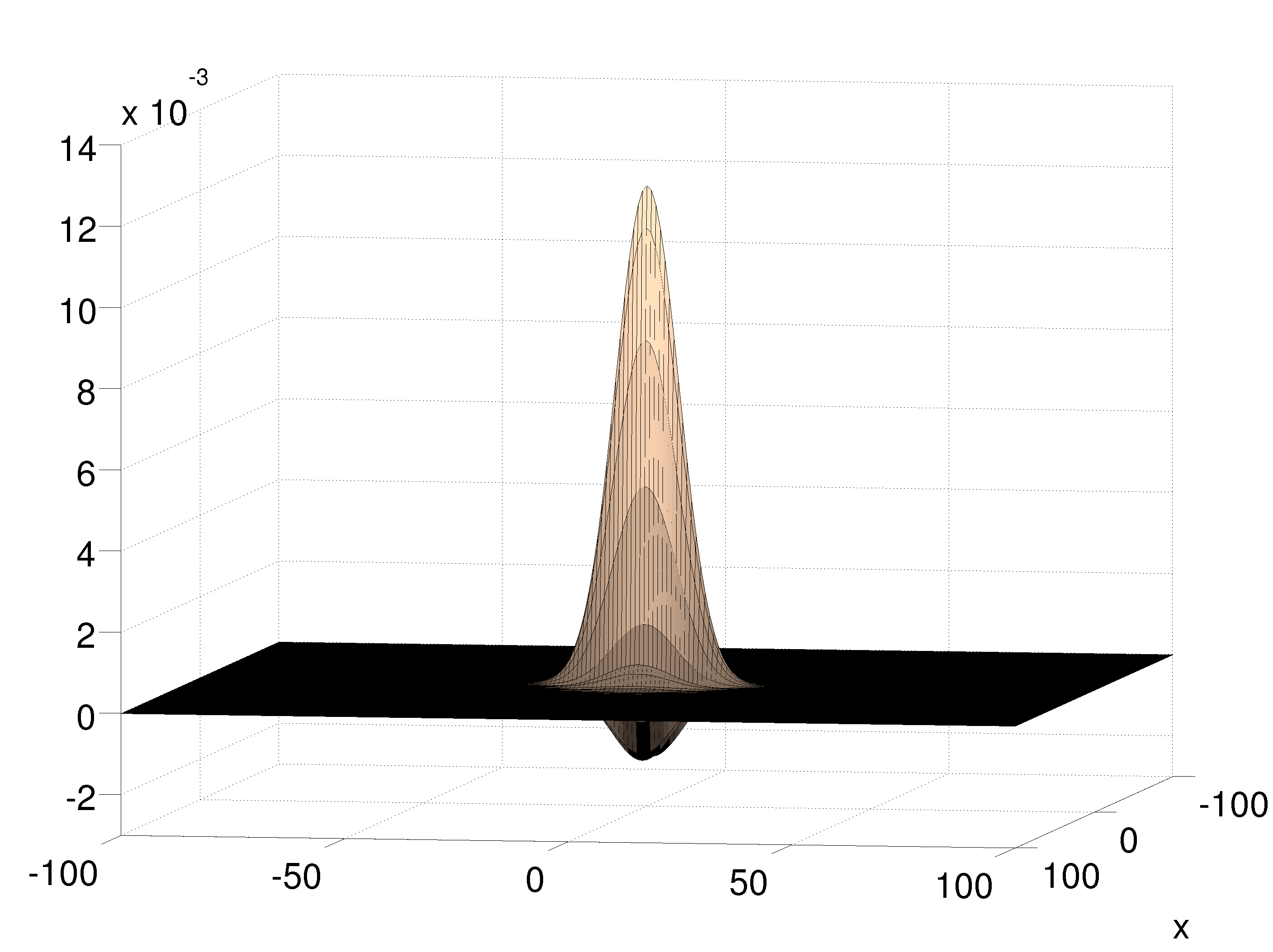}
		\caption{$H_{P_{\xi_1}}^n$ for $n=100$}
		\label{fig:ex4HP100_1}
	    \end{subfigure}}
\resizebox{\textwidth}{!}{	
	    \begin{subfigure}[5cm]{0.5\textwidth}
		\includegraphics[width=\textwidth]{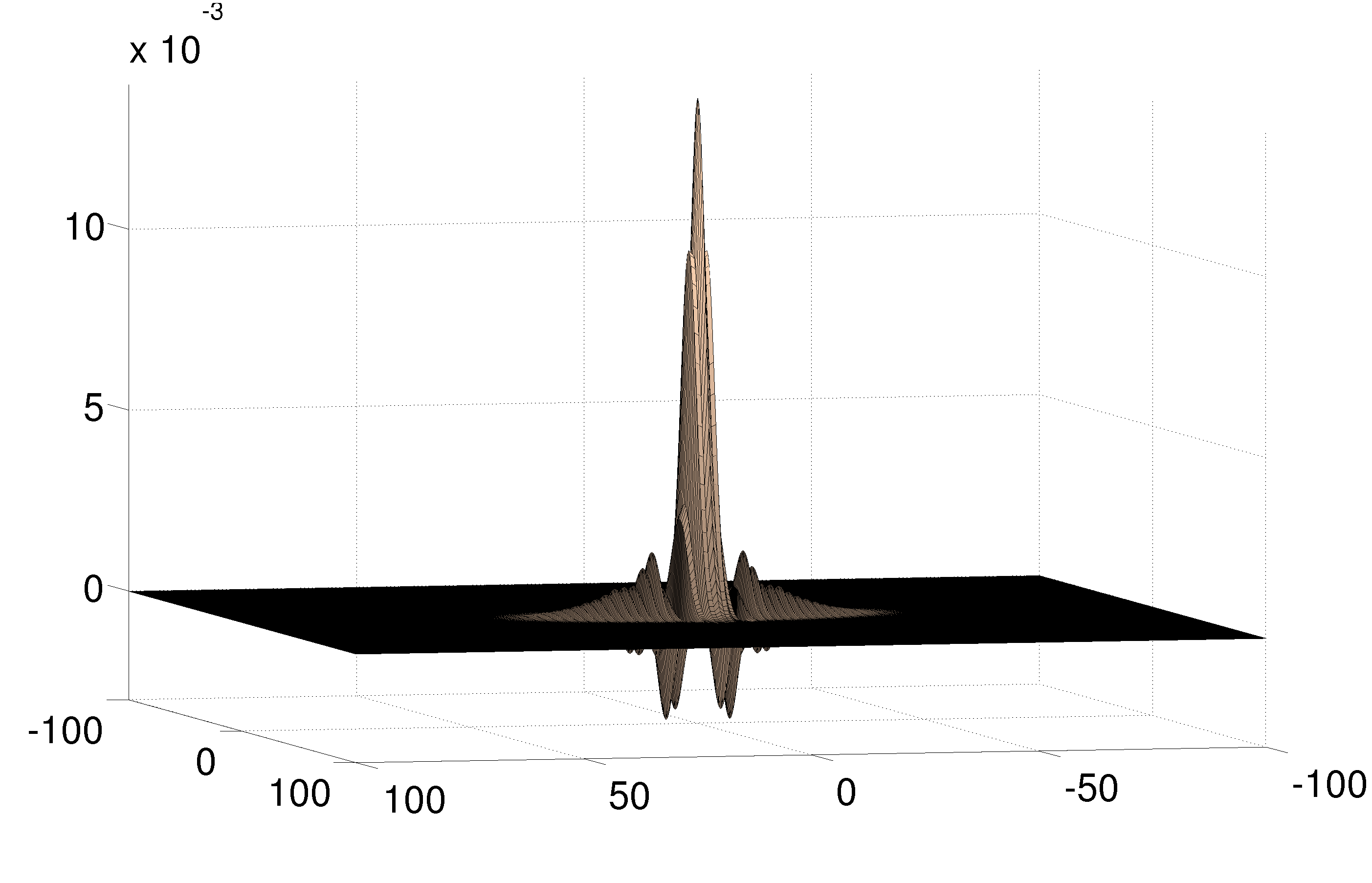}
		\caption{$\phi^{(n)}$ for $n=100$}
		\label{fig:ex4phi100_2}
	    \end{subfigure}
	    \begin{subfigure}[5cm]{0.5\textwidth}
		\includegraphics[width=\textwidth]{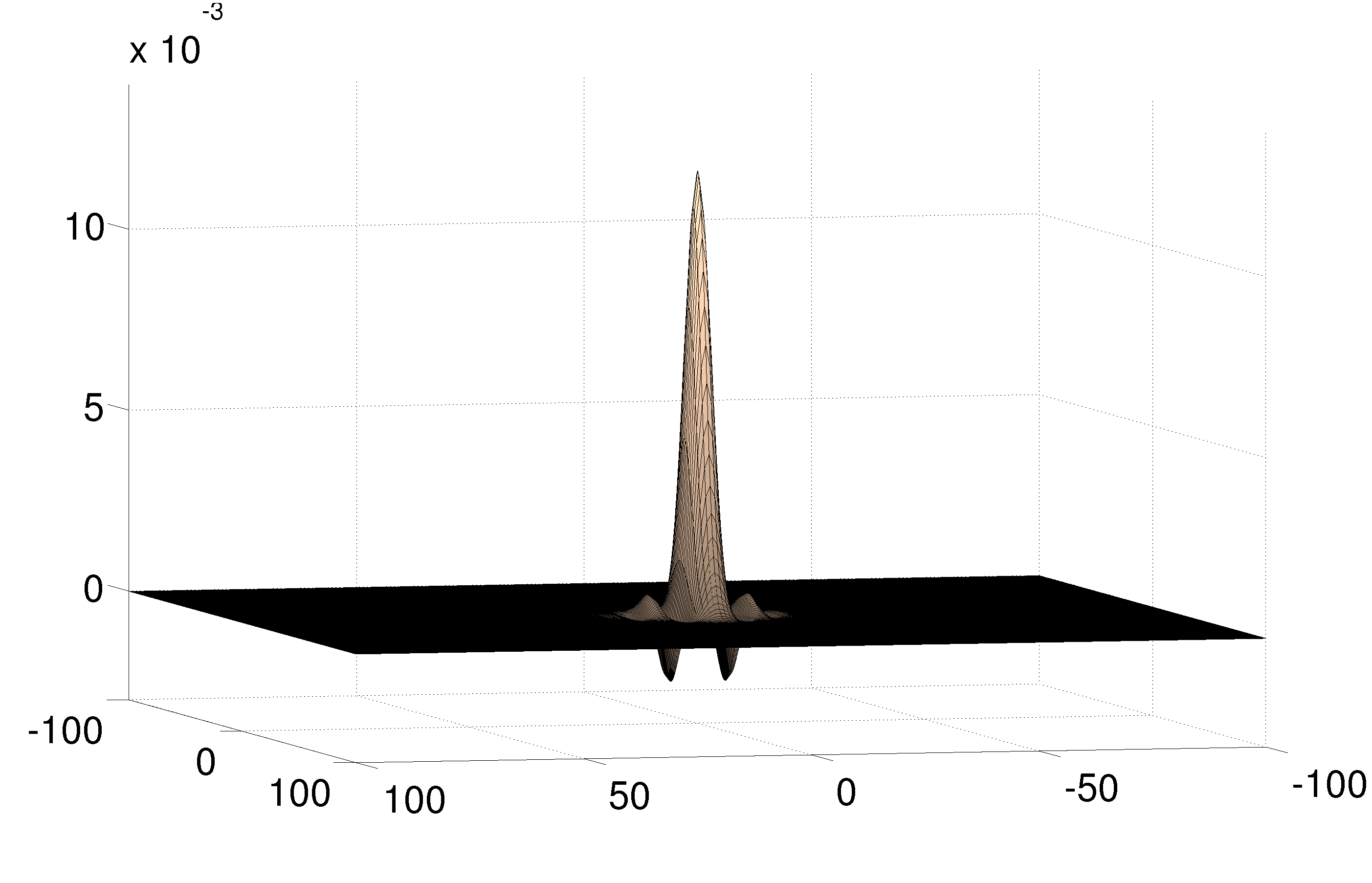}
		\caption{$H_{P_{\xi_1}}^n$ for $n=100$}
		\label{fig:ex4HP100_2}
	    \end{subfigure}}
\caption{The graphs of $\phi^{(n)}$ and $H^n_{P_{\xi_1}}$ for $n=100$}
\label{fig:ex4_100}
\end{center}
\end{figure}
\begin{figure}[h!]
\begin{center}
\resizebox{\textwidth}{!}{
	    \begin{subfigure}[5cm]{0.5\textwidth}
		\includegraphics[width=\textwidth]{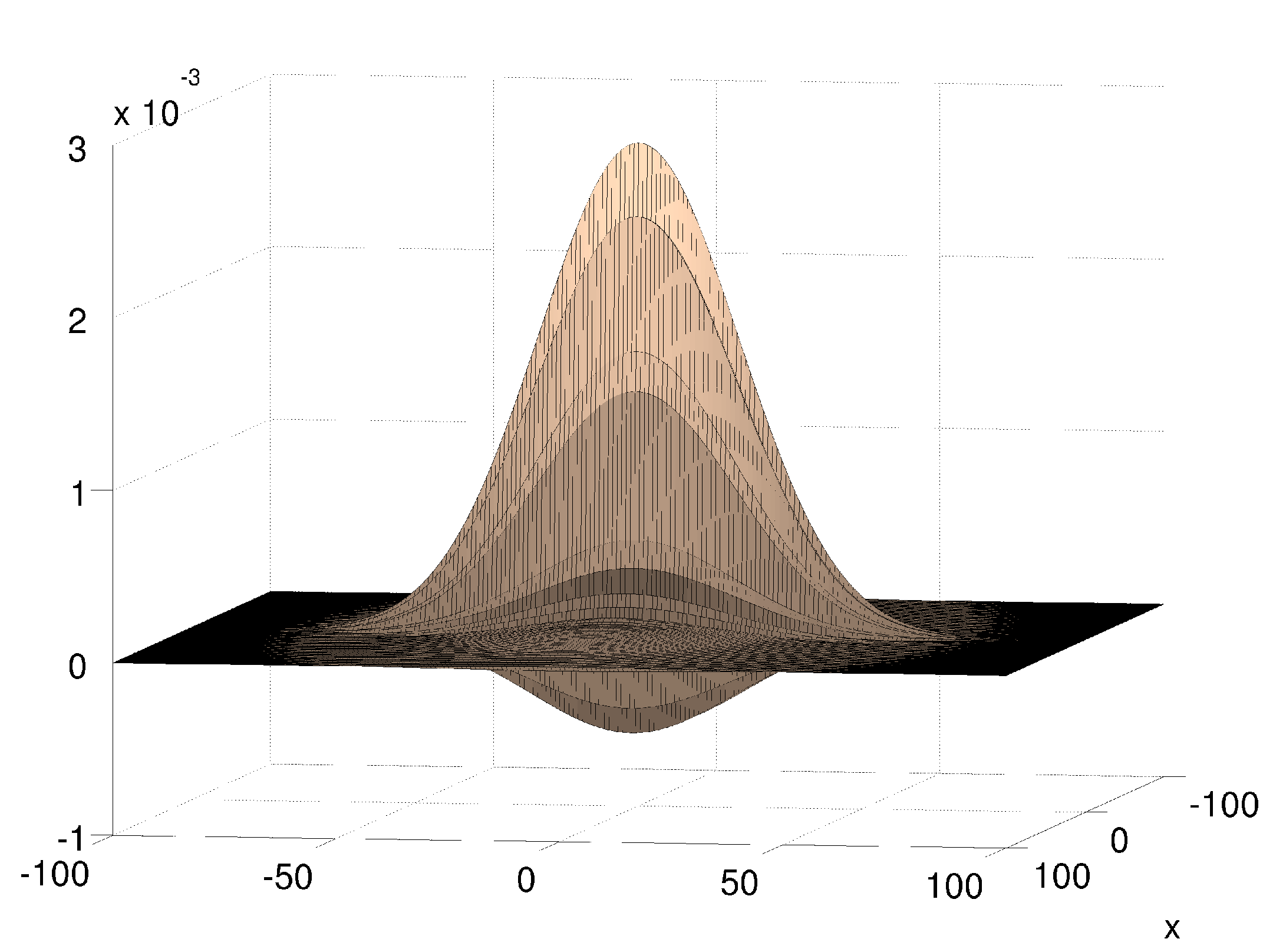}
		\caption{$\phi^{(n)}$ for $n=1,000$}
		\label{fig:ex4phi1000_1}
	    \end{subfigure}
	    \begin{subfigure}[5cm]{0.5\textwidth}
		\includegraphics[width=\textwidth]{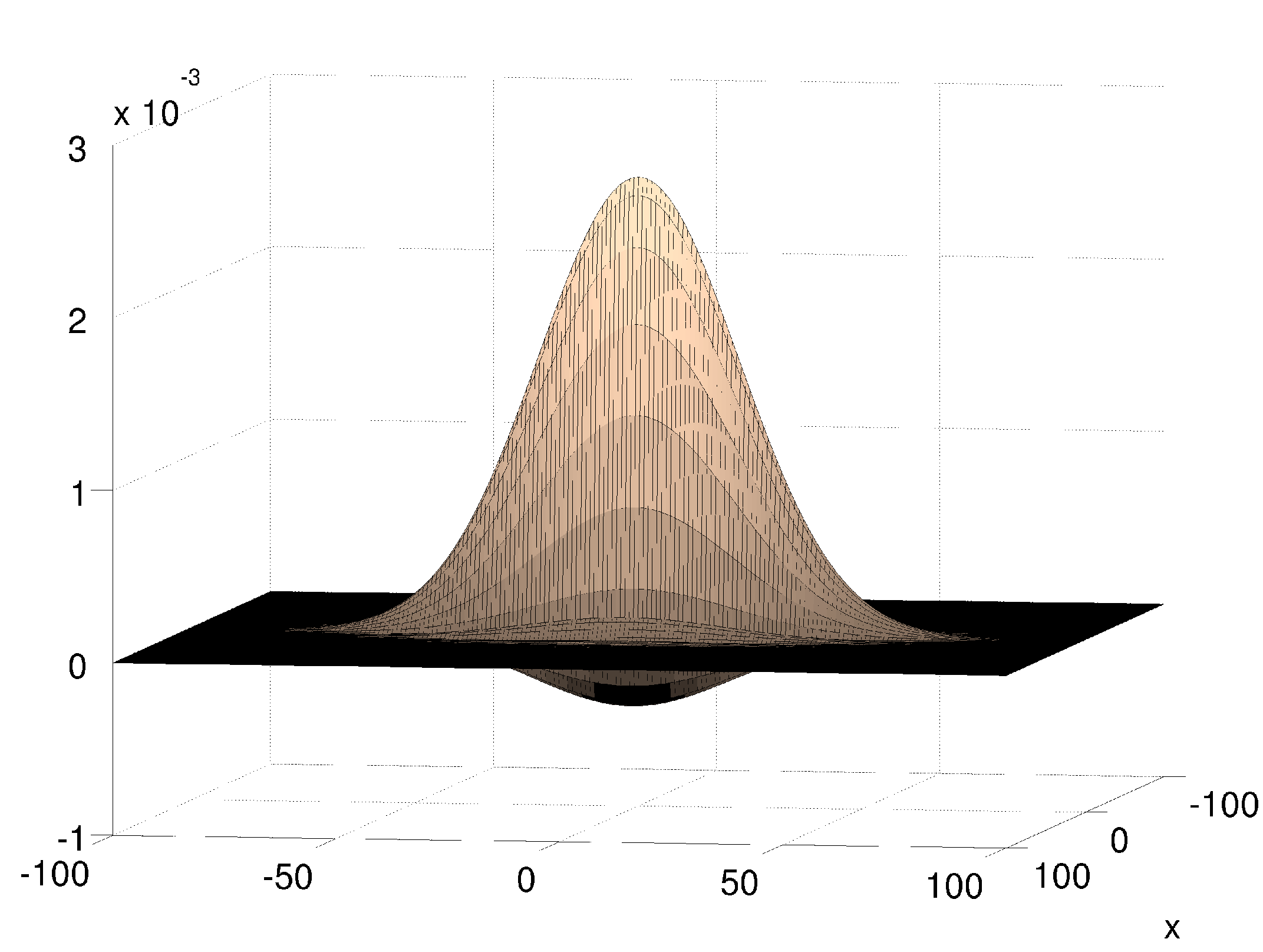}
		\caption{$H_{P_{\xi_1}}^n$ for $n=1,000$}
		\label{fig:ex4HP1000_1}
	    \end{subfigure}}
\resizebox{\textwidth}{!}{	
	    \begin{subfigure}[5cm]{0.5\textwidth}
		\includegraphics[width=\textwidth]{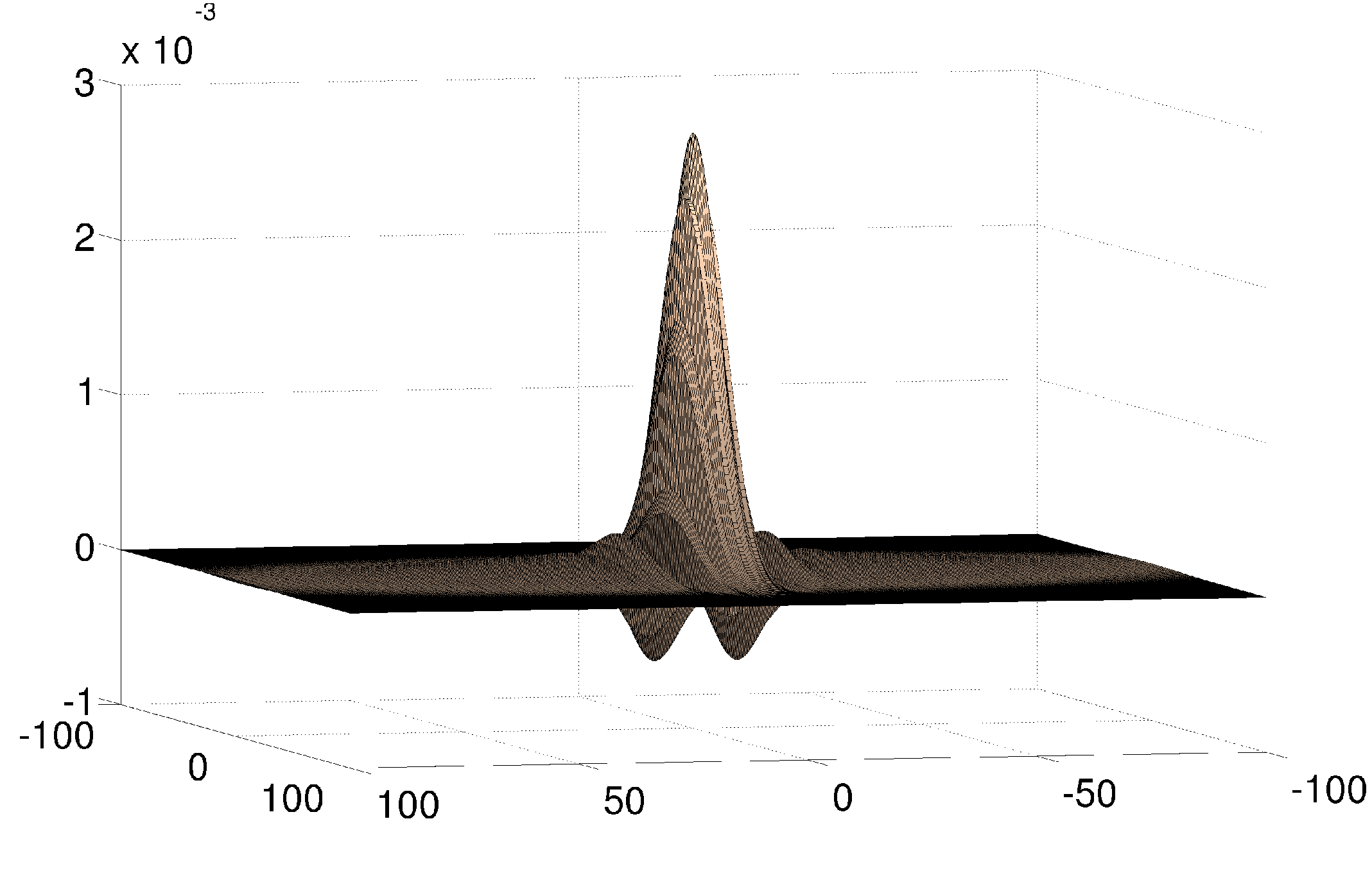}
		\caption{$\phi^{(n)}$ for $n=1,000$}
		\label{fig:ex4phi1000_2}
	    \end{subfigure}
	    \begin{subfigure}[5cm]{0.5\textwidth}
		\includegraphics[width=\textwidth]{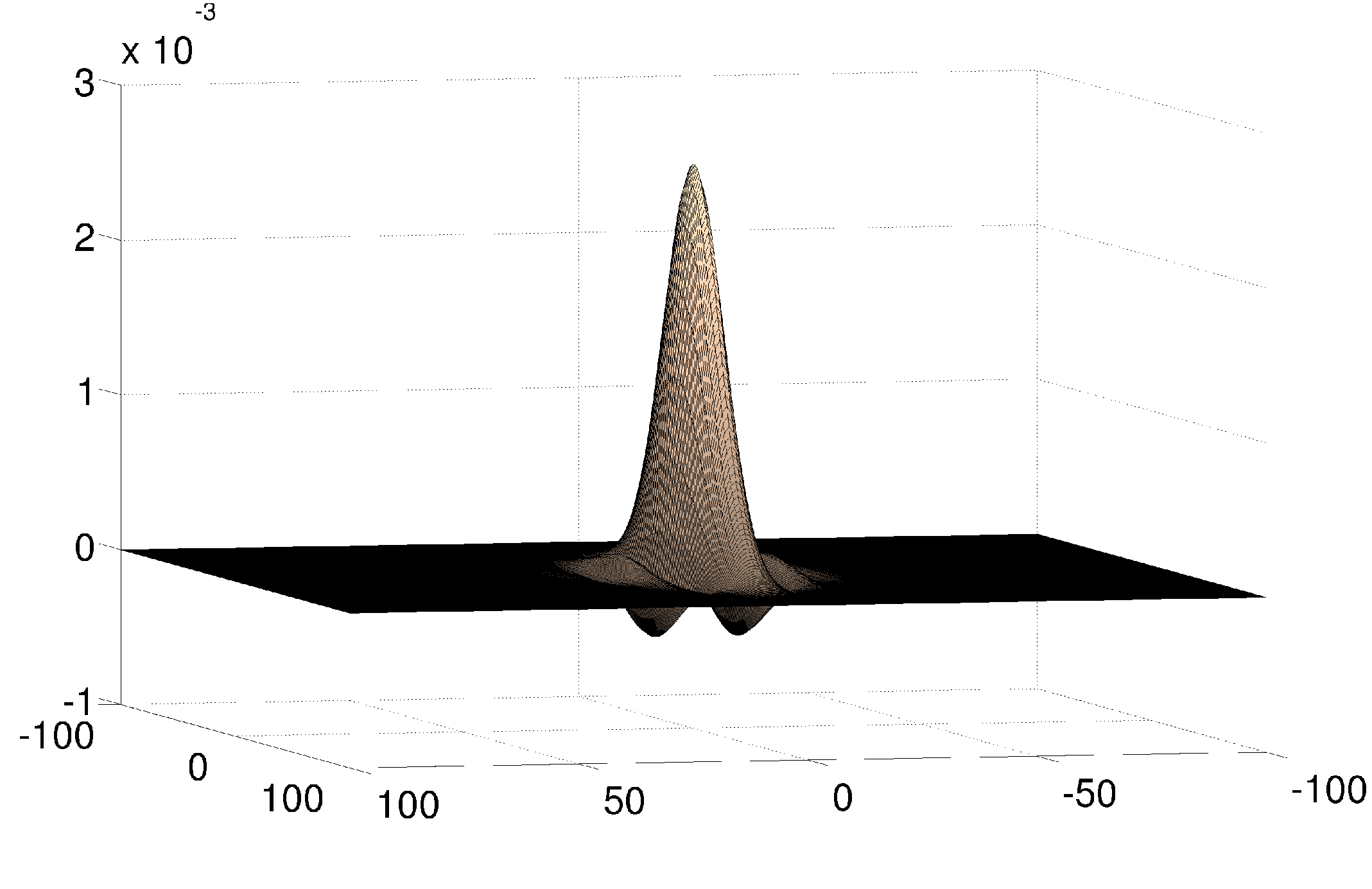}
		\caption{$H_{P_{\xi_1}}^n$ for $n=1,000$}
		\label{fig:ex4HP1000_2}
	    \end{subfigure}}
\caption{The graphs of $\phi^{(n)}$ and $H^n_{P_{\xi_1}}$ for $n=1,000$}
\label{fig:ex4_1000}
\end{center}
\end{figure}

\noindent The graphs of $\phi^{(n)}$ for $(x,y)\in\mathbb{Z}^2$ such that $-100\leq x,y\leq 100$ are displayed in Figures \ref{fig:ex4phi100_1} and \ref{fig:ex4phi100_2} for $n=100$ and Figures \ref{fig:ex4phi1000_1} and \ref{fig:ex4phi1000_2} for $n=1,000$. Upon considering the Fourier transform of $\phi$, we find that $\sup|\hat\phi|=1$ and this maximum is attained at exactly two points in $\mathbb{T}^2$. Specifically,
\begin{equation*}
\Omega(\phi)=\{\xi_1,\xi_2\}=\{(0,0),(\pi,0)\},
\end{equation*}
where $\hat\phi(\xi_1)=1$ and $\hat\phi(\xi_2)=-1$. In expanding the logarithm of $\hat\phi(\xi+\xi_1)/\hat\phi(\xi_1)$ about $(0,0)$, we find that $\xi_1=(0,0)$ is of positive homogeneous type for $\hat\phi$ with $\alpha_{\xi_1}=(0,0)$ and
\begin{equation*}
P_{\xi_1}(\eta,\zeta)=\frac{\eta^6}{16}+\frac{\zeta^2}{4}.
\end{equation*}
Clearly $P_{\xi_1}$ is positive homogeneous with $E_1=\diag(1/6,1/2)\in\Exp(P_{\xi_1})$ thus $\mu_{P_{\xi_1}}=\tr E_1=2/3$. Now, upon expanding the logarithm of $\hat\phi(\xi+\xi_2)/\hat\phi(\xi_2)$ we find that $\xi_2=(\pi,0)$ is also of positive homogeneous type for $\hat\phi$ with $\alpha_{\xi_2}=(0,0)$ and positive homogeneous polynomial
\begin{equation*}
P_{\xi_2}(\eta,\zeta)=\eta^2+\frac{\zeta^2}{4};
\end{equation*}
Here, $E_2=(1/2)I\in\Exp(P_{\xi_2})$ and thus $\mu_{P_{\xi_2}}=\tr E_2=1$. In this case
\begin{equation*}
\mu_\phi=\min_{i=1,2}\mu_{P_{\xi_i}}=\mu_{P_{\xi_1}}=2/3
\end{equation*}
and so, in light of the paragraph preceding the statement of Theorem \ref{thm:MainLocalLimit}, we restrict our attention to $\xi_1$, in which case the theorem describes the approximation of $\phi^{(n)}$ by a single attractor $H_{P_{\xi_1}}$. This is the local limit
\begin{equation}\label{ex4locallimit}
\phi^{(n)}(x,y)=H_{P_{\xi_1}}^n(x,y)+o(n^{-2/3})
\end{equation}
which holds uniformly for $(x,y)\in\mathbb{Z}^2$. Figures \ref{fig:ex4HP100_1}, \ref{fig:ex4HP100_2}, \ref{fig:ex4HP1000_1} and \ref{fig:ex4HP1000_2} illustrate this result. It should be noted that the approximations shown in Figures \ref{fig:ex4_100} and \ref{fig:ex4_1000} appear more coarse than those of the previous examples. For instance, Figure \ref{fig:ex4phi100_2} depicts minor oscillations in the graph of $\phi^{(n)}$ which do not appear in the approximation illustrated in Figure \ref{fig:ex4HP100_2}. These oscillations are due to the influence on $\phi^{(n)}$ by $\hat\phi$ near $\xi_2$ which for $n=1,000$ is not yet sufficiently scaled out. As demonstrated in the proof of Theorem \ref{thm:MainLocalLimit}, this influence dies out on the relative order of $n^{1-2/3}=n^{-1/3}$ and thus the influence is not negligible when $n=1,000$.\\

\noindent As a final remark, we note that $\phi$ is the tensor product of two functions mapping $\mathbb{Z}$ into $\mathbb{C}$. Specifically, $\phi=\phi_1\otimes\phi_2$ where,
\begin{equation*}
\phi_1(x)=\begin{cases}
           19/64 & x=0\\
           1/2 &x=\pm 1\\
           -5/32 & x=\pm 2\\
           1/128 & x=\pm 4\\
            0 &\mbox{ otherwise}
          \end{cases}
\hspace{.5cm}\mbox{and}\hspace{.5cm}
\phi_2(y)=\begin{cases}
           1/2 &y=0\\
           1/4 &y=\pm 1\\
           0 &\mbox{ otherwise}.
          \end{cases}
\end{equation*}
In fact, had we studied the functions $\phi_1$ and $\phi_2$ separately, we would have found that
\begin{equation*}
\phi_1^{(n)}(x)=H^n_{\eta^6/16}(x)+o(n^{-1/6})
\hspace{.5cm}\mbox{ and }\hspace{.5cm}
\phi_1^{(n)}(y)=H^n_{\zeta^2/4}(y)+o(n^{-1/2})
\end{equation*}
uniformly for $x,y\in\mathbb{Z}$ and from this deduced the local limit \eqref{ex4locallimit} because $\phi^{(n)}=\phi_1^{(n)}\otimes\phi_2^{(n)}$ and $H_{P_{\xi_1}}=H_{\eta^6/16}\otimes H_{\zeta^2/4}$ (note also that $\mu_\phi=1/6+1/2=\mu_{\phi_1}+\mu_{\phi_2}$). In general, tensor products can be used to create a wealth of examples in any dimension to which the results of lower dimensions can be applied. For instance, by applying the much stronger theory of one dimension (in light of \cite{Randles2015}), one can deduce stronger results than are given here for the class of finitely supported functions on $\mathbb{Z}^d$ of the form
\begin{equation*}
\phi=\phi_1\otimes\phi_2\otimes\cdots\otimes\phi_d
\end{equation*}
where $\phi_k:\mathbb{Z}\mapsto\mathbb{C}$ is finitely supported for $k=1,2,\dots,d$. How to place these examples in a $d$-dimensional theory is an open question.

\subsection{A simple class of real valued functions}\label{subsec:ex5}

\noindent In this subsection we consider a class of real valued and finitely supported functions $\phi_{\mathbf{m},\lambda}$ determined by two multi-parameters $\mathbf{m}\in\mathbb{N}_+$ and $\lambda\in\mathbb{R}_+^d$, c.f, Subsection 2.4 of \cite{Diaconis2014}. Here, $\Omega(\phi_{\mathbf{m},\lambda})$ contains only $0\in\mathbb{T}^d$ which is of positive homogeneous type for $\hat\phi_{\mathbf{m},\lambda}$ with no drift and whose associated positive homogeneous polynomial is a semi-elliptic polynomial with no ``mixed'' terms. In this setting, our methods yield easily $\ell^{\infty}$-asymptotics and local limit theorems for $\phi_{\mathbf{m},\lambda}^{(n)}=(\phi_{\mathbf{m},\lambda})^{(n)}$. Moreover, all of the results of Section \ref{sec:PointwiseBounds} concerning global space-estimates for $\phi_{\mathbf{m},\lambda}^{(n)}$ and its discrete differences are valid and we apply them.\\

\noindent Let $\mathbf{m}=(m_1,m_2,\dots,m_d)\in\mathbb{N}_+^d$ and $\lambda=(\lambda_1,\lambda_2,\dots,\lambda_d)$ be such that $\lambda_j\in(0,2^{1-m_j}/d]$ for $j=1,2,\dots,d$ with at least one $\lambda_j<2^{1-m_j}/d$. Define
\begin{equation}\label{eq:DefofPhiMLambda}
\phi_{\mathbf{m},\lambda}=\delta_0-\sum_{j=1}^d\lambda_j(\delta_0-\rho_j)^{(m_j)}
\end{equation}
where $\rho_j=(1/2)(\delta_{e_j}+\delta_{-e_j})$ is the Bernoulli walk on the $j$th coordinate axis. By a straightforward computation, we have
\begin{equation*}
\hat\phi_{\mathbf{m},\lambda}(\xi)=1-\sum_{j=1}^d\lambda_j(1-\cos(\xi_j))^{m_j}
\end{equation*}
for $\xi=(\xi_1,\xi_2,\dots,\xi_d)\in\mathbb{R}^d$ and from this it is easily seen that $\sup_{\xi}|\hat\phi_{\mathbf{m},\lambda}(\xi)|=1$ which is attained only at $0\in\mathbb{T}^d$, i.e., $\Omega(\phi_{\mathbf{m},\lambda})=\{0\}$. Here, $\hat\phi_{\mathbf{m},\lambda}(0)=1$ and it is easily seen that
\begin{equation*}
\Gamma(\xi)=\log(\hat\phi_{\mathbf{m},\lambda}(\xi))=-P_{\mathbf{m},\lambda}(\xi)+o(P_{\mathbf{m},\lambda}(\xi))
\end{equation*}
as $\xi\rightarrow 0$, where
\begin{equation*}
P_{\mathbf{m},\lambda}(\xi)=\sum_{j=1}^d\frac{\lambda_j}{2^{m_j}}\xi_j^{2m_j}
\end{equation*}
for $\xi=(\xi_1,\xi_2,\dots,\xi_d)\in\mathbb{R}^d$. Note that $P_{\mathbf{m},\lambda}(\xi)$ is a semi-elliptic polynomial of the form \eqref{eq:SemiEllipticPolynomial} with $D_{\mathbf{m}}=\diag((2m_1)^{-1},(2m_2)^{-1},\dots,(2m_d)^{-1})\in\Exp(P_{\mathbf{m},\lambda})$ and hence
\begin{equation*}
\mu_{\phi_{\mathbf{m},\lambda}}=\mu_{P_{\mathbf{m},\lambda}}=(2m_1)^{-1}+(2m_2)^{-1}+\cdots+(2m_d)^{-1}=|\mathbf{1}:2\mathbf{m}|,
\end{equation*}
where $\mathbf{1}=(1,1,\dots,1)\in\mathbb{N}^d$. \\

\noindent For any $l\in\mathbb{N}$, recall from Section \ref{sec:PointwiseBounds} the discrete time difference operator $\partial_l=\partial_l(\phi_{\mathbf{m},\lambda},\xi_0,\alpha)$ which, in this case, is given by
\begin{equation*}
\partial_l\psi=(\delta-\phi_{\mathbf{m},\lambda}^{(l)})\ast\psi
\end{equation*}
for $\psi\in\ell^1(\mathbb{Z}^d)$. For any multi-index $\beta\in\mathbb{N}^d$, consider the difference operator $D^\beta=D^{\beta}_e$ defined for any $\psi\in\ell^1(\mathbb{Z}^d)$ by
\begin{equation*}
D^{\beta}\psi=(D_{e_1})^{\beta_1}(D_{e_2})^{\beta_2}\cdots (D_{e_d})^{\beta_d}\psi
\end{equation*}
where $D_{e_j}\psi(x)=\psi(x+e_j)-\psi(x)$ for $x\in\mathbb{Z}^d$ and $(D_{e_j})^0$ is the identity. We note that $e=\{e_1,e_2,\dots,e_d\}$ is $P_{\mathbf{m},\lambda}$-fitted of weight $\mathbf{m}$ in view of the discussion preceding Corollary \ref{cor:DerivativeEstimate}.  Finally, define
\begin{equation}\label{eq:mHomogeneousNormDef}
|x|_{\mathbf{m}}=\sum_{j=1}^d|x_j|^{2m_i/(2m_i-1)}
\end{equation}
for $x=(x_1,x_2,\dots,x_d)\in\mathbb{R}^d$ and observe that 
\begin{equation*}
|n^{-D_{\mathbf{m}}}x|_\mathbf{m}=\sum_{j=1}^d|x_j|^{2m_j/(2m_j-1)}/n^{1/(2m_j-1)}
\end{equation*} for $x\in\mathbb{R}^d$ and $n\in\mathbb{N}_+$.

\begin{proposition}
Let $\phi_{\mathbf{m},\lambda}$ be defined by \eqref{eq:DefofPhiMLambda}, assume the notation above and write $(\phi_{\mathbf{m},\lambda})^{(n)}=\phi_{\mathbf{m},\lambda}^{(n)}$ for $n\in\mathbb{N}_+$. There are positive constants $C$ and $C'$ for which
\begin{equation}\label{eq:BernoulliSemiElliptic_1}
Cn^{-|\mathbf{1}:2\mathbf{m}|}\leq \|\phi_{\mathbf{m},\lambda}^{(n)}\|_{\infty}\leq C'n^{-|\mathbf{1}:2\mathbf{m}|}
\end{equation}
for all $n\in\mathbb{N}_+$. We have
\begin{equation}\label{eq:BernoulliSemiElliptic_2}
\begin{split}
\phi_{\mathbf{m},\lambda}^{(n)}(x)&=n^{-|\mathbf{1}:2\mathbf{m}|}H_{P_{\mathbf{m},\lambda}}\left(n^{-D_{\mathbf{m}}}x\right)+o(n^{-|\mathbf{1}:2\mathbf{m}|})\\
&=n^{-|\mathbf{1}:2\mathbf{m}|}H_{P_{\mathbf{m},\lambda}}\left(\frac{x_1}{n^{1/(2m_1)}},\frac{x_2}{n^{1/(2m_2)}},\dots,\frac{x_d}{n^{1/(2m_d)}}\right)\\
&\quad+o(n^{-|\mathbf{1}:2\mathbf{m}|})
\end{split}
\end{equation}
uniformly for $x=(x_1,x_2,\dots,x_d)\in\mathbb{Z}^d$, where $H_{P_{\mathbf{m},\lambda}}$ is defined by \eqref{eq:HPdef}. There are positive constants $C_0,C_1,M_0$ and $M_1$ for which
\begin{equation}\label{eq:BernoulliSemiElliptic_3}
|\phi_{\mathbf{m},\lambda}^{(n)}(x)|\leq \frac{C_0}{n^{|\mathbf{1}:2\mathbf{m}|}}\exp\left(-M_0\left|n^{-D_{\mathbf{m}}}x\right|_{\mathbf{m}}\right)
\end{equation}
and
\begin{equation}\label{eq:BernoulliSemiElliptic_4}
|\phi_{\mathbf{m},\lambda}^{(n+1)}(x)-\phi_{\mathbf{m},\lambda}^{(n)}(x)|\leq \frac{C_1}{n^{|\mathbf{1}+2\mathbf{m}:2\mathbf{m}|}}\exp\left(-M_1\left|n^{-D_{\mathbf{m}}}x\right|_{\mathbf{m}}\right)
\end{equation}
for all $x\in\mathbb{Z}^d$ and $n\in\mathbb{N}_+$. Further, there are positive constants $C_0$ and $M$ and, to each multi-index $\beta$, a positive constant $C_\beta$ such that, for any $l_1,l_2,\dots,l_k\in\mathbb{N}_+^d$,
\begin{equation}\label{eq:BernoulliSemiElliptic_5}
|\partial_{l_1}\partial_{l_2}\cdots\partial_{l_j}D^{\beta}\phi_{\mathbf{m},\lambda}^{(n)}(x)|\leq\frac{C_\beta C_0^kk!\prod_{q=1}^kl_q}{n^{|\mathbf{1}+\beta+2k\mathbf{m}:2\mathbf{m}|}}\exp\left(-M\left|(n+s_k)^{-D_{\mathbf{m}}}x\right|_{\mathbf{m}}\right)
\end{equation}
for all $x\in\mathbb{Z}^d$ and $n\in\mathbb{N}_+$, where $s_k=l_1+l_2+\cdots+l_k$.
\end{proposition}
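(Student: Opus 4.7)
The plan is to verify that $\phi_{\mathbf{m},\lambda}$ satisfies the hypotheses of each preceding theorem and then invoke them in turn. The discussion preceding the proposition already identified the relevant data: $\Omega(\phi_{\mathbf{m},\lambda}) = \{0\}$, the point $0$ is of positive homogeneous type for $\hat\phi_{\mathbf{m},\lambda}$ with drift $\alpha = 0$ and positive homogeneous polynomial $P_{\mathbf{m},\lambda}(\xi) = \sum_j (\lambda_j/2^{m_j}) \xi_j^{2m_j}$, $\hat\phi_{\mathbf{m},\lambda}(0) = 1$, and $D_{\mathbf{m}} \in \Exp(P_{\mathbf{m},\lambda})$, so $\mu_{\phi_{\mathbf{m},\lambda}} = \tr D_{\mathbf{m}} = |\mathbf{1}:2\mathbf{m}|$. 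Because $P_{\mathbf{m},\lambda}$ is real, $R := \Re P_{\mathbf{m},\lambda} = P_{\mathbf{m},\lambda}$, and $D_{\mathbf{m}}$ is symmetric, so $D_{\mathbf{m}}^* = D_{\mathbf{m}}$ and $n^{-D_{\mathbf{m}}}$ acts diagonally as $\diag(n^{-1/(2m_1)},\ldots,n^{-1/(2m_d)})$.

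With these preliminaries, \eqref{eq:BernoulliSemiElliptic_1} is an immediate application of Theorem \ref{thm:SubDecay}. For \eqref{eq:BernoulliSemiElliptic_2}, Theorem \ref{thm:MainLocalLimit} collapses to the single term $\phi_{\mathbf{m},\lambda}^{(n)}(x) = H_{P_{\mathbf{m},\lambda}}^n(x) + o(n^{-|\mathbf{1}:2\mathbf{m}|})$ uniformly in $x$, since $\Omega(\phi_{\mathbf{m},\lambda})$ is a singleton and the prefactors $e^{-ix\cdot 0}\hat\phi_{\mathbf{m},\lambda}(0)^n$ are identically $1$. The scaling identity \eqref{eq:HPScale} applied with $E = D_{\mathbf{m}}$ then rewrites $H_{P_{\mathbf{m},\lambda}}^n(x)$ in the diagonal form displayed in \eqref{eq:BernoulliSemiElliptic_2}.

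The step requiring marginally more work is \eqref{eq:BernoulliSemiElliptic_3}, which I obtain from Theorem \ref{thm:ExponentialEstimate}. The key auxiliary calculation is the Legendre--Fenchel transform of $R$, which, because $R$ decouples across coordinates, separates as
\begin{equation*}
R^{\#}(x) = \sum_{j=1}^d \sup_{\xi_j \in \mathbb{R}} \Bigl\{ x_j \xi_j - \tfrac{\lambda_j}{2^{m_j}} \xi_j^{2m_j} \Bigr\} = \sum_{j=1}^d c_j\, |x_j|^{2m_j/(2m_j-1)}
\end{equation*}
for explicit positive constants $c_j$. In particular $R^{\#}(x) \asymp |x|_{\mathbf{m}}$, and using the scaling $(I - D_{\mathbf{m}})^{*} \in \Exp(R^{\#})$ one checks directly that $n R^{\#}(x/n) = R^{\#}(n^{-D_{\mathbf{m}}}x) \asymp |n^{-D_{\mathbf{m}}}x|_{\mathbf{m}}$. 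Substituting this into the conclusion of Theorem \ref{thm:ExponentialEstimate} (with $\alpha = 0$) yields \eqref{eq:BernoulliSemiElliptic_3}. This separable Legendre--Fenchel computation is the only non-mechanical step in the proof; everything else is bookkeeping.

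Finally, \eqref{eq:BernoulliSemiElliptic_4} is Corollary \ref{cor:TimeDifferenceEstimate} applied with $l = 1$ (the hypotheses $\alpha = 0$ and $\hat\phi_{\mathbf{m},\lambda}(0) = 1$ are in place), where the denominator is rewritten using $|\mathbf{1}:2\mathbf{m}| + 1 = |\mathbf{1}+2\mathbf{m}:2\mathbf{m}|$ and the exponential factor is handled via $n R^{\#}(x/n) \asymp |n^{-D_{\mathbf{m}}}x|_{\mathbf{m}}$ exactly as in the previous paragraph. For \eqref{eq:BernoulliSemiElliptic_5}, the standard basis $e = \{e_1, \ldots, e_d\}$ is $P_{\mathbf{m},\lambda}$-fitted of weight $\mathbf{m}$ (take $A = I$ in Proposition \ref{prop:PositiveHomogeneousPolynomialsareSemiElliptic}), so $|v_j|^{\beta_j} = 1$, and the hypothesis $l_q \alpha \in \mathbb{Z}^d$ is trivially satisfied; Theorem \ref{thm:TimeSpaceDerivatives} then applies, and the exponential factor $(n+s_k) R^{\#}(x/(n+s_k))$ is rewritten one last time as $|(n+s_k)^{-D_{\mathbf{m}}}x|_{\mathbf{m}}$ up to an adjustable constant $M$ by the same scaling identity.
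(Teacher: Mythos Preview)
Your proof is correct and follows essentially the same route as the paper: verify the standing hypotheses ($\Omega=\{0\}$, $\alpha=0$, $\hat\phi(0)=1$, $P_{\mathbf m,\lambda}$ positive homogeneous with $D_{\mathbf m}\in\Exp(P_{\mathbf m,\lambda})$), compute $R^{\#}\asymp|\cdot|_{\mathbf m}$, and then invoke Theorems \ref{thm:SubDecay}, \ref{thm:MainLocalLimit}, \ref{thm:ExponentialEstimate}, Corollary \ref{cor:TimeDifferenceEstimate}, and Theorem \ref{thm:TimeSpaceDerivatives} in turn. Your handling of the scaling identity $nR^{\#}(x/n)=R^{\#}(n^{-D_{\mathbf m}}x)$ and the exponent bookkeeping is exactly what the paper does.
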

\begin{remark}
For simplicity, we have not treated the critical case in which $\lambda_j=2^{1-m_j}/d$ for $j=1,2,\dots,d$ in the proposition above, however, our methods handle this easily. In this case, the local limit \eqref{eq:BernoulliSemiElliptic_2} instead contains the prefactor $1+\exp(i\pi(n-x_1-x_2-\cdots-x_d))$. The estimate \eqref{eq:BernoulliSemiElliptic_4} is also valid here but \eqref{eq:BernoulliSemiElliptic_5} and \eqref{eq:BernoulliSemiElliptic_3} fail to hold (for reasons similar to those of Subsection \ref{subsec:ex3}).
\end{remark}
\begin{proof}
In view of the discussion proceeding the proposition, straightforward applications of Theorems \ref{thm:SubDecay} and \ref{thm:MainLocalLimit} yield \eqref{eq:BernoulliSemiElliptic_1} and \eqref{eq:BernoulliSemiElliptic_2} respectively. To see the global space-time estimates, we first observe that
\begin{align*}
\lefteqn{P_{\mathbf{m},\lambda}^{\#}(x_1,x_2,\dots,x_d)}\\
&=\sum_{j=1}^d\left(\left(\frac{1}{2m_j}\right)^{1/(2m_j-1)}-\left(\frac{1}{2m_j}\right)^{2m_j/(2m_j-1)}\right)\left(\frac{2^{m_j}|x_j|^{2m_j}}{\lambda_j}\right)^{1/(2m_j-1)}
\end{align*}
for $x=(x_1,x_2,\dots,x_d)$. From this it is easily checked that $|\cdot|_\mathbf{m}\asymp P_{\mathbf{m},\lambda}^{\#}$ (this can also be seen with the help of Corollary \ref{cor:LegendreCompareDiagonal}). Using the fact $0\in\Omega(\phi_{\mathbf{m},\lambda})$ has corresponding $\alpha_0=0$ and $P_0=P_{\mathbf{m},\lambda}$ which is semi-elliptic, $\phi_{\mathbf{m},\lambda}$ meets hypotheses  of Theorem \ref{thm:ExponentialEstimate}, Corollary \ref{cor:TimeDifferenceEstimate} and Theorem \ref{thm:TimeExponentialEstimate}. The estimates \eqref{eq:BernoulliSemiElliptic_3} follows immediately from Theorem \ref{thm:ExponentialEstimate}. Upon noting that $\mu_\phi+1=|\mathbf{1}:2\mathbf{m}|+|2\mathbf{m}:2\mathbf{m}|=|\mathbf{1}+2\mathbf{m}:2\mathbf{m}|$,  \eqref{eq:BernoulliSemiElliptic_4} follows from Corollary \ref{cor:TimeDifferenceEstimate}. Finally, the estimate \eqref{eq:BernoulliSemiElliptic_5} follows from Theorem \ref{thm:TimeSpaceDerivatives} once it is observed that $\mu_\phi+|\beta:2\mathbf{m}|+k=|\mathbf{1}+\beta+2k\mathbf{m}:2\mathbf{m}|$, $e=\{e_1,e_2,\dots,e_d\}$ is $P_{\mathbf{m},\lambda}$-fitted with weight $\mathbf{m}$ and $\prod_{j=1}|e_j|^{\beta_j}=1$.
\end{proof}

\subsection{Random walks on $\mathbb{Z}^d$: A look at the classical theory}\label{subsec:ClassicalLLT}

\noindent In this short subsection, we revisit the classical theory of random walks on $\mathbb{Z}^d$. We denote by $\mathcal{M}^1_d$, the set functions $\phi:\mathbb{Z}^d\rightarrow [0,1]$ satisfying
\begin{equation*}
\|\phi\|_1=\sum_{x\in\mathbb{Z}^d}\phi(x)=1,
\end{equation*}
i.e., $\mathcal{M}_d^1$ is the set of probability distributions on $\mathbb{Z}^d$. As discussed in the introduction, each $\phi\in\mathcal{M}^1_d$ drives a random walk on $\mathbb{Z}^d$ whose $n$th-step transition kernel $k_n$ is given by $k_n(x,y)=\phi^{(n)}(y-x)$ for $x,y\in\mathbb{Z}^d$. Taking our terminology from p. 72 of \cite{Spitzer1964}, we say that $\phi\in\mathcal{M}_d^1$ is \textit{genuinely $d$-dimensional} if $\supp(\phi)$ is not contained in any $(d-1)$-dimensional affine subspace of $\mathbb{R}^d$; in this case, we also say that the associated random walk is genuinely $d$-dimensional. Our main focus throughout this subsection is on subset of $\phi\in\mathcal{M}_d^1$ which are genuinely $d$-dimensional with finite second moments. In contrast to the standard literature, we make no assumptions concerning periodicity/aperiodicity/irreducibility, c.f., \cite{Spitzer1964,Lawler2010}. In this generality, our formulation of the (classical) local limit theorem, Theorem \ref{thm:classicLLT}, naturally contains a prefactor $\Theta$ which nicely describes the support of $\phi^{(n)}$ and hence the random walk's periodic structure.\\

\noindent Our first two results, Lemma \ref{lem:constantphase} and Proposition \ref{prop:OmegaisSubgroup} are stated for the general class of $\phi\in\mathcal{M}_d^1$; one should note that both results fail to hold in the case that $\phi$ is generally complex valued. The lemma and proposition highlight the importance of the set $\Omega(\phi)$ and, in particular, its inherent group structure.  This intrinsic structure (and much more) was also recognized by B. Schreiber in his study of (complex valued) measure algebras on locally compact abelian groups \cite{Schreiber1970}. In fact, Schreiber's results can be used to prove Lemma \ref{lem:constantphase} and Proposition \ref{prop:OmegaisSubgroup}; although, in our context, the proofs are straightforward and so we proceed directly.

\begin{lemma}\label{lem:constantphase}
Let $\phi\in\mathcal{M}_d^1$. Then $\Omega(\phi)$ depends only on $\supp(\phi)$ in the sense that, if $\supp(\phi_1)=\supp(\phi_2)$ for $\phi_1,\phi_2\in\mathcal{M}_d^1$, then $\Omega(\phi_1)=\Omega(\phi_2)$. Furthermore, for each $\xi\in\Omega(\phi)$,  there exists $\omega(\xi)\in(-\pi,\pi]$ such that
\begin{equation*}
\hat\phi(\xi)=e^{i\omega(\xi)}=e^{ix\cdot\xi}
\end{equation*}
for all $x\in\supp(\phi)$.
\end{lemma}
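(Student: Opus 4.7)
The plan is to exploit the equality case in the triangle inequality, which is available here because $\phi$ is a probability distribution (so $\phi(x) \geq 0$). For any $\xi \in \mathbb{R}^d$, we have the bound
\begin{equation*}
|\hat\phi(\xi)| = \left|\sum_{x \in \mathbb{Z}^d} \phi(x) e^{ix\cdot\xi}\right| \leq \sum_{x \in \mathbb{Z}^d} \phi(x) = 1,
\end{equation*}
and I would note that equality forces the nonzero terms $\phi(x) e^{ix\cdot\xi}$, $x \in \supp(\phi)$, to be nonnegative scalar multiples of one common unit complex number. Because each $\phi(x) > 0$ on the support, this means $e^{ix\cdot\xi}$ itself is constant on $\supp(\phi)$; call its common value $e^{i\omega(\xi)}$ with $\omega(\xi) \in (-\pi, \pi]$.

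Once that observation is in hand, the second assertion is immediate: for $\xi \in \Omega(\phi)$,
\begin{equation*}
\hat\phi(\xi) = \sum_{x \in \supp(\phi)} \phi(x) e^{ix\cdot\xi} = e^{i\omega(\xi)} \sum_{x \in \supp(\phi)} \phi(x) = e^{i\omega(\xi)},
\end{equation*}
and we have $e^{ix\cdot\xi} = e^{i\omega(\xi)} = \hat\phi(\xi)$ for every $x \in \supp(\phi)$.

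For the first assertion, I would turn the characterization around: the derivation above shows that $\xi \in \Omega(\phi)$ if and only if the function $x \mapsto e^{ix\cdot\xi}$ is constant on $\supp(\phi)$. Since this condition refers only to the set $\supp(\phi)$ and not to the specific weights $\phi(x)$, it follows that whenever $\phi_1, \phi_2 \in \mathcal{M}_d^1$ satisfy $\supp(\phi_1) = \supp(\phi_2)$, we obtain $\Omega(\phi_1) = \Omega(\phi_2)$. The only real step is the equality case of the triangle inequality, which is standard; I do not expect any serious obstacle, only the need to be careful that the identity $\hat\phi(\xi) = e^{ix\cdot\xi}$ is stated pointwise on $\supp(\phi)$ rather than on all of $\mathbb{Z}^d$.
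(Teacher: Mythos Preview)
Your argument is correct and matches the paper's proof essentially line for line: both invoke the equality case of the triangle inequality to force $e^{ix\cdot\xi}$ to be constant on $\supp(\phi)$, then factor this constant out of the sum to identify $\hat\phi(\xi)=e^{i\omega(\xi)}$. Your explicit biconditional characterization of $\Omega(\phi)$ in terms of $\supp(\phi)$ is a mild clarification of what the paper leaves implicit, but the content is the same.
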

\begin{proof}
We shall use the following property of complex numbers. If $\{z_1,z_2,\dots\}\subseteq\mathbb{C}$ satisfy
\begin{equation*}
\sum_k^\infty |z_k|=1=\Big|\sum_{k=1}^\infty z_k\Big|,
\end{equation*}
then, for some $\omega\in(-\pi,\pi]$, $z_k=r_ke^{i\omega}$ for all $k$. Thus, whenever $\xi\in\Omega(\phi)$, i.e.,
\begin{equation*}
|\hat\phi(\xi)|=\Big|\sum_{x\in\mathbb{Z}^d}\phi(x)e^{ix\cdot\xi}\Big|=1,
\end{equation*}
there exists $\omega=\omega(\xi)\in(-\pi,\pi]$ for which
\begin{equation}\label{eq:constantphase_1}
e^{ix\cdot\xi}=e^{i\omega(\xi)}
\end{equation}
for all $x\in\supp(\phi)$. In particular, this shows that $\Omega(\phi)$ depends only on $\supp(\phi)$. Further, observe that
\begin{equation}\label{eq:constantphase_2}
\hat\phi(\xi)=\sum_{x\in\mathbb{Z}^d}\phi(x)e^{ix\cdot\xi}=e^{i\omega(\xi)}\sum_{x\in\mathbb{Z}^d}\phi(x)=e^{i\omega(\xi)}
\end{equation}
and so the result follows upon combining \eqref{eq:constantphase_1} and \eqref{eq:constantphase_2}.
\end{proof}

\begin{proposition}\label{prop:OmegaisSubgroup}
Let $\phi\in\mathcal{M}_d^1$. Then $\Omega(\phi)$ is a subgroup of $\mathbb{T}^d$ and
\begin{equation*}
\hat\phi\Big\vert_{\Omega(\phi)}:\Omega(\phi)\rightarrow \mathbb{S}^1
\end{equation*}
is a homomorphism of groups; here, $\mathbb{T}^d$ is taken to have the canonical group structure and $\mathbb{S}^1=\{z\in\mathbb{C}:|z|=1\}$.
\end{proposition}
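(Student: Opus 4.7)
The plan is to leverage Lemma \ref{lem:constantphase} to reduce everything to an algebraic computation on $\supp(\phi)$. Once one knows that each $\xi \in \Omega(\phi)$ satisfies $e^{ix\cdot\xi} = \hat\phi(\xi)$ uniformly for $x \in \supp(\phi)$, both the subgroup condition and the homomorphism condition will fall out of direct summation.

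First I would handle the identity: $0 \in \Omega(\phi)$ since $\hat\phi(0) = \|\phi\|_1 = 1$, and trivially $\hat\phi(0) = 1$ is the identity of $\mathbb{S}^1$. Next, for closure under the group operation on $\mathbb{T}^d$ (coordinatewise addition modulo $2\pi$), I take $\xi_1, \xi_2 \in \Omega(\phi)$ and use Lemma \ref{lem:constantphase} to write $\hat\phi(\xi_j) = e^{ix\cdot\xi_j}$ for every $x \in \supp(\phi)$ and $j = 1,2$. Multiplying the two identities gives $\hat\phi(\xi_1)\hat\phi(\xi_2) = e^{ix\cdot(\xi_1+\xi_2)}$ for all $x \in \supp(\phi)$, and therefore
\begin{equation*}
\hat\phi(\xi_1 + \xi_2) = \sum_{x \in \mathbb{Z}^d} \phi(x)\, e^{ix\cdot(\xi_1+\xi_2)} = \hat\phi(\xi_1)\hat\phi(\xi_2)\sum_{x \in \supp(\phi)} \phi(x) = \hat\phi(\xi_1)\hat\phi(\xi_2),
\end{equation*}
which lives in $\mathbb{S}^1$. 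This simultaneously shows $\xi_1 + \xi_2 \in \Omega(\phi)$ and gives the homomorphism property; the $2\pi$-periodicity of $\xi \mapsto e^{ix\cdot\xi}$ ensures the identity respects the quotient $\mathbb{T}^d = \mathbb{R}^d/(2\pi\mathbb{Z})^d$.

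For inverses, I would use that $\phi$ is real-valued (indeed, $[0,1]$-valued), so $\hat\phi(-\xi) = \overline{\hat\phi(\xi)}$. Hence for $\xi \in \Omega(\phi)$, $|\hat\phi(-\xi)| = |\hat\phi(\xi)| = 1$, so $-\xi \in \Omega(\phi)$, and moreover $\hat\phi(-\xi) = \overline{\hat\phi(\xi)} = \hat\phi(\xi)^{-1}$, matching the inverse in $\mathbb{S}^1$.

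There is really no main obstacle here; the only subtlety is remembering to interpret $\xi_1 + \xi_2$ as an element of $\mathbb{T}^d$ (i.e.\ modulo $2\pi$) and to invoke reality of $\phi$ for the inverse step — without reality, neither the conjugation identity nor even the closure of $\Omega(\phi)$ under negation need hold, which is precisely why the proposition is stated for $\phi \in \mathcal{M}_d^1$ rather than for arbitrary $\phi \in \mathcal{S}_d$.
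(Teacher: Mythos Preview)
Your proof is correct and rests on the same key input as the paper's, namely Lemma~\ref{lem:constantphase}. The only difference is organizational: the paper verifies closure under \emph{subtraction} in one stroke, computing $\hat\phi(\xi_2-\xi_1)=\hat\phi(\xi_2)\hat\phi(\xi_1)^{-1}$ directly from the lemma, and then invokes the standard criterion that a nonempty subset closed under subtraction is a subgroup. You instead check closure under addition and inverses separately, using the conjugation symmetry $\hat\phi(-\xi)=\overline{\hat\phi(\xi)}$ for the latter. Both are fine, but note that the paper's route shows $-\xi\in\Omega(\phi)$ without invoking real-valuedness explicitly (it follows from the subtraction identity with $\xi_2=0$), so your closing remark slightly overstates how essential the conjugation trick is---Lemma~\ref{lem:constantphase} alone already carries the full subgroup structure.
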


\begin{proof}
It is obvious that $0\in\Omega(\phi)$; hence $\Omega(\phi)$ is non-empty. Let $\xi_1,\xi_2\in\Omega(\phi)$ and, in view of Lemma \ref{lem:constantphase},
\begin{equation*}
\hat\phi(\xi_2-\xi_1)=\sum_{x\in\supp(\phi)}\phi(x)e^{ix\cdot(\xi_1-\xi_2)}=\sum_{x\in\supp(\phi)}\phi(x)\hat\phi(\xi_2)\hat\phi(\xi_1)^{-1}=\hat\phi(\xi_2)\hat\phi(\xi_1)^{-1}
\end{equation*}
and thus $\xi_2-\xi_1\in\Omega(\phi)$ because $|\hat\phi(\xi_2-\xi_1)|=|\hat\phi(\xi_2)\hat\phi(\xi_1)^{-1}|=1$. As $\Omega(\phi)$ is non-empty and closed under subtraction, we conclude at once that $\Omega(\phi)$ is a subgroup of $\mathbb{T}^d$ and the restriction of $\hat\phi$ to $\Omega(\phi)$ is a homomorphism.
\end{proof}

\noindent We now begin to develop what is needed to recapture and reformulate the classical local limit theorem in the general case that $\phi\in\mathcal{M}_d^1$ is genuinely $d$-dimensional and has finite second moments. In this case, the \textit{mean} $\alpha_\phi\in\mathbb{R}^d$ and \textit{covariance} $C_\phi\in\MdR$ of $\phi$ are defined respectively by
\begin{equation*}
\{\alpha_\phi\}_k=\sum_{x\in\mathbb{Z}^d}x_k\phi(x)\hspace{.5cm}\mbox{ for }k=1,2,\dots,d
\end{equation*}
and
\begin{equation*}
\{C_{\phi}\}_{k,l}=\sum_{x\in\mathbb{Z}^d}(x_k-\{\alpha_\phi\}_k)(x_l-\{\alpha_\phi\}_l)\phi(x)\hspace{.5cm}\mbox{ for }k,l=1,2,\dots,d.
\end{equation*}

\begin{proposition}\label{prop:ProbabilityPosHomType}
Let $\phi\in\mathcal{M}_d^1$ be genuinely $d$-dimensional with finite second moments and let $\alpha_\phi$ and $C_\phi$ be the mean and covariance of $\phi$ as defined above. Set
\begin{equation*}
P_\phi(\xi)=\frac{1}{2}\xi\cdot C_\phi\xi
\end{equation*}
for $\xi\in\mathbb{R}^d$. Then each $\xi_0$ is of positive homogeneous type for $\hat\phi$ with $\alpha_{\xi_0}=\alpha_\phi$ and positive homogeneous polynomial $P_{\xi_0}=P_\phi$. In particular, $\mu_{\phi}=\mu_{P_\phi}=d/2$.
\end{proposition}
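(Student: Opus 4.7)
The plan is to use the group-theoretic structure provided by Lemma \ref{lem:constantphase} and Proposition \ref{prop:OmegaisSubgroup} to reduce the expansion at an arbitrary $\xi_0 \in \Omega(\phi)$ to the standard second-order expansion of a characteristic function at the origin, then to verify that the resulting quadratic form is positive homogeneous.

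First I would fix $\xi_0 \in \Omega(\phi)$ and appeal to Lemma \ref{lem:constantphase} to write $e^{ix\cdot\xi_0} = \hat\phi(\xi_0)$ for every $x \in \supp(\phi)$. Factoring this constant out of the defining series for $\hat\phi(\xi+\xi_0)$ should yield the multiplicative identity
\[
\hat\phi(\xi+\xi_0) = \hat\phi(\xi_0)\,\hat\phi(\xi)
\]
valid for all $\xi \in \mathbb{R}^d$, and in particular $\Gamma_{\xi_0}(\xi) = \log\hat\phi(\xi)$ on a neighborhood of $0$. Consequently the verification of positive homogeneous type at $\xi_0$ collapses to the case $\xi_0 = 0$, and the resulting drift and polynomial are automatically independent of $\xi_0$.

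Next, the finite second-moment hypothesis makes $\hat\phi$ twice continuously differentiable near $0$, with $\nabla\hat\phi(0) = i\alpha_\phi$ and Hessian $-M_\phi$, where $M_\phi$ is the second-moment matrix of $\phi$. Combining the resulting expansion $\hat\phi(\xi) = 1 + i\alpha_\phi\cdot\xi - \tfrac{1}{2}\xi\cdot M_\phi\xi + o(|\xi|^2)$ with $\log(1+z) = z - z^2/2 + o(z^2)$ and the standard identity $\xi\cdot C_\phi\xi = \xi\cdot M_\phi\xi - (\alpha_\phi\cdot\xi)^2$, I would arrive at
\[
\Gamma_{\xi_0}(\xi) = i\alpha_\phi\cdot\xi - \tfrac{1}{2}\xi\cdot C_\phi\xi + o(|\xi|^2) = i\alpha_\phi\cdot\xi - P_\phi(\xi) + o(|\xi|^2)
\]
as $\xi \to 0$.

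The remaining and most substantive step is to verify that $P_\phi$ is a positive homogeneous polynomial in the sense of Definition \ref{def:positivehomogeneous}. Homogeneity is immediate: $P_\phi$ is real-valued and quadratic with $P_\phi(t^{1/2}\xi) = tP_\phi(\xi)$, so $E = (1/2)I \in \Exp(P_\phi)$ has real spectrum $\{1/2\}$. The main obstacle is positive definiteness of $C_\phi$, for which I would use the quadratic-form representation
\[
\xi\cdot C_\phi\xi = \sum_{x \in \supp(\phi)} \phi(x)\bigl(\xi\cdot(x-\alpha_\phi)\bigr)^2,
\]
which can vanish only when $\supp(\phi)$ is contained in the affine hyperplane $\alpha_\phi + \xi^\perp$; the genuinely $d$-dimensional hypothesis rules this out for nonzero $\xi$. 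Once positive definiteness is in hand, $P_\phi(\xi) \asymp |\xi|^2$ near the origin, so the error $o(|\xi|^2)$ is in fact $o(P_\phi(\xi))$, and Lemma \ref{lem:PisUnique} confirms the uniqueness $\alpha_{\xi_0} = \alpha_\phi$ and $P_{\xi_0} = P_\phi$. The final assertion $\mu_\phi = d/2$ is then immediate from $\mu_{P_\phi} = \tr((1/2)I) = d/2$.
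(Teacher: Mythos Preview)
Your argument is correct and rests on the same key input as the paper's proof, namely Lemma~\ref{lem:constantphase}, but you deploy it differently. The paper computes the first and second partial derivatives of $\Gamma_{\xi_0}$ at the origin directly, invoking the constant-phase identity $e^{ix\cdot\xi_0}=e^{i\omega(\xi_0)}$ inside each sum to reduce $\partial_k\Gamma(0)$ and $\partial_{k,l}\Gamma(0)$ to the moments of $\phi$. You instead observe at the outset that the constant-phase identity yields the global multiplicative relation $\hat\phi(\xi+\xi_0)=\hat\phi(\xi_0)\hat\phi(\xi)$, whence $\Gamma_{\xi_0}=\log\hat\phi$ is literally independent of $\xi_0$, and the problem collapses to the classical second-order expansion of a characteristic function at the origin. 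Your route is slightly more economical and makes the $\xi_0$-independence of $(\alpha_{\xi_0},P_{\xi_0})$ transparent from the start; the paper's route stays closer to the general framework of Definition~\ref{def:PosHomType} and avoids the composition with the $\log(1+z)$ series. Both arguments handle positive definiteness of $C_\phi$ via the genuinely $d$-dimensional hypothesis in essentially the same way.
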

\begin{proof}
When $\phi$ is genuinely $d$-dimensional, it is well-known that the covariance form
\begin{equation*}
\xi\mapsto \mbox{Cov}(\phi)(\xi)=\xi\cdot C_\phi\xi
\end{equation*}
is positive definite (when $\alpha_\phi=0$, $\supp(\phi)$ contains a basis of $\mathbb{R}^d$ and when $\alpha_\phi\neq 0$, an appropriate shift does the trick). Upon noting that $2^{-1}I\in\Exp(P_\phi)$, we conclude that $P_\phi$ is a positive homogeneous polynomial. Observe that, for $\Gamma(\xi)=\log(\hat\phi(\xi+\xi_0)/\hat\phi(\xi_0))$,
\begin{align*}
\partial_k\Gamma(0)&=\frac{\partial_k\hat\phi(\xi_0)}{\hat\phi(\xi_0)}\\
&=\frac{1}{\hat\phi(\xi_0)}\sum_{x\in\supp(\phi)}ix_k\phi(x)e^{ix\cdot\xi_0}\\
&=\frac{1}{\hat\phi(\xi_0)}\sum_{x\in\supp(\phi)}ix_k\phi(x)e^{i\omega(\xi_0)}\\
&=\frac{e^{i\omega(\xi_0)}}{\hat\phi(\xi_0)}\sum_{x\in\supp(\phi)}ix_k\phi(x)\\
&=i\{\alpha_\phi\}_k
\end{align*}
for all $k=1,2\dots d$, where we have used Lemma \ref{lem:constantphase}. By analogous reasoning, which again makes use of the lemma, $\partial_{k,l}\Gamma(0)=-\{C_\phi\}_{k,l}$ for $k,l=1,2,\dots,d$. Consequently,
\begin{equation}\label{eq:ProbabilityPosHomType}
\begin{split}
\Gamma(\xi)&=\sum_{k=1}^d\partial_k\Gamma(0)\xi_k+\sum_{k,l=1}^d\frac{1}{2}\partial_{k,l}\Gamma(0)\xi_k\xi_l+o(|\xi|^2)\\
&=i\alpha_\phi\cdot\xi-P_\phi(\xi)+o(|\xi|^2),
\end{split}
\end{equation}
as $\xi\rightarrow 0$, where we have used the positive definiteness $P_\phi$ to rewrite the error. From this it follows immediately that $\xi_0$ is of positive homogeneous type for $\hat\phi$ with $\alpha_{\xi_0}=\alpha_\phi$ and positive homogeneous polynomial $P_{\xi_0}=P_\phi$.
\end{proof}

\noindent We now present the classical local limit theorem in a new form. Assuming the notation of the previous proposition, the attractor $G_\phi=H_{P_\phi}$ which appears below is the generalized Gaussian density given by \eqref{eq:Gaussian} (see p. 25 of \cite{Lawler2010}). Let us also note that, in view of the previous proposition and Proposition \ref{prop:OmegaisFinite}, $\Omega(\phi)$ is finite.

\begin{theorem}\label{thm:classicLLT}
Let $\phi\in\mathcal{M}_d^1$ be genuinely $d$-dimensional with finite second moments. Then there exists positive constants $C$ and $C'$ for which
\begin{equation}\label{eq:classicLLT_1}
Cn^{-d/2}\leq \sup_{x\in\mathbb{Z}^d}\phi^{(n)}(x)\leq C'n^{-d/2}
\end{equation}
for all $n\in\mathbb{N}_+$. Furthermore,
\begin{equation}\label{eq:classicLLT_2}
\phi^{(n)}(x)=n^{-d/2}\Theta(n,x)G_\phi\left(\frac{x-n\alpha_\phi}{\sqrt{n}}\right)+o(n^{-d/2})
\end{equation}
uniformly for $x\in\mathbb{Z}^d$, where $\Theta:\mathbb{N}_+\times\mathbb{Z}^d$ is dependent only on $\supp(\phi)$ in the sense of Lemma \ref{lem:constantphase} and is given (equivalently) by
\begin{equation}\label{eq:classicLLT_3}
\Theta(n,x)=\sum_{\xi\in\Omega(\phi)}e^{i(n\omega(\xi)-x\cdot\xi)}=\sum_{\xi\in\Omega(\phi)}\cos(n\omega(\xi)-x\cdot\xi);
\end{equation}
here, $\omega(\xi)\in(-\pi,\pi]$ is that given by Lemma \ref{lem:constantphase} for each $\xi\in\Omega(\phi)$.
\end{theorem}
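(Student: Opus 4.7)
The plan is to derive the theorem as a direct consequence of Theorem \ref{thm:SubDecay} and Theorem \ref{thm:MainLocalLimit}, with their hypotheses furnished by Proposition \ref{prop:ProbabilityPosHomType}. That proposition tells us that every $\xi \in \Omega(\phi)$ is of positive homogeneous type for $\hat\phi$ with \emph{identical} drift $\alpha_{\xi} = \alpha_\phi$ and \emph{identical} positive homogeneous polynomial $P_{\xi} = P_\phi(\xi) = \tfrac12 \xi \cdot C_\phi \xi$. Since $\tfrac12 I \in \Exp(P_\phi)$, we have $\mu_\phi = d/2$. Because $\phi^{(n)} \geq 0$ as $\phi$ is a probability distribution, the two-sided estimate \eqref{eq:classicLLT_1} is then exactly the content of Theorem \ref{thm:SubDecay} applied to $\phi$.

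The local limit \eqref{eq:classicLLT_2} will follow from Theorem \ref{thm:MainLocalLimit} after two simplifications. First, because the attractor $H_{P_\phi}^n(\,\cdot - n\alpha_\phi)$ is \emph{common} to every term in the sum, it factors out:
\begin{equation*}
\phi^{(n)}(x) = H_{P_\phi}^n(x - n\alpha_\phi) \sum_{\xi \in \Omega(\phi)} e^{-ix\cdot\xi}\hat\phi(\xi)^n + o(n^{-d/2}),
\end{equation*}
uniformly for $x \in \mathbb{Z}^d$. The scaling relation \eqref{eq:HPScale} with $E = I/2$ then gives $H_{P_\phi}^n(x - n\alpha_\phi) = n^{-d/2} G_\phi\bigl((x - n\alpha_\phi)/\sqrt{n}\bigr)$, using that $G_\phi = H_{P_\phi}$ for this choice of $P_\phi$. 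Second, Lemma \ref{lem:constantphase} supplies, for each $\xi \in \Omega(\phi)$, some $\omega(\xi) \in (-\pi, \pi]$ with $\hat\phi(\xi) = e^{i\omega(\xi)}$. The exponential sum therefore reduces to $\Theta(n,x) = \sum_{\xi \in \Omega(\phi)} e^{i(n\omega(\xi) - x\cdot\xi)}$, which is the first representation in \eqref{eq:classicLLT_3}. That $\Theta$ depends only on $\supp(\phi)$ is immediate from Lemma \ref{lem:constantphase}, since both $\Omega(\phi)$ and the phases $\omega(\xi)$ (determined by $e^{i\omega(\xi)} = e^{ix\cdot\xi}$ for any $x \in \supp(\phi)$) are functions of $\supp(\phi)$ alone.

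It remains to establish the cosine form of $\Theta$. By Proposition \ref{prop:OmegaisSubgroup}, $\Omega(\phi)$ is a subgroup of $\mathbb{T}^d$ and hence closed under $\xi \mapsto -\xi$. Since $\phi$ is real-valued, $\hat\phi(-\xi) = \overline{\hat\phi(\xi)}$, which forces $\omega(-\xi) \equiv -\omega(\xi) \pmod{2\pi}$. Pairing $\xi$ with $-\xi$ in the exponential sum cancels the imaginary parts, while the $2$-torsion elements of $\Omega(\phi)$ (those fixed by $\xi \mapsto -\xi$, for which $\omega(\xi) \in \{0, \pi\}$) contribute real terms already; the two representations of $\Theta$ in \eqref{eq:classicLLT_3} therefore agree.

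The substantive work has all been done upstream: in Theorems \ref{thm:SubDecay} and \ref{thm:MainLocalLimit}, and in the verification via Proposition \ref{prop:ProbabilityPosHomType} that probability distributions with finite second moments fit into the positive homogeneous framework with a second-order polynomial. The only genuinely new feature is the \emph{collapse} of the general sum in \eqref{eq:MainLocalLimit} into a single Gaussian multiplied by the bounded, purely arithmetic prefactor $\Theta$ — a simplification specific to the probabilistic setting, where all drifts and all positive homogeneous polynomials in Theorem \ref{thm:MainLocalLimit} coincide. This is the step I expect to be the most conceptually illuminating, though not technically difficult.
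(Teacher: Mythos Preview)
Your approach is essentially the paper's own, but you have glossed over one genuine technical point. Theorems~\ref{thm:SubDecay} and~\ref{thm:MainLocalLimit} are stated for $\phi\in\mathcal{S}_d$, i.e., for $\phi$ having finite moments of \emph{all} orders, whereas Theorem~\ref{thm:classicLLT} assumes only finite \emph{second} moments. So you cannot simply invoke those theorems as black boxes. The paper handles this explicitly: it observes that the only place the $\mathcal{S}_d$ hypothesis enters the proof of Theorem~\ref{thm:MainLocalLimit} is in guaranteeing enough smoothness of $\hat\phi$ to produce the Taylor expansion~\eqref{eq:PosHomType}, and that in the probabilistic case Proposition~\ref{prop:ProbabilityPosHomType} furnishes exactly the needed second-order expansion~\eqref{eq:ProbabilityPosHomType} from finite second moments alone. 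Hence the \emph{proof} of Theorem~\ref{thm:MainLocalLimit} (and with it Theorem~\ref{thm:SubDecay}, which is derived from it) goes through unchanged, even though its \emph{hypotheses} as stated are not met. You should make this reduction explicit rather than treating the earlier theorems as directly applicable.

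Apart from this, your argument matches the paper's: the factoring of the common attractor, the identification $H_{P_\phi}=G_\phi$ via~\eqref{eq:HPScale} with $E=\tfrac12 I$, the substitution $\hat\phi(\xi)^n=e^{in\omega(\xi)}$ from Lemma~\ref{lem:constantphase}, and the pairing $\xi\leftrightarrow -\xi$ via Proposition~\ref{prop:OmegaisSubgroup} together with $\omega(-\xi)=-\omega(\xi)$ to obtain the cosine form are all exactly what the paper does.
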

\begin{proof}
The hypotheses of the present theorem are weaker than those of Theorems \ref{thm:SubDecay} and \ref{thm:MainLocalLimit} as the latter theorems require $\phi$ to have finite moments of all orders. However, what is really used in the proof of the Theorem \ref{thm:MainLocalLimit} is the condition that, for each $\xi_0\in\Omega(\phi)$, $\Gamma_{\xi_0}$ can be written in the form \eqref{eq:ProbabilityPosHomType} where $P_{\xi_0}=P_\phi$ is positive definite (in the general case that $\phi$ is complex valued, it is not known a priori how many terms in the Taylor expansion for $\Gamma_{\xi_0}$ are needed for this to be true). Under the present hypotheses and in view of Proposition \ref{prop:ProbabilityPosHomType}, the proof of Theorem \ref{thm:MainLocalLimit} pushes through with no modification and so we apply it (or simply its conclusion). As an immediate consequence, we obtain \eqref{eq:classicLLT_1} because Theorem \ref{thm:SubDecay} follows directly from Theorem \ref{thm:MainLocalLimit}. It remains to show that the local limit yielded by Theorem \ref{thm:MainLocalLimit} can be written in the form \eqref{eq:classicLLT_2}.

By virtue of Proposition \ref{prop:ProbabilityPosHomType}, we have  $\alpha_\xi=\alpha_\phi$, $P_{\xi}=P_\phi$ for all $\xi\in\Omega(\phi)$ and, moreover $\mu_{\phi}=\mu_P=d/2$. Noting that all $\xi\in\Omega(\phi)$ have corresponding positive homogeneous polynomials of the same order (because the polynomials are identical), all appear in the local limit. Consequently,
\begin{align*}
\phi^{(n)}(x)&=\sum_{\xi\in\Omega(\phi)}e^{-ix\cdot\xi}(\hat\phi(\xi))^nH_{P_\phi}^n\left(x-n\alpha_\phi\right)+o(n^{-d/2})\\\nonumber
&=\left(\sum_{\xi\in\Omega(\phi)}e^{-ix\cdot\xi}(\hat\phi(\xi))^n\right)n^{-d/2}H_{P_\phi}\left(n^{-1/2}\left(x-n\alpha_\phi\right)\right)+o(n^{-d/2})\\
&=n^{-d/2}\left(\sum_{\xi\in\Omega(\phi)}e^{i(n\omega(\xi)-x\cdot\xi)}\right)G_\phi\left(n^{-1/2}\left(x-n\alpha_\phi\right)\right)+o(n^{-d/2})\\
&=n^{-d/2}\Theta(n,x)G_\phi\left(\frac{x-n\alpha_\phi}{\sqrt{n}}\right)+o(n^{-d/2})
\end{align*}
uniformly for $x\in\mathbb{Z}^d$. In view of Lemma \ref{lem:constantphase}, it is clear that $\Theta$ depends only on $\supp(\phi)$ and so to complete the proof, we need only to verify the second equality in \eqref{eq:classicLLT_3}. Using the fact that $\Omega(\phi)$ is a subgroup of $\mathbb{T}^d$ in view of Proposition \ref{prop:OmegaisSubgroup}, for each $\xi\in\Omega(\phi)$, $-\xi\in\Omega(\phi)$ and therefore
\begin{align*}
\Theta(n,x)&=\frac{1}{2}\left(\sum_{\xi\in\Omega(\phi)}e^{i(n\omega(\xi)-x\cdot\xi)}+\sum_{\xi\in\Omega(\phi)}e^{i(n\omega(-\xi)-x\cdot(-\xi))}\right)\\
&=\sum_{\xi\in\Omega(\phi)}\cos(n\omega(\xi)-x\cdot\xi)
\end{align*}
where we have noted that $\omega(-\xi)=-\omega(\xi)$ for each $\xi\in\Omega(\phi)$.
\end{proof}

\noindent By close inspection of the theorem, one expects that in general $\Theta$ can help us describe the support of $\phi^{(n)}$ and hence the periodicity of the associated random walk. This turns out to be the case as our next theorem shows.

\begin{theorem}\label{thm:PeriodicityofRandomWalk}
Let $\phi\in\mathcal{M}_d^1$ be genuinely $d$-dimensional with finite second moments and define $\Theta:\mathbb{N}_+\times\mathbb{Z}^d\rightarrow\mathbb{R}$ by \eqref{eq:classicLLT_3}. Then
\begin{equation}\label{eq:suppcondition}
\supp\left(\phi^{(n)}\right)\subseteq\supp(\Theta(n,\cdot))
\end{equation}
for all $n\in\mathbb{N}_+$. Further, if
\begin{equation*}
\limsup_n|\Theta(n,x+\lfloor n\alpha_\phi\rfloor)|>0
\end{equation*}
for $x\in\mathbb{Z}^d$, then
\begin{equation*}
\limsup_n n^{\mu_\phi}\phi^{(n)}(x+\lfloor n\alpha_\phi\rfloor)>0.
\end{equation*}
\end{theorem}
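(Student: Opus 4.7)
The plan is to treat the two assertions separately. The inclusion \eqref{eq:suppcondition} will follow directly from Lemma \ref{lem:constantphase}, while the second claim will follow from the local limit theorem \eqref{eq:classicLLT_2} combined with the non-negativity of $\phi^{(n)}$.

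For \eqref{eq:suppcondition}, I would fix $n \in \mathbb{N}_+$ and $x \in \supp(\phi^{(n)})$. Because $\phi \geq 0$, the inequality $\phi^{(n)}(x) > 0$ forces the existence of a decomposition $x = x_1 + x_2 + \cdots + x_n$ with $x_1, \ldots, x_n \in \supp(\phi)$. For each $\xi \in \Omega(\phi)$, Lemma \ref{lem:constantphase} gives $e^{ix_k \cdot \xi} = e^{i\omega(\xi)}$ for every $k$, and multiplying these together yields $e^{ix \cdot \xi} = e^{in\omega(\xi)}$. Every summand in \eqref{eq:classicLLT_3} is therefore equal to $1$, so $\Theta(n, x) = |\Omega(\phi)| > 0$ and $x \in \supp(\Theta(n, \cdot))$.

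For the second claim, I would fix $x \in \mathbb{Z}^d$ and set $x_n := x + \lfloor n\alpha_\phi \rfloor$. The fractional remainder $\epsilon_n := \lfloor n\alpha_\phi \rfloor - n\alpha_\phi$ has components in $(-1, 0]$, hence $(x_n - n\alpha_\phi)/\sqrt{n} = (x + \epsilon_n)/\sqrt{n} \to 0$ as $n \to \infty$. By continuity of $G_\phi$ and the strict positivity $G_\phi(0) > 0$, we obtain $G_\phi((x_n - n\alpha_\phi)/\sqrt{n}) \to G_\phi(0) > 0$. Since $\mu_\phi = d/2$ by Proposition \ref{prop:ProbabilityPosHomType}, the local limit theorem \eqref{eq:classicLLT_2} specializes to
$$n^{d/2}\phi^{(n)}(x_n) = \Theta(n, x_n)\,G_\phi\!\left(\tfrac{x_n - n\alpha_\phi}{\sqrt{n}}\right) + o(1),$$
with every ingredient on the right being real ($\Theta$ as a sum of cosines, $G_\phi$ as a real Gaussian density).

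The main subtlety — and the only step that requires any care — is that the hypothesis only bounds $|\Theta(n, x_n)|$ from below along a subsequence, whereas I need $\Theta(n, x_n)$ itself (not merely its absolute value) to stay positive along some subsequence. Non-negativity of $\phi^{(n)}$ breaks the symmetry: the displayed equality rearranges to $\Theta(n, x_n) \geq -o(1)/G_\phi((x_n - n\alpha_\phi)/\sqrt{n})$, whose right side tends to $0$. So if one extracts a subsequence with $|\Theta(n_k, x_{n_k})| \to c > 0$ and, passing to a further subsequence, arranges $\Theta(n_k, x_{n_k})$ to converge, the limit must be $+c$ rather than $-c$. Substitution into the local limit then gives $n_k^{d/2}\phi^{(n_k)}(x_{n_k}) \to c\,G_\phi(0) > 0$, completing the proof.
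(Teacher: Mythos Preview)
Your proof is correct and follows essentially the same route as the paper: Lemma \ref{lem:constantphase} plus the decomposition $x=x_1+\cdots+x_n$ for the support inclusion, and the local limit \eqref{eq:classicLLT_2} together with $G_\phi((x_n-n\alpha_\phi)/\sqrt n)\to G_\phi(0)>0$ for the second claim. Your treatment of the sign of $\Theta(n,x_n)$ via the non-negativity of $\phi^{(n)}$ is in fact more explicit than the paper's, which writes $n^{d/2}\phi^{(n)}(x_n)\geq |\Theta(n,x_n)G_\phi(\cdots)|/2$ for large $n$ without spelling out why the absolute value is harmless.
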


\begin{proof}
In view of Lemma \ref{lem:constantphase}, for any $x_0\in\supp(\phi)$, $\omega(\xi)=x_0\cdot\xi$ for all $\xi\in\Omega(\phi)$. Therefore, for any $x_0\in\supp(\phi)$,
\begin{equation*}
\Theta(n,x)=\sum_{\xi\in\Omega(\phi)}\cos((nx_0-x)\cdot\xi)
\end{equation*}
for all $n\in\mathbb{N}_+$ and $x\in\mathbb{Z}^d$ and, in particular,
\begin{equation*}
\Theta(1,x_0)=\sum_{\xi\in\Omega(\phi)}\cos(0)=\#(\Omega(\phi))>0
\end{equation*}
whence $\supp(\phi)\subseteq \supp(\Theta(1,\cdot))$. The inclusion \eqref{eq:suppcondition} follows straightforwardly by induction. For the second conclusion, an appeal to Theorem \ref{thm:classicLLT} shows that, for sufficiently large $n$,
\begin{equation*}
n^{d/2}\phi^{(n)}(x+\lfloor n\alpha_\phi\rfloor)\geq |\Theta(n,x+\lfloor n\alpha_\phi\rfloor)G_\phi(n^{-1/2}(x+\lfloor n\alpha_\phi\rfloor-n\alpha_\phi))|/2
\end{equation*}
for all $x\in\mathbb{R}^d$. Of course, for any fixed $x$, 
\begin{equation*}\lim_{n\rightarrow\infty}|G_\phi(n^{-1/2}(x+\lfloor n\alpha_\phi\rfloor-n\alpha_\phi))|=G_\phi(0)>0
\end{equation*}
 and from this the assertion follows without trouble.

\end{proof}

\noindent To illustrate the utility of the function $\Theta$, we consider a class of examples which generalizes simple random walk. For a fixed $\mathbf{m}=(m_1,m_2,\dots,m_2)\in\mathbb{N}_+^d$ define $\phi_\mathbf{m}\in\mathcal{M}_d^1$ by
\begin{equation*}
\phi_{\mathbf{m}}(m_je_j)=\phi_{\mathbf{m}}(-m_je_j)=\frac{1}{2d}
\end{equation*}
for $j=1,2,\dots,d$ and set $\phi_{\mathbf{m}}(x)=0$ otherwise; here, $\{e_1,e_2,\dots,e_d\}$ is the standard euclidean basis in $\mathbb{R}^d$. This generates the random walk with statespace $\{(k_1 m_1,k_2m_2,\dots, k_d m_d):k_j\in\mathbb{Z}\mbox{ for }j=1,2,\dots,d\}$. We have:
\begin{proposition}
Let $\Theta_{\mathbf{m}}:\mathbb{N}_+\times\mathbb{Z}^d\rightarrow\mathbb{R}$ be that associated to $\phi_{\mathbf{m}}$ by \eqref{eq:classicLLT_3}. Then
\begin{align*}
\lefteqn{\Theta_{\mathbf{m}}(n,x)}\\
&=
\begin{cases}
2\left(\prod_{j=1}^dm_j\right) & \mbox{ if }m_j\vert x_j \mbox{ for all }j=1,2,\dots d\mbox{ and }n-|x:\mathbf{m}|\mbox{ is even}\\
0 & \mbox{ otherwise}.
\end{cases}
\end{align*}
\end{proposition}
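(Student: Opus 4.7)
The plan is to compute $\Omega(\phi_{\mathbf{m}})$ and the phase function $\omega(\cdot)$ explicitly, then exploit the tensor-product structure of $\phi_{\mathbf{m}}$ to reduce $\Theta_{\mathbf{m}}(n,x)$ to a product of one-dimensional roots-of-unity sums.

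First I would compute
\begin{equation*}
\hat{\phi}_{\mathbf{m}}(\xi)=\frac{1}{d}\sum_{j=1}^{d}\cos(m_j\xi_j),
\end{equation*}
so that $|\hat{\phi}_{\mathbf{m}}(\xi)|=1$ forces each $\cos(m_j\xi_j)$ to equal $+1$ for all $j$ or $-1$ for all $j$ (otherwise the triangle inequality is strict). Intersecting with $\mathbb{T}^d$, this identifies $\Omega(\phi_{\mathbf{m}})$ as the disjoint union of
\begin{equation*}
\Omega^{+}=\bigl\{(2\pi k_1/m_1,\dots,2\pi k_d/m_d):0\le k_j<m_j\bigr\}\quad(\mathrm{mod}\ 2\pi)
\end{equation*}
and $\Omega^{-}=\{(\pi(2k_1+1)/m_1,\dots,\pi(2k_d+1)/m_d):0\le k_j<m_j\}$. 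On $\Omega^{+}$ one has $\hat{\phi}_{\mathbf{m}}=1$ and hence $\omega\equiv 0$; on $\Omega^{-}$ one has $\hat{\phi}_{\mathbf{m}}=-1$ and hence $\omega\equiv\pi$. In particular $|\Omega(\phi_{\mathbf{m}})|=2\prod_j m_j$.

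Next I would plug these into the defining formula \eqref{eq:classicLLT_3} and factor each sum according to coordinates:
\begin{equation*}
\sum_{\xi\in\Omega^{+}}e^{-ix\cdot\xi}=\prod_{j=1}^{d}\sum_{k=0}^{m_j-1}e^{-2\pi ikx_j/m_j},\qquad
\sum_{\xi\in\Omega^{-}}e^{-ix\cdot\xi}=\prod_{j=1}^{d}e^{-i\pi x_j/m_j}\sum_{k=0}^{m_j-1}e^{-2\pi ikx_j/m_j}.
\end{equation*}
The inner geometric sums are the classical discrete orthogonality relations on $\mathbb{Z}/m_j\mathbb{Z}$, equalling $m_j$ when $m_j\mid x_j$ and $0$ otherwise. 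Thus both products vanish whenever some $m_j\nmid x_j$. When every $m_j$ divides $x_j$, writing $x_j=m_jy_j$ gives $e^{-i\pi x_j/m_j}=(-1)^{y_j}$, so the $\Omega^{-}$ product picks up the sign $(-1)^{\sum_j y_j}=(-1)^{|x:\mathbf{m}|}$. Combining with the factor $e^{in\omega(\xi)}$ (equal to $1$ on $\Omega^{+}$ and $(-1)^n$ on $\Omega^{-}$) then yields
\begin{equation*}
\Theta_{\mathbf{m}}(n,x)=\bigl(1+(-1)^{n-|x:\mathbf{m}|}\bigr)\prod_{j=1}^{d}m_j
\end{equation*}
when $m_j\mid x_j$ for all $j$, which is $2\prod_j m_j$ or $0$ according to the parity of $n-|x:\mathbf{m}|$, matching the claim.

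Since each step is a one-line computation, I do not expect any real obstacle; the only minor bookkeeping is choosing representatives of $\Omega^{\pm}$ inside $\mathbb{T}^d=(-\pi,\pi]^d$, but this is immaterial because the relevant exponentials only depend on $\xi$ modulo $2\pi$.
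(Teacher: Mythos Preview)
Your proposal is correct and follows essentially the same route as the paper: both compute $\hat\phi_{\mathbf m}(\xi)=\tfrac1d\sum_j\cos(m_j\xi_j)$, split $\Omega(\phi_{\mathbf m})$ into the two pieces where $\hat\phi_{\mathbf m}=\pm1$, and then factor the resulting exponential sums coordinatewise. The only cosmetic difference is that you invoke the standard roots-of-unity orthogonality relation to handle the divisible and non-divisible cases simultaneously, whereas the paper treats the case $m_l\nmid x_l$ separately via a telescoping-sum argument---your version is slightly more streamlined but the underlying computation is identical.
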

\begin{proof}
For notational convenience, we write $\phi=\phi_{\mathbf{m}}$ and $\Theta=\Theta_{\mathbf{m}}$. Observe that $\hat\phi(\xi)=(1/d)\sum_{j=1}^d\cos(m_j\xi_j)$ for $\xi=(\xi_1,\xi_2,\dots,\xi_d)\in\mathbb{T}^d$ and so by a direct computation,
\begin{align*}
\Omega(\phi)&=\Omega_{\mbox{e}}\,\dot\cup\,\Omega_{\mbox{o}}\\
&=\left\{\pi\left(\frac{k_1}{m_2},\frac{k_2}{m_2},\dots,\frac{k_d}{m_d}\right):\mathbf{k}\in Z_{\mbox{e}}\right\}\dot\bigcup\left\{\pi\left(\frac{k_1}{m_2},\frac{k_2}{m_2},\dots,\frac{k_d}{m_d}\right):\mathbf{k}\in Z_{\mbox{o}}\right\},
\end{align*}
where
\begin{equation*}
Z_{\mbox{e}}=\{\mathbf{k}\in\mathbb{Z}^d:-m_j<k_j\leq m_j\mbox{ and }k_j\mbox{ is even for }j=1,2,\dots, d\}
\end{equation*}
and
\begin{equation*}
Z_{\mbox{o}}=\{\mathbf{k}\in\mathbb{Z}^d:-m_j<k_j\leq k_j\mbox{ and }m_j\mbox{ is odd for }j=1,2,\dots, d\}.
\end{equation*}
With this decomposition, we immediately observe that
\begin{equation*}
\omega(\xi)=
\begin{cases}
0 & \mbox{ if }\xi\in\Omega_{\mbox{e}}\\
\pi & \mbox{ if }\xi\in\Omega_{\mbox{o}}.
\end{cases}
\end{equation*}
In the case that $m_j\big\vert x_j$ for $j=1,2,\dots,d$,
\begin{align*}
\Theta(n,x)&=\sum_{\xi\in\Omega_{\mbox{\tiny e}}}e^{i(0n-x\cdot \xi)}+\sum_{\xi\in\Omega_{\mbox{\tiny o}}}e^{i(\pi n-x\cdot \xi)}\\
&=\sum_{\mbox{k}\in Z_{\mbox{\tiny e}}}\exp\left(-i\pi \sum_{j=1}^d \frac{k_jx_j}{m_j}\right)+\sum_{\mbox{k}\in Z_{\mbox{\tiny o}}}\exp\left(i\pi\left(n-\sum_{j=1}^d \frac{k_jx_j}{m_j}\right)\right)\\
&=\#(Z_{\mbox{e}})+\exp\left(i\pi\left(n-\sum_{j=1}^d\frac{x_j}{m_j}\right)\right)\#(Z_{\mbox{o}})
\end{align*}
where we have used \eqref{eq:classicLLT_3}. Now $\#(Z_{\mbox{e}})=\#(Z_{\mbox{o}})=\prod_{j=1}^d m_j$ and so it follows that
\begin{equation*}
\Theta(n,x)=\left (1+e^{i\pi(n-|x:\mathbf{m}|)}\right)\prod_{j=1}^d m_j=
\begin{cases}
2\left(\prod_{j=1}^d m_j\right) & \mbox{ if }n-|x:\mathbf{m}|\mbox{ is even}\\
0 & \mbox{ if }n-|x:\mathbf{m}| \mbox{ is odd.}
\end{cases}
\end{equation*}
In the case that $m_l\not \big\vert x_l$ for some $l=1,2,\dots,d$, observe that
\begin{equation}\label{eq:mSRW_1}
\begin{split}
\Theta(n,x)&=\sum_{\xi\in\Omega_{\mbox{\tiny e}}}e^{-i\xi\cdot x}+\sum_{\xi\in\Omega_{\mbox{\tiny o}}}e^{i(\pi n-\xi\cdot x)}\\
&=\prod_{j=1}^d \sum_{\substack{m_j<k_j\leq m_j\\ k_j\mbox{\tiny even}}}\exp\left(-i\pi\frac{x_jk_j}{m_j}\right)\\
&\quad+e^{i\pi n}\prod_{j=1}^d \sum_{\substack{m_j<k_j\leq m_j\\ k_j\mbox{\tiny odd}}}\exp\left(-i\pi\frac{x_jk_j}{m_j}\right).
\end{split}
\end{equation}
Focusing on the $l$th multiplicand in the first term, it is straightforward to see that
\begin{align*}
\lefteqn{\left(e^{-2\pi ix_l/m_l}-1\right)\sum_{\substack{m_l<k_j\leq m_l\\ k_l\mbox{\tiny even}}}\exp\left(-i\pi\frac{x_lk_l}{m_l}\right)}\\
&=\sum_{\substack{m_l<k_j\leq m_l\\ k_l\mbox{\tiny even}}}\exp\left(-i\pi\frac{x_l(k_l+2)}{m_l}\right)-\exp\left(-i\pi\frac{x_lk_l}{m_l}\right)=0
\end{align*}
and since $m_l\not\big\vert x_l$, we can immediately conclude that
\begin{equation*}
\sum_{\xi\in\Omega_{\mbox{\tiny e}}}e^{-i\xi\cdot x}=\sum_{\substack{m_l<k_j\leq m_l\\ k_l\mbox{\tiny even}}}\exp\left(-i\pi\frac{x_lk_l}{m_l}\right)\prod_{j\neq l}\sum_{\substack{m_j<k_j\leq m_j\\ k_j\mbox{\tiny even}}}\exp\left(-i\pi\frac{x_jk_j}{m_j}\right)=0.
\end{equation*}
An analogous argument shows that $\sum_{\xi\in\Omega_{\mbox{\tiny o}}}e^{i(\pi n-\xi\cdot x)}=0$ and so, in view of \eqref{eq:mSRW_1}, it follows that $\Theta(n,x)=0$ as desired.
\end{proof}

\noindent Simple random walk is, of course, the random walk defined by $\phi_{\mathbf{m}}$ where $\mathbf{m}=(1,1,\dots,1)$. In this case, the proposition yields
\begin{equation*}
\Theta_{(1,1,\dots,1)}(n,x)=\begin{cases}
             2 & \mbox{if }n-x_1-x_2-\cdots-x_d\mbox{ is even}\\
             0 & \mbox{if }n-x_1-x_2-\cdots-x_d\mbox{ is odd};
            \end{cases}
\end{equation*}
this captures the walk's well-known periodicity.\\

\noindent We end this section by showing that Theorem \ref{thm:ExponentialEstimate} provides a Gaussian (upper) bound in the case that $\phi\in\mathcal{M}_d^1$ is finitely supported and genuinely $d$-dimensional. To obtain a matching lower bound, it is necessary to make some assumptions concerning aperiodicity.

\begin{theorem}\label{thm:RWGaussianBound} Let $\phi\in\mathcal{M}_d^1$ be finitely supported and genuinely $d$-dimensional with mean $\alpha_\phi\in\mathbb{R}^d$. Then, there exist positive constants $C$ and $M$ for which
\begin{equation*}
\phi^{(n)}(x)\leq\frac{C}{n^{d/2}}\exp\left(-M|x-n\alpha_\phi|^2/n\right)
\end{equation*}
for all $n\in\mathbb{N}_+$ and $x\in\mathbb{Z}^d$.
\end{theorem}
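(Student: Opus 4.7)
The plan is to derive this as a direct corollary of Theorem \ref{thm:ExponentialEstimate} combined with Proposition \ref{prop:ProbabilityPosHomType}. First I would verify that the hypotheses of Theorem \ref{thm:ExponentialEstimate} are met. Since $\phi$ is a probability distribution, $|\hat\phi(\xi)| \leq \|\phi\|_1 = 1 = \hat\phi(0)$, so $\sup_{\xi}|\hat\phi(\xi)| = 1$ and $0 \in \Omega(\phi)$. Finite support gives finite moments of all orders, so $\phi \in \mathcal{S}_d$. By Proposition \ref{prop:ProbabilityPosHomType}, every $\xi \in \Omega(\phi)$ is of positive homogeneous type for $\hat\phi$ with drift $\alpha_\xi = \alpha_\phi$ and positive homogeneous polynomial $P_\xi = P_\phi$, where $P_\phi(\xi) = \tfrac{1}{2}\xi \cdot C_\phi \xi$. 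Crucially, these drift vectors and polynomials coincide across all $\xi \in \Omega(\phi)$, which is the essential hypothesis of Theorem \ref{thm:ExponentialEstimate}.

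Next, I would identify $\mu_\phi = \mu_{P_\phi} = d/2$ (since $(1/2)I \in \Exp(P_\phi)$) and $R = \Re P_\phi = P_\phi$, which is real valued. Applying Theorem \ref{thm:ExponentialEstimate} yields constants $C, M > 0$ such that
\begin{equation*}
\phi^{(n)}(x) \leq \frac{C}{n^{d/2}} \exp\!\left(-nM R^{\#}\!\left(\frac{x - n\alpha_\phi}{n}\right)\right)
\end{equation*}
for all $n \in \mathbb{N}_+$ and $x \in \mathbb{Z}^d$ (note that $\phi^{(n)}(x) \geq 0$ since $\phi \geq 0$, so the absolute value is unnecessary).

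The final step is to compute (or estimate) the Legendre-Fenchel transform of the positive definite quadratic form $R(\xi) = \tfrac{1}{2}\xi \cdot C_\phi\xi$. A direct calculation gives
\begin{equation*}
R^{\#}(y) = \sup_{\xi \in \mathbb{R}^d} \left\{ y \cdot \xi - \tfrac{1}{2}\xi \cdot C_\phi \xi\right\} = \tfrac{1}{2}\, y \cdot C_\phi^{-1} y,
\end{equation*}
attained at $\xi = C_\phi^{-1} y$. Since $C_\phi^{-1}$ is positive definite, $R^{\#}(y) \geq c |y|^2$ for some $c > 0$ (namely $c = 1/(2\lambda_{\max}(C_\phi))$). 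Substituting $y = (x - n\alpha_\phi)/n$ gives
\begin{equation*}
nR^{\#}\!\left(\frac{x - n\alpha_\phi}{n}\right) = \frac{1}{2n}(x - n\alpha_\phi)\cdot C_\phi^{-1}(x - n\alpha_\phi) \geq \frac{c |x-n\alpha_\phi|^2}{n}.
\end{equation*}
Absorbing $Mc$ into a new constant $M'$ completes the proof.

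There is no real obstacle here; the theorem amounts to unpacking what Theorem \ref{thm:ExponentialEstimate} says in the special case where the positive homogeneous polynomial is the covariance quadratic form. The only subtlety worth checking carefully is that Proposition \ref{prop:ProbabilityPosHomType} indeed produces the \emph{same} $\alpha$ and $P$ at every point of $\Omega(\phi)$ (a feature peculiar to the probabilistic setting via Lemma \ref{lem:constantphase}), which is exactly what licenses the application of Theorem \ref{thm:ExponentialEstimate} rather than the weaker Theorem \ref{thm:SubExponentialEstimate}.
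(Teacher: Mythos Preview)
Your proposal is correct and follows essentially the same route as the paper: invoke Proposition~\ref{prop:ProbabilityPosHomType} to verify the hypotheses of Theorem~\ref{thm:ExponentialEstimate} (common drift $\alpha_\phi$ and common quadratic $P_\phi$ at every point of $\Omega(\phi)$, with $\mu_\phi=d/2$), apply that theorem, and then reduce $R^{\#}$ to $|\cdot|^2$. The only cosmetic difference is that you compute $R^{\#}(y)=\tfrac{1}{2}y\cdot C_\phi^{-1}y$ explicitly, whereas the paper appeals to Proposition~\ref{prop:LegendreContinuousPositiveDefinite} to say $P_\phi^{\#}$ is positive definite and quadratic, hence $\asymp|\cdot|^2$; both arguments are equivalent here.
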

\begin{proof} In view of Proposition \ref{prop:ProbabilityPosHomType}, our hypotheses guarantee that every $\xi\in\Omega(\phi)$ is of positive homogeneous type with corresponding $\alpha_\xi=\alpha_\phi$ and positive homogeneous polynomial $P_{\xi}=P_\phi$; here $\mu_\phi=\mu_{P_\phi}=d/2$ and $R_\phi=\Re P_\phi=P_\phi$. An appeal to Theorem \ref{thm:ExponentialEstimate} gives positive constants $C$ and $M$ for which
\begin{equation*}
\phi^{(n)}(x)=|\phi^{(n)}(x)|\leq \frac{C}{n^{d/2}}\exp\left(-nMP_\phi^{\#}\left((x-n\alpha_\phi\right)/n)\right)
\end{equation*}
for all $n\in\mathbb{N}_+$ and $x\in\mathbb{Z}^d$. Upon noting that $P_\phi^{\#}$ is necessarily quadratic and positive definite by virtue of Proposition \ref{prop:LegendreContinuousPositiveDefinite}, we conclude that $P_\phi^{\#}\asymp|\cdot|^2$ and the theorem follows at once.
\end{proof}

\section{Appendix}\label{sec:Appendix}

\subsection{Properties of contracting one-parameter groups}

\noindent The following proposition is standard \cite{Hazod2001}.
\begin{proposition}\label{prop:tEProperties}
Let $E,G\in \MdR$ and $A\in \GldR$. Also, let $E^*\in\MdR$ denote the adjoint of $E$. Then for all $t,s>0$, the following statements hold:\\

\vspace{.1cm}
\begin{tabular}{lllll}
$\bullet$ $1^E=I$ &  $\bullet$ $t^{E^*}=(t^E)^*$ &   $\bullet$ If $EG=GE$ then $t^Et^G=t^{E+G}$\\
\vspace{.1cm}\\
$\bullet$ $(st)^E=s^Et^E$ & $\bullet$ $At^EA^{-1}=t^{AEA^{-1}}$&  $\bullet$ $\det(t^E)=t^{\tr E}$\\
\vspace{.1cm}\\
$\bullet$ $(t^E)^{-1}=t^{-E}$ & 
\end{tabular}
\end{proposition}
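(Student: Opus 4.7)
The plan is to derive every item from the series definition
\begin{equation*}
t^E = \exp((\log t) E) = \sum_{k=0}^{\infty} \frac{(\log t)^k}{k!} E^k
\end{equation*}
together with standard facts about the matrix exponential that follow by termwise manipulation of absolutely convergent series on $\MdR$. Throughout, I will write $u = \log t$ and $v = \log s$ to convert the $t^E$-identities into well-known identities for $e^{uE}$.

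I would dispatch the easy identities first. The identity $1^E = I$ is immediate from $\log 1 = 0$, giving $e^{0\cdot E} = I$. For $t^{E^*} = (t^E)^*$, I would use $(E^k)^* = (E^*)^k$ and the fact that $(\cdot)^*$ is a continuous linear involution, so it may be interchanged with the sum defining $e^{uE}$. For $At^EA^{-1} = t^{AEA^{-1}}$, I would use $A E^k A^{-1} = (AEA^{-1})^k$ and the continuity of conjugation by $A$ to pass the series through, obtaining $A e^{uE} A^{-1} = e^{u(AEA^{-1})}$. The determinant identity $\det(t^E) = t^{\tr E}$ reduces to the Jacobi formula $\det(e^X) = e^{\tr X}$, applied with $X = uE$: then $\det(e^{uE}) = e^{u \tr E} = t^{\tr E}$.

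The remaining three identities all reduce to a single fact: if $X, Y \in \MdR$ commute, then $e^{X+Y} = e^X e^Y$. This is proved by multiplying the defining series and applying the binomial theorem, which is valid precisely because $XY = YX$. Granting this, if $EG = GE$ then $uE$ and $uG$ commute, so $t^E t^G = e^{uE}e^{uG} = e^{u(E+G)} = t^{E+G}$. Taking $G = -E$ in this identity (they trivially commute) gives $t^E t^{-E} = t^0 = I$, hence $(t^E)^{-1} = t^{-E}$. For $(st)^E = s^E t^E$, I would use $\log(st) = u + v$ together with the fact that $uE$ and $vE$ commute (they are scalar multiples of the same matrix), so
\begin{equation*}
(st)^E = e^{(u+v)E} = e^{uE} e^{vE} = t^E s^E = s^E t^E.
\end{equation*}

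There is no genuine obstacle here; the only point requiring care is that $e^{X+Y} = e^X e^Y$ genuinely fails without commutativity, so one must verify commutativity in each application (which, as noted, is trivial in all three cases). Everything else is a termwise manipulation of an absolutely convergent matrix series, justified by the submultiplicativity of the operator norm on $\MdR$.
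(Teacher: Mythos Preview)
Your proposal is correct and complete; every identity is handled cleanly via the series definition of the matrix exponential, and you correctly flag that commutativity is the only nontrivial hypothesis needed for $e^{X+Y}=e^Xe^Y$. The paper itself does not give a proof of this proposition at all: it simply declares the result ``standard'' and cites \cite{Hazod2001}, so there is nothing to compare against beyond noting that you have supplied the routine verification the authors chose to omit.
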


\begin{lemma}\label{lem:OperatorBoundsforContractingGroup}
Let $\{T_t\}\subseteq\GldR$ be a continuous one-parameter contracting group. Then, for some $E\in \GldR$, $T_t=t^E$ for all $t>0$. Moreover, there exists a positive constant $C$ for which
\begin{equation*}
\|T_t\|\leq C+t^{\|E\|}
\end{equation*}
for all $t>0$.
\end{lemma}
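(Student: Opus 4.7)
The plan is to handle the two assertions separately, since the first is just a citation of the Hille--Yosida construction already referenced in the introduction, while the norm bound is an elementary case analysis.

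For the first assertion, the existence of $E \in \MdR$ (the lemma's ``$\GldR$'' is immaterial since $t^E$ is invertible for any $E \in \MdR$) with $T_t = t^E$ for all $t > 0$ is precisely the Hille--Yosida construction for continuous one-parameter subgroups of $\GldR$, as already invoked in Section \ref{sec:Introduction}. I would simply state this and pass the infinitesimal-generator calculation by reference. No work is needed here.

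For the norm bound, I will split into the regimes $t \geq 1$ and $0 < t \leq 1$. When $t \geq 1$ we have $\log t \geq 0$, and the standard submultiplicative estimate for the matrix exponential gives
\begin{equation*}
\|T_t\| = \|\exp((\log t) E)\| \leq \exp\!\big((\log t)\|E\|\big) = t^{\|E\|},
\end{equation*}
so in this range $\|T_t\| \leq t^{\|E\|} \leq C + t^{\|E\|}$ for any $C > 0$. When $0 < t \leq 1$, the contracting hypothesis gives $\lim_{t \to 0^+}\|T_t\| = 0$, and $t \mapsto \|T_t\|$ is continuous on $(0,\infty)$; hence it extends continuously to $[0,1]$ and in particular
\begin{equation*}
C := \sup_{0 < t \leq 1} \|T_t\| < \infty.
\end{equation*}
Thus for $0 < t \leq 1$, $\|T_t\| \leq C \leq C + t^{\|E\|}$. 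Combining the two ranges yields the claimed uniform estimate.

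There is no genuine obstacle here; the only mild subtlety is making sure one does not overreach on the $t \geq 1$ side and try to derive a bound of the form $t^{\|E\|}$ that is also valid near $t = 0$, where such a bound would blow up if $\|E\|$ is replaced by a negative quantity (and would miss the contracting behavior entirely). The additive constant $C$ is exactly what absorbs the small-$t$ regime, and the split at $t = 1$ is the natural one since $\log t$ changes sign there.
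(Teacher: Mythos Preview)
Your norm-bound argument is correct and essentially identical to the paper's: split at $t=1$, use $\|\exp((\log t)E)\|\leq \exp((\log t)\|E\|)$ for $t\geq 1$, and use continuity together with the contracting hypothesis to bound $\|T_t\|$ on $(0,1]$.

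There is, however, a genuine gap in the first assertion. The lemma claims $E\in\GldR$, i.e., that the \emph{generator} $E$ is invertible, not merely that $t^E$ is invertible. Your parenthetical (``immaterial since $t^E$ is invertible for any $E\in\MdR$'') conflates these two statements. Hille--Yosida only gives $E\in\MdR$; invertibility of $E$ is an additional claim that requires the contracting hypothesis. The paper supplies the missing step: if $E\eta=0$ for some nonzero $\eta$, then $t^E\eta=\eta$ for all $t>0$, so $|T_t\eta|=|\eta|$ does not tend to $0$ as $t\to 0$, contradicting contraction. Hence $\Ker E=\{0\}$ and $E\in\GldR$. You should add this one-line kernel argument; without it the first conclusion of the lemma is not established.
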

\begin{proof}
The representation $T_t=t^E$ for some $E\in\MdR$ follows from the Hille-Yosida construction. If for some non-zero vector $\eta$, $E\eta=0$, then $t^E\eta=\eta$ for all $t>0$ and this would contradict our assumption that $\{T_t\}$ is contracting. Hence $E\in\GldR$ and, in particular, $\|E\|>0$. From the representation $T_t=t^E$ it follows immediately that $\|T_t\|\leq t^{\|E\|}$ for all $t\geq 1$ and so it remains to estimate $\|T_t\|$ for $t<1$. Given that $\{T_t\}$ is continuous and contracting, the map $t\mapsto \|T_t\|$ is continuous and approaches $0$ as $t\rightarrow 0$ and so it is necessarily bounded for $0<t\leq 1$.
\end{proof}

\begin{lemma}\label{lem:SpectralEstimateforContractingGroup}
Let $E\in\GldR$ be diagonalizable with strictly positive spectrum. Then $\{t^E\}$ is a continuous one-parameter contracting group. Moreover, there is a positive constant $C$ such that
\begin{equation*}
\|t^E\|\leq Ct^{\lambda_{\mbox{\tiny{max}}}}
\end{equation*}
for all $t\geq 1$ and
\begin{equation*}
\|t^E\|\leq Ct^{\lambda_{\mbox{\tiny{min}}}}
\end{equation*}
for all $0<t<1$, where $\lambda_{\mbox{\tiny{max}}}=\max(\Spec(E))$ and $\lambda_{\mbox{\tiny{min}}}=\min(\Spec(E))$.
\end{lemma}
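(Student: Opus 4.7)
The plan is to reduce everything to the diagonal case by using the hypothesis that $E$ is diagonalizable, after which the bounds on $\|t^E\|$ follow from elementary inequalities for powers of positive reals.

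First, I would write $E = ADA^{-1}$ where $D = \diag(\lambda_1,\lambda_2,\dots,\lambda_d)$ with each $\lambda_i > 0$, using that $E$ is diagonalizable with strictly positive spectrum. By Proposition \ref{prop:tEProperties}, $t^E = A t^D A^{-1}$ for all $t > 0$, and $t^D = \diag(t^{\lambda_1},t^{\lambda_2},\dots,t^{\lambda_d})$. From this formula it is immediate that $t \mapsto t^E$ is continuous in $t > 0$ and satisfies the one-parameter group law, so $\{t^E\}_{t>0}$ is a continuous one-parameter subgroup of $\GldR$.

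Next I would verify the contracting property. The operator norm (on $\MdR$ with its usual Euclidean structure) of a diagonal matrix is its largest entry in absolute value, so $\|t^D\| = \max_i t^{\lambda_i}$. Since each $\lambda_i > 0$, every entry $t^{\lambda_i} \to 0$ as $t \to 0^+$, whence $\|t^D\| \to 0$. Submultiplicativity of the operator norm gives $\|t^E\| \leq \|A\|\,\|t^D\|\,\|A^{-1}\|$, so $\|t^E\| \to 0$ as $t \to 0^+$ and $\{t^E\}$ is contracting.

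Finally, setting $C = \|A\|\,\|A^{-1}\|$, the estimate $\|t^E\| \leq C\max_i t^{\lambda_i}$ holds for all $t > 0$. When $t \geq 1$ the exponential function $\lambda \mapsto t^\lambda$ is increasing, so $\max_i t^{\lambda_i} = t^{\lambda_{\max}}$; when $0 < t < 1$ it is decreasing, so $\max_i t^{\lambda_i} = t^{\lambda_{\min}}$. This yields the two claimed bounds. There is no real obstacle here — the only subtlety is remembering the direction of monotonicity of $t^\lambda$ flips at $t=1$, which is precisely why the two regimes in the statement feature different eigenvalues.
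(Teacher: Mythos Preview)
Your proof is correct and follows essentially the same approach as the paper: diagonalize $E$, use $t^E = A t^D A^{-1}$, bound $\|t^D\|$ by $\max_i t^{\lambda_i}$, and apply submultiplicativity with $C = \|A\|\,\|A^{-1}\|$. The only cosmetic difference is that the paper invokes the spectral mapping theorem and the symmetry of $t^D$ to justify $\|t^D\| = \max_i t^{\lambda_i}$, whereas you cite the explicit diagonal form directly.
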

\begin{proof}
Let $A\in\GldR$ be such that $A^{-1}EA=D=\diag(\lambda_1,\lambda_2,\dots,\lambda_d)$ where necessarily $\Spec(E)=\Spec(D)=\{\lambda_1,\lambda_2,\dots,\lambda_d\}\subseteq (0,\infty)$. It follows from the spectral mapping theorem that $\Spec(t^D)=\{t^{\lambda_1},t^{\lambda_2},\dots,t^{\lambda_d}\}$ for all $t>0$ and moreover, because $t^D$ is symmetric,
\begin{equation*}
\|t^D\|\leq \max(\{t^{\lambda_1},t^{\lambda_2},\dots,t^{\lambda_d}\})
=\begin{cases}
t^{\lambda_{\mbox{\tiny{max}}}} & \mbox{if }t\geq 1\\
t^{\lambda_{\mbox{\tiny{min}}}} & \mbox{if }t<1.
\end{cases}
\end{equation*}
By virtue of Proposition \ref{prop:tEProperties}, we have
\begin{equation*}
\|t^E\|=\|At^DA^{-1}\|\leq \|A\|\|t^D\|\|A^{-1}\|\leq C\|t^D\|=C\times
\begin{cases}
t^{\lambda_{\mbox{\tiny{max}}}} & \mbox{if }t\geq 1\\
t^{\lambda_{\mbox{\tiny{min}}}} & \mbox{if }t<1
\end{cases}
\end{equation*}
for $t>0$ where $C=\|A\|\|A^{-1}\|$; in particular, $\{t^E\}$ is contracting because $\lambda_{\mbox{\tiny{min}}}>0$.
\end{proof}

\begin{proposition}\label{prop:ContractingLimits}
Let $\{T_t\}_{t>0}\subseteq\GldR$ be a continuous one-parameter contracting group.  Then, for all non-zero $\xi\in\mathbb{R}^d$,
\begin{equation*}
\lim_{t\rightarrow 0}|T_t \xi|=0\hspace{.5cm}\mbox{ and }\hspace{.5cm}\lim_{t\rightarrow\infty}|T_t \xi|=\infty.
\end{equation*}
\end{proposition}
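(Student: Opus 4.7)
The first limit is essentially the definition of contraction. For any non-zero $\xi\in\mathbb{R}^d$ we have $|T_t\xi|\leq \|T_t\|\,|\xi|$, and by hypothesis $\|T_t\|\to 0$ as $t\to 0$, so $|T_t\xi|\to 0$ immediately.

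For the second limit, the idea is to transfer the decay at $0$ to growth at $\infty$ using the group property. By Lemma \ref{lem:OperatorBoundsforContractingGroup} we may write $T_t=t^E$ for some $E\in\GldR$; in particular each $T_t$ is invertible, and the group law together with Proposition \ref{prop:tEProperties} yields
\begin{equation*}
T_t\,T_{1/t}=t^E(1/t)^E=1^E=I,
\end{equation*}
so $T_t^{-1}=T_{1/t}$ for every $t>0$. Then for any non-zero $\xi\in\mathbb{R}^d$,
\begin{equation*}
|\xi|=|T_{1/t}T_t\xi|\leq \|T_{1/t}\|\,|T_t\xi|,
\end{equation*}
that is, $|T_t\xi|\geq |\xi|/\|T_{1/t}\|$. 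Since $1/t\to 0$ as $t\to\infty$ and $\{T_s\}$ is contracting, $\|T_{1/t}\|\to 0$, hence $|T_t\xi|\to\infty$.

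There is no real obstacle here — the only point that requires care is noting that contraction forces $E$ to be invertible (so that $T_t^{-1}=T_{1/t}$ genuinely exists as an element of the group), but this is exactly the content of Lemma \ref{lem:OperatorBoundsforContractingGroup}. Once that is in hand, the argument is a one-line application of the group relation combined with the definition of contraction.
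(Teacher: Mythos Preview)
Your proof is correct and is essentially the same as the paper's: both establish the second limit via the inequality $|\xi|=|T_{1/t}T_t\xi|\leq\|T_{1/t}\|\,|T_t\xi|$ and the contraction hypothesis. The only difference is that you invoke Lemma~\ref{lem:OperatorBoundsforContractingGroup} to justify $T_t^{-1}=T_{1/t}$, whereas this is immediate from the one-parameter group axiom $T_tT_{1/t}=T_1=I$ and needs no information about $E$ (indeed $t^E\in\GldR$ for any $E\in\MdR$, invertible or not); the paper simply uses this without comment.
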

\begin{proof}
The validity of the first limit is clear. Upon noting that $|\xi|=|T_{1/t}T_t\xi|\leq \|T_{1/t}\||T_t \xi|$ for all $t>0$, the second limit follows at once.
\end{proof}


\begin{proposition}\label{prop:ScalefromSphere}
Let $\{T_t\}_{t>0}\subseteq\GldR$ be a continuous one-parameter contracting group. There holds the following:
\begin{enumerate}[a)]
\item\label{st:ScalefromSphere_1} For each non-zero $\xi\in \mathbb{R}^d$, there exists $t>0$ and $\eta\in S$ for which $T_t\eta=\xi$. Equivalently,
\begin{equation*}
\mathbb{R}^d\setminus\{0\}=\{T_t\eta:t>0\mbox{ and }\eta\in S\}.
\end{equation*}
\item\label{st:ScalefromSphere_2} For each sequence $\{\xi_n\}\subseteq\mathbb{R}^d$ such that $\lim_n|\xi_n|=\infty$, $\xi_n=T_{t_n}\eta_n$ for each $n$, where $\{\eta_n\}\subseteq S$ and $t_n\rightarrow\infty$ as $n\rightarrow\infty$.
\item\label{st:ScalefromSphere_3} For each sequence $\{\xi_n\}\subseteq\mathbb{R}^d$ such that $\lim_n|\xi_n|=0$, $\xi_n=T_{t_n}\eta_n$ for each $n$, where $\{\eta_n\}\subseteq S$ and $t_n\rightarrow 0$ as $n\rightarrow\infty$.
\end{enumerate}
\end{proposition}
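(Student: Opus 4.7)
The three parts clearly build on one another, so my plan is to prove (a) first and then use the representation it provides to deduce (b) and (c) by uniform estimates on $\|T_t\|$ and $\|T_{1/t}\|$.

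For part (\ref{st:ScalefromSphere_1}), I would fix a non-zero $\xi \in \mathbb{R}^d$ and consider the scalar function $f:(0,\infty)\to(0,\infty)$ defined by $f(s)=|T_s\xi|$. Continuity of the one-parameter group gives continuity of $f$, and Proposition \ref{prop:ContractingLimits} supplies the limits $\lim_{s\to 0}f(s)=0$ and $\lim_{s\to\infty}f(s)=\infty$. The intermediate value theorem then produces $s_0>0$ with $f(s_0)=1$. Setting $\eta = T_{s_0}\xi\in S$ and $t=1/s_0>0$, the group property in Proposition \ref{prop:tEProperties} yields $T_t\eta = T_{1/s_0}T_{s_0}\xi = T_1\xi = \xi$, as required.

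For part (\ref{st:ScalefromSphere_2}), I use (\ref{st:ScalefromSphere_1}) to write $\xi_n = T_{t_n}\eta_n$ with $\eta_n\in S$ and $t_n>0$ for each $n$, and argue by contradiction. If $\{t_n\}$ fails to tend to infinity, then on some subsequence $t_{n_k}\leq M$. By Lemma \ref{lem:OperatorBoundsforContractingGroup}, $\|T_t\|\leq C+t^{\|E\|}$, so $|\xi_{n_k}|=|T_{t_{n_k}}\eta_{n_k}|\leq \|T_{t_{n_k}}\|\leq C+M^{\|E\|}$, contradicting $|\xi_n|\to\infty$. Hence $t_n\to\infty$.

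Part (\ref{st:ScalefromSphere_3}) proceeds by the same strategy but requires an estimate in the reverse direction, which is the main point needing a little care. Writing $\xi_n=T_{t_n}\eta_n$ with $\eta_n\in S$, suppose for contradiction that $t_n\not\to 0$, so $t_{n_k}\geq\varepsilon>0$ along a subsequence. I would then invoke invertibility of $T_t$ together with the identity $T_{t}^{-1}=T_{1/t}$ (Proposition \ref{prop:tEProperties}) to get $|\xi_{n_k}| = |T_{t_{n_k}}\eta_{n_k}| \geq |\eta_{n_k}|/\|T_{1/t_{n_k}}\| = 1/\|T_{1/t_{n_k}}\|$. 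Splitting into cases according to whether the sequence $\{t_{n_k}\}$ is bounded or not: if bounded, say $t_{n_k}\in[\varepsilon,M]$, then $1/t_{n_k}\in[1/M,1/\varepsilon]$ and continuity of $t\mapsto\|T_{1/t}\|$ on this compact interval gives a uniform upper bound, so $|\xi_{n_k}|$ is bounded below by a positive constant; if unbounded, pass to a further subsequence with $t_{n_k}\to\infty$, so $1/t_{n_k}\to 0$ and $\|T_{1/t_{n_k}}\|\to 0$ by the contracting hypothesis, giving $|\xi_{n_k}|\to\infty$. Either alternative contradicts $|\xi_n|\to 0$, so $t_n\to 0$ as claimed. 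The one step where attention is needed is this lower estimate $|T_t\eta|\geq 1/\|T_{1/t}\|$ uniformly in $\eta\in S$, which is really just the elementary observation $|\eta|=|T_{1/t}T_t\eta|\leq\|T_{1/t}\|\,|T_t\eta|$.
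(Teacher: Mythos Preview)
Your proof is correct and follows essentially the same route as the paper: part (a) via the intermediate value theorem and Proposition~\ref{prop:ContractingLimits}, part (b) via the operator-norm bound of Lemma~\ref{lem:OperatorBoundsforContractingGroup}, and part (c) via the inversion $\eta_n=T_{1/t_n}\xi_n$. The only difference is in part (c): the paper avoids your case split by applying Lemma~\ref{lem:OperatorBoundsforContractingGroup} directly to $T_{1/t_n}$, obtaining $\infty=\limsup_n\|T_{1/t_n}\|\leq\limsup_n\big(C+(1/t_n)^{M}\big)$ and hence $t_n\to 0$ in one stroke.
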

\begin{proof}
In view of Proposition \ref{prop:ContractingLimits}, the assertion \ref{st:ScalefromSphere_1}) is a straightforward application of the intermediate value theorem. For \ref{st:ScalefromSphere_2}), suppose that $\{\xi_n\}\subseteq\mathbb{R}^d$ is such that $|\xi_n|\rightarrow \infty$ as $n\rightarrow\infty$. In view of \ref{st:ScalefromSphere_1}), take $\{\eta_n\}\subseteq S$ and $\{t_n\}\subseteq (0,\infty)$ for which $\xi_n=T_{t_n}\eta_n$ for each $n$. In view of Lemma \ref{lem:OperatorBoundsforContractingGroup},
\begin{equation*}
\infty=\liminf_n |\xi_n|\leq\liminf_n \left(C+t_n^M\right)|\eta_n|\leq C+\liminf_nt_n^M,
\end{equation*}
where $C,M>0$, and therefore $t_n\rightarrow\infty$. If instead $\lim_n\xi_n=0$,
\begin{equation*}
\infty=\lim_{n\rightarrow\infty}\frac{|\eta_n|}{|\xi_n|}=\lim_{n\rightarrow\infty}\frac{|T_{1/t_n}\xi_n|}{|\xi_n|}\leq\limsup_n\|T_{1/t_n}\|\leq\limsup_n(C+(1/t_n)^M)
\end{equation*}
from which we see that $t_n\rightarrow 0$, thus proving \ref{st:ScalefromSphere_3}).
\end{proof}

\begin{proposition}\label{prop:ContractingCapturesCompact}
Let $\{T_t\}$ be a continuous contracting one-parameter group. Then for any open neighborhood $\mathcal{O}\subseteq\mathbb{R}^d$ of the origin and any compact set $K\subseteq\mathbb{R}^d$, $K\subseteq T_t(\mathcal{O})$ for sufficiently large $t$.
\end{proposition}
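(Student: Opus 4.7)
The plan is to reformulate the conclusion via the inverse: since $\{T_t\}$ is a group, $K \subseteq T_t(\mathcal{O})$ is equivalent to $T_t^{-1}(K) = T_{1/t}(K) \subseteq \mathcal{O}$. So it suffices to show that for all sufficiently large $t$, the contracted image $T_{1/t}(K)$ lies inside $\mathcal{O}$. Since $\mathcal{O}$ is an open neighborhood of the origin, it contains some ball $B_\epsilon$ with $\epsilon>0$, so we only need to show that $T_{1/t}(K)\subseteq B_\epsilon$ for all $t$ large.

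To get this uniform containment, I would use compactness of $K$ to set $R := \sup_{\xi\in K}|\xi|<\infty$, and then the operator norm bound
\[
\sup_{\xi\in K}|T_{1/t}\xi|\leq \|T_{1/t}\|\cdot R.
\]
The defining property of a contracting group is exactly $\lim_{s\to 0}\|T_s\|=0$, applied with $s=1/t$. Hence for $t$ large enough, $\|T_{1/t}\|\leq \epsilon/R$ (or just $\epsilon$ if $R=0$, but the case $K=\{0\}$ is trivial anyway), which gives $T_{1/t}(K)\subseteq \overline{B_\epsilon}\subseteq\mathcal{O}$, as desired.

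There is no real obstacle here; the argument is essentially one line once one notices the equivalence $K\subseteq T_t(\mathcal{O})\iff T_{1/t}(K)\subseteq \mathcal{O}$ coming from the group property and then appeals directly to the definition of contracting. The only minor point to be careful about is invoking the group structure correctly, i.e., that $T_t$ is invertible for every $t>0$ with $T_t^{-1}=T_{1/t}$, which follows from Lemma \ref{lem:OperatorBoundsforContractingGroup} (via $T_t=t^E$ with $E\in\GldR$) together with Proposition \ref{prop:tEProperties}.
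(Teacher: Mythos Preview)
Your proof is correct and in fact cleaner than the paper's. The paper argues by contradiction: assuming $K\not\subseteq T_{t_n}(\mathcal{O})$ along some sequence $t_n\to\infty$, it uses compactness of $K$ to extract a convergent subsequence $\zeta_k\to\zeta$ with $T_{1/r_k}\zeta_k\notin\mathcal{O}$, and then obtains a contradiction by splitting $T_{1/r_k}\zeta_k$ into a ``fixed'' part $T_{1/r_k}\zeta\to 0$ and a ``moving'' part $T_{1/r_k}(\zeta_k-\zeta)$ controlled via the uniform bound $\|T_s\|\leq C$ for small $s$ from Lemma~\ref{lem:OperatorBoundsforContractingGroup}. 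Your argument bypasses the contradiction and the subsequence entirely by using the full strength of the contracting definition (operator-norm convergence $\|T_s\|\to 0$, not just pointwise), together with the boundedness of $K$; this gives the uniform containment $T_{1/t}(K)\subseteq B_\epsilon$ in one line. The paper's route would be the natural one if only pointwise convergence $T_s\xi\to 0$ were available; since the norm convergence is part of the definition here, your direct approach is preferable. The only cosmetic point is that after $\|T_{1/t}\|\leq \epsilon/R$ you land in the \emph{closed} ball $\overline{B_\epsilon}$, so either pick $\epsilon$ with $\overline{B_\epsilon}\subseteq\mathcal{O}$ or use a strict inequality; this does not affect the argument.
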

\begin{proof}
Assume, to reach a contradiction, that there are sequences $\{\xi_n\}\subseteq K$ and $t_n\rightarrow\infty$ for which $\xi_n\notin T_{t_n}(\mathcal{O})$ for all $n$. Because $K$ is compact, $\{\xi_n\}$ has a subsequential limit and so by relabeling, let us take sequences $\{\zeta_k\}\subseteq K$ and $\{r_k\}\subseteq (0,\infty)$ for which $\zeta_k\rightarrow \zeta$, $r_k\rightarrow\infty$ and $\zeta_k\notin T_{r_k}(\mathcal{O})$ for all $k$. Setting $s_k=1/r_k$ and using the fact that $\{T_t\}$ is a one-parameter group, we have $T_{s_k}\zeta_k\notin\mathcal{O}$ for all $k$ and so $\liminf_{k}|T_{s_k}\zeta_k|>0$, where $s_k\rightarrow 0$. This is however impossible because $\{T_t\}$ is contracting and so
\begin{equation*}
\lim_{k\rightarrow\infty}|T_{s_k}\zeta_k|\leq\lim_{k\rightarrow \infty}|T_{s_k}(\zeta_k-\zeta)|+\lim_{k\rightarrow\infty}|T_{s_k}\zeta|\leq C\lim_{k\rightarrow\infty}|\zeta_k-\zeta|+0=0
\end{equation*}
in view of Lemma \ref{lem:OperatorBoundsforContractingGroup}.
\end{proof}

\subsection{Properties of homogeneous functions on $\mathbb{R}^d$}

\begin{proposition}\label{prop:ComparableFunctions}
Let $\{T_t\}\subseteq \GldR$ be a contracting one-parameter group and let $R,Q:\mathbb{R}^d\rightarrow\mathbb{R}$ be continuous and homogeneous with respect to $\{T_t\}$. If $R$ is positive definite, then there exists $C>0$ such that
\begin{equation}\label{eq:ComparableFunctions_1}
|Q(\xi)|\leq C R(\xi)
\end{equation}
for all $\xi\in\mathbb{R}^d$. If both $Q$ and $R$ are positive definite, then
\begin{equation}\label{eq:ComparableFunctions_2}
Q\asymp R.
\end{equation}
\end{proposition}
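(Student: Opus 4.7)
The plan is to reduce the global comparison on $\mathbb{R}^d$ to a compactness argument on the unit sphere $S$, using the homogeneity to transport values to $S$ and the contracting nature of $\{T_t\}$ to ensure every non-zero point can be reached.

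First I would observe that both $Q$ and $R$ vanish at the origin: since $T_t \cdot 0 = 0$ for every $t > 0$, homogeneity gives $R(0) = tR(0)$ and $Q(0) = tQ(0)$ for all $t>0$, forcing $R(0) = Q(0) = 0$. Hence the claimed inequality \eqref{eq:ComparableFunctions_1} holds trivially at $\xi = 0$, and it suffices to establish it on $\mathbb{R}^d \setminus \{0\}$. By Proposition \ref{prop:ScalefromSphere}\ref{st:ScalefromSphere_1}, every non-zero $\xi \in \mathbb{R}^d$ can be written as $\xi = T_t \eta$ for some $t > 0$ and $\eta \in S$. Applying the homogeneity relations then yields
\begin{equation*}
\frac{|Q(\xi)|}{R(\xi)} = \frac{|Q(T_t\eta)|}{R(T_t\eta)} = \frac{t|Q(\eta)|}{tR(\eta)} = \frac{|Q(\eta)|}{R(\eta)},
\end{equation*}
provided the denominator is non-zero. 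This is where positive definiteness of $R$ enters: $R$ is strictly positive on the compact set $S$, so $m := \min_{\eta \in S} R(\eta) > 0$ by continuity and compactness. Similarly $M := \max_{\eta \in S} |Q(\eta)| < \infty$ since $Q$ is continuous on $S$. Setting $C = M/m$ gives $|Q(\xi)| \leq C R(\xi)$ for all non-zero $\xi$, hence for all $\xi \in \mathbb{R}^d$.

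For the second statement, if $Q$ is also positive definite, then the same argument with the roles of $R$ and $Q$ interchanged gives a constant $C' > 0$ with $R(\xi) \leq C' Q(\xi)$ for all $\xi \in \mathbb{R}^d$. Combined with the first part (which now reads $Q(\xi) \leq C R(\xi)$ since $Q \geq 0$), this yields $R \asymp Q$.

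There is no substantial obstacle here; the proof is a direct compactness-and-homogeneity argument. The only point requiring any care is ensuring that $R$ and $Q$ vanish at the origin (so that one need not separately check the inequality there) and that positive definiteness of $R$ gives a strictly positive minimum on $S$, both of which are immediate from the hypotheses.
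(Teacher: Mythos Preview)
Your proof is correct and follows essentially the same approach as the paper: reduce to the sphere $S$ via Proposition~\ref{prop:ScalefromSphere}, use continuity and positive definiteness of $R$ to bound the ratio $|Q|/R$ on $S$, and transport back by homogeneity. The only cosmetic difference is that the paper takes $C=\sup_{\eta\in S}|Q(\eta)|/R(\eta)$ directly rather than $M/m$, and handles $\xi=0$ by continuity rather than by your explicit observation that $Q(0)=R(0)=0$.
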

\begin{proof}
Upon reversing the roles of $R$ and $Q$, it is clear that the \eqref{eq:ComparableFunctions_2} follows from \eqref{eq:ComparableFunctions_1} and so it suffices to prove \eqref{eq:ComparableFunctions_1}. Because $R$ is continuous and positive definite, it is strictly positive on $S$ and so, given that $Q$ is also continuous,
\begin{equation*}
C:=\sup_{\eta\in S}\frac{|Q(\eta)|}{R(\eta)}<\infty.
\end{equation*}
For any non-zero $\xi\in\mathbb{R}^d$, let $t>0$ be such that $\xi=T_t\nu$ for $\nu\in S$ in view of Proposition \ref{prop:ScalefromSphere} and observe that
\begin{equation*}
|Q(\xi)|=|Q(T_t\eta)|=t|Q(\eta)|\leq tCR(\eta)=CR(T_t\eta)=CR(\xi).
\end{equation*}
By invoking the continuity of $R$ and $Q$, the above estimate must also hold for $\xi=0$.
\end{proof}

\begin{proposition}\label{prop:NormGammaLittleOhR}
Let $E\in\GldR$ be diagonalizable with strictly positive spectrum and suppose that $R:\mathbb{R}^d\rightarrow\mathbb{R}$ is continuous, positive definite, and homogeneous with respect to $\{t^E\}$. Then, for any $\gamma>(\min(\Spec(E)))^{-1}$,
\begin{equation*}
|\xi|^{\gamma}=o(R(\xi))\mbox{ as }\xi\rightarrow 0.
\end{equation*}
\end{proposition}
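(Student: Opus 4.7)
The plan is to use the polar-type decomposition provided by Proposition \ref{prop:ScalefromSphere} to reduce everything to behavior on the unit sphere $S$, where positive definiteness and continuity of $R$ furnish a uniform positive lower bound, while the contracting group $\{t^E\}$ gives a quantitative upper bound on $|\xi|$ in terms of the scaling parameter $t$.

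First, I would observe that any non-zero $\xi\in\mathbb{R}^d$ can be written as $\xi=t^E\eta$ for some $\eta\in S$ and $t>0$, by Proposition \ref{prop:ScalefromSphere}\ref{st:ScalefromSphere_1}). Moreover, if $\xi_n\to 0$, then by Proposition \ref{prop:ScalefromSphere}\ref{st:ScalefromSphere_3}) the corresponding sequence of parameters satisfies $t_n\to 0$. The homogeneity of $R$ with respect to $\{t^E\}$ then gives $R(\xi_n)=R(t_n^E\eta_n)=t_n R(\eta_n)$, and setting
\begin{equation*}
c:=\inf_{\eta\in S}R(\eta)>0,
\end{equation*}
which is strictly positive because $R$ is continuous and positive definite on the compact set $S$, we get $R(\xi_n)\geq c\,t_n$ for all $n$.

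Next, I would bound $|\xi_n|$ from above using the spectral estimate of Lemma \ref{lem:SpectralEstimateforContractingGroup}. Since $E$ is diagonalizable with strictly positive spectrum, letting $\lambda_{\min}=\min(\Spec(E))>0$, there exists $C>0$ such that $\|t^E\|\leq C t^{\lambda_{\min}}$ for all $0<t<1$. For large $n$, $t_n<1$, so
\begin{equation*}
|\xi_n|=|t_n^E\eta_n|\leq \|t_n^E\|\cdot|\eta_n|\leq C t_n^{\lambda_{\min}}.
\end{equation*}

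Finally, combining these two estimates yields
\begin{equation*}
\frac{|\xi_n|^{\gamma}}{R(\xi_n)}\leq \frac{C^{\gamma}\,t_n^{\gamma\lambda_{\min}}}{c\,t_n}=\frac{C^{\gamma}}{c}\,t_n^{\gamma\lambda_{\min}-1}.
\end{equation*}
The hypothesis $\gamma>1/\lambda_{\min}$ is exactly what forces the exponent $\gamma\lambda_{\min}-1$ to be strictly positive, so the right-hand side tends to $0$ as $t_n\to 0$. This would establish that $|\xi|^{\gamma}/R(\xi)\to 0$ as $\xi\to 0$, as desired. There is no real obstacle here: the substantive content is already packaged in Lemma \ref{lem:SpectralEstimateforContractingGroup} and Proposition \ref{prop:ScalefromSphere}, and the only point to be careful about is ensuring that the spectral bound on $\|t^E\|$ is applied in the regime $0<t<1$, which is automatic for $\xi$ near the origin.
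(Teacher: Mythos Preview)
Your proof is correct and follows essentially the same approach as the paper: decompose $\xi_n=t_n^E\eta_n$ via Proposition~\ref{prop:ScalefromSphere}, use homogeneity $R(\xi_n)=t_nR(\eta_n)$ together with the positive lower bound for $R$ on $S$, and apply the spectral estimate $\|t^E\|\leq Ct^{\lambda_{\min}}$ from Lemma~\ref{lem:SpectralEstimateforContractingGroup} to conclude that the ratio is $O(t_n^{\gamma\lambda_{\min}-1})\to 0$. The only cosmetic difference is that the paper packages the sphere constants into a single $M=\sup_{\eta\in S}C^\gamma|\eta|^\gamma/R(\eta)$ rather than separating out $c=\inf_S R$.
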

\begin{proof}
By virtue of Lemma \ref{lem:SpectralEstimateforContractingGroup}, we know that $\{t^E\}$ is contracting and $\|t^E\|\leq Ct^{\lambda}$ for all $t<1$ where $\lambda=\min(\Spec(E))$ and $C>0$. Let $\{\xi_n\}$ be such that $\lim_n\xi_n\rightarrow 0$ and, in view of Proposition \ref{prop:ScalefromSphere}, let $\{\eta_n\}\subseteq S$ and $t_n\rightarrow 0$ be such that $\xi_n=t_n^E \eta_n$. Then
\begin{equation*}
\limsup_n\frac{|\xi_n|^\gamma}{R(\xi_n)}=\limsup_n\frac{|t_n^E\eta_n|^{\gamma}}{t_n R(\eta_n)}\leq\limsup_n \frac{(Ct_n^{\lambda}|\eta_n|)^{\gamma}}{t_nR(\eta_n)}\leq M\lim_n t_n^{\gamma\lambda-1}=0,
\end{equation*}
where
\begin{equation*}
M:=\sup_{\eta\in S}\frac{C^\gamma|\eta|^{\gamma}}{R(\eta)}
\end{equation*}
is finite because $R$ is continuous and positive definite.
\end{proof}

\begin{lemma}\label{lem:LittleOh}
Let $\mathbf{m}=(m_1,m_2,\dots,m_d)\in\mathbb{N}_+^d$, $D=\diag(m_1^{-1},m_2^{-1},\dots,m_d^{-1})\in\GldR$ and suppose that $R:\mathbb{R}^d\rightarrow\mathbb{R}$ is continuous, positive definite and homogeneous with respect to $\{t^D\}$. Then for any multi-index $\beta$ such that $|\beta:\mathbf{m}|>1$,
\begin{equation*}
\xi^{\beta}=o(R(\xi))\hspace{1cm}\mbox{as }\xi\rightarrow 0.
\end{equation*}
\end{lemma}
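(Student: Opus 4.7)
The plan is to exploit the homogeneity of both sides under the one‑parameter group $\{t^D\}$ and reduce the ratio $\xi^\beta/R(\xi)$ to a bounded function on the unit sphere times a power of the scaling parameter. Since $D$ is diagonal with strictly positive eigenvalues, Lemma \ref{lem:SpectralEstimateforContractingGroup} guarantees that $\{t^D\}$ is a continuous one‑parameter contracting group, which unlocks the ``polar'' decomposition provided by Proposition \ref{prop:ScalefromSphere}.

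First, I would fix an arbitrary sequence $\{\xi_n\}\subseteq\mathbb{R}^d\setminus\{0\}$ with $\xi_n\to 0$ and invoke part (\ref{st:ScalefromSphere_3}) of Proposition \ref{prop:ScalefromSphere} to write $\xi_n=t_n^D\eta_n$ with $\eta_n\in S$ and $t_n\to 0^+$. Next, I would observe the key computation
\begin{equation*}
(t^D\eta)^\beta=\prod_{k=1}^d\bigl(t^{1/m_k}\eta_k\bigr)^{\beta_k}=t^{|\beta:\mathbf{m}|}\,\eta^\beta,
\end{equation*}
so that the monomial $\xi\mapsto\xi^\beta$ is homogeneous of degree $|\beta:\mathbf{m}|$ with respect to $\{t^D\}$. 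Combined with the hypothesis $R(t^D\eta)=tR(\eta)$, this yields
\begin{equation*}
\frac{\xi_n^\beta}{R(\xi_n)}=\frac{t_n^{|\beta:\mathbf{m}|}\eta_n^\beta}{t_n R(\eta_n)}=t_n^{|\beta:\mathbf{m}|-1}\cdot\frac{\eta_n^\beta}{R(\eta_n)}.
\end{equation*}

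To finish, I would bound the second factor uniformly in $n$: since $R$ is continuous and positive definite, compactness of $S$ gives $\inf_{\eta\in S}R(\eta)>0$, while $|\eta^\beta|\leq 1$ for $\eta\in S$; hence $|\eta_n^\beta|/R(\eta_n)$ is uniformly bounded. As $|\beta:\mathbf{m}|-1>0$ by hypothesis and $t_n\to 0^+$, the prefactor $t_n^{|\beta:\mathbf{m}|-1}$ tends to $0$, and so does the product. Since the sequence $\{\xi_n\}$ was arbitrary, this proves $\xi^\beta=o(R(\xi))$ as $\xi\to 0$.

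I do not anticipate any real obstacle here; the argument is essentially a scaling computation, and the only place where care is needed is in citing the correct parts of Proposition \ref{prop:ScalefromSphere} and Lemma \ref{lem:SpectralEstimateforContractingGroup} to obtain the representation $\xi_n=t_n^D\eta_n$ with $t_n\to 0$. This result is the diagonal analogue of Proposition \ref{prop:NormGammaLittleOhR} and is used to discard higher‑order Taylor remainders against the leading semi‑elliptic polynomial.
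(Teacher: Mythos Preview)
Your proof is correct and follows essentially the same approach as the paper's own proof: both write $\xi_n=t_n^D\eta_n$ via Proposition~\ref{prop:ScalefromSphere}, use the diagonal homogeneity computation $(t^D\eta)^\beta=t^{|\beta:\mathbf{m}|}\eta^\beta$, and bound $|\eta_n^\beta|/R(\eta_n)$ uniformly on $S$ to conclude that the ratio is $O(t_n^{|\beta:\mathbf{m}|-1})\to 0$. The only cosmetic difference is that the paper bounds $\sup_{\eta\in S}|\eta^\beta|/R(\eta)$ directly rather than splitting into $|\eta^\beta|\leq 1$ and $\inf_S R>0$.
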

\begin{proof}
Put $\gamma=|\beta:\mathbf{m}|-1>0$ observe that
\begin{equation*}
\sup_{\eta\in S}\frac{|\eta^{\beta}|}{R(\eta)}:=M<\infty
\end{equation*}
because $R$ is continuous and non-zero on $S$. Let $\{\xi_n\}\subseteq \mathbb{R}^d$ be such that $|\xi_n|\rightarrow 0$ as $n\rightarrow\infty$. By virtue of Proposition \ref{prop:ScalefromSphere}, there are sequences $\{\eta_n\}\subseteq S$ and $\{t_n\}\subseteq(0,\infty)$ for which $t_n\rightarrow 0$ and $\xi_n=t_n^D\eta_n$ for all $n$. We see that
\begin{equation*}
\xi_n^{\beta}=(t_n^D\eta_n)^{\beta}=\Big(t_n^{(m_1)^{-1}}\eta_1\Big)^{\beta_1}\Big(t_n^{(m_2)^{-1}}\eta_2\Big)^{\beta_2}\cdots\Big(t_n^{(m_d)^{-1}}\eta_d\Big)^{\beta_d}=t^{|\beta:\mathbf{m}|}\eta_n^{\beta}
\end{equation*}
for each $n$. Therefore
\begin{equation*}
\limsup_n\frac{|\xi_n^{\beta}|}{R(\xi_n)}=\limsup_n\frac{t^{|\beta:\mathbf{m}|}|\eta_n^{\beta}|}{tR(\eta_n)}\leq \limsup_n M t_n^{\gamma}=0
\end{equation*}
as desired.
\end{proof}

\noindent For the remainder of this appendix, $P$ is a positive homogeneous polynomial and $R=\Re P$.
\begin{proposition}\label{prop:RSumEstimate}
For any compact set $K$, there are positive constants $M$ and $M'$ such that
\begin{equation*}
M R(\xi)\leq R(\xi+\zeta)+M'
\end{equation*}
for all $\xi\in\mathbb{R}^d$ and $\zeta\in K$.
\end{proposition}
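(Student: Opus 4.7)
The plan is to leverage the scaling relation $R(t^E\xi)=tR(\xi)$ (valid for any $E\in\Exp(P)$) together with the fact that $\{t^E\}$ is a contracting one-parameter group by Lemma~\ref{lem:tEisContracting}. The intuition is that for $\xi$ with $R(\xi)$ very large, $\zeta$ becomes a tiny perturbation of $\xi$ after rescaling by $t^{-E}$, so $R(\xi+\zeta)\asymp R(\xi)$; for $\xi$ with $R(\xi)$ bounded, the required inequality can be arranged by simply enlarging the additive constant $M'$.

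First I would introduce $c:=\inf_{\eta\in S}R(\eta)>0$ and $C:=\sup_{\eta\in S}R(\eta)<\infty$, both finite and strictly positive since $R$ is continuous and positive definite on the compact unit sphere $S$. By uniform continuity of $R$ on the closed ball $\overline{B_2}$, choose $\delta\in(0,1)$ so that $|R(\nu)-R(\eta)|\leq c/2$ whenever $\eta\in S$ and $|\nu-\eta|\leq\delta$. Then I apply Proposition~\ref{prop:ContractingCapturesCompact} to the compact set $K$ and the open ball $B_\delta$ to obtain $T\geq 1$ such that $t^{-E}(K)\subseteq B_\delta$ for every $t\geq T$. Consequently, for all $t\geq T$, $\eta\in S$, and $\zeta\in K$ we have $|\eta+t^{-E}\zeta-\eta|<\delta$ and $|\eta+t^{-E}\zeta|<2$, so that
\[
R(\eta+t^{-E}\zeta)\geq R(\eta)-c/2\geq c/2.
\]

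For $\xi\in\mathbb{R}^d$ with $R(\xi)\geq CT$, Proposition~\ref{prop:ScalefromSphere} gives $\eta\in S$ and $t>0$ with $\xi=t^E\eta$, and necessarily $t=R(\xi)/R(\eta)\geq R(\xi)/C\geq T$. Using the identity $\xi+\zeta=t^E(\eta+t^{-E}\zeta)$ together with the scaling relation,
\[
R(\xi+\zeta)=t\,R(\eta+t^{-E}\zeta)\geq \tfrac{c}{2}\,t\geq \tfrac{c}{2C}R(\xi),
\]
uniformly in $\zeta\in K$. Setting $M:=c/(2C)$ and $M':=MCT$, the inequality $MR(\xi)\leq R(\xi+\zeta)+M'$ holds in this regime; in the complementary regime $R(\xi)<CT$ we have $MR(\xi)<M'\leq R(\xi+\zeta)+M'$ trivially since $R\geq 0$.

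The only substantive issue is ensuring that the smallness of $t^{-E}\zeta$ is uniform in $\zeta\in K$ as $t$ grows, which is precisely the content of Proposition~\ref{prop:ContractingCapturesCompact}; once this uniformity is secured, the rest is bookkeeping with the homogeneity identity and the compactness of $S$.
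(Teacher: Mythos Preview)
Your proof is correct and follows essentially the same strategy as the paper's: scale by $t^{-E}$ so that $\zeta$ becomes a small perturbation of a point on the sphere, use the contracting property to make this uniform in $\zeta\in K$, and handle the remaining bounded region with the additive constant. The only cosmetic differences are that the paper bounds $R(\eta+t^{-E}\zeta)$ from below by working on the annulus $\overline{B}_{3/2}\setminus B_{1/2}$ and taking $M=\inf_{\eta\in S,\,u\in U}R(u)/R(\eta)$, whereas you use uniform continuity of $R$ on $\overline{B_2}$; and the paper delimits the ``large'' region by $t>T$ rather than by $R(\xi)\geq CT$, then shows its complement is bounded via Lemma~\ref{lem:OperatorBoundsforContractingGroup}.
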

\begin{proof}
Set $U=\overline{B}_{3/2}\setminus B_{1/2}=\{u\in\mathbb{R}^d:1/2\leq |u|\leq 3/2\}$ and
\begin{equation*}
M=\inf_{\eta\in S,u\in U}\frac{R(u)}{R(\eta)};
\end{equation*}
$M$ is necessarily positive because $R$ is continuous and positive definite.
For $E\in\Exp(P)$, $\{t^E\}$ is contracting and so it follows that for some $T>0$, $\left(\eta+t^{-E}\zeta\right)\in U$ for all $\eta\in S$, $\zeta\in K$ and $t> T$. Consider the set $V=\{\xi=t^E\eta\in\mathbb{R}^d:t>T,\eta\in S\}$ and observe that for any $\xi\in V$ and $\zeta\in K$, $t^{-E}(\xi+\zeta)=\eta+t^{-E}\zeta\in U$ for some $t>T$ and consequently
\begin{equation*}
\frac{R(\xi+\zeta)}{R(\xi)}=\frac{tR(\eta+t^{-E}\zeta)}{tR(\eta)}\geq M.
\end{equation*}
We have shown that $MR(\xi)\leq R(\xi+\zeta)$ for all $\xi\in V$ and $\zeta\in K$. To complete the proof, it remains to show that $R$ is bounded on $V^{\mathsf{C}}=\mathbb{R}^d\setminus V$; however, given the continuity of $R$, we need only verify that the set $V^{\mathsf{C}}$ is bounded. By virtue of Proposition \ref{prop:ScalefromSphere}, we can write
\begin{equation*}
V^{\mathsf{C}}=\{0\}\cup\left\{\xi=t^E\eta:t\leq T,\eta\in S\right\}.
\end{equation*}
and so, by an appeal to Lemma \ref{lem:OperatorBoundsforContractingGroup}, we see that $|\xi|\leq C+T^{\|E\|}$ for all $\xi\in V^{\mathsf{C}}$.
\end{proof}

\noindent Our final three results in this subsection concern estimates for $P$ and $R$ regarded as functions on $\mathbb{C}^d$. In what follows, $|\cdot|$ denotes the standard euclidean norm on $\mathbb{C}^d=\mathbb{R}^{2d}$ and $S$ denotes the $2d$-sphere.
\begin{proposition}\label{prop:RDominatesNorm}
For any $M,M'>0$, there exists $C>0$, for which
\begin{equation*}
|z|\leq C+M R(\xi)+M' R(\nu).
\end{equation*}
for all $z=\xi-i\nu\in\mathbb{C}^d$.
\end{proposition}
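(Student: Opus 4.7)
The plan is to reduce to a one-variable estimate: it suffices to show that for any $M>0$ there exists a constant $C_M>0$ such that
\[
|\xi|\leq C_M+MR(\xi)\qquad\text{for all }\xi\in\mathbb{R}^d.
\]
Applying this separately to $\xi$ and $\nu$ (with constants $M$ and $M'$) and invoking $|z|\leq|\xi|+|\nu|$ yields the stated bound with $C=C_M+C_{M'}$.

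The key fact driving the one-variable estimate is the quadratic-type lower bound $R(\xi)\gtrsim|\xi|^2$ at infinity. To get this, I would first pick $E\in\Exp(P)$ with real spectrum (guaranteed by the definition of positive homogeneous polynomial) and apply Proposition \ref{prop:PositiveHomogeneousPolynomialsareSemiElliptic} to conclude that $E$ is similar to $\diag((2m_1)^{-1},\ldots,(2m_d)^{-1})$ for some positive integers $m_j$; in particular $\lambda_{\max}(E)\leq 1/2$. Lemma \ref{lem:SpectralEstimateforContractingGroup} then supplies $C_0>0$ with
\[
\|t^E\|\leq C_0\, t^{1/2}\quad (t\geq 1),\qquad \|t^E\|\leq K_1\quad (0<t\leq 1),
\]
the second bound coming from the continuity and contraction of $\{t^E\}$ on $(0,1]$.

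Next, given $\xi\neq 0$, write $\xi=t^E\eta$ with $t>0$ and $\eta\in S$ using Proposition \ref{prop:ScalefromSphere}. Set $c:=\inf_{\eta\in S}R(\eta)>0$ (positive because $R$ is continuous and positive definite on the compact set $S$). Homogeneity gives $R(\xi)=tR(\eta)\geq ct$. In the case $t\leq 1$ we simply have $|\xi|\leq K_1$. In the case $t\geq 1$ we have $|\xi|\leq C_0 t^{1/2}\leq C_0(R(\xi)/c)^{1/2}$, and Young's inequality $\sqrt{ab}\leq \epsilon b+a/(4\epsilon)$, applied with $\epsilon=M$, produces
\[
|\xi|\leq MR(\xi)+\frac{C_0^2}{4Mc}.
\]
Taking $C_M:=\max\bigl(K_1,\,C_0^2/(4Mc)\bigr)$ combines the two cases into $|\xi|\leq C_M+MR(\xi)$, which finishes the one-variable estimate.

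The main obstacle is really just locating the quadratic-type lower bound $R(\xi)\gtrsim|\xi|^2$ at infinity; everything else is a standard application of Young's inequality. This lower bound is not automatic for an arbitrary positive homogeneous polynomial unless one exploits the spectral bound $\lambda_{\max}(E)\leq 1/2$ coming from Proposition \ref{prop:PositiveHomogeneousPolynomialsareSemiElliptic}, so that is where I would focus care. Once that is secured, the remainder of the argument is routine.
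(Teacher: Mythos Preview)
Your argument is correct. The reduction to a one-variable estimate via $|z|\leq|\xi|+|\nu|$ is legitimate, and your one-variable bound is sound: the spectral bound $\lambda_{\max}(E)\leq 1/2$ from Proposition~\ref{prop:PositiveHomogeneousPolynomialsareSemiElliptic} together with Lemma~\ref{lem:SpectralEstimateforContractingGroup} does yield $|\xi|\lesssim R(\xi)^{1/2}$ for large $\xi$, and Young's inequality then converts this into the desired $|\xi|\leq C_M+MR(\xi)$.

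The paper's proof takes a somewhat different route. Rather than splitting $z$ into its real and imaginary parts and treating each separately, it works directly in $\mathbb{R}^{2d}$ with the positive definite function $Q(\xi,\nu)=MR(\xi)+M'R(\nu)$ and the contracting group $\{t^{E\oplus E}\}$; it then shows that the ratio $|(\xi,\nu)|/Q(\xi,\nu)$ is at most $1$ outside a bounded set by scaling from the unit sphere in $\mathbb{R}^{2d}$ and using only the weaker bound $\lambda_{\max}<1$. Your approach is arguably more elementary and more quantitative (you get an explicit $C_M$ in terms of $M$, $c$, and $C_0$ via Young), while the paper's approach is more unified and avoids the auxiliary inequality step at the cost of introducing the direct-sum group on $\mathbb{R}^{2d}$. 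Both hinge on the same structural fact: the spectrum of any real-spectrum element of $\Exp(P)$ lies in $(0,1/2]$, so $R$ grows superlinearly at infinity.
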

\begin{proof}
Define $Q(\xi,\nu)=MR(\xi)+M'R(\nu)$ for $(\xi,\nu)=z\in\mathbb{R}^{2d}$ and observe that $Q$ is positive definite. It suffices to show that there exists a set $V$ with bounded complement $V^{\mathsf{C}}=\mathbb{R}^{2d}\setminus V $ such that
\begin{equation}\label{eq:RDominatesNorm}
|z|=|(\xi,\nu)|\leq Q(\xi,\nu)
\end{equation}
for all $(\xi,\nu)\in V$. To this end, set
\begin{equation*}
N=\sup_{(\eta,\zeta)\in S}\frac{|(\eta,\zeta)|}{Q(\eta,\zeta)}
\end{equation*}
which is finite because $Q$ is strictly positive on $S$. Let $E\in\Exp(P)$ have real spectrum and recall that $E$ is diagonalizable with $\lambda:=\max(\Spec(E))<1$ in view of Proposition \ref{prop:PositiveHomogeneousPolynomialsareSemiElliptic}. An appeal to Lemma \ref{lem:SpectralEstimateforContractingGroup} gives $C>0$ for which $\|t^E\|\leq Ct^\lambda$ for all $t\geq 1$; the lemma also guarantees that $\{t^{E\oplus E}\}\subseteq\mbox{Gl}_{2d}(\mathbb{R})$ is contracting. Set $T=\max(\{1,(CN)^{1/(1-\lambda}\})$ and consider the set $V=\{(\xi,\nu)=t^{E\oplus E}(\eta,\zeta)\in\mathbb{R}^{2d}:t>T,(\eta,\zeta)\in S\}$. For any $(\xi,\nu)\in V$, we have
\begin{equation*}
\frac{|(\xi,\nu)|}{Q(\xi,\nu)}=\frac{|(t^E\eta,t^E\zeta)|}{Q(t^{E\oplus E}(\eta,\zeta))}\leq \frac{Ct^{\lambda}|(\eta,\zeta)|}{tQ(\eta,\zeta)}\leq Ct^{\lambda-1}N<N^{-1}N=1
\end{equation*}
and therefore \eqref{eq:RDominatesNorm} is satisfied. To see that $V^{\mathsf{C}}$ is bounded, one simply repeats the argument given in the proof of Proposition \ref{prop:RDominatesNorm} where, in this case, Proposition \ref{prop:ScalefromSphere} and Lemma \ref{lem:OperatorBoundsforContractingGroup} are applied to the one-parameter contracting group $\{t^{E\oplus E}\}$.
\end{proof}

\noindent By considering only real arguments $\xi\in\mathbb{R}^d$, Proposition \ref{prop:RDominatesNorm} ensures that, for some constant $C_1>0$, $|\xi|\leq C_1+R(\xi)$ for all $\xi\in\mathbb{R}^d$. Upon noting that $R$ is strictly positive on any sphere of radius $\delta$, one easily obtains the following corollary.

\begin{corollary}\label{cor:RDominatesNorm}
For each $C,\delta>0$, there exists $M>0$ for which
\begin{equation*}
|\xi|+C\leq M R(\xi)
\end{equation*}
for all $|\xi|>\delta$.
\end{corollary}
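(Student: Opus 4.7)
The plan is to combine the uniform bound $|\xi|\le C_1+R(\xi)$ on all of $\mathbb{R}^d$ (an immediate specialization of Proposition~\ref{prop:RDominatesNorm} to real arguments with $\nu=0$) with a positive lower bound for $R$ on the set $\{|\xi|\ge\delta\}$. With these two ingredients in hand, setting $M:=1+(C_1+C)/r_0$ where $r_0>0$ is the lower bound for $R$ on $\{|\xi|\ge\delta\}$, we will have
\begin{equation*}
MR(\xi)=R(\xi)+\frac{C_1+C}{r_0}R(\xi)\ge R(\xi)+(C_1+C)\ge |\xi|+C
\end{equation*}
for all $|\xi|>\delta$, which is the desired estimate.

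The only nontrivial step is verifying that $r_0:=\inf_{|\xi|\ge\delta}R(\xi)>0$. The obstacle here is that $\{|\xi|\ge\delta\}$ is not compact, so continuity and positive definiteness of $R$ alone do not immediately yield a strictly positive infimum. To handle this, I plan to exploit the homogeneity of $R$ together with the contractive nature of the one-parameter group $\{t^E\}$ for $E\in\Exp(P)$. Specifically, given any sequence $\{\xi_n\}$ with $|\xi_n|\ge\delta$, I will use Proposition~\ref{prop:ScalefromSphere} to write $\xi_n=t_n^E\eta_n$ with $\eta_n\in S$ and $t_n>0$, so that $R(\xi_n)=t_nR(\eta_n)$.

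Since $R$ is continuous and positive definite, $m:=\min_{\eta\in S}R(\eta)>0$. Therefore if $R(\xi_n)\to 0$, we would be forced to conclude $t_n\to 0$. But by Lemma~\ref{lem:tEisContracting}, $\{t^E\}$ is a contracting one-parameter group, so $t_n\to 0$ forces $|t_n^E\eta_n|\to 0$ uniformly in $\eta_n\in S$ (one can invoke Lemma~\ref{lem:OperatorBoundsforContractingGroup} to make this quantitative), contradicting $|\xi_n|\ge\delta$. This gives $r_0>0$ and finishes the proof.

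Everything else is routine arithmetic; the essential content of the corollary is bootstrapping the global bound $|\xi|\le C_1+R(\xi)$ from Proposition~\ref{prop:RDominatesNorm} to a ``multiplicative'' bound of the form $|\xi|\lesssim R(\xi)$ away from the origin, where the positive definiteness and homogeneity of $R$ guarantee that $R$ does not approach $0$ outside any fixed neighborhood of the origin.
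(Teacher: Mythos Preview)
Your proposal is correct and follows essentially the same route as the paper: specialize Proposition~\ref{prop:RDominatesNorm} to real arguments to obtain $|\xi|\le C_1+R(\xi)$, then use a strictly positive lower bound $r_0$ for $R$ on $\{|\xi|\ge\delta\}$ to absorb the additive constant into a multiplicative one. The only difference is in how you justify $r_0>0$: you invoke the contracting group $\{t^E\}$ and Proposition~\ref{prop:ScalefromSphere}, whereas the paper's one-line remark (``$R$ is strictly positive on any sphere of radius $\delta$'') implicitly leans on the bound $R(\xi)\ge|\xi|-C_1$ just obtained, which forces $R\to\infty$ at infinity and reduces the question to a compact annulus. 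Both arguments are valid; the paper's is slightly more economical since it reuses the first step rather than reaching back to the homogeneity machinery.
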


\begin{proposition}\label{prop:ComplexEstimate}
Let $P$ be a positive homogeneous polynomial with $R=\Re P$. There exist $\epsilon>0$ and $M>0$ such that \begin{equation}\label{eq:ComplexEstimate_1}
-\Re P(z)\leq -\epsilon R(\xi)+M R(\nu)
\end{equation}
and
\begin{equation}\label{eq:ComplexEstimate_2}
|P(z)|\leq M (R(\xi)+R(\nu))
\end{equation}
for all $z=\xi-i\nu\in\mathbb{C}^d$.
\end{proposition}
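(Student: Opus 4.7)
The plan is to lift both estimates to statements on $\mathbb{R}^{2d}\cong\mathbb{C}^d$ and then invoke the results already proved for homogeneous functions with respect to a contracting one-parameter group. Fix $E\in\Exp(P)\subseteq\Exp(R)$ and consider the block-diagonal one-parameter group $\{t^{E\oplus E}\}_{t>0}\subseteq\mbox{Gl}_{2d}(\mathbb{R})$ acting on $(\xi,\nu)\in\mathbb{R}^{2d}$ by $(\xi,\nu)\mapsto(t^E\xi,t^E\nu)$; since $\{t^E\}$ is contracting by Lemma~\ref{lem:tEisContracting}, so is $\{t^{E\oplus E}\}$. Because $P$ is a polynomial homogeneous with respect to $t^E$, its holomorphic extension to $\mathbb{C}^d$ satisfies $P(t^Ez)=tP(z)$, and hence the three functions
\[
(\xi,\nu)\mapsto |P(\xi-i\nu)|,\qquad (\xi,\nu)\mapsto -\Re P(\xi-i\nu),\qquad (\xi,\nu)\mapsto R(\xi)+R(\nu)
\]
are continuous and $\{t^{E\oplus E}\}$-homogeneous of degree one on $\mathbb{R}^{2d}$, and the last one is positive definite there.

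The estimate \eqref{eq:ComplexEstimate_2} follows immediately by applying Proposition~\ref{prop:ComparableFunctions} to the pair $|P(\xi-i\nu)|$ and $R(\xi)+R(\nu)$.

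For \eqref{eq:ComplexEstimate_1}, fix any $\epsilon\in(0,1)$ and, for $M\geq 0$, define
\[
\Phi_M(\xi,\nu):=-\Re P(\xi-i\nu)+\epsilon R(\xi)-MR(\nu),
\]
which is continuous and $\{t^{E\oplus E}\}$-homogeneous of degree one. By Proposition~\ref{prop:ScalefromSphere}, establishing $\Phi_M\leq 0$ on the unit sphere $\Sigma\subseteq\mathbb{R}^{2d}$ is equivalent to establishing $\Phi_M\leq 0$ on all of $\mathbb{R}^{2d}$, which is precisely the desired inequality. On the compact slice $\Sigma_0:=\{(\xi,0)\in\Sigma\}$ one has $\Phi_0(\xi,0)=(\epsilon-1)R(\xi)<0$, so continuity gives an open neighborhood $V\subseteq\Sigma$ of $\Sigma_0$ on which $\Phi_0<0$. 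Since $\Phi_M=\Phi_0-MR(\nu)\leq\Phi_0$ for every $M\geq 0$, this yields $\Phi_M<0$ on $V$ simultaneously for all such $M$.

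The only genuine work then happens on the compact complement $\Sigma\setminus V$, and this is also the spot where a naive application of Proposition~\ref{prop:ComparableFunctions} fails, because $R(\nu)$, being zero on the subspace $\{\nu=0\}$, is not positive definite on $\mathbb{R}^{2d}$. The remedy is compactness: on $\Sigma\setminus V$ the coordinate $\nu$ is bounded away from $0$, so the continuous positive-definite function $R(\nu)$ admits a uniform lower bound $c>0$, while the continuous function $(\xi,\nu)\mapsto -\Re P(\xi-i\nu)+\epsilon R(\xi)$ attains some finite maximum $K$ on $\Sigma\setminus V$. Choosing $M\geq K/c$ then forces $\Phi_M\leq 0$ on $\Sigma\setminus V$, and together with the bound on $V$ this gives $\Phi_M\leq 0$ throughout $\Sigma$, completing the proof.
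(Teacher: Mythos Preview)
Your argument is correct. For \eqref{eq:ComplexEstimate_2} you do exactly what the paper does: note that $|P(\xi-i\nu)|$ and $R(\xi)+R(\nu)$ are both continuous and $\{t^{E\oplus E}\}$-homogeneous on $\mathbb{R}^{2d}$, the latter positive definite, and invoke Proposition~\ref{prop:ComparableFunctions}.

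For \eqref{eq:ComplexEstimate_1} the two proofs genuinely diverge. The paper first reduces, via Proposition~\ref{prop:PositiveHomogeneousPolynomialsareSemiElliptic}, to a semi-elliptic polynomial $P_A$, then applies the multivariate binomial theorem to write $P_A(\xi-i\nu)=P_A(\xi)+Q(\xi,\nu)$ with $Q$ collecting all cross terms $\xi^\gamma\nu^{\beta-\gamma}$; each such monomial is $\{t^{D\oplus D}\}$-homogeneous, so Proposition~\ref{prop:ComparableFunctions} bounds it by $C(R_A(\xi)+R_A(\nu))$, and a further rescaling $\xi\mapsto t^{-D}\xi$ exploits $|\gamma:2\mathbf{m}|<1$ to push the coefficient of $R_A(\xi)$ below any $\delta>0$. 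Your route is purely topological: fix $\epsilon<1$, reduce by homogeneity to the unit sphere $\Sigma\subseteq\mathbb{R}^{2d}$, observe $\Phi_0<0$ on the slice $\{\nu=0\}$ and hence on a neighborhood $V$, and on the compact complement $\Sigma\setminus V$ choose $M$ large enough that $MR(\nu)$ absorbs the bounded remainder. This is cleaner---it never touches coefficients or the semi-elliptic normal form---and it applies verbatim to any continuous $\{t^E\}$-homogeneous $P$ whose complexification remains homogeneous, not just polynomials. The paper's approach, on the other hand, makes visible the mechanism (the strict inequality $|\gamma:2\mathbf{m}|<1$ for cross terms) that underlies the estimate and yields the same ``for every $\epsilon<1$'' conclusion through an explicit scaling, which may be preferable when one later wants quantitative control on $M$ in terms of $\epsilon$.
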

\begin{proof}
Let $E\in\Exp(P)$ have strictly real spectrum and, by virtue of Proposition \ref{prop:PositiveHomogeneousPolynomialsareSemiElliptic}, let $A$ be such that $D=A^{-1} EA=\diag((2m_1)^{-1},(2m_2)^{-1},\dots,(2m_d)^{-1})$ and
\begin{equation*}
P_A(\xi):=(P\circ L_A)(\xi)=\sum_{|\alpha:\mathbf{m}|=2}a_\alpha\xi^{\alpha},
\end{equation*}
where $\mathbf{m}=(m_1,m_2,\dots,m_d)\in\mathbb{N}_+^d$. Because $A\in\GldR\subseteq\mbox{Gl}_d(\mathbb{C})$, it suffices to verify the estimates \eqref{eq:ComplexEstimate_1} and \eqref{eq:ComplexEstimate_2} for $P_A$ and $R_A=\Re P_A$. As in the proof of the previous proposition, we identify $\mathbb{C}^d=\mathbb{R}^{2d}$ by $z=(\xi,\nu)$, and observe that $\{t^{D\oplus D}\}\subseteq \mbox{Gl}_{2d}(\mathbb{R})$ is contracting. Consequently, by considering $T_t=t^{D\oplus D}$, the estimate \eqref{eq:ComplexEstimate_2} follows directly from Proposition \ref{prop:ComparableFunctions}.

An appeal to the multivariate binomial theorem shows that for all $z=\xi-i\nu\in\mathbb{C}^d$,
\begin{equation}\label{eq:ComplexEstimate_3}
P_A(\xi-i\nu)=P_A(\xi)+Q(\xi,\nu),
\end{equation}
where
\begin{equation*}
Q(\xi,\nu)=\sum_{|\alpha:\mathbf{m}|=2}a_{\alpha}\sum_{\gamma<\alpha}\binom{\alpha}{\gamma}\xi^{\gamma}(-i\nu)^{\alpha-\gamma}=\sum_{\substack{|\alpha:\mathbf{m}|=2\\ \gamma<\alpha}}b_{\alpha,\gamma}\xi^{\gamma}\nu^{\alpha-\gamma};
\end{equation*}
here, $\{b_{\alpha,\gamma}\}\subseteq\mathbb{C}$. We claim that for each $\delta>0$, there exists $M>0$ such that
\begin{equation}\label{eq:ComplexEstimate_4}
|Q(\xi,\nu)|\leq \delta R_A(\xi)+MR_A(\nu)
\end{equation}
for all $\xi,\nu\in\mathbb{R}^d$. For the moment, let us accept the validity of the claim. By choosing $\delta<1$, a combination of \eqref{eq:ComplexEstimate_3} and \eqref{eq:ComplexEstimate_4} yields
\begin{equation*}
-\Re(P_A(\xi-i\nu))+R_A(\xi)\leq \delta R_A(\xi)+MR_A(\nu)
\end{equation*}
for all $\xi,\nu\in\mathbb{R}^d$ and from this we see that \eqref{eq:ComplexEstimate_1} is satisfied with $\epsilon=1-\delta$. It therefore suffices to prove \eqref{eq:ComplexEstimate_4}.

For any multi-indices $\beta$ and $\gamma$ for which $|\beta:\mathbf{m}|=2$ and $\gamma<\beta$, it is straightforward to see that
\begin{equation*}
(t^D\xi)^{\gamma}(t^D\nu)^{\beta-\gamma}=t^{|\gamma:2\mathbf{m}|}t^{|\beta-\gamma:2\mathbf{m}|}\xi^{\gamma}\nu^{\beta-\gamma}=t^{|\beta:2\mathbf{m}|}\xi^{\gamma}\nu^{\beta-\gamma}=t\xi^{\gamma}\nu^{\beta-\gamma}
\end{equation*}
for all $\xi,\nu\in\mathbb{R}^d$ and so the map $(\xi,\nu)\mapsto\xi^{\gamma}\nu^{\beta-\gamma}$ is homogeneous with respect to the contracting group $\{t^{D\oplus D}\}\subseteq\mbox{Gl}_{2d}(\mathbb{R})$. Consequently, an application of Proposition \ref{prop:ComparableFunctions} gives $C>0$ for which
\begin{equation*}
|\xi^{\gamma}\nu^{\beta-\gamma}|\leq C( R_A(\xi)+R_A(\nu))
\end{equation*}
for all $\xi,\nu\in\mathbb{R}^d$. By invoking the homogeneity of $\xi^\gamma$ and $R_A(\xi)$ with respect to $\{t^D\}\subseteq\GldR$, the above estimate ensures that, for all $t>0$,
\begin{align*}
|\xi^{\gamma}\nu^{\beta-\gamma}|&=|t^{|\gamma:2\mathbf{m}|}(t^{-D}\xi)^{\gamma}\nu^{\beta-\gamma}|\\
&\leq t^{|\gamma:2\mathbf{m}|}C(R_A(t^{-D}\xi)+R_A(\nu))\\
&=Ct^{|\gamma:2\mathbf{m}|-1}R_A(\xi)+Ct^{|\gamma:2\mathbf{m}|}R_A(\nu)
\end{align*}
for all $\xi,\nu\in\mathbb{R}^d$. Noting that $|\gamma:2\mathbf{m}|-1<0$ because $\gamma<\beta$, we can make the coefficient of $R_A(\xi)$ in the above estimate arbitrarily small by choosing $t$ sufficiently large. Consequently, for any $\delta>0$ there exists $M>0$ for which
\begin{equation*}
|\xi^{\gamma}\nu^{\beta-\gamma}|\leq \delta R_A(\xi)+M R_A(\nu)
\end{equation*}
for all $\xi,\nu\in\mathbb{R}^d$. The claim \eqref{eq:ComplexEstimate_4} now follows by a simple application of the triangle inequality.
\end{proof}

\subsection{Properties of the Legendre-Fenchel transform of a positive homogeneous polynomial}
\begin{lemma}\label{lem:LegendreNormEstimate}
Let $P$ be a positive homogeneous polynomial and let $R=\Re P$. For $E\in\Exp(P)$ with real spectrum let $\lambda_{\mbox{\tiny{max}}}=\max(\Spec(E))$ and $\lambda_{\mbox{\tiny{min}}}=\min(\Spec(E))$ (note that $0<\lambda_{\mbox{\tiny{min}}},\lambda_{\mbox{\tiny{max}}}\leq 1/2$ by Proposition \ref{prop:PositiveHomogeneousPolynomialsareSemiElliptic}) and set
\begin{equation*}
\mathcal{N}_E(x)=\begin{cases}
          |x|^{1/(1-\lambda_{\mbox{\tiny{max}}})} & \mbox{if }|x|\geq 1\\
          |x|^{1/(1-\lambda_{\mbox{\tiny{min}}})} & \mbox{if }|x|<1
         \end{cases}
\end{equation*}
for $x\in\mathbb{R}^d$. There are positive constants $M,M'$ for which
\begin{equation}\label{eq:LegendreNormEstimate}
|x|-M\leq R^{\#}(x)\leq M'\mathcal{N}_E(x)
\end{equation}
for all $x\in\mathbb{R}^d$.
\end{lemma}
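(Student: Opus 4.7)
The plan is to prove the two inequalities separately, relying on two basic facts: $R^{\#}$ is continuous and positive definite (Proposition \ref{prop:LegendreContinuousPositiveDefinite}), and $R^{\#}$ is homogeneous with respect to the one-parameter contracting group $\{t^{(I-E)^*}\}$, i.e., $R^{\#}(t^{(I-E)^*}x) = tR^{\#}(x)$ for all $t>0$ and $x \in \mathbb{R}^d$. The lower bound is essentially immediate: for nonzero $x$, plugging the test point $\xi = x/|x|$ into the defining supremum yields
\begin{equation*}
R^{\#}(x) \geq x \cdot (x/|x|) - R(x/|x|) = |x| - R(x/|x|) \geq |x| - M,
\end{equation*}
where $M := \sup_{|\eta|=1} R(\eta) < \infty$ by continuity of $R$. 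At $x=0$ one has $R^{\#}(0)=0 \geq -M$ trivially.

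For the upper bound, I would set $N := \sup_{|\eta|=1} R^{\#}(\eta) < \infty$ and, given nonzero $x$, invoke Proposition \ref{prop:ScalefromSphere} to write $x = s^{(I-E)^*}\eta$ for some $s > 0$ and $\eta \in S$. Homogeneity then gives $R^{\#}(x) = sR^{\#}(\eta) \leq sN$, reducing the task to bounding $s$ by a constant multiple of $\mathcal{N}_E(x)$. I will take norms in the relation $\eta = s^{-(I-E)^*}x$ to obtain $1 \leq \|s^{-(I-E)^*}\|\cdot|x|$. Since $I-E$ is diagonalizable with real spectrum $\{1-\lambda : \lambda \in \Spec(E)\} \subseteq [1-\lambda_{\mbox{\tiny max}},\, 1-\lambda_{\mbox{\tiny min}}] \subseteq [1/2, 1)$, Lemma \ref{lem:SpectralEstimateforContractingGroup} applied to $(I-E)^*$ at argument $1/s$ gives
\begin{equation*}
\|s^{-(I-E)^*}\| \leq C s^{-(1-\lambda_{\mbox{\tiny min}})} \text{ for } 0 < s \leq 1, \qquad \|s^{-(I-E)^*}\| \leq C s^{-(1-\lambda_{\mbox{\tiny max}})} \text{ for } s > 1.
\end{equation*}
Substituting into $1 \leq \|s^{-(I-E)^*}\|\cdot|x|$ and solving yields $s \leq C'|x|^{1/(1-\lambda_{\mbox{\tiny min}})}$ in the first case and $s \leq C'|x|^{1/(1-\lambda_{\mbox{\tiny max}})}$ in the second.

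The main technical nuance is that the decomposition $x = s^{(I-E)^*}\eta$ need not be unique, so a given $x$ may be realized with $s \leq 1$ for some choice and with $s > 1$ for another; both bounds must therefore be reconciled with the piecewise definition of $\mathcal{N}_E$. This is handled by the following case analysis: for $|x| \geq 1$ the exponent $1/(1-\lambda_{\mbox{\tiny max}})$ is the larger, and the alternative case $s \leq 1$ is trivially absorbed via $s \leq 1 \leq |x|^{1/(1-\lambda_{\mbox{\tiny max}})}$; for $|x| < 1$ the map $a \mapsto |x|^a$ is decreasing, so the $s>1$ estimate $s \leq C'|x|^{1/(1-\lambda_{\mbox{\tiny max}})}$ automatically satisfies $s \leq C'|x|^{1/(1-\lambda_{\mbox{\tiny min}})}$. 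In every scenario one concludes $s \leq C'\mathcal{N}_E(x)$, and taking $M' = NC'$ completes the proof.
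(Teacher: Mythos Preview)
Your lower bound matches the paper's verbatim. The upper bound argument, however, has a circularity: you invoke Proposition~\ref{prop:LegendreContinuousPositiveDefinite} to guarantee $N:=\sup_{|\eta|=1}R^{\#}(\eta)<\infty$, but in the paper that proposition comes \emph{after} the present lemma, and its proof explicitly uses the upper estimate of \eqref{eq:LegendreNormEstimate} to establish that $R^{\#}$ is finite (hence continuous). So as written your argument is circular within the paper's logical structure. The fix is immediate and avoids the proposition entirely: for $|\eta|=1$ one has $\eta\cdot\xi-R(\xi)\le|\xi|-R(\xi)$, and Proposition~\ref{prop:RDominatesNorm} gives a constant $C>0$ with $|\xi|\le C+R(\xi)$ for all $\xi$, whence $R^{\#}(\eta)\le C$ uniformly on $S$. (You also implicitly use that $\{t^{(I-E)^*}\}$ is contracting in order to invoke Proposition~\ref{prop:ScalefromSphere}; this is fine by Lemma~\ref{lem:SpectralEstimateforContractingGroup}, since $(I-E)^*$ is diagonalizable with spectrum in $[1/2,1)$.)

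With that repair your route is genuinely different from the paper's. You exploit the homogeneity of $R^{\#}$ under $\{t^{(I-E)^*}\}$ to reduce to a bound on $S$ and then control the scaling parameter $s$, which forces the case-reconciliation you carry out. The paper instead works directly with the defining supremum: for $|x|\ge1$ it sets $t=|x|^{1/(1-\lambda_{\max})}$ and combines $\|t^E\|\le Ct^{\lambda_{\max}}$ with $C|\xi|\le R(\xi)+M'$ (again Proposition~\ref{prop:RDominatesNorm}) to obtain $x\cdot\xi\le R(\xi)+M'|x|^{1/(1-\lambda_{\max})}$ for every $\xi$, and argues analogously for $|x|<1$. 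The paper's computation is self-contained and sidesteps any a priori finiteness of $R^{\#}$; yours is more structural but hinges on establishing $N<\infty$ independently.
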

\begin{proof}
Set $M=\sup_{\xi\in S}R(\xi)$ and observe that, for any non-zero $x\in\mathbb{R}^d$,
\begin{equation*}
R^{\#}(x)=\sup_{\xi\in\mathbb{R}^d}\{x\cdot \xi-R(\xi)\}\geq x\cdot\frac{x}{|x|}-R\left(\frac{x}{|x|}\right)\geq |x|-M.
\end{equation*}
The lower estimate in \eqref{eq:LegendreNormEstimate} now follows from the observation that $R^{\#}(0)=0$ which is true because $R$ is positive definite. We now focus on the upper estimate. In view of Lemma \ref{lem:SpectralEstimateforContractingGroup} and Proposition \ref{prop:PositiveHomogeneousPolynomialsareSemiElliptic}, let $C\geq 1$ be such that $\|t^E\|\leq Ct^{\lambda_{\mbox{\tiny{max}}}}$ for $t\geq 1$ and $\|t^E\|\leq Ct^{\lambda_{\mbox{\tiny{min}}}}$ for $t\leq 1$. An appeal to Proposition \ref{prop:RDominatesNorm} gives $M'>0$ for which $C|\xi|\leq R(\xi)+M'$ for all $\xi\in\mathbb{R}^d$. In the case that $|x|\geq 1$, we set $t=|x|^{1/(1-\lambda_{\mbox{\tiny{max}}})}$ and observe that
\begin{align*}
x\cdot\xi&\leq |x||t^Et^{-E}\xi|\\
&\leq |x|\|t^E\||t^{-E}\xi|\\
& \leq|x|t^{\lambda_{\mbox{\tiny{max}}}}\left(R(t^{-E}\xi)+M'\right)\\
&= |x|t^{\lambda_{\mbox{\tiny{max}}}-1}R(\xi)+M'|x|t^{\lambda_{\mbox{\tiny{max}}}}\\
&= R(\xi)+M'|x|^{1/(1-\lambda_{\mbox{\tiny{max}}})}
\end{align*}
for all $\xi\in\mathbb{R}^d$ and therefore
\begin{equation*}
R^{\#}(x)=\sup_{\xi\in\mathbb{R}^d}\{x\cdot\xi-R(\xi)\}\leq M'|x|^{1/(1-\lambda_{\mbox{\tiny{max}}})}=M'\mathcal{N}_E(x).
\end{equation*}
When $|x|\leq 1$, we repeat the argument above to find that 
\begin{equation*}R^{\#}(x)\leq M'|x|^{1/(1-\lambda_{\mbox{\tiny{min}}})}=M'\mathcal{N}_E(x)
\end{equation*}
as desired.
\end{proof}

\begin{proposition}\label{prop:LegendreContinuousPositiveDefinite}
Let $P$ be a positive homogeneous polynomial with $R=\Re P$. Then $R^{\#}$ is continuous, positive definite, and for any $E\in\Exp(P)$, $F=(I-E)^*\in\Exp(R^{\#})$.
\end{proposition}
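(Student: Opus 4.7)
My plan is to establish the three claims in order: first the homogeneity relation $F=(I-E)^*\in\Exp(R^{\#})$, then positive definiteness, and finally continuity, taking advantage of the bounds provided by Lemma \ref{lem:LegendreNormEstimate} and the contracting-group machinery assembled in the Appendix.

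For the homogeneity relation, I would work with an arbitrary $E\in\Exp(P)$ and compute $R^{\#}(t^{(I-E)^*}x)$ directly via a change of variable. Since $E\in\Exp(P)\subseteq\Exp(R)$, the key identities are $R(t^{E}\eta)=tR(\eta)$ together with $t^{I-E}t^{E}=t^{I}=tI$ (using that $I$ commutes with $E$). Substituting $\xi=t^{E}\eta$ inside the sup,
\begin{equation*}
R^{\#}(t^{(I-E)^*}x)=\sup_{\xi\in\mathbb{R}^d}\{x\cdot(t^{I-E}\xi)-R(\xi)\}=\sup_{\eta\in\mathbb{R}^d}\{tx\cdot\eta-tR(\eta)\}=tR^{\#}(x),
\end{equation*}
which is exactly the scaling identity already sketched in the introduction preceding \eqref{eq:LegendreTransformEstimateIntro}. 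This shows $F=(I-E)^*\in\Exp(R^{\#})$.

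For positive definiteness, note that $R^{\#}(x)\geq x\cdot 0-R(0)=0$ for every $x$, so it remains only to show $R^{\#}(x)>0$ when $x\neq 0$. Here I pick an $E_0\in\Exp(P)$ with real spectrum (guaranteed by Definition \ref{def:positivehomogeneous}) and set $F_0=(I-E_0)^*$. By Proposition \ref{prop:PositiveHomogeneousPolynomialsareSemiElliptic}, $E_0$ is diagonalizable with $\Spec(E_0)\subseteq(0,1/2]$, so $F_0$ is diagonalizable with strictly positive spectrum, and Lemma \ref{lem:SpectralEstimateforContractingGroup} ensures that $\{t^{F_0}\}$ is a contracting one-parameter group. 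Suppose for contradiction that $R^{\#}(x_0)=0$ for some non-zero $x_0$. By the already-established scaling, $R^{\#}(t^{F_0}x_0)=tR^{\#}(x_0)=0$ for all $t>0$, whereas the lower bound $R^{\#}(y)\geq|y|-M$ from Lemma \ref{lem:LegendreNormEstimate} forces $|t^{F_0}x_0|\leq M$ for all $t>0$. This contradicts Proposition \ref{prop:ContractingLimits}, which guarantees $|t^{F_0}x_0|\to\infty$ as $t\to\infty$.

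For continuity, $R^{\#}$ is a supremum of affine functions and hence is convex and lower semi-continuous on $\mathbb{R}^d$; the upper bound $R^{\#}(x)\leq M'\mathcal{N}_{E_0}(x)$ from Lemma \ref{lem:LegendreNormEstimate} shows that it takes finite values everywhere. A convex function that is finite on all of $\mathbb{R}^d$ is automatically continuous, which completes the proof. I do not anticipate any substantive obstacle: the only non-cosmetic step is verifying that $\{t^{F}\}$ is contracting for some admissible choice of $E$, and that follows from combining the real-spectrum clause in Definition \ref{def:positivehomogeneous} with Proposition \ref{prop:PositiveHomogeneousPolynomialsareSemiElliptic} and Lemma \ref{lem:SpectralEstimateforContractingGroup}.
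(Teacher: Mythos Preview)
Your proposal is correct and follows essentially the same approach as the paper's own proof: both establish the homogeneity $F=(I-E)^*\in\Exp(R^{\#})$ from the definition via a change of variable, use convexity plus the finiteness bound from Lemma~\ref{lem:LegendreNormEstimate} for continuity, and derive positive definiteness by combining the lower bound in Lemma~\ref{lem:LegendreNormEstimate} with the contracting property of $\{t^{F_0}\}$ (via Proposition~\ref{prop:PositiveHomogeneousPolynomialsareSemiElliptic} and Lemma~\ref{lem:SpectralEstimateforContractingGroup}) to reach a contradiction with Proposition~\ref{prop:ContractingLimits}. Your explicit choice of an $E_0$ with real spectrum for the positive-definiteness step is, if anything, slightly cleaner than the paper's presentation.
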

\begin{proof}
Since $R^{\#}$ is the Legendre-Fenchel transform of $R:\mathbb{R}^d\rightarrow\mathbb{R}$ it is convex (and lower semi-continuous). Furthermore, the upper estimate in Lemma \ref{lem:LegendreNormEstimate} guarantees that $R^{\#}$ is finite on $\mathbb{R}^d$ and therefore continuous in view of Corollary 10.1.1 of \cite{Rockafellar1970}.

Given that $R$ is positive definite and homogeneous with respect to $\{t^E\}$, it follows directly from the definition of the Legendre-Fenchel transform that $R^{\#}$ is non-negative, homogeneous with respect to $\{t^F\}$ where $F=(I-E)^*$ and has $R^{\#}(0)=0$. To complete the proof, it remains to show that $R^{\#}(x)\neq 0$ for all non-zero $x\in\mathbb{R}^d$. Using the lower estimate in Lemma \ref{lem:LegendreNormEstimate}, we have
\begin{equation}\label{eq:LegendreContinuousPositiveDefinite}
\lim_{x\rightarrow\infty}R^{\#}(x)=\infty.
\end{equation}
By virtue of Proposition \ref{prop:PositiveHomogeneousPolynomialsareSemiElliptic}, $F$ is diagonalizable with $\Spec(F)\subseteq [1/2,1)$; in particular, $\{t^F\}$ is contracting in view of Lemma \ref{lem:SpectralEstimateforContractingGroup}. Now if for some non-zero $x\in\mathbb{R}^d$, $R^{\#}(x)=0$,
\begin{equation*}
0=\lim_{t\rightarrow \infty}tR^{\#}(x)=\lim_{t\rightarrow\infty}R^{\#}(t^Fx),
\end{equation*}
which is impossible in view of Proposition \ref{prop:ContractingLimits} and \eqref{eq:LegendreContinuousPositiveDefinite}.
\end{proof}

\begin{corollary}\label{cor:LegendreCompareDiagonal}
Let $P$ be a positive homogeneous polynomial of the form \eqref{eq:SemiEllipticPolynomial} for $\mathbf{m}=(m_1,m_2,\dots,m_d)\in\mathbb{N}_+^d$ and $\{a_\beta\}\subseteq\mathbb{C}$. That is, the conclusion of Proposition \ref{prop:PositiveHomogeneousPolynomialsareSemiElliptic} holds where $A=I\in\GldR$. Let $R=\Re P$, let $R^{\#}$ be the Legendre-Fenchel transform of $R$ and define $|\cdot|_\mathbf{m}:\mathbb{R}^d\rightarrow[0,\infty)$ by \eqref{eq:mHomogeneousNormDef} for $x\in\mathbb{R}^d$. Then
\begin{equation*}
R^{\#}(x)\asymp |x|_{\mathbf{m}}.
\end{equation*}
\end{corollary}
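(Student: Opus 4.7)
The plan is to apply Proposition \ref{prop:ComparableFunctions} to the two functions $R^{\#}$ and $|\cdot|_{\mathbf{m}}$, both regarded as functions on $\mathbb{R}^d$ that are homogeneous with respect to a common contracting one-parameter group. Since $P$ is in the form \eqref{eq:SemiEllipticPolynomial} with $A=I$, we have $E=D=\diag((2m_1)^{-1},(2m_2)^{-1},\dots,(2m_d)^{-1})\in\Exp(P)$. By Proposition \ref{prop:LegendreContinuousPositiveDefinite}, $R^{\#}$ is continuous, positive definite, and homogeneous with respect to $\{t^F\}$ where
\begin{equation*}
F=(I-D)^*=\diag\!\left(\frac{2m_1-1}{2m_1},\frac{2m_2-1}{2m_2},\dots,\frac{2m_d-1}{2m_d}\right).
\end{equation*}
Since $F$ has strictly positive real spectrum contained in $[1/2,1)$, Lemma \ref{lem:SpectralEstimateforContractingGroup} ensures that $\{t^F\}$ is contracting.

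Next I would verify that $|\cdot|_{\mathbf{m}}$ shares the same homogeneity. A direct computation using \eqref{eq:mHomogeneousNormDef} gives, for any $t>0$ and $x\in\mathbb{R}^d$,
\begin{equation*}
|t^Fx|_{\mathbf{m}}=\sum_{j=1}^d\left(t^{(2m_j-1)/(2m_j)}|x_j|\right)^{2m_j/(2m_j-1)}=\sum_{j=1}^d t\,|x_j|^{2m_j/(2m_j-1)}=t|x|_{\mathbf{m}},
\end{equation*}
so $|\cdot|_{\mathbf{m}}$ is homogeneous with respect to $\{t^F\}$. Continuity of $|\cdot|_{\mathbf{m}}$ is immediate since each exponent $2m_j/(2m_j-1)\geq 1$, and positive definiteness is clear because $|x|_{\mathbf{m}}=0$ forces $x_j=0$ for every $j$.

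With both $R^{\#}$ and $|\cdot|_{\mathbf{m}}$ continuous, positive definite, and homogeneous with respect to the contracting group $\{t^F\}$, a direct application of the second conclusion of Proposition \ref{prop:ComparableFunctions} yields $R^{\#}(x)\asymp|x|_{\mathbf{m}}$, as claimed. There is no real obstacle here; the only minor point to be careful about is ensuring that $|\cdot|_{\mathbf{m}}$ genuinely satisfies the hypotheses of Proposition \ref{prop:ComparableFunctions} (continuity at the origin, positive definiteness, and the precise homogeneity relation), all of which follow from the elementary computation above.
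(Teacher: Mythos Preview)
Your proof is correct and follows essentially the same approach as the paper: both identify $F=(I-D)^*=\diag\big((2m_j-1)/(2m_j)\big)$, invoke Proposition~\ref{prop:LegendreContinuousPositiveDefinite} to see that $R^{\#}$ is continuous, positive definite, and homogeneous with respect to $\{t^F\}$, verify the same properties for $|\cdot|_{\mathbf{m}}$, and conclude via Proposition~\ref{prop:ComparableFunctions}. Your version is slightly more explicit in checking the homogeneity of $|\cdot|_{\mathbf{m}}$, but the argument is the same.
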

\begin{proof}
Let us first note that $|\cdot|_{\mathbf{m}}$ is continuous, positive definite and homogeneous with respect to the one-parameter contracting group $\{t^F\}$ where $F=\diag((2m_1-1)/(2m_1),(2m_2-1)/(2m_2),\dots,(2m_d-1)/(2m_d))$. Because $E=\diag\left((2m_1)^{-1},(2m_2)^{-1},\dots,(2m_d)^{-1}\right)\in\Exp(R)$, Proposition \ref{prop:LegendreContinuousPositiveDefinite} ensures that $R^{\#}$ is continuous, positive definite and has $F=(I-E)^*\in\Exp(R^{\#})$. The desired result follows directly by an appeal to Proposition \ref{prop:ComparableFunctions}.
\end{proof}

\noindent Another application of Proposition \ref{prop:LegendreContinuousPositiveDefinite} and \ref{prop:ComparableFunctions} yields the following corollary.

\begin{corollary}\label{cor:MovingConstant}
Let $P$ be a positive homogeneous polynomial with $R=\Re P$. For any constant $M>0$,
$(MR)^{\#}\asymp R^{\#}$.
\end{corollary}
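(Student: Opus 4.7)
The plan is to reduce the corollary to a direct application of Proposition \ref{prop:ComparableFunctions}. The key observation is that replacing $P$ by $MP$ (for $M>0$) leaves the exponent set unchanged, so the Legendre--Fenchel transforms of $R$ and $MR$ live in the same ``homogeneity class'' and are therefore mutually comparable.

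First, I would verify that $MP$ is itself a positive homogeneous polynomial with $\Exp(MP)=\Exp(P)$. Indeed, $\Re(MP)=MR$ is positive definite since $M>0$ and $R$ is, and whenever $E\in\Exp(P)$ one has
\begin{equation*}
MP(t^E\xi)=M\cdot tP(\xi)=t\cdot MP(\xi)
\end{equation*}
for all $t>0$ and $\xi\in\mathbb{R}^d$, so $E\in\Exp(MP)$; the reverse inclusion is symmetric. In particular, any $E\in\Exp(P)$ with real spectrum (which exists by definition of positive homogeneous polynomial) also lies in $\Exp(MP)$.

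Next, I would apply Proposition \ref{prop:LegendreContinuousPositiveDefinite} to both $P$ and $MP$. This immediately tells me that $R^{\#}$ and $(MR)^{\#}$ are each continuous and positive definite, and both are homogeneous with respect to the \emph{same} continuous one-parameter group $\{t^F\}$ where $F=(I-E)^*$. Moreover, $\{t^F\}$ is contracting: as noted inside the proof of Proposition \ref{prop:LegendreContinuousPositiveDefinite}, $F$ is diagonalizable with $\Spec(F)\subseteq[1/2,1)\subseteq(0,\infty)$, so Lemma \ref{lem:SpectralEstimateforContractingGroup} applies.

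Having established that $R^{\#}$ and $(MR)^{\#}$ are two continuous, positive definite functions on $\mathbb{R}^d$ that are both homogeneous with respect to the common contracting one-parameter group $\{t^F\}$, an appeal to Proposition \ref{prop:ComparableFunctions} yields $(MR)^{\#}\asymp R^{\#}$, as desired. There is no real obstacle here; the corollary is essentially a bookkeeping consequence of the machinery already built, the only point of substance being the stability of $\Exp(\cdot)$ under positive scalar multiplication, which transports the scaling structure from $R$ to $MR$ and hence to their transforms.
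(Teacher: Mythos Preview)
Your proof is correct and follows exactly the approach the paper indicates: apply Proposition~\ref{prop:LegendreContinuousPositiveDefinite} to both $P$ and $MP$ (noting $\Exp(MP)=\Exp(P)$) to see that $R^{\#}$ and $(MR)^{\#}$ are continuous, positive definite, and homogeneous with respect to the same contracting group $\{t^{(I-E)^*}\}$, then invoke Proposition~\ref{prop:ComparableFunctions}. The paper states only that the corollary is ``another application'' of these two propositions, and your write-up fills in precisely those details.
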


\subsection{The proof of Proposition \ref{prop:PosHomTypeChar}}\label{subsec:ProofofPosHomTypeChar}

\begin{proof}[Proof of Proposition \ref{prop:PosHomTypeChar}]
\noindent $(a\Rightarrow b)$ Let $P=P_{\xi_0}$, take $E\in\Exp(P_{\xi_0})$ with strictly real spectrum and set $m=\max_{i=1,2\dots,d} 2m_i$ in view of Proposition \ref{prop:PositiveHomogeneousPolynomialsareSemiElliptic}. Noting that $E$ is diagonalizable,   $m+1>(\min(\Spec(E)))^{-1}$ and $Q_{\xi_0}^{m}(\xi)+O(|\xi|^{m+1})=P_{\xi_0}(\xi)+\Upsilon_{\xi_0}(\xi)$ for $\xi$ sufficiently close to $0$, our result follows from Proposition \ref{prop:NormGammaLittleOhR}.\\

\noindent $(b\Rightarrow c)$ Let $E\in\Exp(P)$ have real spectrum and observe that, for all $n\in\mathbb{N}_+$,
\begin{equation}\label{eq:PosHomTypeChar_1}
C^{-1}R(\xi)\leq n \Re Q_{\xi_0}^m(n^{-E}\xi)\leq CR(\xi) \hspace{.2cm}\mbox{ and }\hspace{.2cm}|n\Im Q_{\xi_0}^m(n^{-E}\xi)|\leq C R(\xi)
\end{equation}
for $\xi\in \overline{B_r}$. It follows that the sequence $\{\rho_n\}\subseteq C(\overline{B_r})$ of degree $m$ polynomials, defined by $\rho_n(\xi)=nQ_{\xi_0}^m(n^{-E}\xi)$ for all $n\in\mathbb{N}_+$ and $\xi\in\overline{B_r}$, is bounded. As the subspace of degree $m$ polynomials is a finite dimensional subspace of $C(\overline{B_r})$, $\{\rho_n\}$ must have a convergent subsequence. Moreover, because $R(\xi)$ is positive definite, \eqref{eq:PosHomTypeChar_1} ensures that the subsequential limit has positive real part on $S_r$.\\

\noindent $(c\Rightarrow a)$ The proof of this implication is lengthy and will be shown using a sequence of lemmas. We fix $E\in\MdR$ with real spectrum and for which the condition \eqref{eq:PosHomTypeCharHypothesis} is satisfied. As the characteristic polynomial of $E$ completely factors over $\mathbb{R}$, the Jordan-Chevally decomposition for $E$ gives $A\in\GldR$ for which $F:=A^{-1}EA=D+N$ where $D$ is diagonal, $N$ is nilpotent and $DN=ND$. Upon setting $Q_A=Q_{\xi_0}^m\circ L_A$, it follows that
\begin{equation*}
Q_A(\xi)=\sum_{1<|\beta|\leq m}a_{\beta}\xi^{\beta}
\end{equation*}
for $\xi\in\mathbb{R}^d$ where $\{a_\beta\}\subseteq\mathbb{C}$. Define $\rho_A:(0,\infty)\times\mathbb{R}^d\rightarrow\mathbb{C}$ by $\rho_A(t,\xi)=tQ_A(t^{-F}\xi)$ for $t>0$ and $\xi\in\mathbb{R}^d$. The hypotheses \eqref{eq:PosHomTypeCharHypothesis} ensures that, for each $\xi\in A^{-1}\overline{B_r}$,
\begin{equation}\label{eq:PosHomTypeChar_2}
P_A(\xi):=\lim_{n\rightarrow\infty}\rho_A(t_n,\xi)
\end{equation}
exists and is such that $\Re P_A(\xi)>0$ whenever $\xi\in A^{-1}S_r$.

\begin{lemma}\label{lem:PosHomTypeChar_1}
Under the hypotheses \eqref{eq:PosHomTypeCharHypothesis}, $\lim_{t\rightarrow\infty}\rho_A(t,\xi)$ exists for all $\xi\in\mathbb{R}^d$ and the convergence is uniform on all compact sets of $\mathbb{R}^d$. In particular, $P_A$ extends uniquely to $\mathbb{R}^d$ (which we also denote by $P_A$) by
\begin{equation}\label{eq:PosHomTypeChar_3}
P_A(\xi)=\lim_{t\rightarrow\infty}\rho_A(t,\xi)=\lim_{n\rightarrow\infty}\rho_A(t_n,\xi)
\end{equation}
for all $\xi\in\mathbb{R}^d$. Moreover, $P_A:\mathbb{R}^d\rightarrow\mathbb{C}$ is a positive homogeneous polynomial with the representation
\begin{equation}\label{eq:PosHomTypeChar_4}
P_A(\xi)=\sum_{|\beta:\mathbf{m}|=2}a_\beta\xi^{\beta}
\end{equation}
for some $\mathbf{m}=(m_1,m_2,\dots,m_d)\in\mathbb{N}_+^d$ where $m\geq 2m_i$ for $i=1,2,\dots d$ and
\begin{equation}\label{eq:PosHomTypeChar_5}
F=D=\diag((2m_1)^{-1},(2m_2)^{-1},\dots,(2m_d)^{-1})\in \Exp(P_A).
\end{equation}
Furthermore
\begin{equation}\label{eq:PosHomTypeChar_6}
Q_A(\xi)=\sum_{|\beta:2\mathbf{m}|\geq 1}a_{\beta}\xi^{\beta}=P_A(\xi)+\sum_{|\beta:2\mathbf{m}|>1}a_{\beta}\xi^{\beta}
\end{equation}
for $\xi\in\mathbb{R}^d$.
\end{lemma}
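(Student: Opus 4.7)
The plan is to closely mirror the structure of the proof of Proposition \ref{prop:PositiveHomogeneousPolynomialsareSemiElliptic}, but to replace its use of strict homogeneity of $Q$ with an asymptotic analysis of $\rho_A(t, \xi) = tQ_A(t^{-F}\xi)$. First I would expand $t^{-F} = t^{-D}t^{-N}$ (using $DN = ND$) and note that $t^{-N}$ is a matrix-valued polynomial in $\log t$. This produces the representation
\begin{equation*}
\rho_A(t, \xi) = \sum_{s, k} b_{s,k}(\xi)\, t^s\, (\log t)^k,
\end{equation*}
a finite sum in which each $b_{s,k}$ is a polynomial in $\xi$ and $s$ ranges over the values $1 - \beta'\cdot\gamma$ associated with multi-indices $\beta'$ of $Q_A$. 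Since polynomials of degree $\leq m$ form a finite-dimensional space, the pointwise convergence of $\rho_A(t_n, \cdot)$ on $A^{-1}\overline{B_r}$ is equivalent to coefficientwise convergence. For each fixed $\xi$, the asymptotic linear independence of the functions $\{t^s(\log t)^k\}$ as $t_n \to \infty$ forces $b_{s,k} \equiv 0$ for all $s > 0$ and $b_{0,k} \equiv 0$ for $k \geq 1$. Using that $t^{-N} = I$ at $t = 1$, so that the $(\log t)^0$-component of the contribution of $a_{\beta'}$ to $\rho_A$ is simply $a_{\beta'}\xi^{\beta'}$, the vanishing of the $k=0$ component of each $b_{s,0}$ with $s > 0$ yields $a_{\beta'} = 0$ for all $\beta'$ with $\beta'\cdot\gamma < 1$. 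Consequently $P_A(\xi) = b_{0,0}(\xi) = \sum_{\beta\cdot\gamma = 1} a_\beta \xi^\beta$, and the residual $\rho_A(t,\xi) - P_A(\xi)$ consists only of terms with $s < 0$, which tend to zero as $t \to \infty$ along the \emph{full range} (not merely along $t_n$), uniformly on compact subsets of $\mathbb{R}^d$. This establishes \eqref{eq:PosHomTypeChar_3}.

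Next I would establish positive definiteness of $\Re P_A$ and determine the structure of $D$. The explicit form of $P_A$ shows $D \in \Exp(P_A)$ directly, since $P_A(t^D\xi) = \sum a_\beta t^{\beta\cdot\gamma}\xi^\beta = tP_A(\xi)$. If some $\gamma_i \leq 0$, then no $n \in \mathbb{N}_+$ satisfies $n\gamma_i = 1$, which forces $P_A(xe_i) \equiv 0$; but $A^{-1}S_r$ contains the antipodal points $\pm(r/|A_i|)e_i$ (where $A_i$ is the $i$-th column of $A$), contradicting the hypothesis that $\Re P_A > 0$ on $A^{-1}S_r$. Hence $\gamma_i > 0$ for all $i$, so $\{t^D\}$ is contracting by Lemma \ref{lem:SpectralEstimateforContractingGroup}, and Proposition \ref{prop:ScalefromSphere} extends $\Re P_A > 0$ from $A^{-1}S_r$ to all of $\mathbb{R}^d \setminus \{0\}$.

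Then I would show $N = 0$. The group identity $(ct)^{-F} = c^{-F}t^{-F}$ combined with commutativity gives $\rho_A(ct, \xi) = c\,\rho_A(t, c^{-F}\xi)$ for all $c > 0$; letting $t \to \infty$ along the full range (using the first step) yields $P_A(\xi) = cP_A(c^{-F}\xi)$, that is, $F \in \Exp(P_A)$. Combined with $D \in \Exp(P_A)$, this gives $t^N = t^F t^{-D} \in \Sym(P_A) \subseteq \OdR$ by Proposition \ref{prop:SymPisCompact}. Differentiating the orthogonality relation $(t^N)^*(t^N) = I$ at $t = 1$ forces $N^* + N = 0$, so $N$ is antisymmetric; combined with nilpotency (which forces purely zero spectrum) and the fact that real antisymmetric matrices are diagonalizable over $\mathbb{C}$, this yields $N = 0$, and hence $F = D$. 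With $N = 0$ and $D$ diagonal with positive entries, Step 2 of Proposition \ref{prop:PositiveHomogeneousPolynomialsareSemiElliptic} applies verbatim to give $1/\gamma_i = 2m_i \in 2\mathbb{N}_+$. The representations \eqref{eq:PosHomTypeChar_4}--\eqref{eq:PosHomTypeChar_6} then follow from substituting $\gamma_i = 1/(2m_i)$, the earlier vanishing $a_\beta = 0$ for $\beta\cdot\gamma < 1$, and the degree bound $|(2m_i)e_i| = 2m_i \leq m$.

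The hard part will be the initial coefficient analysis: disentangling the polynomial-in-$\log t$ contributions arising from $t^{-N}$ (when $N \neq 0$ a priori) from the polynomial-in-$t$ contributions of $t^{-D}$ to extract the vanishing $a_\beta = 0$ for $\beta\cdot\gamma < 1$. Once convergence along the full range $t \to \infty$ is established, the remaining steps are essentially adaptations of the Jordan-Chevally arguments of Proposition \ref{prop:PositiveHomogeneousPolynomialsareSemiElliptic} combined with the compact-subgroup argument for killing the nilpotent part.
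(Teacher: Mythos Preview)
Your proposal is correct and follows the same overall architecture as the paper's proof: expand $t^{-F}=t^{-D}t^{-N}$, analyze the asymptotics in $t$ of $\rho_A(t,\xi)$, extract $P_A$ as the $(s,k)=(0,0)$ coefficient, establish positive homogeneity, and kill the nilpotent part. Your bookkeeping of the $t^s(\log t)^k$ expansion is in fact somewhat tidier than the paper's equation (in the proof) which lumps the $\log$-terms together; your version makes the extraction $b_{s,0}(\xi)=\sum_{1-\gamma\cdot\beta=s}a_\beta\xi^\beta$ transparent and gives the full-range limit and uniform convergence on compacts in one stroke, whereas the paper postpones the uniform statement to a separate Step~4.

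Two genuine differences are worth flagging. First, for $F\in\Exp(P_A)$ you use the group identity $\rho_A(ct,\xi)=c\,\rho_A(t,c^{-F}\xi)$ and pass to the full-range limit directly; the paper instead rescales by $u^{-D}$ to force the arguments back into $A^{-1}\overline{B_r}$ before taking the limit, which is more circuitous. Second, for $N=0$ you invoke Proposition~\ref{prop:SymPisCompact} to place $t^N\in\Sym(P_A)\subseteq\OdR$, differentiate to obtain $N^*+N=0$, and conclude from skew-symmetry plus nilpotency; the paper instead repeats the divergence argument of Step~3 in the proof of Proposition~\ref{prop:PositiveHomogeneousPolynomialsareSemiElliptic}, i.e.\ $R_A(\xi)=R_A(\xi+(\log t)\nu)$ contradicts $R_A\to\infty$. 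Your route is shorter but relies on Proposition~\ref{prop:SymPisCompact} (whose proof already uses Proposition~\ref{prop:PositiveHomogeneousPolynomialsareSemiElliptic}); this is not circular since by that point you have shown $P_A$ is positive homogeneous. One minor quibble: your appeal to Proposition~\ref{prop:ScalefromSphere} to pass positivity from $A^{-1}S_r$ to all of $\mathbb{R}^d\setminus\{0\}$ needs the obvious adaptation from the unit sphere $S$ to the ellipsoid $A^{-1}S_r$---this is exactly what the paper spells out in its Step~2.
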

\begin{subproof}[Proof of Lemma \ref{lem:PosHomTypeChar_1}.] Our proof is broken into three steps. In the first step we show that the representation \eqref{eq:PosHomTypeChar_4} is valid on $A^{-1}\overline{B_r}$ and the first equality in \eqref{eq:PosHomTypeChar_6} holds on $\mathbb{R}^d$. The first step also ensures the validity of the second equality in \eqref{eq:PosHomTypeChar_5}. In the second step, we define $P_A:\mathbb{R}^d\rightarrow\mathbb{C}$ by the right hand side of \eqref{eq:PosHomTypeChar_4} and check that $P_A$ is a positive homogeneous polynomial with $D\in \Exp(P_A)$. In the third step we show that $N=0$ and hence $F=D$ and in the fourth step we show that the limit \eqref{eq:PosHomTypeChar_3} converges uniformly on any compact set $K\subseteq \mathbb{R}^d$. The second inequality in \eqref{eq:PosHomTypeChar_6} follows directly by combining the results.\\

\noindent\textit{Step 1.} Write $D=\diag(\gamma_1,\gamma_2,\dots,\gamma_d)$ and put $\gamma=(\gamma_1,\gamma_2,\dots,\gamma_d)\in\mathbb{R}^d$.  We fix $\xi\in A^{-1}\overline{B_r}$ and observe that
\begin{equation}\label{eq:PosHomTypeChar_7}
\begin{split}
\rho_A(t,\xi)&=\sum_{1<|\beta|\leq m}a_{\beta}t\Big(t^{-D}\Big(I+\log t N\xi+\dots+\frac{(\log t)^k}{k!}N^k\xi\Big)\Big)^{\beta}\\
&=\sum_{1<|\beta|\leq m}a_{\beta}t^{1-\gamma\cdot \beta}\xi^{\beta}+\sum_{j=1}^lb_{j}t^{\omega_j}(\log t)^j
\end{split}
\end{equation}
for all $t>0$ where, by invoking the multinomial theorem, we have simplified the expression so that  $\omega_1,\omega_2,\dots,\omega_l\in \mathbb{R}$ are distinct and $b_j=b_j(\xi;N)\in\mathbb{C}$ for $j=1,2,\dots l=km$. Considering the sum
\begin{equation}\label{eq:PosHomTypeChar_8}
\sum_{j=1}^lb_{j}t^{\omega_j}(\log t)^j
\end{equation}
we see that, as $t\rightarrow \infty$, the summands must either converge to $0$ or diverge to $\infty$ in absolute value. Moreover, the distinctness of the collection $\{\omega_1,\omega_2,\dots,\omega_l\}$ and the presence of positive powers of $\log t$ ensure that this convergence or divergence happens at distinct rates. Consequently, as $t_n\rightarrow \infty$ the divergence of even a single summand would force the non-existence of the limit \eqref{eq:PosHomTypeChar_2}. Consequently, the expression \eqref{eq:PosHomTypeChar_8} converges to $0$ as $t\rightarrow\infty$ and so
\begin{equation}\label{eq:PosHomTypeChar_9}
P_A(\xi)=\lim_{n\rightarrow\infty}\rho_A(t_n,\xi)=\lim_{t\rightarrow\infty}\rho_A(t,\xi)=\lim_{t\rightarrow\infty}\sum_{1<|\beta|\leq m}a_{\beta}t^{1-\gamma\cdot\beta}\xi^{\beta}.
\end{equation}
Since $\xi$ was arbitrary, \eqref{eq:PosHomTypeChar_9} must hold for all $\xi\in A^{-1}\overline{B_r}$. This is the only part of the argument in which the subsequence $\{t_n\}$ appears.

We claim that, for all multi-indices $\beta$ for which $a_\beta\neq 0$, $\beta\cdot \gamma=\beta_1\gamma_1+\beta_2\gamma_2+\cdots+\beta_d\gamma_d\geq 1$. Indeed, fix $\kappa=\min(\{\beta\cdot\gamma:a_\beta\neq 0\})$, set $\mathcal{I_{\kappa}}=\{\beta:a_{\beta}\neq 0\mbox{ and }\beta\cdot\gamma=\kappa\}$ and define $B_{\kappa}:\mathbb{R}^d\rightarrow\mathbb{C}$ by
\begin{equation*}
B_{\kappa}(\xi)=\sum_{\beta\in \mathcal{I}_{\kappa}}a_{\beta}\xi^{\beta}
\end{equation*}
for $\xi\in\mathbb{R}^d$. The linear independence of the monomials $\{\xi^{\beta}\}_{\beta\in\mathcal{I}_\kappa}$ ensures that $B_{\kappa}(\xi')\neq 0$ for some $\xi'\in A^{-1}\overline{B_r}$. It follows from \eqref{eq:PosHomTypeChar_9} that $\lim_{t\rightarrow\infty} \rho_A(t,\xi')=\lim_{t\rightarrow\infty}t^{1-\kappa}B_{\kappa}(\xi')$ from which we conclude that $\kappa=1$; the hypotheses that $P_A$ has positive real part on $A^{-1}S_r$ rules out the possibility that $\kappa>1$.

From the claim it is now evident that
\begin{equation}\label{eq:PosHomTypeChar_10}
P_A(\xi)=\sum_{\beta\cdot\gamma=1}a_{\beta}\xi^{\beta}
\end{equation}
for $\xi\in A^{-1}\overline{B_r}$ and
\begin{equation}\label{eq:PosHomTypeChar_11}
Q_{A}(\xi)=\sum_{\beta\cdot\gamma\geq 1}a_{\beta}\xi^{\beta}
\end{equation}
for $\xi\in\mathbb{R}^d$.

It is straightforward to see that the set $A^{-1}S_r$ intersects each coordinate axis at exactly two antipodal points. That is, for each $j=1,2,\dots d$, there exists $x_j>0$ for which $\{\pm x_j e_j\}=A^{-1}S_r\cap\{xe_j: x\in\mathbb{R}\}$. Upon evaluating $\Re (P_A)$ at such points and recalling that $\Re P_A>0$ on $A^{-1}S_r$, one sees by the same argument given in \textit{Step 2} of the proof of Proposition \ref{prop:PositiveHomogeneousPolynomialsareSemiElliptic} that $1/\gamma_j$ is an even natural number which cannot be greater than $m$. Therefore, for each $j=1,2,\dots, d$, $1/\gamma_j=2m_j\geq m$ for some $m_j\in\mathbb{N}_+$. The representation \eqref{eq:PosHomTypeChar_4} on $A^{-1}\overline{B_r}$ and the first equality in \eqref{eq:PosHomTypeChar_6} now follow from \eqref{eq:PosHomTypeChar_10} \eqref{eq:PosHomTypeChar_11} and the observation that $\beta\cdot\gamma=\sum_{j=1}^d\beta_j/2m_j=|\beta:2\mathbf{m}|$. Moreover,
\begin{equation}\label{eq:PosHomTypeChar_12}
D=\diag((2m_1)^{-1},(2m_2)^{-1},\dots,(2m_d)^{-1}).
\end{equation}

\noindent\textit{Step 2.} We define $P_A:\mathbb{R}^d\rightarrow\mathbb{C}$ by the right hand side of \eqref{eq:PosHomTypeChar_4}. It is clear that $D\in\Exp(P_A)$ and so, to prove that $P_A$ is positive homogeneous, it suffices to show that that $R_A(\xi)=\Re P_A(\xi)>0$ whenever $\xi\neq 0$. To this end, let $\xi\in\mathbb{R}^d$ be non-zero and find $t>0$ for which $t^D\xi\in A^{-1}S_r$; this can be done because $\{t^D\}$ is contracting in view of \eqref{eq:PosHomTypeChar_12}. From the previous step we know that \eqref{eq:PosHomTypeChar_4} holds on $A^{-1}S_r$ and thus by invoking \eqref{eq:PosHomTypeChar_2}, we find that $R_A(\xi)=t^{-1}\Re P_A(t^{D}\xi)>0$ as claimed.\\

\noindent\textit{Step 3.} We now show that $F\in \Exp(P_A)$ and use it to conclude that $N=0$. As we will see, this assertion relies on $P_A$ being originally defined via a ``scaling'' limit. Indeed, for any $\xi\in\mathbb{R}^d$ and $t>0$, find $u>0$ for which both $u^{-D}\xi$ and $u^{-D}t^{F}\xi$ belong to $A^{-1}\overline{B_r}$; this can be done because $A^{-1}\overline {B_r}$ necessarily contains an open neighborhood of $0$. In view of \eqref{eq:PosHomTypeChar_9},
\begin{align*}
tP_A(\xi)&=tuP_A(u^{-D}\xi)=ut\lim_{s\rightarrow\infty}s\rho_A(s,u^{-D}\xi)=ut\lim_{s\rightarrow\infty}sQ_A(s^{-F}u^{-D}\xi)\\
&=u\lim_{s\rightarrow\infty}stQ_A(s^{-F}t^{-F}t^{F}u^{-D}\xi)=u\lim_{(st)\rightarrow\infty}(st)Q_A((st)^{-F}u^{-D}t^{F}\xi)\\
&=u\lim_{v\rightarrow\infty}\rho_A(v,u^{-D}t^{F}\xi)=(uP_A(u^{-D}t^F\xi)=P_A(t^F\xi)
\end{align*}
where we have used Proposition \ref{prop:tEProperties} and the fact that $D\in\Exp(P_A)$. Consequently $F\in\Exp(P_A)$ and since $P_A$ is a positive homogeneous polynomial, the same argument given in \textit{Step 3} of the proof of Proposition \ref{prop:PositiveHomogeneousPolynomialsareSemiElliptic} ensures that $N=0$.\\

\noindent\textit{Step 4.} Let $K\subseteq\mathbb{R}^d$ be compact and note that $t^{-F}K\subseteq A^{-1}\overline{B_r}$ for sufficiently large $t$ by virtue of Proposition \ref{prop:ContractingCapturesCompact}. Thus by invoking \eqref{eq:PosHomTypeChar_4}, which we know to be valid on $A^{-1}\overline{B_r}$, we have
\begin{align*}
|\rho_A(t,\xi)-P_A(\xi)|&=|tQ_A(t^{-F}\xi)-tP_A(t^{-F}\xi)|\\
&=\Big|t\sum_{|\beta:2\mathbf{m}|>1}a_{\beta}(t^{-F}\xi)^{\beta}\Big|\\
&\leq\sum_{|\beta:2\mathbf{m}|>1}t^{1-|\beta:2\mathbf{m}|} |a_{\beta}\xi^{\beta}\\
&\leq t^{\omega}\sum_{|\beta:2\mathbf{m}|>1}|a_{\beta}\xi^{\beta}|
\end{align*}
for all $\xi\in K$ and sufficiently large $t$ where $\omega<0$ is independent of $K$. The assertion concerning the uniform limit follows at once because $\sum_{|\beta:2\mathbf{m}|>1}|a_{\beta}\xi^{\beta}|$ is necessarily bounded on $K$.
\end{subproof}

\noindent We shall henceforth abandon using the symbol $D$ and write
\begin{equation*}
F=A^{-1}EA=\diag((2m_1)^{-1},(2m_2)^{-1},\dots,(2m_d)^{-1})\in\Exp(P_A).
\end{equation*}

\begin{lemma}\label{lem:PosHomTypeChar_2}
Under the hypotheses of Lemma \ref{lem:PosHomTypeChar_1}, $Q_A(\xi)-P_A(\xi)=o(R_A(\xi))$ as $\xi\rightarrow 0$.
\end{lemma}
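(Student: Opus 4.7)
The plan is to exploit the explicit representation of $Q_A - P_A$ already obtained in Lemma \ref{lem:PosHomTypeChar_1}. From \eqref{eq:PosHomTypeChar_6}, we have
\begin{equation*}
Q_A(\xi) - P_A(\xi) = \sum_{|\beta:2\mathbf{m}|>1} a_\beta \xi^\beta,
\end{equation*}
which is a \emph{finite} sum of monomials, each of whose multi-index strictly exceeds the homogeneity threshold of $P_A$ in the $\mathbf{m}$-weighted sense. Since the class of functions that are $o(R_A(\xi))$ as $\xi\to 0$ is closed under finite linear combinations, it will suffice to show that every individual monomial $\xi^\beta$ with $|\beta:2\mathbf{m}|>1$ satisfies $\xi^\beta = o(R_A(\xi))$ as $\xi\to 0$.

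The key step is to recognize this single-monomial statement as a direct application of Lemma \ref{lem:LittleOh} of the appendix, applied with the weight vector $2\mathbf{m}=(2m_1,2m_2,\dots,2m_d)$ playing the role of the $\mathbf{m}$ in that lemma. Concretely, I would verify the three hypotheses in the present setting: $R_A=\Re P_A$ is continuous (as the real part of a polynomial), positive definite (since $P_A$ is positive homogeneous by Lemma \ref{lem:PosHomTypeChar_1}), and homogeneous with respect to the contracting one-parameter group $\{t^F\}$, where $F=\diag((2m_1)^{-1},\dots,(2m_d)^{-1})\in\Exp(P_A)\subseteq\Exp(R_A)$ by \eqref{eq:PosHomTypeChar_5}. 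With these hypotheses verified, Lemma \ref{lem:LittleOh} yields $\xi^\beta=o(R_A(\xi))$ as $\xi\to 0$ for each multi-index $\beta$ with $|\beta:2\mathbf{m}|>1$.

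Summing the resulting little-$o$ estimates over the finitely many $\beta$ appearing in \eqref{eq:PosHomTypeChar_6} delivers $Q_A(\xi)-P_A(\xi) = o(R_A(\xi))$ as $\xi\to 0$, which is the desired conclusion. I do not anticipate any genuine obstacle: the lemma is essentially a bookkeeping step that repackages the semi-elliptic scaling behavior of monomials against a positive definite homogeneous base, and the only point requiring care is the notational matching between the weight $\mathbf{m}$ of Lemma \ref{lem:LittleOh} and the effective weight $2\mathbf{m}$ dictated by $F\in\Exp(P_A)$ here.
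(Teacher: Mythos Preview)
Your proposal is correct and is essentially the paper's own proof: the paper bounds $|Q_A(\xi)-P_A(\xi)|$ by the finite sum $\sum_{|\beta:2\mathbf{m}|>1}|a_\beta\xi^\beta|$ via \eqref{eq:PosHomTypeChar_6} and then invokes Lemma~\ref{lem:LittleOh} directly. Your observation about matching the weight $2\mathbf{m}$ here with the $\mathbf{m}$ in Lemma~\ref{lem:LittleOh} is exactly the right bookkeeping.
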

\begin{subproof}[Proof of Lemma \ref{lem:PosHomTypeChar_2}.] In view of Lemma \ref{lem:PosHomTypeChar_1},
\begin{equation*}
|Q_A(\xi)-P_A(\xi)|\leq\sum_{|\beta:2\mathbf{m}|>1}|a_{\beta}\xi^{\beta}|
\end{equation*}
for all $\xi\in\mathbb{R}^d$. The desired result now follows directly from Lemma \ref{lem:LittleOh}.
\end{subproof}

\noindent We now define $P_{\xi_0}:\mathbb{R}^d\rightarrow\mathbb{C}$ by $P_{\xi_0}=P_A\circ L_{A^{-1}}$. By virtue of our results above, it is clear that $P_{\xi_0}$ is positive homogeneous with $E\in\Exp(P_{\xi_0})$. We have
\begin{equation*}
\Upsilon_{\xi_0}(\xi)=\Gamma_{\xi_0}(\xi)-i\alpha_{\xi_0}\cdot\xi+P_{\xi_0}(\xi)= P_{\xi_0}(\xi)-Q^m_{\xi_0}(\xi)+O(|\xi|^{(m+1)})
\end{equation*}
as $\xi\rightarrow 0$. Because $R_{\xi_0}=\Re P_{\xi_0}= R_A\circ L_{A^{-1}}$, it follows from Lemma \ref{lem:PosHomTypeChar_2} that $P_{\xi_0}(\xi)-Q_{\xi_0}(\xi)=o(R_{\xi_0}(\xi))$ as $\xi\rightarrow 0$. Moreover, because $E$ is diagonalizable and $m+1>2m_i\geq (\min(\Spec(E)))^{-1}$, $|\xi|^{(m+1)}=o(R_{\xi_0}(\xi))$ as $\xi\rightarrow 0$ by virtue of Proposition \ref{prop:NormGammaLittleOhR}. Therefore
\begin{equation*}
\Gamma_{\xi_0}(\xi)=i\alpha_{\xi_0}-P_{\xi_0}(\xi)+\Upsilon_{\xi_0}(\xi)
\end{equation*}
where $\Upsilon_{\xi_0}=o(R_{\xi_0})$ as $\xi\rightarrow 0$ and thus completing the proof of the implication $(c\Rightarrow a)$.

 To finish the proof of Proposition \ref{prop:PosHomTypeChar}, it remains to prove that, for any $m'\geq m$,
\begin{equation*}
P_{\xi_0}(\xi)=\lim_{t\rightarrow\infty}tQ_{\xi_0}^{m'}(t^{-E}\xi)
\end{equation*}
for all $\xi\in\mathbb{R}^d$ and this limit is uniform on all compact subsets of $\mathbb{R}^d$. Indeed, Let $K\subseteq\mathbb{R}^d$ be compact. By virtue of Lemma \ref{lem:PosHomTypeChar_1},
\begin{equation}\label{eq:PosHomTypeChar_13}
\begin{split}
P_{\xi_0}(\xi)&=P_A(A^{-1}\xi)\\
&=\lim_{t\rightarrow\infty} \rho_A(t,A^{-1}\xi)\\
&=\lim_{t\rightarrow\infty} tQ_A(A^{-1}t^{-E}\xi)\\
&=\lim_{t\rightarrow\infty} tQ_{\xi_0}^m(t^{-E}\xi)
\end{split}
\end{equation}
uniformly for $\xi\in K$. Observe that for any $m'>m$, there exists $M>0$ for which
\begin{align*}
\big|tQ_{\xi_0}^{m'}(t^{-E}\xi)-tQ_{\xi_0}^m(t^{-E}\xi)\big|&\leq\sum_{m<|\beta|\leq m'}t\big|c_{\beta}(t^{-E}\xi)^{\beta}\big|\\
&=\sum_{m<|\beta|\leq m'}t\big|c_{\beta}(At^{-F}A^{-1}\xi)^{\beta}\big|\\
&\leq M\sum_{m<|\gamma|\leq m'}t\big|(t^{-F}A^{-1}\xi)^{\gamma}\big|\\
&=\sum_{m<|\gamma|\leq m'}t^{1-|\gamma:2\mathbf{m}|}\big|(A^{-1}\xi)^{\gamma}\big|
\end{align*}
for all $\xi\in\mathbb{R}^d$ and $t>0$. Noting that $|\gamma:2\mathbf{m}|>1$ whenever $m<|\gamma|\leq m'$, by repeating the argument given in \textit{Step 4} of Lemma \ref{lem:PosHomTypeChar_1}, we observe that
\begin{equation}\label{eq:PosHomTypeChar_14}
\lim_{t\rightarrow \infty}\big(tQ_{\xi_0}^{m'}(t^{-E}\xi)-tQ_{\xi_0}^m(t^{-E}\xi)\big)=0
\end{equation}
uniformly for $\xi\in K$. The desired result follows by combining \eqref{eq:PosHomTypeChar_13} and \eqref{eq:PosHomTypeChar_14}.
\end{proof}

\noindent{\small\bf Acknowledgment:} We thank David Eriksson for his help and expertise while producing the figures for this paper. We also thank Bernie Randles for her careful proofreading.

\noindent Evan Randles\footnote{This material is based upon work supported by the National Science Foundation Graduate Research Fellowship under Grant No. DGE-1144153} : Center for Applied Mathematics, Cornell University, Ithaca, NY 14853.
\newline E-mail: edr62@cornell.edu\\

\noindent Laurent Saloff-Coste\footnote{This material is based upon work supported by the National Science Foundation under Grant No. DMS-1404435}: Department of Mathematics, Cornell University, Ithaca NY 14853.
\newline E-mail: lsc@math.cornell.edu

\end{document}